\theoremstyle{remark}
\newtheorem{assumptions}{Assumptions}[section]
\newtheorem{remark}[assumptions]{Remark}
\theoremstyle{plain}
\newtheorem{theorem}[assumptions]{Theorem}
\newtheorem{proposition}[assumptions]{Proposition}
\newtheorem{lemma}[assumptions]{Lemma}
\newtheorem{corollary}[assumptions]{Corollary}
\xpatchcmd{\@sec@pppage}{
\bfseries}{
\normalfont\scshape\Large}{}{}
\numberwithin{equation}{section}
\begin{document}

%%%%%%%%%%%%%%%%%%%%%%%%%%%%%%%%%%%%%%%%%%%%%%%%%%%%%%%%%%%%%%%%%%%%%%%%%%%%%%%%%%%%%%%%%%%%%%%%%%%%%%%%%%%%%%%%%%%%%%%%%%%%%%%%%%%%%%%%%%%%%%%%%%%%%%%%%%%%%%%%%%%%%%%%%%%%%%%%%%%%%%%%%%%%%%%%%%%%%%%%%%%%%%%%%%%%%%%%%%%&&&&&&&&&&&&&&&&&&&&&&&&&&&&&&&&&

\title{
\LARGE A trajectorial approach to the gradient flow properties of Langevin--Smoluchowski diffusions\thanks{
We thank Luigi Ambrosio, Mathias Beiglb{\"o}ck, Max Fathi, Ivan Gentil, David Kinderlehrer, Daniel Lacker, Michel Ledoux, Jan Maas, Felix Otto, Chris Rogers, Oleg Szehr, and Lane Chun Yeung for their advice and comments during the preparation of this paper. Special thanks go to Luigi Ambrosio, Michel Ledoux and Jan Maas for their expert guidance, which helped us navigate several difficult narrows successfully. \smallskip\newline
\noindent I. Karatzas acknowledges support from the National Science Foundation (NSF) under grants NSF-DMS-14-05210 and NSF-DMS-20-04997. W. Schachermayer and B. Tschiderer acknowledge support by the Austrian Science Fund (FWF) under grant P28661. W. Schachermayer additionally appreciates support by the Vienna Science and Technology Fund (WWTF) through projects MA14-008 and MA16-021. \smallskip\newline
\noindent Much of this work was done in Fall 2018, when W. Schachermayer was visiting the Department of Mathematics at Columbia University as Minerva Foundation Fellow.
}
}

%%%%%%%%%%%%%%%%%%%%%%%%%%%%%%%%%%%%%%%%%%%%%%%%%%%%%%%%%%%%%%%%%%%%%%%%%%%%%%%%%%%%%%%%%%%%%%%%%%%%%%%%%%%%%%%%%%%%%%%%%%%%%%%%%%%%%%%%%%%%%%%%%%%%%%%%%%%%%%%%%%%%%%%%%%%%%%%%%%%%%%%%%%%%%%%%%%%%%%%%%%%%%%%%%%%%%%%%%%%&&&&&&&&&&&&&&&&&&&&&&&&&&&&&&&&&

\author{
\large Ioannis Karatzas\thanks{
Department of Mathematics, Columbia University, 2990 Broadway, New York, NY 10027, USA 
\newline (email: \href{mailto: ik1@columbia.edu}{ik1@columbia.edu}); 
\newline and INTECH Investment Management, One Palmer Square, Suite 441, Princeton, NJ 08542, USA 
\newline (email: \href{mailto: ikaratzas@intechjanus.com}{ikaratzas@intechjanus.com}).
} 
\and \large Walter Schachermayer\thanks{
Faculty of Mathematics, University of Vienna, Oskar-Morgenstern-Platz 1, 1090 Vienna, Austria 
\newline (email: \href{mailto: walter.schachermayer@univie.ac.at}{\mbox{walter}.schachermayer@univie.ac.at}); 
\newline and Department of Mathematics, Columbia University, 2990 Broadway, New York, NY 10027, USA.
} 
\and \large Bertram Tschiderer\thanks{
Faculty of Mathematics, University of Vienna, Oskar-Morgenstern-Platz 1, 1090 Vienna, Austria \newline (email: \href{mailto: bertram.tschiderer@univie.ac.at}{bertram.tschiderer@univie.ac.at}).
}
}

%%%%%%%%%%%%%%%%%%%%%%%%%%%%%%%%%%%%%%%%%%%%%%%%%%%%%%%%%%%%%%%%%%%%%%%%%%%%%%%%%%%%%%%%%%%%%%%%%%%%%%%%%%%%%%%%%%%%%%%%%%%%%%%%%%%%%%%%%%%%%%%%%%%%%%%%%%%%%%%%%%%%%%%%%%%%%%%%%%%%%%%%%%%%%%%%%%%%%%%%%%%%%%%%%%%%%%%%%%%&&&&&&&&&&&&&&&&&&&&&&&&&&&&&&&&&

\date{\normalsize 20th August 2020}

%%%%%%%%%%%%%%%%%%%%%%%%%%%%%%%%%%%%%%%%%%%%%%%%%%%%%%%%%%%%%%%%%%%%%%%%%%%%%%%%%%%%%%%%%%%%%%%%%%%%%%%%%%%%%%%%%%%%%%%%%%%%%%%%%%%%%%%%%%%%%%%%%%%%%%%%%%%%%%%%%%%%%%%%%%%%%%%%%%%%%%%%%%%%%%%%%%%%%%%%%%%%%%%%%%%%%%%%%%%&&&&&&&&&&&&&&&&&&&&&&&&&&&&&&&&&

\maketitle

%%%%%%%%%%%%%%%%%%%%%%%%%%%%%%%%%%%%%%%%%%%%%%%%%%%%%%%%%%%%%%%%%%%%%%%%%%%%%%%%%%%%%%%%%%%%%%%%%%%%%%%%%%%%%%%%%%%%%%%%%%%%%%%%%%%%%%%%%%%%%%%%%%%%%%%%%%%%%%%%%%%%%%%%%%%%%%%%%%%%%%%%%%%%%%%%%%%%%%%%%%%%%%%%%%%%%%%%%%%&&&&&&&&&&&&&&&&&&&&&&&&&&&&&&&&&

\begin{abstract} \small \noindent \textsc{Abstract.} We revisit the variational characterization of conservative diffusion as entropic gradient flow and provide for it a probabilistic interpretation based on stochastic calculus. It was shown by Jordan, Kinderlehrer, and Otto that, for diffusions of Langevin--Smoluchowski type, the Fokker--Planck probability density flow maximizes the rate of relative entropy dissipation, as measured by the distance traveled in the ambient space of probability measures with finite second moments, in terms of the quadratic Wasserstein metric. We obtain novel, stochastic-process versions of these features, valid along almost every trajectory of the diffusive motion in the backward direction of time, using a very direct perturbation analysis. By averaging our trajectorial results with respect to the underlying measure on path space, we establish the maximal rate of entropy dissipation along the Fokker--Planck flow and measure exactly the deviation from this maximum that corresponds to any given perturbation. As a bonus of our trajectorial approach we derive the HWI inequality relating relative entropy (H), Wasserstein distance (W) and relative Fisher information (I).

\bigskip

\small \noindent \href{https://mathscinet.ams.org/mathscinet/msc/msc2010.html}{\textit{MSC 2010 subject classifications:}} Primary 60H30, 60G44; secondary 82C31, 60J60, 94A17

\bigskip

\small \noindent \textit{Keywords and phrases:} Relative entropy, Wasserstein distance, Fisher information, optimal transport, gradient flow, diffusion processes, time reversal, functional inequalities
\end{abstract}

%%%%%%%%%%%%%%%%%%%%%%%%%%%%%%%%%%%%%%%%%%%%%%%%%%%%%%%%%%%%%%%%%%%%%%%%%%%%%%%%%%%%%%%%%%%%%%%%%%%%%%%%%%%%%%%%%%%%%%%%%%%%%%%%%%%%%%%%%%%%%%%%%%%%%%%%%%%%%%%%%%%%%%%%%%%%%%%%%%%%%%%%%%%%%%%%%%%%%%%%%%%%%%%%%%%%%%%%%%%&&&&&&&&&&&&&&&&&&&&&&&&&&&&&&&&&

\newpage

%%%%%%%%%%%%%%%%%%%%%%%%%%%%%%%%%%%%%%%%%%%%%%%%%%%%%%%%%%%%%%%%%%%%%%%%%%%%%%%%%%%%%%%%%%%%%%%%%%%%%%%%%%%%%%%%%%%%%%%%%%%%%%%%%%%%%%%%%%%%%%%%%%%%%%%%%%%%%%%%%%%%%%%%%%%%%%%%%%%%%%%%%%%%%%%%%%%%%%%%%%%%%%%%%%%%%%%%%%%&&&&&&&&&&&&&&&&&&&&&&&&&&&&&&&&&

\section{Introduction} \label{soti} 

%%%%%%%%%%%%%%%%%%%%%%%%%%%%%%%%%%%%%%%%%%%%%%%%%%%%%%%%%%%%%%%%%%%%%%%%%%%%%%%%%%%%%%%%%%%%%%%%%%%%%%%%%%%%%%%%%%%%%%%%%%%%%%%%%%%%%%%%%%%%%%%%%%%%%%%%%%%%%%%%%%%%%%%%%%%%%%%%%%%%%%%%%%%%%%%%%%%%%%%%%%%%%%%%%%%%%%%%%%%&&&&&&&&&&&&&&&&&&&&&&&&&&&&&&&&&

We provide a trajectorial interpretation of a seminal result by Jordan, Kinderlehrer, and Otto \cite{JKO98}, and present a proof based on stochastic analysis. The basic theme of our approach could be described epigrammatically as ``applying It\^{o} calculus to Otto calculus''. More precisely, we follow a stochastic analysis approach to the characterization of diffusions of Langevin--Smoluchowski type as entropic gradient flows in Wasserstein space, as in \cite{JKO98}. We provide stronger, trajectorial versions of these results. For consistency and readability we adopt the setting and notation of \cite{JKO98}, and even copy some paragraphs of this paper almost verbatim in the remainder of this section. 

\smallskip

Along the lines of \cite{JKO98}, we consider thus a Fokker--Planck or forward Kolmogorov \cite{Kol31} equation of the form
\begin{equation} \label{fpeqnwfp}
\partial_{t} p(t,x) = \operatorname{div}\big(\nabla \Psi(x) \, p(t,x) \big) + \tfrac{1}{2} \Delta p(t,x), \qquad (t,x) \in (0,\infty) \times \mathds{R}^{n},
\end{equation}
with initial condition
\begin{equation} \label{icnwfp}
p(0,x) = p^{0}(x), \qquad x \in \mathds{R}^{n}.
\end{equation}
Here, $p$ is a real-valued function defined for $(t,x) \in [0,\infty) \times \mathds{R}^{n}$, the function $\Psi \colon \mathds{R}^{n} \rightarrow [0,\infty)$ is smooth and plays the role of a potential, and $p^{0}$ is a probability density on $\mathds{R}^{n}$. The solution $p(t,x)$ of \hyperref[fpeqnwfp]{(\ref*{fpeqnwfp})} with initial condition \hyperref[icnwfp]{(\ref*{icnwfp})} stays non-negative and conserves its mass, which means that the spatial integral $\int_{\mathds{R}^{n}} p(t,x) \, \textnormal{d}x$ is independent of the time parameter $t \geqslant 0$ and is thus equal to $\int p^{0} \, \textnormal{d}x = 1$. Therefore, $p(t, \, \cdot \,)$ must be a probability density on $\mathds{R}^{n}$ for every fixed time $t \geqslant 0$.

\smallskip

As in \cite{JKO98} we note that the Fokker--Planck equation \hyperref[fpeqnwfp]{(\ref*{fpeqnwfp})} with initial condition \hyperref[icnwfp]{(\ref*{icnwfp})} is inherently related to the stochastic differential equation of Langevin--Smoluchowski type \cite{Fri75,Gar09,Ris96,Sch80} 
\begin{equation} \label{sdeids}
\textnormal{d}X(t) = - \nabla \Psi\big(X(t)\big) \, \textnormal{d}t + \textnormal{d}W(t), \qquad t \geqslant 0.
\end{equation}
In the equation above, $(W(t))_{t \geqslant 0}$ is an $n$-dimensional Brownian motion started at the origin, and the $\mathds{R}^{n}$-valued random variable $X(0)$ is independent of the process $(W(t))_{t \geqslant 0}$. The probability distribution of $X(0)$ has density $p^{0}$ and, unless specified otherwise, the reference measure will always be Lebesgue measure on $\mathds{R}^{n}$. Then $p(t, \, \cdot \,)$, the solution of \hyperref[fpeqnwfp]{(\ref*{fpeqnwfp})} with initial condition \hyperref[icnwfp]{(\ref*{icnwfp})}, gives at any given time $t \geqslant 0$ the probability density function of the random variable $X(t)$ from \hyperref[sdeids]{(\ref*{sdeids})}. 

\smallskip

If the potential $\Psi$ grows rapidly enough so that $\mathrm{e}^{-2 \Psi} \in L^{1}(\mathds{R}^{n})$, then the partition constant
\begin{equation} \label{czpf}
Z  = \int_{\mathds{R}^{n}} \mathrm{e}^{- 2 \Psi(x)} \, \textnormal{d}x
\end{equation}
is finite and there exists a unique stationary solution of the Fokker--Planck equation \hyperref[fpeqnwfp]{(\ref*{fpeqnwfp})}; namely, the probability density $q_{Z}$ of the \textit{Gibbs distribution} given by \cite{Gar09,JK96,Ris96}
\begin{equation} \label{usdotgdm}
q_{Z}(x) = Z^{-1} \, \mathrm{e}^{ -2 \Psi(x)}
\end{equation}
for $x \in \mathds{R}^{n}$. When it exists, the probability measure on $\mathds{R}^{n}$ with density function $q_{Z}$ is called Gibbs distribution, and is the unique invariant measure for the Markov process $(X(t))_{t \geqslant 0}$ defined by the stochastic differential equation \hyperref[sdeids]{(\ref*{sdeids})}; see, e.g., \cite[Exercise 5.6.18, p.\ 361]{KS88}.

\smallskip

In \cite{JK96} it is shown that the stationary probability density $q_{Z}$ satisfies the following variational principle: it minimizes the \textit{free energy functional}
\begin{equation} \label{fef}
\mathscr{F}(p) = \mathcal{E}(p) + \tfrac{1}{2} \, \mathcal{S}(p)
\end{equation}
over all probability densities $p$ on $\mathds{R}^{n}$. Here, the functionals
\begin{equation} 
\mathcal{E}(p) \vcentcolon = \int_{\mathds{R}^{n}} \Psi(x) \, p(x) \, \textnormal{d}x, \qquad \qquad \mathcal{S}(p) \vcentcolon = \int_{\mathds{R}^{n}} p(x) \log p(x) \, \textnormal{d}x
\end{equation}
model respectively the potential energy and the internal energy (given by the negative of the Gibbs-Boltzmann entropy functional).

\subsection{Preview}

We set up in \hyperref[snaas]{Section \ref*{snaas}} the model for the Langevin--Smoluchowski diffusion, and introduce its fundamental quantities: the current and the invariant distributions of particles, the resulting likelihood ratio process, the associated concepts of free energy, relative entropy and relative Fisher information. In \hyperref[sub.reg.ass.]{Subsection \ref*{sub.reg.ass.}} we discuss the regularity assumptions of the present paper. 

\smallskip

\hyperref[thetheoseccth]{Sections \ref*{thetheoseccth}} and \hyperref[thetheoscthtra]{\ref*{thetheoscthtra}} present the basic results. These include \hyperref[thetone]{Theorem \ref*{thetone}}, which computes in terms of the relative Fisher information the rate of relative entropy decay in the ambient Wasserstein space of probability density functions with finite second moment; and its ``perturbed'' counterpart, \hyperref[thetthre]{Theorem \ref*{thetthre}}. We compute explicitly the difference between these perturbed and unperturbed rates, and show that it is always non-negative --- in fact strictly positive, unless the perturbation and the gradient of the log-likelihood ratio function are collinear. This way, the Langevin--Smoluchowski diffusion emerges as the \textit{steepest descent} (or ``gradient flow'') of the relative entropy functional with respect to the Wasserstein metric. 

\smallskip

The essence of \hyperref[thetone]{Theorems \ref*{thetone}} and \hyperref[thetthre]{\ref*{thetthre}} is well known, and the special case $\Psi(x) = \frac{1}{2} \vert x \vert^{2}$ of Ornstein--Uhlenbeck dynamics goes back as far as the 1950's. Our novel contribution is that \hyperref[thetone]{Theorems \ref*{thetone}} and \hyperref[thetthre]{\ref*{thetthre}} are simple consequences of their stronger, trajectorial versions, \hyperref[thetsix]{Theorems \ref*{thetsix}} and \hyperref[thetthretv]{\ref*{thetthretv}}, respectively. These are the main results of this work. They provide very detailed descriptions for the semimartingale dynamics of the relative entropy process in both its ``pure'' and ``perturbed'' forms, and are most transparent when time is \textit{reversed}. \hyperref[thetone]{Theorems \ref*{thetone}} and \hyperref[thetthre]{\ref*{thetthre}} then follow from \hyperref[thetsix]{Theorems \ref*{thetsix}} and \hyperref[thetthretv]{\ref*{thetthretv}} simply by taking expectations. 

\smallskip

Several consequences and ramifications of the main results, \hyperref[thetsix]{Theorems \ref*{thetsix}} and \hyperref[thetthretv]{\ref*{thetthretv}}, are developed in \hyperref[subimpconq]{Subsections \ref*{subimpconq}} and \hyperref[ramifications]{\ref*{ramifications}}, including a derivation of the famous HWI inequality of Otto and Villani \cite{OV00, Vil03, Vil09, CE02} that relates relative entropy (H) to Wasserstein distance (W) and to relative Fisher information (I). Detailed arguments and proofs are collected in \hyperref[ssgrodwudm]{Section \ref*{ssgrodwudm}}. The limiting behavior of the Wasserstein distance along the Langevin--Smoluchowski diffusion is analyzed in \hyperref[stwt]{Section \ref*{stwt}}; here, most of the effort goes into showing that relative entropy and Wasserstein distance have exactly the same exceptional sets of zero Lebesgue measure, for their temporal rate of change. This, seemingly purely technical, point, is of paramount importance for the rigorous justification of the perturbation analysis deployed in \hyperref[thetthre]{Theorem \ref*{thetthre}}; it turns out also to be rather delicate.

\smallskip

The present paper is a condensed version of the more detailed presentation \cite{KST20} available on \href{https://arxiv.org}{arXiv} under \href{https://arxiv.org/abs/1811.08686}{https://arxiv.org/abs/1811.08686}. This extended version contains more details, and several of its appendices present background material and known results used in our approach. 

%%%%%%%%%%%%%%%%%%%%%%%%%%%%%%%%%%%%%%%%%%%%%%%%%%%%%%%%%%%%%%%%%%%%%%%%%%%%%%%%%%%%%%%%%%%%%%%%%%%%%%%%%%%%%%%%%%%%%%%%%%%%%%%%%%%%%%%%%%%%%%%%%%%%%%%%%%%%%%%%%%%%%%%%%%%%%%%%%%%%%%%%%%%%%%%%%%%%%%%%%%%%%%%%%%%%%%%%%%%&&&&&&&&&&&&&&&&&&&&&&&&&&&&&&&&&

\section{The stochastic approach} \label{snaas}

%%%%%%%%%%%%%%%%%%%%%%%%%%%%%%%%%%%%%%%%%%%%%%%%%%%%%%%%%%%%%%%%%%%%%%%%%%%%%%%%%%%%%%%%%%%%%%%%%%%%%%%%%%%%%%%%%%%%%%%%%%%%%%%%%%%%%%%%%%%%%%%%%%%%%%%%%%%%%%%%%%%%%%%%%%%%%%%%%%%%%%%%%%%%%%%%%%%%%%%%%%%%%%%%%%%%%%%%%%%&&&&&&&&&&&&&&&&&&&&&&&&&&&&&&&&&

In \hyperref[soti]{Section \ref*{soti}} we were mostly quoting from \cite{JKO98}. We adopt now a more probabilistic point of view, and translate our setting into the language of stochastic processes and probability measures. 

\smallskip

Let $P(0)$ be a probability measure on the Borel sets of $\mathds{R}^{n}$ with density function $p^{0} = p(0, \, \cdot \,)$. This measure induces a probability measure $\mathds{P}$ on path space $\Omega = \mathcal{C}(\mathds{R}_{+};\mathds{R}^{n})$ of $\mathds{R}^{n}$-valued continuous functions on $\mathds{R}_{+}=[0,\infty)$, under which the canonical coordinate process $(X(t,\omega))_{t \geqslant 0} = (\omega(t))_{t \geqslant 0}$ satisfies the stochastic differential equation \hyperref[sdeids]{(\ref*{sdeids})} with initial probability distribution $P(0)$. We shall denote by $P(t)$ the probability distribution of the random vector $X(t)$ under $\mathds{P}$, and by $p(t) \equiv p (t, \, \cdot \, )$ the corresponding probability density function, at each time $t \geqslant 0$. This function solves the equation \hyperref[fpeqnwfp]{(\ref*{fpeqnwfp})} with initial condition \hyperref[icnwfp]{(\ref*{icnwfp})}.

\smallskip

An important role will be played by the \textit{Radon--Nikod\'{y}m derivative}, or \textit{likelihood ratio process}, 
\begin{equation} \label{rndlr}
\frac{\textnormal{d}P(t)}{\textnormal{d}\mathrm{Q}}\big(X(t)\big) = \ell\big(t,X(t)\big), 
\qquad \textnormal{ where } \quad 
\ell(t,x) \vcentcolon = \frac{p(t,x)}{q(x)} = p(t,x) \, \mathrm{e}^{2 \Psi(x)}
\end{equation}
for $t \geqslant 0$ and $x \in \mathds{R}^{n}$. Here and throughout, we denote by $\mathrm{Q}$ the $\sigma$-finite measure on the Borel sets of $\mathds{R}^{n}$, whose density with respect to Lebesgue measure is
\begin{equation} \label{qdq}
q(x) \vcentcolon = \mathrm{e}^{-2\Psi(x)}, \qquad x \in \mathds{R}^{n}.
\end{equation}
The \textit{relative entropy} and the \textit{relative Fisher information} (see, e.g., \cite{OV00,CT06}) of $P(t)$ with respect to this measure $\mathrm{Q}$, are defined respectively as
\begingroup
\addtolength{\jot}{0.7em}
\begin{align}
H\big( P(t) \, \vert \, \mathrm{Q} \big) &\vcentcolon = \mathds{E}_{\mathds{P}}\big[ \log \ell\big(t,X(t)\big) \big] = \int_{\mathds{R}^{n}} \log \bigg(\frac{p(t,x)}{q(x)}\bigg) \, p(t,x) \, \textnormal{d}x, \qquad t \geqslant 0, \label{doref} \\    
I\big( P(t) \, \vert \, \mathrm{Q}\big) &\vcentcolon = \mathds{E}_{\mathds{P}}\Big[ \, \big\vert \nabla \log \ell\big(t,X(t)\big) \big\vert^{2} \, \Big] = \int_{\mathds{R}^{n}} \big\vert \nabla \log \ell(t,x) \big\vert^{2} \, p(t,x) \, \textnormal{d}x, \qquad t \geqslant 0. \label{rfi}
\end{align}
\endgroup
It follows from Section 2 in \cite{Leo14} (see also Appendix C in \cite{KST20}) that the relative entropy $H( P \, \vert \, \mathrm{Q})$ is well-defined and takes values in $(-\infty,\infty]$ if the probability measure $P$ has finite second moment. The latter is always the case in our paper.

\smallskip

Direct computation reveals that, along the curve of probability measures $(P(t))_{t \geqslant 0}$, the free energy functional \hyperref[fef]{(\ref*{fef})} and the relative entropy \hyperref[doref]{(\ref*{doref})} are related for each $t \geqslant 0$ through the equation
\begin{equation} \label{reeef} 
2 \, \mathscr{F}\big(p(t, \, \cdot \,)\big) = H\big( P(t) \, \vert \, \mathrm{Q} \big).
\end{equation}
This shows that studying the decay of the free energy $t \mapsto \mathscr{F}(p(t, \, \cdot \,))$ is equivalent to studying the decay of the relative entropy $t \mapsto H( P(t) \, \vert \, \mathrm{Q})$, a key aspect of thermodynamics. In light of condition \hyperref[saosaojko]{\ref*{saosaojko}} in \hyperref[osaojko]{Assumptions \ref*{osaojko}} below, the identity \hyperref[reeef]{(\ref*{reeef})} implies that $H( P(0) \, \vert \, \mathrm{Q} )$ is finite, so the quantity in \hyperref[doref]{(\ref*{doref})} is finite for $t = 0$; thus, on account of \hyperref[stewrtpefitre]{(\ref*{stewrtpefitre})} below, finite also for $t > 0$.

\subsection{Regularity assumptions} \label{sub.reg.ass.}

In order to provide mathematically precise formulations of subsequent results, we have to specify convenient regularity assumptions. These issues are of a rather technical nature, and \hyperref[sub.reg.ass.]{Subsection \ref*{sub.reg.ass.}} might be skipped at a first reading of this paper. 

By analogy with \cite[Theorem 5.1]{JKO98} we consider the following assumptions.
\begin{assumptions} \label{osaojko} \
\begin{enumerate}[label=(\roman*)] 
\item \label{faosaojko} The potential $\Psi \colon \mathds{R}^{n} \rightarrow [0,\infty)$ is of class $\mathcal{C}^{\infty}(\mathds{R}^{n};[0,\infty))$.
\item \label{saosaojko} The distribution $P(0)$ of $X(0)$ in \hyperref[sdeids]{(\ref*{sdeids})} has probability density function $p^{0} = p(0, \, \cdot \,)$ with respect to Lebesgue measure on $\mathds{R}^{n}$, with finite second moment and free energy, i.e.,
\begin{equation} \label{ffecaoo}
\int_{\mathds{R}^{n}} p^{0}(x) \, \vert x \vert^{2} \, \textnormal{d}x < \infty  
\qquad \textnormal{ and } \qquad
\mathscr{F}(p^{0}) = \tfrac{1}{2} \, H\big( P(0) \, \vert \, \mathrm{Q} \big) \in (-\infty,\infty). 
\end{equation}
\end{enumerate}
\end{assumptions}

In \cite{JKO98} it is also assumed that the potential $\Psi$ satisfies, for some real constant $C > 0$, the bound $\vert \nabla \Psi \vert \leqslant C \, (\Psi + 1)$, which we do not need here. Instead of this requirement, we shall impose the following rather weak assumptions.

\begin{assumptions}[\textsf{Regularity assumptions for the trajectorial results of the present paper}] \label{sosaojkoia} In addition to conditions \hyperref[faosaojko]{\ref*{faosaojko}} and \hyperref[saosaojko]{\ref*{saosaojko}} of \hyperref[osaojko]{Assumptions \ref*{osaojko}}, we also impose that:
\begin{enumerate}[label=(\roman*)] 
\setcounter{enumi}{2}
\item \label{naltsaosaojko} The potential $\Psi$ satisfies, for some real constants $c \geqslant 0$ and $R \geqslant 0$, the drift (or coercivity) condition
\begin{equation} \label{tppstdc}
\forall \, x \in \mathds{R}^{n}, \vert x \vert \geqslant R \colon \qquad \big\langle x \, , \nabla \Psi(x) \big\rangle \geqslant - c \, \vert x \vert^{2}.
\end{equation}
\item \label{tsaosaojko} The potential $\Psi$ is sufficiently well-behaved to guarantee that the solution of \hyperref[sdeids]{(\ref*{sdeids})} is unique and well-defined for all $t \geqslant 0$, and that the solution $(t,x) \mapsto p(t,x)$ of \hyperref[fpeqnwfp]{(\ref*{fpeqnwfp})} with initial condition \hyperref[icnwfp]{(\ref*{icnwfp})} is continuous and strictly positive on $(0,\infty) \times \mathds{R}^{n}$, differentiable with respect to the time variable $t$ for each $x \in \mathds{R}^{n}$, and smooth in the space variable $x$ for each $t > 0$. We also assume that the logarithmic derivative $(t,x) \mapsto \nabla \log p(t,x)$ is continuous on $(0,\infty) \times \mathds{R}^{n}$. For example, by requiring that all derivatives of $\Psi$ grow at most exponentially as $\vert x \vert$ tends to infinity, one may adapt the arguments from \cite{Rog85} showing that this is indeed the case.
\end{enumerate}
For the formulation of \hyperref[thetthre]{Theorem \ref*{thetthre}} we will need a vector field $\beta \colon \mathds{R}^{n} \rightarrow \mathds{R}^{n}$ which is the gradient of a potential $B \colon \mathds{R}^{n} \rightarrow \mathds{R}$ satisfying the following regularity assumption:
\begin{enumerate}[label=(\roman*)] 
\setcounter{enumi}{4} 
\item \label{naltsaosaojkos} The potential $B \colon \mathds{R}^{n} \rightarrow \mathds{R}$ is of class $\mathcal{C}^{\infty}(\mathds{R}^{n};\mathds{R})$ and has compact support. Consequently, its gradient $\beta \vcentcolon = \nabla B \colon \mathds{R}^{n} \rightarrow \mathds{R}^{n}$ is of class $\mathcal{C}^{\infty}(\mathds{R}^{n};\mathds{R}^{n})$ and again compactly supported. We also assume that, for every such $\beta$, the perturbed potential $\Psi + B$ satisfies condition \hyperref[tsaosaojko]{\ref*{tsaosaojko}}.
\end{enumerate}
\end{assumptions}

The \hyperref[sosaojkoia]{Assumptions \ref*{sosaojkoia}} are satisfied by typical convex potentials $\Psi$. They also accommodate examples such as double-well potentials of the form $\Psi(x) = (x^{2}-\alpha^{2})^{2}$ on the real line, for real constants $\alpha > 0$. It is important to point out, that these assumptions do not rule out the case when the constant $Z$ in \hyperref[czpf]{(\ref*{czpf})} is infinite; thus, they allow for cases (such as $\Psi \equiv 0$) in which the stationary probability density function $q_{Z}$ in \hyperref[usdotgdm]{(\ref*{usdotgdm})} does not exist. In fact, in \cite{JKO98} the authors point out explicitly that, even when the stationary probability density $q_{Z}$ is not defined, the free energy \textnormal{\hyperref[fef]{(\ref*{fef})}} of a density $p(t,x)$ satisfying the Fokker--Planck equation \hyperref[fpeqnwfp]{(\ref*{fpeqnwfp})} with initial condition \hyperref[icnwfp]{(\ref*{icnwfp})} can be defined, provided that the free energy $\mathscr{F}(p^{0})$ is finite. Furthermore, we note that the \hyperref[sosaojkoia]{Assumptions \ref*{sosaojkoia}} are designed in such a way that they are invariant when passing from the potential $\Psi$ to $\Psi + B$ if $B$ satisfies condition \hyperref[naltsaosaojkos]{\ref*{naltsaosaojkos}}.

Under the \textnormal{\hyperref[sosaojkoia]{Assumptions \ref*{sosaojkoia}}}, the Langevin--Smoluchowski diffusion equation \textnormal{\hyperref[sdeids]{(\ref*{sdeids})}} with initial distribution $P(0)$ admits a pathwise unique, strong solution, which satisfies $P(t) \in \mathscr{P}_{2}(\mathds{R}^{n})$ for all $t \geqslant 0$; here $\mathscr{P}_{2}(\mathds{R}^{n})$ is the set of probability measures on the Borel sets of $\mathds{R}^{n}$ with finite second moment. Indeed, the drift condition \hyperref[tppstdc]{(\ref*{tppstdc})} guarantees that the second-moment condition in \hyperref[ffecaoo]{(\ref*{ffecaoo})} propagates in time, i.e.,
\begin{equation} \label{1.12}
\forall \, t \geqslant 0 \colon \qquad \int_{\mathds{R}^{n}} p(t,x) \, \vert x \vert^{2} \, \textnormal{d}x < \infty;
\end{equation}
see the first problem on p.\ 125 of \cite{Fri75}, and Appendix B in \cite{KST20}.

\begin{assumptions}[\textsf{Regularity assumptions regarding the Wasserstein distance}] \label{sosaojkoianoew} In addition to conditions \hyperref[faosaojko]{\ref*{faosaojko}} -- \hyperref[naltsaosaojkos]{\ref*{naltsaosaojkos}} of \hyperref[sosaojkoia]{Assumptions \ref*{sosaojkoia}}, we require that:
\begin{enumerate}[label=(\roman*)] 
\setcounter{enumi}{5} 
\item \label{nalwstasas} For every $t \geqslant 0$, there exists a sequence of functions $(\varphi_{m}(t, \cdot \,))_{m \geqslant 1} \subseteq \mathcal{C}_{c}^{\infty}(\mathds{R}^{n};\mathds{R})$, whose gradients $( \nabla \varphi_{m}(t, \cdot \,))_{m \geqslant 1}$ converge in $L^{2}(P(t))$ to the velocity field $v(t, \, \cdot \,) = \nabla \varphi(t, \, \cdot \, )$ of gradient type as in \hyperref[tdvfvtx]{(\ref*{tdvfvtx})} with $\varphi(t,x) = - \Psi(x) - \frac{1}{2} \log p(t,x)$, as $m \rightarrow \infty$. 
\end{enumerate}
\end{assumptions}

This last requirement guarantees, for every $t \geqslant 0$, that the velocity field $v(t, \, \cdot \,)$ is an element of the tangent space of $\mathscr{P}_{2}(\mathds{R}^{n})$ at the point $P(t) \in \mathscr{P}_{2}(\mathds{R}^{n})$ in the sense of \cite[Definition 8.4.1]{AGS08}. For the details we refer to \hyperref[stwt]{Section \ref*{stwt}} below, in particular, the display \hyperref[tanpcvecup]{(\ref*{tanpcvecup})}. However, we do not know whether this condition \hyperref[nalwstasas]{\ref*{nalwstasas}} in \hyperref[sosaojkoianoew]{Assumptions \ref*{sosaojkoianoew}} is actually an additional requirement, or whether it is automatically satisfied in our setting. But as this issue only affects the Wasserstein distance, and has no relevance for our novel trajectorial results \hyperref[thetsix]{Theorems \ref*{thetsix}} and \hyperref[thetthretv]{\ref*{thetthretv}} which constitute the main point of this work, we will not pursue this issue here further.

The condition \hyperref[nalwstasas]{\ref*{nalwstasas}} in \hyperref[sosaojkoianoew]{Assumptions \ref*{sosaojkoianoew}} is satisfied by simple potentials such as for example $\Psi \equiv 0$ or $\Psi(x) = \frac{1}{2} \vert x \vert^{2}$. More generally, potentials with a curvature lower bound $\textnormal{Hess}(\Psi) \geqslant \kappa \, I_{n}$, for some $\kappa \in \mathds{R}$ (as in \hyperref[sndcbe]{(\ref*{sndcbe})} below), for instance the double-well potential $\Psi(x) = (x^{2}-\alpha^{2})^{2}$ on the real line, satisfy this condition; more on this theme can be found in \cite[Theorem 10.4.13]{AGS08}, as was kindly pointed out to us by Luigi Ambrosio.
%The above condition \hyperref[nalwstasas]{\ref*{nalwstasas}} is also satisfied whenever $\int_{\mathds{R}^{n}} \varphi^{2}(t,x) \, p(t,x) \, \textnormal{d}x < \infty$ holds for all $t \geqslant 0$, as is seen by multiplying $\varphi$ by a sequence of %smooth cut-off functions.

%%%%%%%%%%%%%%%%%%%%%%%%%%%%%%%%%%%%%%%%%%%%%%%%%%%%%%%%%%%%%%%%%%%%%%%%%%%%%%%%%%%%%%%%%%%%%%%%%%%%%%%%%%%%%%%%%%%%%%%%%%%%%%%%%%%%%%%%%%%%%%%%%%%%%%%%%%%%%%%%%%%%%%%%%%%%%%%%%%%%%%%%%%%%%%%%%%%%%%%%%%%%%%%%%%%%%%%%%%%&&&&&&&&&&&&&&&&&&&&&&&&&&&&&&&&&

\section{The main theorems in aggregate form} \label{thetheoseccth}

%%%%%%%%%%%%%%%%%%%%%%%%%%%%%%%%%%%%%%%%%%%%%%%%%%%%%%%%%%%%%%%%%%%%%%%%%%%%%%%%%%%%%%%%%%%%%%%%%%%%%%%%%%%%%%%%%%%%%%%%%%%%%%%%%%%%%%%%%%%%%%%%%%%%%%%%%%%%%%%%%%%%%%%%%%%%%%%%%%%%%%%%%%%%%%%%%%%%%%%%%%%%%%%%%%%%%%%%%%%&&&&&&&&&&&&&&&&&&&&&&&&&&&&&&&&&

In light of \hyperref[reeef]{(\ref*{reeef})}, the goal of \cite{JKO98} is to relate the decay of the \textit{relative entropy functional}
\begin{equation} \label{dotref}
\mathscr{P}_{2}(\mathds{R}^{n}) \ni P \longmapsto H(P \, \vert \, \mathrm{Q}) \in (-\infty,\infty]
\end{equation}
along the curve $(P(t))_{t \geqslant 0}$, to the quadratic Wasserstein distance 
\begin{equation}
W_{2}(\mu,\nu) = \Big( \, \inf_{\scriptscriptstyle Y \sim \mu, Z \sim \nu} \mathds{E} \vert Y - Z \vert^{2} \,  \Big)^{1/2}, \qquad \mu, \nu \in \mathscr{P}_{2}(\mathds{R}^{n})
\end{equation}
on $\mathscr{P}_{2}(\mathds{R}^{n})$ (cf.\ \cite{Vil03, AGS08, AG13}). We resume the remarkable relation between these two quantities in the following two theorems; these quantify the relationship between displacement in the ambient space (the denominator in \hyperref[ttofeosl]{(\ref*{ttofeosl})}) and fluctuations of the free energy, or equivalently of the relative entropy (the numerator in \hyperref[ttofeosl]{(\ref*{ttofeosl})}). The proofs will be given in \hyperref[subimpconq]{Subsection \ref*{subimpconq}} below.

\medskip

\begin{theorem} \label{thetone} Under the \textnormal{\hyperref[sosaojkoianoew]{Assumptions \ref*{sosaojkoianoew}}}, the relative Fisher information $I( P(t_{0}) \, \vert \, \mathrm{Q})$ is finite for Lebesgue-a.e.\ $t_{0} \geqslant 0$, and we have the \textnormal{\textsf{generalized de Bruijn identity}}
\begin{equation} \label{flffflnv}
\lim_{t \rightarrow t_{0}} \, \frac{H\big( P(t) \, \vert \, \mathrm{Q} \big) - H\big( P(t_{0}) \, \vert \, \mathrm{Q}\big)}{t-t_{0}} 
= - \tfrac{1}{2} \, I\big( P(t_{0}) \, \vert \, \mathrm{Q}\big),
\end{equation}
as well as the limiting behavior of the quadratic Wasserstein distance
\begin{equation} \label{agswtuffasihnv}
\lim_{t \rightarrow t_{0}} \, \frac{W_{2}\big(P(t),P(t_{0})\big)}{\vert t - t_{0} \vert} 
= \tfrac{1}{2} \, \sqrt{I\big( P(t_{0}) \, \vert \, \mathrm{Q}\big)},
\end{equation}
so that
\begin{equation} \label{ttofeosl}
\lim_{t \rightarrow t_{0}} \, \Bigg( \operatorname{sgn}(t-t_{0}) \cdot \frac{H\big( P(t) \, \vert \, \mathrm{Q} \big) - H\big( P(t_{0}) \, \vert \, \mathrm{Q}\big) }{W_{2}\big( P(t),P(t_{0})\big)} \Bigg)
= - \sqrt{I\big( P(t_{0}) \, \vert \, \mathrm{Q}\big)}.
\end{equation}
Furthermore, if $t_{0} \geqslant 0$ is chosen so that the generalized de Bruijn identity \textnormal{\hyperref[flffflnv]{(\ref*{flffflnv})}} does hold, then the limiting assertions \textnormal{\hyperref[agswtuffasihnv]{(\ref*{agswtuffasihnv})}} and \textnormal{\hyperref[ttofeosl]{(\ref*{ttofeosl})}} are also valid.
\end{theorem}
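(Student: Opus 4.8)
The plan is threefold: to deduce the generalized de Bruijn identity \eqref{flffflnv} by averaging the trajectorial statement of \hyperref[thetsix]{Theorem \ref*{thetsix}} over path space; to obtain the Wasserstein asymptotics \eqref{agswtuffasihnv} from the metric-velocity machinery of \cite{AGS08}; and to derive \eqref{ttofeosl} by taking the quotient of these two.

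For the first part, \hyperref[thetsix]{Theorem \ref*{thetsix}} will provide, on every finite horizon and most transparently in reversed time, a semimartingale decomposition of the relative-entropy process $t\mapsto\log\ell(t,X(t))$ whose finite-variation component has $\mathds{P}$-a.s.\ density (up to sign) $\tfrac12\,\big\vert\nabla\log\ell(t,X(t))\big\vert^{2}$ with respect to $\mathrm{d}t$, and whose local-martingale component becomes, after a standard localization argument, a true $\mathds{P}$-martingale. Taking $\mathds{E}_{\mathds{P}}[\,\cdot\,]$ and recalling \eqref{doref} and \eqref{rfi}, the martingale term drops out and one reads off that $t\mapsto H(P(t)\,\vert\,\mathrm{Q})$ is locally absolutely continuous and non-increasing, with $\frac{\mathrm{d}}{\mathrm{d}t}H(P(t)\,\vert\,\mathrm{Q})=-\tfrac12\,I(P(t)\,\vert\,\mathrm{Q})$ for Lebesgue-a.e.\ $t$. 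Since $H(P(0)\,\vert\,\mathrm{Q})$ is finite by condition \hyperref[saosaojko]{\ref*{saosaojko}} in \hyperref[osaojko]{Assumptions \ref*{osaojko}}, and since by \eqref{reeef}, \eqref{fef} and $\Psi\geqslant 0$ one has $H(P(t)\,\vert\,\mathrm{Q})=2\,\mathscr{F}(p(t,\cdot))\geqslant\mathcal{S}(p(t,\cdot))$ --- which is bounded below on bounded time intervals by the usual lower bound for $\int p\log p$ over densities with controlled second moment, available thanks to \eqref{1.12} --- integration over $[0,T]$ gives $\int_{0}^{T} I(P(t)\,\vert\,\mathrm{Q})\,\mathrm{d}t=2\big(H(P(0)\,\vert\,\mathrm{Q})-H(P(T)\,\vert\,\mathrm{Q})\big)<\infty$; hence $I(P(t_{0})\,\vert\,\mathrm{Q})<\infty$ for Lebesgue-a.e.\ $t_{0}\geqslant 0$. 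Upgrading the a.e.\ one-sided derivative to the genuine two-sided limit \eqref{flffflnv} at each such $t_{0}$ will use lower semicontinuity of $t\mapsto I(P(t)\,\vert\,\mathrm{Q})$, continuity of $(t,x)\mapsto\nabla\log p(t,x)$ from condition \hyperref[tsaosaojko]{\ref*{tsaosaojko}}, and a uniform-integrability argument in \eqref{rfi}; these details are the business of \hyperref[ssgrodwudm]{Section \ref*{ssgrodwudm}}.

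For the Wasserstein asymptotics, note that $(P(t))_{t\geqslant 0}$ solves the continuity equation $\partial_{t}p+\operatorname{div}(p\,v)=0$ with velocity field of gradient type $v(t,\cdot)=\nabla\varphi(t,\cdot)$, $\varphi(t,x)=-\Psi(x)-\tfrac12\log p(t,x)$; by \eqref{rndlr} this means $v(t,\cdot)=-\tfrac12\,\nabla\log\ell(t,\cdot)$, so that $\Vert v(t,\cdot)\Vert_{L^{2}(P(t))}^{2}=\tfrac14\,I(P(t)\,\vert\,\mathrm{Q})$ by \eqref{rfi}. Condition \hyperref[nalwstasas]{\ref*{nalwstasas}} in \hyperref[sosaojkoianoew]{Assumptions \ref*{sosaojkoianoew}} guarantees that $v(t,\cdot)$ lies in the tangent space of $\mathscr{P}_{2}(\mathds{R}^{n})$ at $P(t)$ in the sense of \cite[Definition 8.4.1]{AGS08}, whence the theory of absolutely continuous curves in Wasserstein space \cite[Theorem 8.3.1]{AGS08} identifies the metric derivative of $t\mapsto P(t)$ with $\Vert v(t,\cdot)\Vert_{L^{2}(P(t))}=\tfrac12\sqrt{I(P(t)\,\vert\,\mathrm{Q})}$ for a.e.\ $t$, which is \eqref{agswtuffasihnv} for a.e.\ $t_{0}$. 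The theorem's final sentence --- the technical heart of \hyperref[stwt]{Section \ref*{stwt}} --- asserts that this a.e.\ statement can be sharpened: the Lebesgue-null set of times at which the difference quotient $W_{2}(P(t),P(t_{0}))/\vert t-t_{0}\vert$ fails to converge to $\tfrac12\sqrt{I(P(t_{0})\,\vert\,\mathrm{Q})}$ is contained in the exceptional set of \eqref{flffflnv}, so that \eqref{agswtuffasihnv} holds at every $t_{0}$ at which \eqref{flffflnv} does.

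Finally, \eqref{ttofeosl} follows from \eqref{flffflnv} and \eqref{agswtuffasihnv} by writing, for $t\neq t_{0}$,
\[
\operatorname{sgn}(t-t_{0})\cdot\frac{H(P(t)\,\vert\,\mathrm{Q})-H(P(t_{0})\,\vert\,\mathrm{Q})}{W_{2}(P(t),P(t_{0}))}=\frac{\big(H(P(t)\,\vert\,\mathrm{Q})-H(P(t_{0})\,\vert\,\mathrm{Q})\big)/(t-t_{0})}{W_{2}(P(t),P(t_{0}))/\vert t-t_{0}\vert},
\]
and letting $t\to t_{0}$: when $I(P(t_{0})\,\vert\,\mathrm{Q})>0$ the right-hand side converges to $-\sqrt{I(P(t_{0})\,\vert\,\mathrm{Q})}$, while the degenerate case $I(P(t_{0})\,\vert\,\mathrm{Q})=0$ forces $\nabla\log\ell(t_{0},\cdot)\equiv 0$ (using strict positivity of $p(t_{0},\cdot)$ from condition \hyperref[tsaosaojko]{\ref*{tsaosaojko}}), hence $P(\cdot)$ is constant and both sides of \eqref{ttofeosl} vanish. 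I expect the main obstacle to be precisely the sharpening in the preceding paragraph: the Ambrosio--Gigli--Savar\'{e} formula for the metric derivative is only an a.e.\ identity, and matching its exceptional set with that of the entropy dissipation rate --- equivalently, showing that Wasserstein speed and entropy-dissipation rate are controlled by one and the same Lebesgue-null set --- is genuinely delicate, and is carried out in \hyperref[stwt]{Section \ref*{stwt}}. A secondary, more routine, difficulty is the uniform-integrability and interchange-of-limits step that turns the pathwise semimartingale identity of \hyperref[thetsix]{Theorem \ref*{thetsix}} into the averaged de Bruijn identity.
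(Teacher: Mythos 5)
Your proposal is correct and follows essentially the same route as the paper: you average the backward-martingale decomposition of Theorem \ref{thetsix} and apply the Lebesgue differentiation theorem to obtain \eqref{flffflnv} together with the a.e.\ finiteness of the Fisher information, you obtain the a.e.\ Wasserstein limit from the tangency condition \ref{nalwstasas} and the metric-derivative theory of \cite{AGS08}, and you get \eqref{ttofeosl} by dividing the two limits. The one step you leave open --- that the exceptional set for \eqref{agswtuffasihnv} is contained in that of \eqref{flffflnv}, which you correctly single out as the delicate point --- is precisely the step the paper's own proof also delegates, namely to Theorem \ref{agswt} (deduced from Theorem \ref{bvagswt} with $\beta \equiv 0$) in Section \ref{stwt}.
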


\medskip

The ratio on the left-hand side of \hyperref[ttofeosl]{(\ref*{ttofeosl})} can be interpreted as the rate of decay for the relative entropy functional \hyperref[dotref]{(\ref*{dotref})} at $P = P(t_{0})$ along the curve $(P(t))_{t \geqslant 0}$, if distances in the ambient space $\mathscr{P}_{2}(\mathds{R}^{n})$ are measured by the quadratic Wasserstein distance $W_{2}$. The quantity appearing on the right-hand side of \hyperref[ttofeosl]{(\ref*{ttofeosl})} is the square root of the relative Fisher information in \hyperref[rfi]{(\ref*{rfi})}, written more explicitly in terms of the ``score function'' $\nabla \ell(t, \, \cdot \,) / \ell(t, \, \cdot \,)$ as
\begin{equation} \label{merfi}
I\big( P(t) \, \vert \, \mathrm{Q}\big) = \mathds{E}_{\mathds{P}}\Bigg[ \ \frac{\big\vert \nabla \ell\big(t,X(t)\big) \big\vert^{2}}{\ell\big(t,X(t)\big)^{2}} \ \Bigg] 
= \int_{\mathds{R}^{n}} \bigg\vert \frac{\nabla p(t,x)}{p(t,x)} + 2 \, \nabla \Psi(x) \bigg\vert^{2} \, p(t,x) \, \textnormal{d}x.
\end{equation}

\smallskip

For future reference, we denote by $N$ the set of exceptional points $t_{0} \geqslant 0$ for which the right-sided version of the limit in \hyperref[flffflnv]{(\ref*{flffflnv})}, i.e., the limiting assertion  
\begin{equation} \label{rgtsflffflnv}
\lim_{t \downarrow t_{0}} \, \frac{H\big( P(t) \, \vert \, \mathrm{Q} \big) - H\big( P(t_{0}) \, \vert \, \mathrm{Q}\big)}{t-t_{0}} 
= - \tfrac{1}{2} \, I\big( P(t_{0}) \, \vert \, \mathrm{Q}\big),
\end{equation}
fails. According to \hyperref[thetone]{Theorem \ref*{thetone}}, this exceptional set $N$ has zero Lebesgue measure.

\bigskip

The remarkable insight of \cite{JKO98} states that the rate of entropy decay \hyperref[ttofeosl]{(\ref*{ttofeosl})} along the curve $(P(t))_{t \geqslant 0}$ is, in fact, the slope of \textit{steepest descent} for the relative entropy functional \hyperref[dotref]{(\ref*{dotref})} with respect to the Wasserstein distance $W_{2}$ at the point $P = P(t_{0})$ on the curve. To formalize this assertion, we fix a time $t_{0} \geqslant 0$ and let the vector field $\beta = \nabla B \colon \mathds{R}^{n} \rightarrow \mathds{R}^{n}$ be the gradient of a potential $B$, as in condition \hyperref[naltsaosaojkos]{\ref*{naltsaosaojkos}} of \hyperref[sosaojkoia]{Assumptions \ref*{sosaojkoia}}. This gradient vector field $\beta$ will serve as a perturbation in
\begin{equation} \label{pfpeq}
\partial_{t} p^{\beta}(t,x) = \operatorname{div}\Big(\big(\nabla \Psi(x) + \beta(x) \big) \, p^{\beta}(t,x) \Big) + \tfrac{1}{2} \Delta p^{\beta}(t,x), \qquad (t,x) \in (t_{0},\infty) \times \mathds{R}^{n},
\end{equation}
the thus perturbed Fokker--Planck equation with initial condition
\begin{equation} \label{pic}
p^{\beta}(t_{0},x) = p(t_{0},x), \qquad x \in \mathds{R}^{n}.
\end{equation}
We denote by $\mathds{P}^{\beta}$ the probability measure on path space $\Omega = \mathcal{C}([t_{0},\infty);\mathds{R}^{n})$, under which the canonical coordinate process $(X(t))_{t \geqslant t_{0}}$ satisfies the stochastic differential equation 
\begin{equation} \label{wpsdeids}
\textnormal{d}X(t) = - \Big( \nabla \Psi\big(X(t)\big) + \beta\big(X(t)\big) \Big) \, \textnormal{d}t + \textnormal{d}W^{\beta}(t), \qquad t \geqslant t_{0}
\end{equation}
with initial probability distribution $P(t_{0})$. Here, the process $(W^{\beta}(t))_{t \geqslant t_{0}}$ is Brownian motion under $\mathds{P}^{\beta}$. The probability distribution of $X(t)$ under $\mathds{P}^{\beta}$ on $\mathds{R}^{n}$ will be denoted by $P^{\beta}(t)$, for $t \geqslant t_{0}$; as before, the corresponding probability density function $p^{\beta}(t) \equiv p^{\beta}(t, \, \cdot \, )$ solves the equation \hyperref[pfpeq]{(\ref*{pfpeq})} subject to the initial condition \hyperref[pic]{(\ref*{pic})}.

\smallskip

After these preparations we can state the result formalizing the gradient flow, or \textit{steepest descent}, property of the curve $(P(t))_{t \geqslant 0}$ generated by the Langevin--Smoluchowski diffusion \textnormal{\hyperref[sdeids]{(\ref*{sdeids})}} in the ambient space of probability measures $\mathscr{P}_{2}(\mathds{R}^{n})$ endowed with the quadratic Wasserstein metric.

\begin{theorem} \label{thetthre} Under the \textnormal{\hyperref[sosaojkoianoew]{Assumptions \ref*{sosaojkoianoew}}}, the following assertions hold for every point $t_{0} \in \mathds{R}_{+} \setminus N$ \textnormal{(}at which the right-sided limiting identity \textnormal{\hyperref[rgtsflffflnv]{(\ref*{rgtsflffflnv})}} is valid\textnormal{):}

\smallskip

\noindent The $\mathds{R}^{n}$-valued random vectors 
\begin{equation} \label{ttrvzo}
a \vcentcolon = \nabla \log \ell\big(t_{0},X(t_{0})\big) = \nabla \log p\big(t_{0},X(t_{0})\big) + 2 \, \nabla \Psi\big( X(t_{0}) \big) \, , \, \qquad b \vcentcolon = \beta\big(X(t_{0})\big)    
\end{equation}
are elements of the Hilbert space $L^{2}(\mathds{P})$, and the perturbed version of the generalized de Bruijn identity \textnormal{\hyperref[flffflnv]{(\ref*{flffflnv})}} reads  
\begin{equation} \label{tatpvotgdbi}
\lim_{t \downarrow t_{0}} \, \frac{H\big( P^{\beta}(t) \, \vert \, \mathrm{Q} \big) - H\big( P^{\beta}(t_{0}) \, \vert \, \mathrm{Q}\big)}{t-t_{0}} 
= - \tfrac{1}{2} \, I\big( P(t_{0}) \, \vert \, \mathrm{Q}\big) - \langle a,b \rangle_{L^{2}(\mathds{P})} = -  \tfrac{1}{2} \, \big\langle a , a + 2b \big\rangle_{L^{2}(\mathds{P})}.
\end{equation}
The limiting behavior of the quadratic Wasserstein distance \textnormal{\hyperref[agswtuffasihnv]{(\ref*{agswtuffasihnv})}} in this perturbed context is given by
\begin{equation} \label{ftlbotwditpc}
\lim_{t \downarrow t_{0}} \, \frac{W_{2}\big( P^{\beta}(t),P^{\beta}(t_{0})\big)}{t-t_{0}}
=  \tfrac{1}{2}  \, \| a + 2 b\|_{L^{2}(\mathds{P})}.
\end{equation}
Combining \textnormal{\hyperref[tatpvotgdbi]{(\ref*{tatpvotgdbi})}} with \textnormal{\hyperref[ftlbotwditpc]{(\ref*{ftlbotwditpc})}}, and assuming $a + 2b \neq 0$, we have
\begin{equation}
\lim_{t \downarrow t_{0}} \, \frac{H\big( P^{\beta}(t) \, \vert \, \mathrm{Q} \big) - H\big( P^{\beta}(t_{0}) \, \vert \, \mathrm{Q}\big)}{W_{2}\big( P^{\beta}(t),P^{\beta}(t_{0})\big)} 
= - \Bigg\langle a \, , \, \frac{a+2b}{\| a + 2b\|_{L^{2}(\mathds{P})}} \Bigg\rangle_{L^{2}(\mathds{P})} \, ,
\end{equation}
and therefore
\begingroup
\addtolength{\jot}{0.7em}
\begin{align} 
\lim_{t \downarrow t_{0}} \, \Bigg( \, \frac{H\big( P^{\beta}(t) \, \vert \, \mathrm{Q} \big) - H\big( P^{\beta}(t_{0}) \, \vert \, \mathrm{Q}\big)}{W_{2}\big( P^{\beta}(t),P^{\beta}(t_{0})\big)} \, &- \, 
\frac{H\big( P(t) \, \vert \, \mathrm{Q} \big) - H\big( P(t_{0}) \, \vert \, \mathrm{Q}\big)}{W_{2}\big( P(t),P(t_{0})\big)} \, \Bigg) \label{nlwpthtt} \\
&= \|a\|_{L^{2}(\mathds{P})}  - \Bigg\langle a \, , \, \frac{a+2b}{\| a + 2b\|_{L^{2}(\mathds{P})}} \Bigg\rangle_{L^{2}(\mathds{P})} \, . \label{slnlwpthtt}
\end{align}
\endgroup
\end{theorem}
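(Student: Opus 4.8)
The plan is to deduce the stated identities from the trajectorial results, as announced in the introduction: obtain the perturbed de Bruijn identity \eqref{tatpvotgdbi} by averaging a trajectorial semimartingale decomposition (Theorem~\ref{thetthretv}), obtain the perturbed Wasserstein speed \eqref{ftlbotwditpc} from the metric-derivative analysis of Section~\ref{stwt}, and then read off \eqref{nlwpthtt}--\eqref{slnlwpthtt} by algebra. The $L^{2}(\mathds{P})$-membership comes first and is essentially free: $\beta = \nabla B$ is smooth with compact support, hence bounded, so $b = \beta(X(t_{0})) \in L^{\infty}(\mathds{P})$; and, since $t_{0} \notin N$, Theorem~\ref{thetone} gives $I(P(t_{0}) \,\vert\, \mathrm{Q}) < \infty$, which by \eqref{merfi} equals $\|a\|_{L^{2}(\mathds{P})}^{2}$. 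This identity $I(P(t_{0}) \,\vert\, \mathrm{Q}) = \|a\|_{L^{2}(\mathds{P})}^{2}$ is precisely what converts $-\tfrac{1}{2} I(P(t_{0})\,\vert\,\mathrm{Q}) - \langle a,b\rangle$ into $-\tfrac{1}{2} \langle a, a+2b\rangle$.

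For \eqref{tatpvotgdbi} I would run, for the perturbed diffusion \eqref{wpsdeids} under $\mathds{P}^{\beta}$, the time-reversal argument underlying the unperturbed Theorem~\ref{thetsix}. Fix a horizon $T > t_{0}$ and put $\hat{X}(s) := X(T-s)$; the reversed process solves an SDE with drift $\nabla\Psi + \beta + \nabla\log p^{\beta}(T-s,\cdot)$, and applying It\^{o}'s formula to $\log\ell^{\beta}(T-s,\hat{X}(s)) = \log p^{\beta}(T-s,\hat{X}(s)) + 2\Psi(\hat{X}(s))$ — with the Laplacian and the mixed terms telescoping exactly as in the unperturbed case — exhibits this process as a submartingale whose finite-variation part has density $\tfrac{1}{2}\vert\nabla\log\ell^{\beta}(T-s,\hat{X}(s))\vert^{2}$ plus a $\beta$-correction, which one checks equals $2\langle\nabla\Psi,\beta\rangle - \operatorname{div}\beta$ along $\hat{X}(s)$. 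The stochastic integral is a genuine martingale because $I(P^{\beta}(\cdot)\,\vert\,\mathrm{Q})$ is locally integrable in time, so taking $\mathds{P}^{\beta}$-expectations and integrating the correction by parts against $p^{\beta}(T-s,\cdot)$ — using $q = \mathrm{e}^{-2\Psi}$ and the compact support of $\beta$ — turns it into $\mathds{E}_{\mathds{P}^{\beta}}[\langle\beta(\hat{X}(s)),\nabla\log\ell^{\beta}(T-s,\hat{X}(s))\rangle]$. Since $s \mapsto \mathds{E}_{\mathds{P}^{\beta}}[\log\ell^{\beta}(T-s,\hat{X}(s))] = H(P^{\beta}(T-s)\,\vert\,\mathrm{Q})$, differentiating and evaluating at $s = T - t_{0}$ — where $p^{\beta}(t_{0}) = p(t_{0})$ by \eqref{pic}, so that $\nabla\log\ell^{\beta}(t_{0},X(t_{0})) = a$, $\beta(X(t_{0})) = b$, and $X(t_{0})$ has law $P(t_{0})$ under both $\mathds{P}^{\beta}$ and $\mathds{P}$ — produces the right-sided limit $-\tfrac{1}{2} I(P(t_{0})\,\vert\,\mathrm{Q}) - \langle a,b\rangle_{L^{2}(\mathds{P})}$, which is \eqref{tatpvotgdbi}.

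For \eqref{ftlbotwditpc}, the conservative form of \eqref{pfpeq} exhibits $(P^{\beta}(t))_{t \geqslant t_{0}}$ as a solution of the continuity equation with velocity $v^{\beta}(t,\cdot) = -\tfrac{1}{2}\nabla\log\ell^{\beta}(t,\cdot) - \beta$; at $t_{0}$ this is $v^{\beta}(t_{0},X(t_{0})) = -\tfrac{1}{2}(a+2b)$, of $L^{2}(P(t_{0}))$-norm $\tfrac{1}{2}\|a+2b\|_{L^{2}(\mathds{P})}$. Condition \ref{nalwstasas} of Assumptions~\ref{sosaojkoianoew} — which applies to the perturbed potential $\Psi + B$ by the invariance of the assumptions under $\Psi \mapsto \Psi + B$ — places $v^{\beta}(t_{0},\cdot)$ in the tangent cone of $\mathscr{P}_{2}(\mathds{R}^{n})$ at $P(t_{0})$; the curve is absolutely continuous since $t \mapsto \|v^{\beta}(t,\cdot)\|_{L^{2}(P^{\beta}(t))}$ is locally integrable; and the analysis of Section~\ref{stwt} then identifies the one-sided metric derivative at $t_{0}$ with $\|v^{\beta}(t_{0},\cdot)\|_{L^{2}(P(t_{0}))}$, i.e.\ \eqref{ftlbotwditpc}. \emph{This is the step I expect to be the crux.} One cannot merely quote the Lebesgue-a.e.\ identity ``metric derivative $=$ velocity norm'': one needs it at \emph{every} $t_{0} \notin N$ — so the exceptional set of the Wasserstein temporal rate must be shown to lie inside $N$, the very set on whose complement \eqref{tatpvotgdbi} was obtained — and one must control $W_{2}(P^{\beta}(t),P^{\beta}(t_{0}))$ as $t \downarrow t_{0}$, i.e.\ at the left endpoint of the curve $(P^{\beta}(t))_{t \geqslant t_{0}}$. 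Establishing this matching of exceptional sets, together with the tangent-cone membership, is the delicate point flagged in the introduction and carried out in Section~\ref{stwt}.

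Finally, \eqref{nlwpthtt}--\eqref{slnlwpthtt} is arithmetic. Assuming $a + 2b \neq 0$ and dividing \eqref{tatpvotgdbi} by the strictly positive limit \eqref{ftlbotwditpc},
\begin{equation*}
\lim_{t \downarrow t_{0}} \frac{H(P^{\beta}(t)\,\vert\,\mathrm{Q}) - H(P^{\beta}(t_{0})\,\vert\,\mathrm{Q})}{W_{2}(P^{\beta}(t),P^{\beta}(t_{0}))} = \frac{-\tfrac{1}{2}\langle a, a+2b\rangle_{L^{2}(\mathds{P})}}{\tfrac{1}{2}\|a+2b\|_{L^{2}(\mathds{P})}} = -\Big\langle a, \tfrac{a+2b}{\|a+2b\|_{L^{2}(\mathds{P})}}\Big\rangle_{L^{2}(\mathds{P})},
\end{equation*}
and subtracting the unperturbed ratio, which by \eqref{ttofeosl} (with $\operatorname{sgn}(t-t_{0}) = 1$) and $I(P(t_{0})\,\vert\,\mathrm{Q}) = \|a\|_{L^{2}(\mathds{P})}^{2}$ equals $-\|a\|_{L^{2}(\mathds{P})}$, yields exactly the right-hand side of \eqref{slnlwpthtt}. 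As a coda, Cauchy--Schwarz in $L^{2}(\mathds{P})$ shows this difference is $\geqslant 0$, vanishing precisely when $a$ is a non-negative multiple of $a+2b$ — equivalently, when the perturbation $b$ and the score $a$ are collinear — which is the steepest-descent characterization of the Langevin--Smoluchowski flow.
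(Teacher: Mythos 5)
Your overall route coincides with the paper's: average the perturbed time-reversal/It\^{o} decomposition to get \eqref{tatpvotgdbi}, take the perturbed Wasserstein speed from the analysis of Section \ref{stwt}, identify $-\langle a,b\rangle_{L^{2}(\mathds{P})}$ by integrating the term $\operatorname{div}\beta - \langle\beta, 2\nabla\Psi\rangle$ by parts against $p(t_{0},\cdot)$, and finish by algebra; the $L^{2}$-membership of $a,b$ and the final arithmetic are fine. The genuine gap is in your derivation of \eqref{tatpvotgdbi}, at the step ``differentiating and evaluating at $s=T-t_{0}$''. Taking expectations in the reversed semimartingale decomposition only yields the integral identity \eqref{stewrtpefitrepv}; to obtain the right-derivative at the specific point $t_{0}$ you need
\[
\lim_{t\downarrow t_{0}}\,\frac{1}{t-t_{0}}\int_{t_{0}}^{t}\mathds{E}_{\mathds{P}^{\beta}}\Big[\big\vert\nabla\log\ell^{\beta}\big(u,X(u)\big)\big\vert^{2}\Big]\,\textnormal{d}u \;=\; I\big(P(t_{0})\,\vert\,\mathrm{Q}\big),
\]
and this is not an evaluation of the integrand at the left endpoint of the perturbed curve: a priori such a limit holds only for Lebesgue-a.e.\ starting time, whereas the theorem claims it at \emph{every} $t_{0}\notin N$, with $N$ defined through the \emph{unperturbed} flow \eqref{rgtsflffflnv}. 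The paper bridges exactly this gap with the comparison estimates \eqref{ilpdepvhfv}--\eqref{ilpdepvhfvsv} of Lemma \ref{hctclwittmeitpocasot} (namely $\vert p^{\beta}/p-1\vert\leqslant C(t-t_{0})$ and the $O\big((t-t_{0})^{2}\big)$ bound on $\mathds{E}_{\mathds{P}}\int_{t_{0}}^{t}\vert\nabla\log(\ell^{\beta}/\ell)\vert^{2}\,\textnormal{d}u$), which show that the perturbed Fisher-information averages, under either $\mathds{P}$ or $\mathds{P}^{\beta}$, have the same right-limit at $t_{0}$ as the unperturbed ones; only then does $t_{0}\notin N$ yield \eqref{flffflpv}. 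You correctly flag the analogous exceptional-set problem for \eqref{ftlbotwditpc} and defer it to Section \ref{stwt} — legitimate, since the paper's own proof quotes Corollary \ref{thetsixcoraopv} and Theorem \ref{bvagswt} for that half — but the same issue already bites in the entropy part, and your proposal passes over it silently; moreover these estimates are used again in Step 6 of the proof of Theorem \ref{bvagswt}, so they cannot be dispensed with even for the Wasserstein half.

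A smaller inaccuracy: you place $v^{\beta}(t_{0},\cdot)$ in the tangent space by asserting that condition \ref{nalwstasas} is invariant under $\Psi\mapsto\Psi+B$. The paper claims invariance only for Assumptions \ref{sosaojkoia}, not for condition \ref{nalwstasas}; instead it uses $v^{\beta}(t_{0},\cdot)=v(t_{0},\cdot)-\nabla B$ with $B\in\mathcal{C}_{c}^{\infty}(\mathds{R}^{n};\mathds{R})$, so tangency at $P(t_{0})$ follows from condition \ref{nalwstasas} for $\Psi$ alone. This is easily repaired; the missing Lemma \ref{hctclwittmeitpocasot} estimates are the substantive omission.
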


On the strength of the Cauchy--Schwarz inequality, the expression in \hyperref[slnlwpthtt]{(\ref*{slnlwpthtt})} is non-negative, and vanishes if and only if $a+2b$ is a positive multiple of $a$. Consequently, when the vector field $\beta$ is not a scalar multiple of $\nabla \log \ell(t_{0},\, \cdot \,)$, the difference of the two slopes in \hyperref[nlwpthtt]{(\ref*{nlwpthtt})} is strictly positive. In other words, the slope quantified by the first term of the difference \hyperref[nlwpthtt]{(\ref*{nlwpthtt})}, is then strictly bigger than the (negative) slope expressed by the second term of \hyperref[nlwpthtt]{(\ref*{nlwpthtt})}.

\medskip

These two theorems are essentially well known. They build upon a vast amount of previous work. In the quadratic case $\Psi(x) = \frac{1}{2} \vert x \vert^{2}$, i.e., when the process $(X(t))_{t \geqslant 0}$ in \hyperref[sdeids]{(\ref*{sdeids})} is Ornstein--Uhlenbeck with invariant measure in \hyperref[usdotgdm]{(\ref*{usdotgdm})} being standard Gaussian, the relation
\begin{equation} \label{dbirbtfimsf}
\tfrac{\textnormal{d}}{\textnormal{d}t} \, H\big( P(t) \, \vert \, \mathrm{Q} \big) = - \tfrac{1}{2} \, I\big( P(t) \, \vert \, \mathrm{Q}\big)
\end{equation}
has been known since \cite{Sta59} as \textit{de Bruijn's identity}. This relationship between the two fundamental information measures, due to Shannon and Fisher, respectively, is a dominant theme in many aspects of information theory and probability. We refer to the book \cite{CT06} by Cover and Thomas for an account of the results by Barron, Blachman, Brown, Linnik, R\'{e}nyi, Shannon, Stam and many others; in a similar vein, see also the seminal work \cite{BE85} by Bakry and {\'E}mery, as well as the paper \cite{MV00} by Markowich and Villani, and the book \cite{Vil03} by Villani. Consult also Carlen and Soffer \cite{CS91} and Johnson \cite{Joh04} on the relation of \hyperref[dbirbtfimsf]{(\ref*{dbirbtfimsf})} to the central limit theorem. For the connections with large deviations we refer to \cite{ADPZ13} and \cite{Fat16}. 

\smallskip

The paper \cite{JKO98} broke new ground in this respect, as it considered a general potential $\Psi$ and established the relation to the quadratic Wasserstein distance, culminating with the characterization of the curve $(P(t))_{t \geqslant 0}$ as a gradient flow. This relation was further investigated by Otto in the paper \cite{Ott01}, where the theory now known as \textit{``Otto calculus''} was developed. For a recent application of Otto calculus to the Schr\"odinger problem, see \cite{GLR20}.

\smallskip

The statements of our \hyperref[thetone]{Theorems \ref*{thetone}} and \hyperref[thetthre]{\ref*{thetthre}} complement the existing results in some details, e.g., the precise form \hyperref[slnlwpthtt]{(\ref*{slnlwpthtt})}, measuring the difference of the two slopes appearing in \hyperref[nlwpthtt]{(\ref*{nlwpthtt})}. The main novelty of our approach, however, will only become apparent below with the formulation of \hyperref[thetsix]{Theorems \ref*{thetsix}} and \hyperref[thetthretv]{\ref*{thetthretv}}, the trajectorial versions of \hyperref[thetone]{Theorems \ref*{thetone}} and \hyperref[thetthre]{\ref*{thetthre}}. 

%%%%%%%%%%%%%%%%%%%%%%%%%%%%%%%%%%%%%%%%%%%%%%%%%%%%%%%%%%%%%%%%%%%%%%%%%%%%%%%%%%%%%%%%%%%%%%%%%%%%%%%%%%%%%%%%%%%%%%%%%%%%%%%%%%%%%%%%%%%%%%%%%%%%%%%%%%%%%%%%%%%%%%%%%%%%%%%%%%%%%%%%%%%%%%%%%%%%%%%%%%%%%%%%%%%%%%%%%%%&&&&&&&&&&&&&&&&&&&&&&&&&&&&&&&&&

\section{The main theorems in trajectorial form} \label{thetheoscthtra}

%%%%%%%%%%%%%%%%%%%%%%%%%%%%%%%%%%%%%%%%%%%%%%%%%%%%%%%%%%%%%%%%%%%%%%%%%%%%%%%%%%%%%%%%%%%%%%%%%%%%%%%%%%%%%%%%%%%%%%%%%%%%%%%%%%%%%%%%%%%%%%%%%%%%%%%%%%%%%%%%%%%%%%%%%%%%%%%%%%%%%%%%%%%%%%%%%%%%%%%%%%%%%%%%%%%%%%%%%%%&&&&&&&&&&&&&&&&&&&&&&&&&&&&&&&&&

Our main goal is to investigate \hyperref[thetone]{Theorems \ref*{thetone}} and \hyperref[thetthre]{\ref*{thetthre}} in a trajectorial fashion, by considering the \textit{relative entropy process} 
\begin{equation} \label{stlrpd}
\log \ell\big(t,X(t)\big) = \log \Bigg( \frac{p\big(t,X(t)\big)}{q\big(X(t)\big)} \Bigg) = \log p\big(t,X(t)\big) + 2 \, \Psi\big(X(t)\big) \, , \qquad t \geqslant 0
\end{equation}
along each trajectory of the canonical coordinate process $(X(t))_{t \geqslant 0}$, and calculating its dynamics (stochastic differential) under the probability measure $\mathds{P}$. The $\mathds{P}$-expectation of this quantity is, of course, the relative entropy in \hyperref[doref]{(\ref*{doref})}. A decisive tool in the analysis of the relative entropy process \hyperref[stlrpd]{(\ref*{stlrpd})} is to \textit{reverse time}, and use a remarkable insight due to Fontbona and Jourdain \cite{FJ16}. These authors consider the canonical coordinate process $(X(t))_{0 \leqslant t \leqslant T}$ on path space $\Omega = \mathcal{C}([0,T];\mathds{R}^{n})$ in the reverse direction of time, i.e., they work with the time-reversed process $(X(T-s))_{0 \leqslant s \leqslant T}$; it is then notationally convenient to consider a finite time interval $[0,T]$, rather than $\mathds{R}_{+}$. 

\smallskip

At this stage it becomes important to specify the relevant filtrations: We denote by $(\mathcal{F}(t))_{t \geqslant 0}$ the smallest continuous filtration to which the canonical coordinate process $(X(t))_{t \geqslant 0}$ is adapted. That is, modulo $\mathds{P}$-augmentation, we have
\begin{equation} \label{fftbfmgtmt}
\mathcal{F}(t) = \sigma \big( X(u) \colon \, 0 \leqslant u \leqslant t \big), \qquad t \geqslant 0;
\end{equation} 
and we call $(\mathcal{F}(t))_{t \geqslant 0}$ the ``filtration generated by $(X(t))_{t \geqslant 0}$''. Likewise, we let $(\mathcal{G}(T-s))_{0 \leqslant s \leqslant T}$ be the ``filtration generated by the time-reversed canonical coordinate process $(X(T-s))_{0 \leqslant s \leqslant T}$'' in the same sense as before. In other words,
\begin{equation} \label{tbfmgtmt}
\mathcal{G}(T-s) = \sigma \big( X(T-u) \colon \, 0 \leqslant u \leqslant s \big), \qquad 0 \leqslant s \leqslant T,
\end{equation}
modulo $\mathds{P}$-augmentation. For the necessary measure-theoretic operations that ensure the continuity (from both left and right) of filtrations associated with continuous processes, the reader may consult Section 2.7 in \cite{KS88}; in particular, Problems 7.1 -- 7.6 and Proposition 7.7.

\smallskip

The following two \hyperref[thetsix]{Theorems \ref*{thetsix}} and \hyperref[thetthretv]{\ref*{thetthretv}} are the main new results of this paper. They can be regarded as trajectorial versions of \hyperref[thetone]{Theorems \ref*{thetone}} and \hyperref[thetthre]{\ref*{thetthre}}, whose proofs will follow from \hyperref[thetsix]{Theorems \ref*{thetsix}} and \hyperref[thetthretv]{\ref*{thetthretv}} simply by taking expectations. Similar trajectorial approaches have already been applied successfully to the temporal dissipation of relative entropy and Fisher information \cite{FJ16}, to the theory of optimal stopping \cite{DK94}, to Doob's martingale inequalities \cite{ABPST13}, and to the Burkholder--Davis--Gundy inequality \cite{BS15}. 

\smallskip

The significance of \hyperref[thetsix]{Theorem \ref*{thetsix}} right below, is that the trade-off between the temporal decay of relative entropy, and the temporal growth of the quadratic Wasserstein distance along the curve of probability measures $(P(t))_{t \geqslant 0}$, both of which are characterized in terms of the cumulative relative Fisher information process, is valid not only in expectation, but also along (almost) every trajectory, provided we run time in the reverse direction.\footnote{As David Kinderlehrer kindly pointed out to the second named author, the implicit Euler scheme used in \cite{JKO98} also reflects the idea of going back in time at each step of the discretization.}

\begin{theorem} \label{thetsix} Under the \textnormal{\hyperref[sosaojkoia]{Assumptions \ref*{sosaojkoia}}}, we fix $T \in (0,\infty)$ and define the cumulative relative Fisher information process, accumulated from the right, as
\begingroup
\addtolength{\jot}{0.7em}
\begin{equation} \label{fispdaftr}
\begin{aligned} 
F(T-s) \vcentcolon =& \int_{0}^{s} \frac{1}{2} \frac{\big\vert \nabla \ell\big(T-u,X(T-u)\big) \big\vert^{2}}{\ell\big(T-u,X(T-u)\big)^{2}} \, \textnormal{d}u \\
=& \int_{0}^{s} \frac{1}{2} \bigg\vert \frac{\nabla p\big(T-u,X(T-u)\big)}{p\big(T-u,X(T-u)\big)} + 2 \, \nabla \Psi\big(X(T-u)\big) \bigg\vert^{2} \, \textnormal{d}u 
\end{aligned}
\end{equation}
\endgroup
for $0 \leqslant s \leqslant T$. Then 
\begin{equation}  \label{intfisinfint}
H\big( P(0) \, \vert \, \mathrm{Q} \big) - H\big( P(T) \, \vert \, \mathrm{Q} \big) = \mathds{E}_{\mathds{P}}\big[F(0)\big] = \tfrac{1}{2} \int_{0}^{T} I\big( P(t) \, \vert \, \mathrm{Q}\big) \, \textnormal{d}t < \infty,
\end{equation}
and the process
\begin{equation} \label{dollafipim}
M(T-s) \vcentcolon = \Big( \log \ell\big(T-s,X(T-s)\big) - \log \ell\big(T,X(T)\big) \Big) - F(T-s)  \, , \qquad 0 \leqslant s \leqslant T
\end{equation}
is a square-integrable martingale of the backwards filtration $(\mathcal{G}(T-s))_{0 \leqslant s \leqslant T}$ under the probability measure $\mathds{P}$. More explicitly, the martingale of \textnormal{\hyperref[dollafipim]{(\ref*{dollafipim})}} can be represented as 
\begin{equation} \label{erdollafipim}
M(T-s) = \int_{0}^{s} \Bigg\langle \frac{\nabla \ell\big(T-u,X(T-u)\big)}{\ell\big(T-u,X(T-u)\big)} \, , \, \textnormal{d}\overline{W}^\mathds{P}(T-u) \Bigg\rangle \, , \qquad 0 \leqslant s \leqslant T,
\end{equation}
for a $\mathds{P}$-Brownian motion $\big(\overline{W}^\mathds{P}(T-s)\big)_{0 \leqslant s \leqslant T}$ of the backwards filtration $(\mathcal{G}(T-s))_{0 \leqslant s \leqslant T}$. In particular, the quadratic variation of the martingale of \textnormal{\hyperref[dollafipim]{(\ref*{dollafipim})}} is given by the non-decreasing process in \textnormal{\hyperref[fispdaftr]{(\ref*{fispdaftr})}}, up to the multiplicative factor of $1/2$.
\end{theorem}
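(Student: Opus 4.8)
The plan is to \emph{reverse time} and then apply It\^o's formula to the relative entropy process \hyperref[stlrpd]{(\ref*{stlrpd})}, exploiting the Fokker--Planck equation \hyperref[fpeqnwfp]{(\ref*{fpeqnwfp})} to witness a cancellation that reduces the finite-variation part to one half of the integrated squared score function. The starting point is the time-reversal theory for diffusions (Haussmann--Pardoux, F\"ollmer; this is the insight of \cite{FJ16} being exploited here): under \hyperref[sosaojkoia]{Assumptions \ref*{sosaojkoia}}, and thanks to the regularity granted by \hyperref[tsaosaojko]{\ref*{tsaosaojko}}, the time-reversed canonical process $\widehat{X}(s) \vcentcolon = X(T-s)$, $0 \leqslant s \leqslant T$, is a diffusion of the backwards filtration $(\mathcal{G}(T-s))_{0 \leqslant s \leqslant T}$ satisfying
\begin{equation*}
\textnormal{d}\widehat{X}(s) = \Big( \nabla \Psi\big(\widehat{X}(s)\big) + \nabla \log p\big(T-s,\widehat{X}(s)\big) \Big) \, \textnormal{d}s + \textnormal{d}\overline{W}^{\mathds{P}}(T-s), \qquad 0 \leqslant s \leqslant T,
\end{equation*}
with $\big(\overline{W}^{\mathds{P}}(T-s)\big)_{0 \leqslant s \leqslant T}$ a $\mathds{P}$-Brownian motion of the backwards filtration. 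Because $p(0,\cdot\,)=p^{0}$ need not be smooth, this representation is first obtained on each compact forward-time subinterval $[\varepsilon,T]\subseteq(0,T]$, where $\nabla \log p$ is continuous, and the endpoint $s=T$ (forward time $0$) is appended at the end by continuity.

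Next, I would apply It\^o's formula to the function $g(s,x) \vcentcolon = \log \ell(T-s,x) = \log p(T-s,x) + 2\Psi(x)$ along $\widehat{X}$. Substituting $\partial_{t}p/p$ from \hyperref[fpeqnwfp]{(\ref*{fpeqnwfp})} and using $\Delta p/p = \Delta \log p + |\nabla \log p|^{2}$, the three finite-variation contributions --- the time-derivative term $-\partial_{t}p/p$, the transport term $\langle \nabla g\, , \nabla \Psi + \nabla \log p\rangle$, and the It\^o term $\tfrac{1}{2}\Delta g$ --- combine so that \emph{all} second-order terms ($\Delta \log p$ and $\Delta \Psi$) cancel, leaving precisely $\tfrac{1}{2}\,\big\vert \nabla \log p(T-s,\cdot\,) + 2\nabla\Psi \big\vert^{2} = \tfrac{1}{2}\,\vert \nabla\ell/\ell\vert^{2}$ evaluated along $\widehat{X}(s)$. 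Integrating yields
\begin{equation*}
\log \ell\big(T-s,X(T-s)\big) - \log \ell\big(T,X(T)\big) = F(T-s) + \int_{0}^{s} \Big\langle \tfrac{\nabla \ell}{\ell}\big(T-u,X(T-u)\big)\, , \textnormal{d}\overline{W}^{\mathds{P}}(T-u)\Big\rangle,
\end{equation*}
which is exactly \hyperref[dollafipim]{(\ref*{dollafipim})} together with the representation \hyperref[erdollafipim]{(\ref*{erdollafipim})}; in particular $M(\,\cdot\,)$ is a continuous local martingale of the backwards filtration whose quadratic variation equals $\int_{0}^{s}\vert\nabla\ell/\ell\vert^{2}\,\textnormal{d}u = 2\,F(T-s)$, which is the last assertion of the theorem.

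It remains to promote $M$ to a square-integrable martingale and to extract \hyperref[intfisinfint]{(\ref*{intfisinfint})}. Localizing the decomposition on $[\varepsilon,T]$ along stopping times of the backwards filtration, taking $\mathds{P}$-expectations, and then letting the localization and $\varepsilon\downarrow 0$ run with the monotone convergence theorem (legitimate since $F(0)\geqslant 0$), one arrives at $H\big(P(0)\,\vert\,\mathrm{Q}\big) - H\big(P(T)\,\vert\,\mathrm{Q}\big) = \mathds{E}_{\mathds{P}}[F(0)] = \tfrac{1}{2}\int_{0}^{T} I\big(P(t)\,\vert\,\mathrm{Q}\big)\,\textnormal{d}t$; the crucial finiteness is then read off from $H\big(P(0)\,\vert\,\mathrm{Q}\big)<\infty$ (Assumption \hyperref[saosaojko]{\ref*{saosaojko}}) together with $H\big(P(T)\,\vert\,\mathrm{Q}\big)>-\infty$, the latter because $\Psi\geqslant 0$ makes the potential-energy part non-negative and $P(T)\in\mathscr{P}_{2}(\mathds{R}^{n})$ --- which holds by \hyperref[1.12]{(\ref*{1.12})} --- forces the lower bound $\int p(T,\cdot\,)\log p(T,\cdot\,)\,\textnormal{d}x > -\infty$. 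Since then $\mathds{E}_{\mathds{P}}[\langle M\rangle_{T}] = 2\,\mathds{E}_{\mathds{P}}[F(0)] < \infty$, the local martingale $M$ is $L^{2}$-bounded, hence a genuine martingale of the backwards filtration, and \hyperref[intfisinfint]{(\ref*{intfisinfint})} follows once more by taking expectations in the now-honest decomposition.

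I expect the principal obstacle to be the handling of the endpoint $t=0$ in tandem with the localization: at $t=0$ the density $p^{0}$ is merely a probability density, so neither the time-reversal SDE nor It\^o's formula is directly available there, and one must argue the $\mathds{P}$-a.s.\ continuity of $s\mapsto\log\ell(T-s,X(T-s))$ up to $s=T$ while simultaneously showing that no mass of the stochastic integral $M$ leaks away as $\varepsilon\downarrow 0$ --- precisely the point where the finite-second-moment hypothesis, via the resulting lower bound on relative entropy, earns its keep (compare the corresponding delicate steps in \cite{FJ16}).
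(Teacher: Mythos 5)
Your structural derivation --- time reversal via \hyperref[ptra]{Proposition \ref*{ptra}}, It\^{o}'s formula for $\log \ell\big(T-s,X(T-s)\big)$, the cancellation of the second-order terms through the Fokker--Planck equation, the representation \hyperref[erdollafipim]{(\ref*{erdollafipim})}, and the identification of the quadratic variation with $2F$ --- is exactly the paper's route (\hyperref[ppropp]{Proposition \ref*{ppropp}} and the first half of the proof in \hyperref[cotpottpf]{Subsection \ref*{cotpottpf}}). The genuine gap is in the last step, where you promote the local martingale $M$ to a square-integrable martingale and extract \hyperref[intfisinfint]{(\ref*{intfisinfint})}. Your mechanism ``localize, take expectations, let the localization run with monotone convergence'' only controls the Fisher-information side: with localizing times $\tau_{n}$ one gets $\mathds{E}_{\mathds{P}}\big[F(T-\tau_{n})\big] \uparrow \mathds{E}_{\mathds{P}}\big[F(0)\big]$ by MCT, but on the entropy side you are left with $\mathds{E}_{\mathds{P}}\big[\log \ell\big(T-\tau_{n},X(T-\tau_{n})\big) - \log \ell\big(T,X(T)\big)\big]$, and the almost-sure convergence of these integrands does \emph{not} yield convergence of their expectations: there is no one-sided integrable bound and no uniform integrability available at this stage, and this is precisely where a strict-local-martingale ``loss of mass'' could occur. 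Your finiteness bookkeeping is also incomplete: you check $H\big(P(T)\,\vert\,\mathrm{Q}\big) > -\infty$, but ruling out $H\big(P(T)\,\vert\,\mathrm{Q}\big) = +\infty$ is not an assumption --- the paper derives it as a consequence of this very theorem (see the remark following \hyperref[reeef]{(\ref*{reeef})}) --- so it cannot simply be ``read off'' before the identity is established.

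The paper closes exactly this gap with a device absent from your proposal: it observes from \hyperref[qubmcfrsinvtpmpaei]{(\ref*{qubmcfrsinvtpmpaei})} that the reciprocal likelihood process $\ell^{-1}\big(T-s,X(T-s)\big)$ of \hyperref[ttrreplikrev]{(\ref*{ttrreplikrev})} is a strictly positive local martingale of the backwards filtration (the Fontbona--Jourdain insight), and then invokes \hyperref[sprobamtr]{Proposition \ref*{sprobamtr}} (Lemma 2.48 of \cite{KK21}) with $\tau = T$, whose hypotheses are precisely the finiteness of $H\big(P(0)\,\vert\,\mathrm{Q}\big)$ and the integrability of the negative part of $\log \ell\big(T,X(T)\big)$; its conclusion delivers simultaneously the finiteness of $H\big(P(T)\,\vert\,\mathrm{Q}\big)$ and the identity \hyperref[gfdmlmpapitupc]{(\ref*{gfdmlmpapitupc})}, whence $\mathds{E}_{\mathds{P}}\big[F(0)\big] < \infty$ and the $L^{2}$-boundedness of $M$. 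If you wish to keep your localization route, you must supply an argument of comparable strength --- for instance exploiting the supermartingale property of the positive local martingale $\ell^{-1}\big(T-s,X(T-s)\big)$ to obtain the missing one-sided control; as written, the MCT step does not suffice. Your concern about the endpoint $t = 0$ is legitimate but secondary: it is absorbed into the cited time-reversal results and into the application of \hyperref[sprobamtr]{Proposition \ref*{sprobamtr}} at the deterministic time $\tau = T$.
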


\smallskip

Next, we state the trajectorial version of \hyperref[thetthre]{Theorem \ref*{thetthre}} --- or equivalently, the ``perturbed'' analogue of \hyperref[thetsix]{Theorem \ref*{thetsix}}. As we did in \hyperref[thetthre]{Theorem \ref*{thetthre}}, in particular in the preceding equations \hyperref[pfpeq]{(\ref*{pfpeq})} -- \hyperref[wpsdeids]{(\ref*{wpsdeids})}, we consider the perturbation $\beta \colon \mathds{R}^{n} \rightarrow \mathds{R}^{n}$ and denote the \textit{perturbed likelihood ratio function} by
\begin{equation} \label{dotplrfb}
\ell^{\beta}(t,x) \vcentcolon = \frac{p^{\beta}(t,x)}{q(x)} = p^{\beta}(t,x) \, \mathrm{e}^{2 \Psi(x)} \, , \qquad (t,x) \in [t_{0},\infty) \times \mathds{R}^{n}.
\end{equation}
The stochastic analogue of this quantity is the \textit{perturbed likelihood ratio process} 
\begin{equation} \label{pvdotplrfbs}
\ell^{\beta}\big(t,X(t)\big) = \frac{p^{\beta}\big(t,X(t)\big)}{q\big(X(t)\big)} = p^{\beta}\big(t,X(t)\big) \, \mathrm{e}^{2 \Psi(X(t))} \, , \qquad t \geqslant t_{0}.
\end{equation}
The logarithm of this process is the \textit{perturbed relative entropy process}
\begin{equation} \label{prerepitufdllpitufd} 
\log \ell^{\beta}\big(t,X(t)\big) = \log \Bigg( \frac{p^{\beta}\big(t,X(t)\big)}{q\big(X(t)\big)} \Bigg) = \log p^{\beta}\big(t,X(t)\big) + 2 \, \Psi\big(X(t)\big) \, , \qquad  t \geqslant t_{0}.
\end{equation}

\begin{theorem} \label{thetthretv} Under the \textnormal{\hyperref[sosaojkoia]{Assumptions \ref*{sosaojkoia}}}, we let $t_{0} \geqslant 0$ and $T > t_{0}$. We define the perturbed cumulative relative Fisher information process, accumulated from the right, as
\begin{equation} \label{fispdaftrps}
F^{\beta}(T-s) \vcentcolon = \int_{0}^{s} \Bigg( \, \frac{1}{2}\frac{\big\vert \nabla \ell^{\beta}\big(T-u,X(T-u)\big) \big\vert^{2}}{\ell^{\beta}\big(T-u,X(T-u)\big)^{2}}   + \Big( \big\langle \beta \, , \, 2 \, \nabla \Psi \big\rangle - \operatorname{div} \beta \Big) \big(X(T-u)\big) \Bigg) \, \textnormal{d}u
\end{equation}
for $0 \leqslant s \leqslant T - t_{0}$. Then $\mathds{E}_{\mathds{P}^{\beta}}\big[F^{\beta}(t_{0})\big]< \infty$, and the process
\begin{equation} \label{dollafipimps}
M^{\beta}(T-s) \vcentcolon = \Big( \log \ell^{\beta}\big(T-s,X(T-s)\big) - \log \ell^{\beta}\big(T,X(T)\big) \Big) - F^{\beta}(T-s)  
\end{equation}
for $0 \leqslant s \leqslant T - t_{0}$, is a square-integrable martingale of the backwards filtration $(\mathcal{G}(T-s))_{0 \leqslant s \leqslant T - t_{0}}$ under the probability measure $\mathds{P}^{\beta}$. More explicitly, the martingale \textnormal{\hyperref[dollafipimps]{(\ref*{dollafipimps})}} can be represented as 
\begin{equation} \label{erdollafipimps}
M^{\beta}(T-s) = \int_{0}^{s} \Bigg\langle \frac{\nabla \ell^{\beta}\big(T-u,X(T-u)\big)}{\ell^{\beta}\big(T-u,X(T-u)\big)} \, , \, \textnormal{d}\overline{W}^{\mathds{P}^{\beta}}(T-u) \Bigg\rangle \, , \qquad 0 \leqslant s \leqslant T-t_{0},
\end{equation}
for a $\mathds{P}^{\beta}$-Brownian motion $\big(\overline{W}^{\mathds{P}^{\beta}}(T-s)\big)_{0 \leqslant s \leqslant T-t_{0}}$ of the backwards filtration $(\mathcal{G}(T-s))_{0 \leqslant s \leqslant T-t_{0}}$. 
\end{theorem}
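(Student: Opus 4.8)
The plan is to mimic the proof of Theorem~\ref{thetsix}, which should already have been established (or will be established in parallel), applying it to the perturbed potential $\widetilde{\Psi} := \Psi + B$ in place of $\Psi$, and then to translate the resulting identity back into the language of the \emph{unperturbed} likelihood ratio $\ell^{\beta}$. The point is that the diffusion \eqref{wpsdeids} is precisely the Langevin--Smoluchowski diffusion associated with the potential $\widetilde{\Psi}$, started at $P(t_{0})$ at time $t_{0}$; by condition~\ref{naltsaosaojkos} of Assumptions~\ref{sosaojkoia}, the perturbed potential $\widetilde{\Psi}$ satisfies the same regularity requirements as $\Psi$, so Theorem~\ref{thetsix} applies verbatim to it on the interval $[t_{0},T]$. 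Hence the process $\widetilde{\ell}(t,x) := p^{\beta}(t,x)\,\mathrm{e}^{2\widetilde{\Psi}(x)} = p^{\beta}(t,x)\,\mathrm{e}^{2\Psi(x)}\mathrm{e}^{2B(x)} = \ell^{\beta}(t,x)\,\mathrm{e}^{2B(x)}$ has the stated martingale decomposition with respect to $\overline{W}^{\mathds{P}^{\beta}}$, with compensator $\frac12\int_0^s \bigl|\nabla\widetilde{\ell}/\widetilde{\ell}\bigr|^2(T-u,X(T-u))\,\mathrm{d}u$.

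First I would record that $\log\widetilde{\ell}(t,X(t)) = \log\ell^{\beta}(t,X(t)) + 2B(X(t))$, so that the relative entropy process for the perturbed diffusion relative to $\widetilde{\mathrm{Q}}$ differs from $\log\ell^{\beta}(t,X(t))$ by the additive term $2B(X(t))$. To transfer the dynamics, I would apply Itô's formula (in reverse time, along the backwards filtration $(\mathcal{G}(T-s))$) to the bounded smooth function $B$ evaluated along the time-reversed diffusion. The time-reversal of \eqref{wpsdeids} has an explicit drift --- this is the Fontbona--Jourdain / Haussmann--Pardoux formula that already underlies Theorem~\ref{thetsix} --- namely $\mathrm{d}X(T-s) = \bigl(\nabla\widetilde{\Psi} + \nabla\log p^{\beta} + \dots\bigr)$ terms plus $\mathrm{d}\overline{W}^{\mathds{P}^{\beta}}(T-s)$; applying Itô to $2B(X(T-s))$ produces a martingale part $\int_0^s \langle 2\nabla B(X(T-u)), \mathrm{d}\overline{W}^{\mathds{P}^{\beta}}(T-u)\rangle = \int_0^s \langle 2\beta(X(T-u)),\mathrm{d}\overline{W}^{\mathds{P}^{\beta}}(T-u)\rangle$ and a finite-variation part $\int_0^s (\text{drift}\cdot 2\nabla B + \Delta B)(X(T-u))\,\mathrm{d}u$. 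Subtracting this from the decomposition of $\log\widetilde{\ell}$ and collecting the finite-variation terms is where the extra integrand $\bigl(\langle\beta,2\nabla\Psi\rangle - \operatorname{div}\beta\bigr)$ in \eqref{fispdaftrps} should emerge; note $\operatorname{div}\beta = \Delta B$, so the sign and form are consistent. One also checks that the martingale integrands combine to give exactly $\nabla\ell^{\beta}/\ell^{\beta}$, using $\nabla\log\widetilde{\ell} = \nabla\log\ell^{\beta} + 2\nabla B$ and cancelling the $2\nabla B = 2\beta$ contribution against the Itô correction from $2B$.

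For the integrability claims --- that the random vectors are in $L^2(\mathds{P})$ (here, $L^2(\mathds{P}^{\beta})$), that $\mathds{E}_{\mathds{P}^{\beta}}[F^{\beta}(t_{0})] < \infty$, and that $M^{\beta}$ is a genuine square-integrable martingale rather than merely a local one --- I would again invoke the corresponding statements from Theorem~\ref{thetsix} applied to $\widetilde{\Psi}$: these give $\mathds{E}_{\mathds{P}^{\beta}}$-integrability of $\frac12\int_0^{T-t_0}|\nabla\widetilde{\ell}/\widetilde{\ell}|^2\,\mathrm{d}u$, hence (since $\widetilde{\ell}$ and $\ell^{\beta}$ differ multiplicatively by the bounded factor $\mathrm{e}^{2B}$ with bounded gradient) of $F^{\beta}(t_0)$ up to the bounded correction term $(\langle\beta,2\nabla\Psi\rangle - \operatorname{div}\beta)$, which however involves $\nabla\Psi$ and is only bounded on the compact support of $\beta$ --- so it is bounded outright. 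Square-integrability of the stochastic integral in \eqref{erdollafipimps} then follows from finiteness of the expected quadratic variation $\mathds{E}_{\mathds{P}^{\beta}}\bigl[\frac12\int_0^{T-t_0}|\nabla\ell^{\beta}/\ell^{\beta}|^2\,\mathrm{d}u\bigr]$, which is controlled the same way.

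The main obstacle I anticipate is \emph{not} the algebra of collecting drift terms --- that is routine Itô bookkeeping --- but rather ensuring that the time-reversal machinery (existence of the backward drift, the identity $\overline{W}^{\mathds{P}^{\beta}}$ is a $\mathds{P}^{\beta}$-Brownian motion for the backward filtration, and the validity of Itô's formula along the reversed diffusion) genuinely applies to the perturbed diffusion under the stated weak hypotheses. This is exactly why condition~\ref{naltsaosaojkos} was built to be stable under $\Psi\mapsto\Psi+B$: it guarantees $p^{\beta}$ inherits the positivity, smoothness, and logarithmic-gradient continuity demanded in~\ref{tsaosaojko}, which are the prerequisites for the Fontbona--Jourdain argument. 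So the real work is to state carefully that ``Theorem~\ref{thetsix} with $\Psi$ replaced by $\Psi+B$'' is legitimate --- a one-line observation given how the assumptions were engineered --- and then the rest is the deterministic identity $\log\widetilde\ell = \log\ell^\beta + 2B$ combined with Itô applied to the bounded smooth function $B$. A secondary point requiring a word of care is that the correction term in \eqref{fispdaftrps} is automatically $\mathds{P}^{\beta}$-integrable because $\beta$ has compact support, so no moment estimate beyond what Theorem~\ref{thetsix} already provides is needed.
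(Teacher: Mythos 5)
Your proposal is correct, and it shares the paper's central trick: since the \hyperref[sosaojkoia]{Assumptions \ref*{sosaojkoia}} are invariant under $\Psi \mapsto \Psi + B$, Theorem \ref{thetsix} applies to the potential $\Psi+B$, and your $\widetilde{\ell}(t,x) = p^{\beta}(t,x)\,\mathrm{e}^{2(\Psi+B)(x)}$ is exactly the ``doubly perturbed'' likelihood ratio $\ell^{\beta}_{\beta}$ of \hyperref[ellbb]{(\ref*{ellbb})} that the paper introduces. Where you differ is in how the semimartingale decomposition of $\log\ell^{\beta}(T-s,X(T-s))$ is obtained. The paper derives it directly: it writes down the perturbed backwards Kolmogorov equation \hyperref[pfpeeffe]{(\ref*{pfpeeffe})} for $\ell^{\beta}$ relative to the unperturbed density $q$, applies It\^{o}'s formula along the reversed dynamics \hyperref[rtdfttrpp]{(\ref*{rtdfttrpp})} (this is Lemma \ref{trplrpdald}), gets a local martingale by the same stopping argument as for Theorem \ref{thetsix}, and invokes Theorem \ref{thetsix} with $\Psi+B$ only to prove $\mathds{E}_{\mathds{P}^{\beta}}\big[F^{\beta}(t_{0})\big]<\infty$, comparing $\ell^{\beta}$ with $\ell^{\beta}_{\beta}$ through the bounded factor $\mathrm{e}^{2B}$, its bounded gradient $2\nabla B$, and the bounded correction $\langle\beta,2\nabla\Psi\rangle-\operatorname{div}\beta$. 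You instead obtain the decomposition itself by transfer, via $\log\ell^{\beta} = \log\widetilde{\ell} - 2B$ and It\^{o} applied to $2B(X(T-s))$ along the reversed diffusion; your bookkeeping does close: the martingale integrand becomes $\nabla\log\widetilde{\ell} - 2\beta = \nabla\log\ell^{\beta}$, and the drift $\tfrac12\vert\nabla\log\widetilde{\ell}\vert^{2} - \langle 2\beta, \nabla\log\widetilde{\ell}\rangle + \langle 2\beta, \nabla\Psi + \beta\rangle - \Delta B$ collapses to $\tfrac12\vert\nabla\log\ell^{\beta}\vert^{2} + \langle\beta, 2\nabla\Psi\rangle - \operatorname{div}\beta$, the integrand of \hyperref[fispdaftrps]{(\ref*{fispdaftrps})}; the integrability argument is then literally the paper's. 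What your route buys is economy: no need for \hyperref[pfpeeffe]{(\ref*{pfpeeffe})} or Lemma \ref{trplrpdald}, since one application of Theorem \ref{thetsix} to $\Psi+B$ yields both the decomposition and the $L^{2}(\mathds{P}^{\beta})$-bound, at the price of one extra (harmless, $B\in\mathcal{C}^{\infty}_{c}$) It\^{o} computation; the paper's direct derivation has the side benefit that the equation \hyperref[pfpeeffe]{(\ref*{pfpeeffe})} is reused later, e.g.\ in \hyperref[pdefoybptupr]{(\ref*{pdefoybptupr})} and Lemma \ref{wsublnsibvn}. The one point you should make explicit, as the paper does, is that the shifted initial data $(t_{0},P(t_{0}))$ still satisfy condition \ref{saosaojko} for the potential $\Psi+B$: the second moment is finite by \hyperref[1.12]{(\ref*{1.12})}, and the relative entropy of $P(t_{0})$ with respect to the measure with density $\mathrm{e}^{-2(\Psi+B)}$ differs from the finite $H(P(t_{0})\,\vert\,\mathrm{Q})$ only by $\mathds{E}_{\mathds{P}}\big[2B\big(X(t_{0})\big)\big]$, which is bounded.
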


\subsection{Consequences of the trajectorial results} \label{subimpconq}

Before tackling the proofs of \hyperref[thetsix]{Theorems \ref*{thetsix}} and \hyperref[thetthretv]{\ref*{thetthretv}}, we state several important consequences of these two basic results. In particular, we indicate how the corresponding assertions in the earlier \hyperref[thetone]{Theorems \ref*{thetone}} and \hyperref[thetthre]{\ref*{thetthre}} follow directly from these results by taking expectations.

\begin{corollary}[\textsf{Dissipation of relative entropy}] \label{thetsixcorao} Under the \textnormal{\hyperref[sosaojkoianoew]{Assumptions \ref*{sosaojkoianoew}}}, we have for all $t, t_{0} \geqslant 0$ the relative entropy identity
\begin{equation} \label{stewrtpefitre}
H\big( P(t) \, \vert \, \mathrm{Q} \big) - H\big( P(t_{0}) \, \vert \, \mathrm{Q}\big) = \mathds{E}_{\mathds{P}}\Bigg[ \log \Bigg( \frac{\ell\big(t,X(t)\big)}{\ell\big(t_{0},X(t_{0})\big)} \Bigg) \Bigg]  
= \mathds{E}_{\mathds{P}}\Bigg[\int_{t_{0}}^{t} \Bigg(- \frac{1}{2}\frac{\big\vert \nabla \ell\big(u,X(u)\big) \big\vert^{2}}{\ell\big(u,X(u)\big)^{2}} \, \Bigg) \, \textnormal{d}u\Bigg].
\end{equation}
Furthermore, we have for Lebesgue-a.e.\ $t_{0} \geqslant 0$ the \textnormal{\textsf{generalized de Bruijn identity}}
\begin{equation} \label{flfffl}
\lim_{t \rightarrow t_{0}} \, \frac{H\big( P(t) \, \vert \, \mathrm{Q} \big) - H\big( P(t_{0}) \, \vert \, \mathrm{Q}\big)}{t-t_{0}} 
= - \tfrac{1}{2} \, \mathds{E}_{\mathds{P}}\Bigg[ \ \frac{\big\vert \nabla \ell\big(t_{0},X(t_{0})\big) \big\vert^{2}}{\ell\big(t_{0},X(t_{0})\big)^{2}} \ \Bigg],
\end{equation}
as well as the limiting behavior of the quadratic Wasserstein distance
\begin{equation} \label{agswtuffasih}
\lim_{t \rightarrow t_{0}} \, \frac{W_{2}\big(P(t),P(t_{0})\big)}{\vert t - t_{0} \vert} 
= \tfrac{1}{2} \, \Bigg( \, \mathds{E}_{\mathds{P}}\Bigg[ \ \frac{\big\vert \nabla \ell\big(t_{0},X(t_{0})\big) \big\vert^{2}}{\ell\big(t_{0},X(t_{0})\big)^{2}} \ \Bigg] \, \Bigg)^{1/2}.
\end{equation}
If $t_{0} \geqslant 0$ is chosen so that the generalized de Bruijn identity \textnormal{\hyperref[flfffl]{(\ref*{flfffl})}} holds, then the limiting assertion \textnormal{\hyperref[agswtuffasih]{(\ref*{agswtuffasih})}} pertaining to the Wasserstein distance is also valid.
\begin{proof}[Proof of \texorpdfstring{\hyperref[thetsixcorao]{Corollary \ref*{thetsixcorao}}}{} from \texorpdfstring{\hyperref[thetsix]{Theorem \ref*{thetsix}}}{}:] The identity \hyperref[stewrtpefitre]{(\ref*{stewrtpefitre})} follows by taking expectations in \hyperref[erdollafipim]{(\ref*{erdollafipim})} with respect to the probability measure $\mathds{P}$, recalling the definitions \hyperref[fispdaftr]{(\ref*{fispdaftr})}, \hyperref[dollafipim]{(\ref*{dollafipim})}, and invoking the martingale property of the process in \hyperref[dollafipim]{(\ref*{dollafipim})} for $T \geqslant \max\{t_{0},t\}$. In particular, \hyperref[stewrtpefitre]{(\ref*{stewrtpefitre})} shows that the relative entropy function $t \mapsto H( P(t) \, \vert \, \mathrm{Q})$ from \hyperref[doref]{(\ref*{doref})}, thus also the free energy function $t \mapsto \mathscr{F}(p(t, \, \cdot \, ))$ from \hyperref[reeef]{(\ref*{reeef})}, are strictly decreasing provided $\ell(t, \, \cdot \,)$ is not constant. 

\smallskip

According to the Lebesgue differentiation theorem, the monotone function $t \mapsto H(P(t) \, \vert \, \mathrm{Q})$ is differentiable for Lebesgue-a.e.\ $t_{0} \geqslant 0$, in which case \hyperref[stewrtpefitre]{(\ref*{stewrtpefitre})} leads to the identity \hyperref[flfffl]{(\ref*{flfffl})}.

\smallskip

The limiting behavior \hyperref[agswtuffasih]{(\ref*{agswtuffasih})} of the Wasserstein distance, for Lebesgue-a.e.\ $t_{0} \geqslant 0$, is well known and worked out in \cite{AGS08}; \hyperref[stwt]{Section \ref*{stwt}} below provides details. \hyperref[agswt]{Theorem \ref*{agswt}} establishes the important, novel aspect of \hyperref[thetsixcorao]{Corollary \ref*{thetsixcorao}}; namely, its last assertion, that the validity of \hyperref[flfffl]{(\ref*{flfffl})} for some $t_{0} \geqslant 0$ implies that the limiting assertion \hyperref[agswtuffasih]{(\ref*{agswtuffasih})} also holds for the same point $t_{0}$. This seemingly harmless issue is actually quite delicate, and will be of crucial importance for our gradient flow analysis; it is here that we shall have to rely on condition \hyperref[nalwstasas]{\ref*{nalwstasas}} of \hyperref[sosaojkoianoew]{Assumptions \ref*{sosaojkoianoew}}.
\end{proof}
\end{corollary}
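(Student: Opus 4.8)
\emph{Proof proposal.} The plan is to extract every assertion of the Corollary from \hyperref[thetsix]{Theorem \ref*{thetsix}} by integrating against the path-space measure $\mathds{P}$, and to import the statement about $W_2$ from \hyperref[stwt]{Section \ref*{stwt}}. For the relative entropy identity \eqref{stewrtpefitre}: fix $T\geqslant\max\{t_0,t\}$ and apply \hyperref[thetsix]{Theorem \ref*{thetsix}} on $[0,T]$. The representation \eqref{erdollafipim} exhibits $(M(T-s))_{0\leqslant s\leqslant T}$ as a square-integrable $\mathds{P}$-martingale of the backwards filtration that vanishes at $s=0$, so $\mathds{E}_{\mathds{P}}[M(T-s)]=0$ for every $s\in[0,T]$; reading off the definition \eqref{dollafipim} and recalling $H(P(\tau)\,\vert\,\mathrm{Q})=\mathds{E}_{\mathds{P}}[\log\ell(\tau,X(\tau))]$, this says $H(P(T-s)\,\vert\,\mathrm{Q})-H(P(T)\,\vert\,\mathrm{Q})=\mathds{E}_{\mathds{P}}[F(T-s)]$. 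Taking the two values of $s$ with $T-s=t$ and $T-s=t_0$ and subtracting, the telescoping $F(t)-F(t_0)=-\int_{t_0}^t\frac12\,\vert\nabla\ell(u,X(u))\vert^2/\ell(u,X(u))^2\,\textnormal{d}u$ implied by \eqref{fispdaftr} yields \eqref{stewrtpefitre}. Since the integrand is non-negative, $t\mapsto H(P(t)\,\vert\,\mathrm{Q})$ is non-increasing, and strictly decreasing on any interval where $\ell(t,\cdot)$ is not constant.

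For the generalized de Bruijn identity \eqref{flfffl}: apply Fubini's theorem in \eqref{stewrtpefitre} to rewrite its right-hand side as $-\frac12\int_{t_0}^t I(P(u)\,\vert\,\mathrm{Q})\,\textnormal{d}u$, and recall from \eqref{intfisinfint} that $u\mapsto I(P(u)\,\vert\,\mathrm{Q})$ is locally integrable on $\mathds{R}_+$. By the Lebesgue differentiation theorem, for Lebesgue-a.e.\ $t_0\geqslant 0$ the difference quotient $(t-t_0)^{-1}\int_{t_0}^t I(P(u)\,\vert\,\mathrm{Q})\,\textnormal{d}u$ converges to $I(P(t_0)\,\vert\,\mathrm{Q})$ as $t\to t_0$; dividing \eqref{stewrtpefitre} by $t-t_0$, letting $t\to t_0$, and rewriting the limit via the form \eqref{merfi} of the relative Fisher information produces \eqref{flfffl}.

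The Wasserstein limit \eqref{agswtuffasih} belongs to the absolutely-continuous-curve calculus of Ambrosio--Gigli--Savar\'e. The curve $(P(t))$ solves the continuity equation with velocity field $v(t,\cdot)=\nabla\varphi(t,\cdot)$, $\varphi(t,x)=-\Psi(x)-\frac12\log p(t,x)$, hence is locally absolutely continuous in $(\mathscr{P}_2(\mathds{R}^n),W_2)$ with metric derivative pointwise bounded by $\Vert v(t,\cdot)\Vert_{L^2(P(t))}$; condition \ref{nalwstasas} of \hyperref[sosaojkoianoew]{Assumptions \ref*{sosaojkoianoew}} places $v(t,\cdot)$ in the tangent space of $\mathscr{P}_2(\mathds{R}^n)$ at $P(t)$, so it is the minimal-norm velocity field and equality holds for a.e.\ $t$; since $\Vert v(t_0,\cdot)\Vert_{L^2(P(t_0))}^2=\frac14\int\vert\nabla\log\ell(t_0,x)\vert^2 p(t_0,x)\,\textnormal{d}x=\frac14\,I(P(t_0)\,\vert\,\mathrm{Q})$, this gives \eqref{agswtuffasih}. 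The final and most delicate assertion --- that \eqref{agswtuffasih} holds at \emph{every} $t_0$ at which \eqref{flfffl} holds, not merely for a.e.\ $t_0$ --- I defer to \hyperref[agswt]{Theorem \ref*{agswt}} in \hyperref[stwt]{Section \ref*{stwt}}. There the bound $\limsup_{t\to t_0} W_2(P(t),P(t_0))/\vert t-t_0\vert\leqslant\Vert v(t_0,\cdot)\Vert_{L^2(P(t_0))}$ is established for every $t_0$ as a general feature of solutions of the continuity equation, while the matching lower bound for the $\liminf$ is extracted from the differentiability of $t\mapsto H(P(t)\,\vert\,\mathrm{Q})$ at $t_0$ together with the tangent-space property \ref{nalwstasas}. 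Coupling the exceptional set of the Wasserstein limit to that of the de Bruijn identity in this way is the main obstacle, and is the technical crux of the argument.
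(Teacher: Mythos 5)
Your proposal is correct and follows essentially the same route as the paper: take expectations of the backward martingale identity from \hyperref[thetsix]{Theorem \ref*{thetsix}} to get \hyperref[stewrtpefitre]{(\ref*{stewrtpefitre})}, apply Lebesgue differentiation to obtain \hyperref[flfffl]{(\ref*{flfffl})}, and defer the Wasserstein limit \hyperref[agswtuffasih]{(\ref*{agswtuffasih})} and the crucial exceptional-set coupling to \hyperref[agswt]{Theorem \ref*{agswt}} in \hyperref[stwt]{Section \ref*{stwt}}. The one small refinement worth noting is that you invoke the Lebesgue differentiation theorem directly on the locally integrable density $u\mapsto I(P(u)\,\vert\,\mathrm{Q})$, which is a touch more precise than the paper's appeal to differentiability of the monotone function $t\mapsto H(P(t)\,\vert\,\mathrm{Q})$, since it identifies the derivative with $-\tfrac12 I(P(t_0)\,\vert\,\mathrm{Q})$ in one step.
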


\begin{proof}[Proof of \texorpdfstring{\hyperref[thetone]{Theorem \ref*{thetone}}}{} from \texorpdfstring{\hyperref[thetsix]{Theorem \ref*{thetsix}}}{}:] This is a direct consequence of \hyperref[thetsixcorao]{Corollary \ref*{thetsixcorao}}. 
\end{proof}

In a manner similar to the derivation of \hyperref[thetsixcorao]{Corollary \ref*{thetsixcorao}} from \hyperref[thetsix]{Theorem \ref*{thetsix}}, we deduce now from \hyperref[thetthretv]{Theorem \ref*{thetthretv}} the following \hyperref[thetsixcoraopv]{Corollary \ref*{thetsixcoraopv}}. Its first identity \hyperref[stewrtpefitrepv]{(\ref*{stewrtpefitrepv})} shows, in particular, that the relative entropy $H( P^{\beta}(t) \, \vert \, \mathrm{Q})$ is finite for all $t \geqslant t_{0}$.  

\begin{corollary}[\textsf{Dissipation of relative entropy under perturbations}] \label{thetsixcoraopv} Under the \textnormal{\hyperref[sosaojkoianoew]{Assumptions \ref*{sosaojkoianoew}}}, we have, for all $t \geqslant t_{0} \geqslant 0$, the relative entropy identity
\begingroup
\addtolength{\jot}{0.7em}
\begin{equation} \label{stewrtpefitrepv}
\begin{aligned}
& H\big( P^{\beta}(t) \, \vert \, \mathrm{Q} \big) - H\big( P^{\beta}(t_{0}) \, \vert \, \mathrm{Q}\big) = \mathds{E}_{\mathds{P}^{\beta}}\Bigg[ \log \Bigg( \frac{\ell^{\beta}\big(t,X(t)\big)}{\ell^{\beta}\big(t_{0},X(t_{0})\big)}\Bigg)\Bigg]   \\
& \qquad = \mathds{E}_{\mathds{P}^{\beta}}\Bigg[\int_{t_{0}}^{t} \Bigg( - \frac{1}{2}\frac{\big\vert \nabla \ell^{\beta}\big(u,X(u)\big) \big\vert^{2}}{\ell^{\beta}\big(u,X(u)\big)^{2}} + \Big(\operatorname{div} \beta -  \big\langle \beta \, , \, 2 \, \nabla \Psi \big\rangle\Big)\big(X(u)\big) \Bigg) \textnormal{d}u \Bigg].
\end{aligned}
\end{equation}
\endgroup
Furthermore, for every point $t_{0} \in \mathds{R}_{+} \setminus N$ \textnormal{(}at which the right-sided limiting assertion \textnormal{\hyperref[rgtsflffflnv]{(\ref*{rgtsflffflnv})}} is valid\textnormal{)}, we have also the limiting identities
\begingroup
\addtolength{\jot}{0.7em}
\begin{align}
\lim_{t \downarrow t_{0}} \, \frac{H\big( P^{\beta}(t) \, \vert \, \mathrm{Q} \big) - H\big( P^{\beta}(t_{0}) \, \vert \, \mathrm{Q}\big)}{t-t_{0}} 
&= \mathds{E}_{\mathds{P}}\Bigg[ - \frac{1}{2} \frac{\big\vert \nabla \ell\big(t_{0},X(t_{0})\big) \big\vert^{2}}{\ell\big(t_{0},X(t_{0})\big)^{2}} + \Big( \operatorname{div} \beta -  \big\langle \beta \, , \, 2 \, \nabla \Psi \big\rangle\Big)\big(X(t_{0})\big)  \Bigg], \label{flffflpv} \\
\lim_{t \downarrow t_{0}} \, \frac{W_{2}\big( P^{\beta}(t),P^{\beta}(t_{0})\big)}{t-t_{0}} 
&= \tfrac{1}{2} \, \Bigg( \, \mathds{E}_{\mathds{P}}\Bigg[ \ \bigg\vert \frac{\nabla \ell\big(t_{0},X(t_{0})\big)}{\ell\big(t_{0},X(t_{0})\big)} + 2 \, \beta\big(X(t_{0})\big) \bigg\vert^{2} \ \Bigg] \, \Bigg)^{1/2}. \label{svpvompvv}
\end{align}
\endgroup
\begin{proof}[Proof of \texorpdfstring{\hyperref[thetsixcoraopv]{Corollary \ref*{thetsixcoraopv}}}{} from \texorpdfstring{\hyperref[thetthretv]{Theorem \ref*{thetthretv}}}{}:] Taking expectations in \hyperref[erdollafipimps]{(\ref*{erdollafipimps})} under the probability measure $\mathds{P}^{\beta}$, recalling the definitions \hyperref[fispdaftrps]{(\ref*{fispdaftrps})}, \hyperref[dollafipimps]{(\ref*{dollafipimps})}, and using the martingale property of the process in \hyperref[dollafipimps]{(\ref*{dollafipimps})} for $T \geqslant t \geqslant t_{0}$, leads to the identity \hyperref[stewrtpefitrepv]{(\ref*{stewrtpefitrepv})}. In order to derive from \hyperref[stewrtpefitrepv]{(\ref*{stewrtpefitrepv})} the limiting identity \hyperref[flffflpv]{(\ref*{flffflpv})}, extra care is needed to show that \hyperref[flffflpv]{(\ref*{flffflpv})} is valid for every time $t_{0} \in \mathds{R}_{+} \setminus N$. %Colloquially speaking, we want to show that the generalized de Bruijn identity \hyperref[rgtsflffflnv]{(\ref*{rgtsflffflnv})} is stable under smooth and compactly supported perturbations $\beta = \nabla B$.

\smallskip

We shall verify in \hyperref[hctclwittmeitpocasot]{Lemma \ref*{hctclwittmeitpocasot}} of \hyperref[subsomusefullem]{Subsection \ref*{subsomusefullem}} below the following estimates on the ratio between the probability density function $p(t, \, \cdot \, )$ and its perturbed version $p^{\beta}(t, \, \cdot \, )$: For every $t_{0} \geqslant 0$ and $T > t_{0}$ there is a constant $C > 0$ such that 
\begin{equation} \label{ilpdepvhfv}
\bigg\vert \frac{\ell^{\beta}(t,x)}{\ell(t,x)} - 1 \bigg\vert = \bigg\vert \frac{p^{\beta}(t,x)}{p(t,x)} - 1 \bigg\vert \leqslant C \, (t-t_{0}) \, , \qquad (t,x) \in [t_{0},T] \times \mathds{R}^{n}
\end{equation}
as well as
\begin{equation} \label{ilpdepvhfvsv}
\mathds{E}_{\mathds{P}}\Bigg[\int_{t_{0}}^{t} \ \Bigg\vert \nabla \log \Bigg( \frac{\ell^{\beta}\big(u,X(u)\big)}{\ell\big(u,X(u)\big)} \Bigg) \Bigg\vert^{2} \, \textnormal{d}u \Bigg] 
\leqslant C \, (t-t_{0})^{2} \, , \qquad t_{0} \leqslant t \leqslant T.
\end{equation}

\smallskip

We turn now to the derivation of \hyperref[flffflpv]{(\ref*{flffflpv})} from \hyperref[stewrtpefitrepv]{(\ref*{stewrtpefitrepv})}. First, since the perturbation $\beta$ is smooth and compactly supported, and the paths of the canonical coordinate process $(X(t))_{t \geqslant 0}$ are continuous, we have 
\begin{equation} \label{llethar}
\lim_{t \downarrow t_{0}} \, \frac{1}{t-t_{0}} \, \mathds{E}_{\mathds{P}^{\beta}}\Bigg[\int_{t_{0}}^{t} \Big(  \operatorname{div} \beta -  \big\langle \beta \, , \, 2 \, \nabla \Psi \big\rangle\Big)\big(X(u)\big) \, \textnormal{d}u \Bigg] 
= \mathds{E}_{\mathds{P}^{\beta}}\Big[ \Big(\operatorname{div} \beta -  \big\langle \beta \, , \, 2 \, \nabla \Psi \big\rangle\Big)\big(X(t_{0})\big) \Big] 
\end{equation}
\textit{for every} $t_{0} \geqslant 0$. Secondly, the random variable $X(t_{0})$ has the same distribution under $\mathds{P}$, as it does under $\mathds{P}^{\beta}$, so it is immaterial whether we express the expectation on the right-hand side of \hyperref[llethar]{(\ref*{llethar})} with respect to the probability measure $\mathds{P}$ or $\mathds{P}^{\beta}$. Hence this expression equals the corresponding term on the right-hand side of \hyperref[flffflpv]{(\ref*{flffflpv})}.

\smallskip

Regarding the remaining term on the right-hand side of \hyperref[flffflpv]{(\ref*{flffflpv})}, the equality
\begin{equation} \label{tciwsolafophvs}
\lim_{t \downarrow t_{0}} \, \frac{1}{t-t_{0}} \, \mathds{E}_{\mathds{P}^{\beta}}\Bigg[\int_{t_{0}}^{t} \Bigg( -\frac{1}{2}\frac{\big\vert \nabla \ell^{\beta}\big(u,X(u)\big) \big\vert^{2}}{\ell^{\beta}\big(u,X(u)\big)^{2}} \, \Bigg) \, \textnormal{d}u \Bigg] 
= \lim_{t \downarrow t_{0}} \, \frac{1}{t-t_{0}} \, \mathds{E}_{\mathds{P}}\Bigg[\int_{t_{0}}^{t} \Bigg( -\frac{1}{2}\frac{\big\vert \nabla \ell\big(u,X(u)\big) \big\vert^{2}}{\ell\big(u,X(u)\big)^{2}} \, \Bigg) \, \textnormal{d}u \Bigg]
\end{equation}
holds as long as $t_{0} \geqslant 0$ is chosen so that one of the limits exists. Indeed, the equality
\begin{equation} \label{tciwsonlafophvs}
\lim_{t \downarrow t_{0}} \, \frac{1}{t-t_{0}} \, \mathds{E}_{\mathds{P}}\Bigg[\int_{t_{0}}^{t} \Bigg( -\frac{1}{2}\frac{\big\vert \nabla \ell^{\beta}\big(u,X(u)\big) \big\vert^{2}}{\ell^{\beta}\big(u,X(u)\big)^{2}} \, \Bigg) \, \textnormal{d}u \Bigg] 
= \lim_{t \downarrow t_{0}} \, \frac{1}{t-t_{0}} \, \mathds{E}_{\mathds{P}}\Bigg[\int_{t_{0}}^{t} \Bigg( -\frac{1}{2}\frac{\big\vert \nabla \ell\big(u,X(u)\big) \big\vert^{2}}{\ell\big(u,X(u)\big)^{2}} \, \Bigg) \, \textnormal{d}u \Bigg]
\end{equation}
follows from \hyperref[ilpdepvhfvsv]{(\ref*{ilpdepvhfvsv})}, and \hyperref[ilpdepvhfv]{(\ref*{ilpdepvhfv})} implies that it is immaterial whether we take expectations with respect to $\mathds{P}$ or $\mathds{P}^{\beta}$ in the two limits appearing in \hyperref[tciwsonlafophvs]{(\ref*{tciwsonlafophvs})}. Summing up, existence and equality of the limits in \hyperref[tciwsolafophvs]{(\ref*{tciwsolafophvs})} are guaranteed if and only if $t_{0} \in \mathds{R}_{+} \setminus N$. It develops that both limits in \hyperref[tciwsolafophvs]{(\ref*{tciwsolafophvs})} exist if $t_{0} \geqslant 0$ is not in the exceptional set $N$ of zero Lebesgue measure, and their common value is 
\begin{equation} \label{tcigwwsolafophvs}
- \tfrac{1}{2} \, I\big( P(t_{0}) \, \vert \, \mathrm{Q}\big) = - \tfrac{1}{2} \, \mathds{E}_{\mathds{P}}\Bigg[ \ \frac{\big\vert \nabla \ell\big(t_{0},X(t_{0})\big) \big\vert^{2}}{\ell\big(t_{0},X(t_{0})\big)^{2}} \ \Bigg].
\end{equation}
In conjunction with \hyperref[llethar]{(\ref*{llethar})}, which is valid for every $t_{0} \geqslant 0$, this establishes the limiting identity \hyperref[flffflpv]{(\ref*{flffflpv})} for every $t_{0} \in \mathds{R}_{+} \setminus N$. Therefore, the right-sided limiting assertion \hyperref[rgtsflffflnv]{(\ref*{rgtsflffflnv})}, and the similar perturbed limiting assertion in \hyperref[flffflpv]{(\ref*{flffflpv})}, fail on precisely the same set of exceptional points $N$. 

\medskip

As regards the final assertion we note that, by analogy with \hyperref[agswtuffasih]{(\ref*{agswtuffasih})}, the limiting behavior of the Wasserstein distance \hyperref[svpvompvv]{(\ref*{svpvompvv})}, for Lebesgue-a.e.\ $t_{0} \geqslant 0$, is well known \cite{AGS08}; details are in \hyperref[stwt]{Section \ref*{stwt}} below. More precisely, \hyperref[bvagswt]{Theorem \ref*{bvagswt}} establishes the novel and very crucial aspect, that the limiting assertion
\begin{equation} \label{osagswtuffasihnv}
\lim_{t \downarrow t_{0}} \, \frac{W_{2}\big(P(t),P(t_{0})\big)}{t - t_{0}} 
= \tfrac{1}{2} \, \sqrt{I\big( P(t_{0}) \, \vert \, \mathrm{Q}\big)}
\end{equation}
is valid for every $t_{0} \in \mathds{R}_{+} \setminus N$. Once again, concerning the relation between the limits in \hyperref[osagswtuffasihnv]{(\ref*{osagswtuffasihnv})} and \hyperref[svpvompvv]{(\ref*{svpvompvv})} pertaining to the Wasserstein distance, we discern a similar pattern as in the case of the generalized de Bruijn identity. In fact, \hyperref[bvagswt]{Theorem \ref*{bvagswt}} will tell us that the perturbed Wasserstein limit \hyperref[svpvompvv]{(\ref*{svpvompvv})} also holds for every $t_{0} \in \mathds{R}_{+} \setminus N$. %In other words, the limiting behavior of the quadratic Wasserstein distance is stable under perturbations as well.
\end{proof}
\end{corollary}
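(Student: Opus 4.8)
The plan is to obtain \hyperref[thetsixcoraopv]{Corollary \ref*{thetsixcoraopv}} from \hyperref[thetthretv]{Theorem \ref*{thetthretv}} by taking $\mathds{P}^{\beta}$-expectations, and then to read off the pointwise limiting identities by a first-order perturbation analysis resting on the comparison estimates \hyperref[ilpdepvhfv]{(\ref*{ilpdepvhfv})} and \hyperref[ilpdepvhfvsv]{(\ref*{ilpdepvhfvsv})} of \hyperref[hctclwittmeitpocasot]{Lemma \ref*{hctclwittmeitpocasot}}. First I would fix $t \geqslant t_{0}$, pick $T \geqslant t$, and invoke \hyperref[thetthretv]{Theorem \ref*{thetthretv}}: since the process $M^{\beta}$ of \hyperref[dollafipimps]{(\ref*{dollafipimps})} is a $\mathds{P}^{\beta}$-martingale of the backwards filtration with terminal value $M^{\beta}(T) = 0$, its martingale property at the deterministic instants $t$ and $t_{0}$ yields $\mathds{E}_{\mathds{P}^{\beta}}[M^{\beta}(t)] = \mathds{E}_{\mathds{P}^{\beta}}[M^{\beta}(t_{0})] = 0$; subtracting these, using the definitions \hyperref[dollafipimps]{(\ref*{dollafipimps})}, \hyperref[fispdaftrps]{(\ref*{fispdaftrps})}, undoing the time reversal in the integral defining $F^{\beta}$, and noting $\mathds{E}_{\mathds{P}^{\beta}}[\log \ell^{\beta}(t,X(t))] = H(P^{\beta}(t) \, \vert \, \mathrm{Q})$ by \hyperref[doref]{(\ref*{doref})}, produces exactly the chain of identities \hyperref[stewrtpefitrepv]{(\ref*{stewrtpefitrepv})}. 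Finiteness of $H(P^{\beta}(t) \, \vert \, \mathrm{Q})$ for all $t \geqslant t_{0}$ is then forced by \hyperref[stewrtpefitrepv]{(\ref*{stewrtpefitrepv})}: the left endpoint equals the finite quantity $H(P(t_{0}) \, \vert \, \mathrm{Q})$; the nonnegative Fisher integrand has finite $\mathds{P}^{\beta}$-expectation because $\mathds{E}_{\mathds{P}^{\beta}}[F^{\beta}(t_{0})] < \infty$ and the drift term is bounded ($\beta$ compactly supported); and $H(P^{\beta}(t) \, \vert \, \mathrm{Q}) > -\infty$ automatically since $P^{\beta}(t) \in \mathscr{P}_{2}(\mathds{R}^{n})$.

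Next I would divide \hyperref[stewrtpefitrepv]{(\ref*{stewrtpefitrepv})} by $t - t_{0}$ and send $t \downarrow t_{0}$. For \emph{every} $t_{0} \geqslant 0$ the drift contribution converges — by \hyperref[llethar]{(\ref*{llethar})}, i.e.\ path-continuity, smoothness and compact support of $\beta$, and dominated convergence — to $\mathds{E}_{\mathds{P}^{\beta}}[(\operatorname{div}\beta - \langle\beta,2\nabla\Psi\rangle)(X(t_{0}))]$, and since $X(t_{0})$ has the same law under $\mathds{P}$ and under $\mathds{P}^{\beta}$ this is the drift term in \hyperref[flffflpv]{(\ref*{flffflpv})}. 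For the Fisher contribution I would write the integrand $\vert \nabla \ell^{\beta}/\ell^{\beta}\vert^{2} = \vert \nabla \log \ell^{\beta}\vert^{2} = \vert \nabla \log \ell \vert^{2} + 2\langle \nabla \log \ell , \nabla \log(\ell^{\beta}/\ell)\rangle + \vert \nabla \log(\ell^{\beta}/\ell)\vert^{2}$ and estimate the cross term by Cauchy--Schwarz using \hyperref[ilpdepvhfvsv]{(\ref*{ilpdepvhfvsv})} together with finiteness of the unperturbed Fisher mass from \hyperref[intfisinfint]{(\ref*{intfisinfint})}: this shows that, under $\mathds{E}_{\mathds{P}}$, the perturbed Fisher integral differs from $\mathds{E}_{\mathds{P}}[\int_{t_{0}}^{t}\tfrac12 \vert \nabla \log \ell \vert^{2}\,\textnormal{d}u]$ by $o(t-t_{0})$; and \hyperref[ilpdepvhfv]{(\ref*{ilpdepvhfv})} shows that replacing $\mathds{E}_{\mathds{P}^{\beta}}$ by $\mathds{E}_{\mathds{P}}$ in these integrals costs at most $C(t-t_{0})$ times an integrable quantity. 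Therefore the left-hand limit of \hyperref[flffflpv]{(\ref*{flffflpv})} exists if and only if the unperturbed right-sided de Bruijn limit does, i.e.\ if and only if $t_{0} \notin N$; and in that case the unperturbed identity \hyperref[stewrtpefitre]{(\ref*{stewrtpefitre})} of \hyperref[thetsixcorao]{Corollary \ref*{thetsixcorao}} pins down the value of the Fisher term as $-\tfrac12 I(P(t_{0}) \, \vert \, \mathrm{Q})$. Adding the drift term gives \hyperref[flffflpv]{(\ref*{flffflpv})} for every $t_{0} \in \mathds{R}_{+}\setminus N$, and shows the perturbed and unperturbed right-sided de Bruijn identities fail on precisely the same exceptional set $N$.

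For the limiting behavior \hyperref[svpvompvv]{(\ref*{svpvompvv})} of the Wasserstein distance I would observe that $t \mapsto P^{\beta}(t)$ is an absolutely continuous curve in $\mathscr{P}_{2}(\mathds{R}^{n})$, governed by the continuity equation \hyperref[pfpeq]{(\ref*{pfpeq})} with velocity field $\nabla(-\Psi - \tfrac12 \log p^{\beta}) - \beta$, whose squared $L^{2}(P^{\beta}(t))$-norm coincides with the bracket on the right of \hyperref[svpvompvv]{(\ref*{svpvompvv})} evaluated along the flow. The metric-speed theory of \cite{AGS08} then delivers \hyperref[svpvompvv]{(\ref*{svpvompvv})} for Lebesgue-a.e.\ $t_{0}$; for the upgrade to \emph{every} $t_{0} \in \mathds{R}_{+}\setminus N$ — so that the entropy-side and Wasserstein-side exceptional sets coincide — I would simply cite \hyperref[bvagswt]{Theorem \ref*{bvagswt}} of \hyperref[stwt]{Section \ref*{stwt}}, which in turn leans on condition \hyperref[nalwstasas]{\ref*{nalwstasas}} of \hyperref[sosaojkoianoew]{Assumptions \ref*{sosaojkoianoew}}.

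The part I expect to be the main obstacle is exactly this matching of exceptional sets: proving that \hyperref[flffflpv]{(\ref*{flffflpv})} and \hyperref[svpvompvv]{(\ref*{svpvompvv})} hold at \emph{every} $t_{0} \notin N$, not merely almost everywhere. On the entropy side everything hinges on the comparison estimates \hyperref[ilpdepvhfv]{(\ref*{ilpdepvhfv})}--\hyperref[ilpdepvhfvsv]{(\ref*{ilpdepvhfvsv})} having the sharp orders $(t-t_{0})$ and $(t-t_{0})^{2}$, which is precisely what renders the perturbation first-order negligible; establishing \hyperref[hctclwittmeitpocasot]{Lemma \ref*{hctclwittmeitpocasot}} via the perturbed Fokker--Planck equation \hyperref[pfpeq]{(\ref*{pfpeq})} is where most of the technical effort goes. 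On the Wasserstein side the corresponding pointwise statement is the genuinely delicate point of the paper and is handled separately in \hyperref[stwt]{Section \ref*{stwt}}.
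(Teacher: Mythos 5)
Your proposal matches the paper's proof essentially step for step: take $\mathbb{P}^{\beta}$-expectations in the martingale identity of Theorem \ref{thetthretv} to get \eqref{stewrtpefitrepv}; separate the drift contribution, which is handled for every $t_{0}$ by continuity and compact support of $\beta$; and reduce the Fisher contribution to the unperturbed case via the comparison estimates \eqref{ilpdepvhfv}--\eqref{ilpdepvhfvsv} of Lemma \ref{hctclwittmeitpocasot} and a change of measure; then invoke Theorem \ref{bvagswt} for the Wasserstein limit. The only genuine difference is presentational: you make explicit the quadratic expansion $\vert\nabla\log\ell^{\beta}\vert^{2}=\vert\nabla\log\ell\vert^{2}+2\langle\nabla\log\ell,\nabla\log(\ell^{\beta}/\ell)\rangle+\vert\nabla\log(\ell^{\beta}/\ell)\vert^{2}$ and the Cauchy--Schwarz bound on the cross term, which the paper compresses into the single remark that \eqref{tciwsonlafophvs} ``follows from \eqref{ilpdepvhfvsv}'' --- your version is correct and arguably clearer.
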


\begin{proof}[Proof of \texorpdfstring{\hyperref[thetthre]{Theorem \ref*{thetthre}}}{} from \texorpdfstring{\hyperref[thetsix]{Theorems \ref*{thetsix}}}{}, \texorpdfstring{\hyperref[thetthretv]{\ref*{thetthretv}}}{} and \texorpdfstring{\hyperref[thetsixcorao]{Corollaries \ref*{thetsixcorao}}}{}, \texorpdfstring{\hyperref[thetsixcoraopv]{\ref*{thetsixcoraopv}}}{}:] Let $t_{0} \in \mathds{R}_{+} \setminus N$, so that the limiting identities \hyperref[flffflpv]{(\ref*{flffflpv})} and \hyperref[svpvompvv]{(\ref*{svpvompvv})} from \hyperref[thetsixcoraopv]{Corollary \ref*{thetsixcoraopv}} are valid. Recalling the abbreviations in \hyperref[ttrvzo]{(\ref*{ttrvzo})}, we summarize now the identities just mentioned as
\begingroup
\addtolength{\jot}{1em}
\begin{alignat}{3}
&\lim_{t \downarrow t_{0}} \, \frac{H\big( P(t) \, \vert \, \mathrm{Q} \big) - H\big( P(t_{0}) \, \vert \, \mathrm{Q}\big)}{t-t_{0}} 
&&= - && \tfrac{1}{2} \, \| a \|_{L^{2}(\mathds{P})}^{2}, \label{nlwpthtt1} \\
&\lim_{t \downarrow t_{0}} \, \frac{W_{2}\big( P(t),P(t_{0})\big)}{t-t_{0}}
&&= && \tfrac{1}{2} \, \| a \|_{L^{2}(\mathds{P})}, \label{nlwpthtt2} \\
&\lim_{t \downarrow t_{0}} \, \frac{H\big( P^{\beta}(t) \, \vert \, \mathrm{Q} \big) - H\big( P^{\beta}(t_{0}) \, \vert \, \mathrm{Q}\big)}{t-t_{0}} 
&&= - && \tfrac{1}{2}  \, \big\langle a , a + 2b \big\rangle_{L^{2}(\mathds{P})}, \label{nlwpthtt3} \\
&\lim_{t \downarrow t_{0}} \, \frac{W_{2}\big( P^{\beta}(t),P^{\beta}(t_{0})\big)}{t-t_{0}} 
&&= && \tfrac{1}{2}  \, \| a + 2 b\|_{L^{2}(\mathds{P})}. \label{nlwpthtt4}
\end{alignat}
\endgroup
Indeed, the equations \hyperref[nlwpthtt1]{(\ref*{nlwpthtt1})}, \hyperref[nlwpthtt2]{(\ref*{nlwpthtt2})}, and \hyperref[nlwpthtt4]{(\ref*{nlwpthtt4})} correspond to \hyperref[rgtsflffflnv]{(\ref*{rgtsflffflnv})}, \hyperref[osagswtuffasihnv]{(\ref*{osagswtuffasihnv})}, and \hyperref[svpvompvv]{(\ref*{svpvompvv})}, respectively. As for \hyperref[nlwpthtt3]{(\ref*{nlwpthtt3})}, we note that, according to equation \hyperref[flffflpv]{(\ref*{flffflpv})} of \hyperref[thetsixcoraopv]{Corollary \ref*{thetsixcoraopv}}, the limit in \hyperref[nlwpthtt3]{(\ref*{nlwpthtt3})} equals 
\begin{equation}
- \tfrac{1}{2} \, \| a \|_{L^{2}(\mathds{P})}^{2} + \mathds{E}_{\mathds{P}}\Big[ \Big(\operatorname{div} \beta - 2 \, \big\langle \beta , \nabla \Psi \big\rangle\Big)\big(X(t_{0})\big) \Big].    
\end{equation}
Therefore, in view of the right-hand side of \hyperref[nlwpthtt3]{(\ref*{nlwpthtt3})}, we have to show the identity
\begin{equation} \label{telimitthetthretv}
\mathds{E}_{\mathds{P}}\Big[ \Big(\operatorname{div} \beta -  \big\langle \beta \, , \, 2 \, \nabla \Psi \big\rangle\Big)\big(X(t_{0})\big) \Big] = - \langle a,b \rangle_{L^{2}(\mathds{P})}.
\end{equation}
In order to do this, we write the left-hand side of \hyperref[telimitthetthretv]{(\ref*{telimitthetthretv})} as
\begin{equation} \label{dftelimitthetthretv}
\int_{\mathds{R}^{n}} \Big( \operatorname{div} \beta(x) - \big\langle \beta(x) \, , \, 2 \, \nabla \Psi(x) \big\rangle \Big) \, p(t_{0},x) \, \textnormal{d}x.
\end{equation}
Using --- for the first time, and only in order to show the identity \hyperref[telimitthetthretv]{(\ref*{telimitthetthretv})} --- integration by parts, and the fact that the perturbation $\beta$ is assumed to be smooth and compactly supported, we see that the expression \hyperref[dftelimitthetthretv]{(\ref*{dftelimitthetthretv})} becomes
\begin{equation}
- \int_{\mathds{R}^{n}} \Big\langle \beta(x) \, , \, \nabla \log p(t_{0},x) + 2 \, \nabla \Psi(x) \Big\rangle \, p(t_{0},x) \, \textnormal{d}x,
\end{equation}
which is the same as $- \big\langle \beta(X(t_{0})), \nabla \log \ell(t_{0},X(t_{0})) \big\rangle_{L^{2}(\mathds{P})} = - \langle b, a\rangle_{L^{2}(\mathds{P})}$. 

\smallskip

The limiting identities \hyperref[nlwpthtt1]{(\ref*{nlwpthtt1})} -- \hyperref[nlwpthtt4]{(\ref*{nlwpthtt4})} now imply the assertions of \hyperref[thetthre]{Theorem \ref*{thetthre}}. 
\end{proof}
%Regarding the first assertion of \hyperref[thetthre]{Theorem \ref*{thetthre}}, as the relative entropy $H( P(t) \, \vert \, \mathrm{Q})$ is real-valued for all $t \geqslant 0$, the identity \hyperref[stewrtpefitre]{(\ref*{stewrtpefitre})} in %\hyperref[thetsixcorao]{Corollary \ref*{thetsixcorao}} shows that $\| a \|_{L^{2}(\mathds{P})}$ is finite, for our choice of $t_{0} \in \mathds{R}_{+} \setminus N$. This completes the proof of \hyperref[thetthre]{Theorem \ref*{thetthre}}.
%Subtracting the quotient of \hyperref[nlwpthtt1]{(\ref*{nlwpthtt1})} and \hyperref[nlwpthtt2]{(\ref*{nlwpthtt2})} from the quotient of \hyperref[nlwpthtt3]{(\ref*{nlwpthtt3})} and \hyperref[nlwpthtt4]{(\ref*{nlwpthtt4})}, we arrive at the expression
%\begin{equation}
%\|a\|_{L^{2}(\mathds{P})}  - \Bigg\langle a \, , \, \frac{a+2b}{\| a + 2b\|_{L^{2}(\mathds{P})}} \Bigg\rangle_{L^{2}(\mathds{P})} \, ,
%\end{equation}
%which is just \hyperref[slnlwpthtt]{(\ref*{slnlwpthtt})}. 

\medskip

The following \hyperref[thetsixcor]{Propositions \ref*{thetsixcor}} and \hyperref[thetthretvcor]{\ref*{thetthretvcor}} are trajectorial versions of \hyperref[thetsixcorao]{Corollaries \ref*{thetsixcorao}} and \hyperref[thetsixcoraopv]{\ref*{thetsixcoraopv}}, respectively. They compute the rate of temporal change of relative entropy for the equation \hyperref[sdeids]{(\ref*{sdeids})} and for its perturbed version \hyperref[wpsdeids]{(\ref*{wpsdeids})}, respectively, in the more precise trajectorial manner of \hyperref[thetsix]{Theorems \ref*{thetsix}}, \hyperref[thetthretv]{\ref*{thetthretv}}. 

\begin{proposition}[\textsf{Trajectorial rate of relative entropy dissipation}] \label{thetsixcor} Under the \textnormal{\hyperref[sosaojkoia]{Assumptions \ref*{sosaojkoia}}}, we let $t_{0} \in \mathds{R}_{+} \setminus N$ and $T > t_{0}$. Then the relative entropy process \textnormal{\hyperref[stlrpd]{(\ref*{stlrpd})}} satisfies the trajectorial relation
\begin{equation} \label{thetsixcorfe} 
\lim_{s \uparrow T-t_{0}} \, \frac{\mathds{E}_{\mathds{P}}\Big[ \log \ell\big(t_{0},X(t_{0})\big) \ \big\vert \ \mathcal{G}(T-s) \Big] - \log \ell \big( T-s,X(T-s)\big)}{T-t_{0}-s} 
=  \frac{1}{2} \frac{\big\vert \nabla \ell\big(t_{0},X(t_{0})\big) \big\vert^{2}}{\ell\big(t_{0},X(t_{0})\big)^{2}},
\end{equation}
where the limit exists in $L^{1}(\mathds{P})$.
\end{proposition}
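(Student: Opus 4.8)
The plan is to read everything off the backward martingale of Theorem~\ref{thetsix} and then perform a careful double passage to the limit. Throughout, fix $t_{0}\in\mathds{R}_{+}\setminus N$ and $T>t_{0}$, and abbreviate $g(t,x)\vcentcolon = \tfrac12\lvert\nabla\ell(t,x)\rvert^{2}/\ell(t,x)^{2}$, so that $F(T-s)=\int_{0}^{s}g\big(T-u,X(T-u)\big)\,\textnormal{d}u$ in the notation of \eqref{fispdaftr}, \eqref{dollafipim}, and $\mathds{E}_{\mathds{P}}[g(t,X(t))]=\tfrac12\,I(P(t)\,\vert\,\mathrm{Q})$. First I would rewrite \eqref{dollafipim} as $\log\ell(T-s,X(T-s))=\log\ell(T,X(T))+F(T-s)+M(T-s)$, specialize it to the parameter corresponding to time $t_{0}$, and take $\mathds{E}_{\mathds{P}}[\,\cdot\mid\mathcal{G}(T-s)\,]$ for $0\leqslant s<T-t_{0}$. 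Since $\log\ell(T,X(T))$ and $F(T-s)$ are $\mathcal{G}(T-s)$-measurable, and since $(M(T-r))_{0\leqslant r\leqslant T}$ is a $(\mathcal{G}(T-r))$-martingale so that $\mathds{E}_{\mathds{P}}[M(t_{0})\mid\mathcal{G}(T-s)]=M(T-s)$, the numerator on the left of \eqref{thetsixcorfe} collapses, after the substitution $v=T-u$, to $\mathds{E}_{\mathds{P}}\big[F(t_{0})-F(T-s)\mid\mathcal{G}(T-s)\big]=\mathds{E}_{\mathds{P}}\big[\int_{t_{0}}^{T-s}g(v,X(v))\,\textnormal{d}v\mid\mathcal{G}(T-s)\big]$. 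With $h\vcentcolon = T-t_{0}-s\downarrow0$, the quotient in \eqref{thetsixcorfe} thus becomes
\[
\frac{1}{h}\,\mathds{E}_{\mathds{P}}\!\left[\int_{t_{0}}^{t_{0}+h}g\big(v,X(v)\big)\,\textnormal{d}v \;\Big|\; \mathcal{G}(t_{0}+h)\right],
\]
and the task reduces to showing this converges to $g(t_{0},X(t_{0}))$ in $L^{1}(\mathds{P})$ as $h\downarrow0$.

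Since $g(t_{0},X(t_{0}))$ is $\sigma(X(t_{0}))$-measurable, I would split the displayed quantity as $A_{h}+B_{h}$, with $B_{h}\vcentcolon = \mathds{E}_{\mathds{P}}[g(t_{0},X(t_{0}))\mid\mathcal{G}(t_{0}+h)]$ and $A_{h}\vcentcolon = \tfrac1h\,\mathds{E}_{\mathds{P}}\big[\int_{t_{0}}^{t_{0}+h}(g(v,X(v))-g(t_{0},X(t_{0})))\,\textnormal{d}v\mid\mathcal{G}(t_{0}+h)\big]$. For $B_{h}$: path continuity of $X$ gives $\mathcal{G}(t_{0}+h)\uparrow\mathcal{G}(t_{0})\ni X(t_{0})$ as $h\downarrow0$, while $\mathds{E}_{\mathds{P}}[g(t_{0},X(t_{0}))]=\tfrac12\,I(P(t_{0})\,\vert\,\mathrm{Q})<\infty$ because $t_{0}\notin N$; hence L\'evy's upward convergence theorem yields $B_{h}\to g(t_{0},X(t_{0}))$ in $L^{1}(\mathds{P})$ and a.s. For $A_{h}$: by conditional Jensen and Tonelli, $\mathds{E}_{\mathds{P}}\lvert A_{h}\rvert\leqslant\tfrac1h\int_{t_{0}}^{t_{0}+h}\mathds{E}_{\mathds{P}}\lvert g(v,X(v))-g(t_{0},X(t_{0}))\rvert\,\textnormal{d}v$, and the elementary identity $\lvert a-b\rvert=a+b-2(a\wedge b)$ for $a,b\geqslant0$ rewrites this bound as
\[
\frac1h\int_{t_{0}}^{t_{0}+h}\mathds{E}_{\mathds{P}}[g(v,X(v))]\,\textnormal{d}v \;+\; \mathds{E}_{\mathds{P}}[g(t_{0},X(t_{0}))] \;-\; \frac2h\int_{t_{0}}^{t_{0}+h}\mathds{E}_{\mathds{P}}\big[g(v,X(v))\wedge g(t_{0},X(t_{0}))\big]\,\textnormal{d}v.
\]

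Here the first average equals $\tfrac1h\int_{t_{0}}^{t_{0}+h}\tfrac12\,I(P(v)\,\vert\,\mathrm{Q})\,\textnormal{d}v$, which by the relative entropy identity \eqref{stewrtpefitre} together with the very definition of the exceptional set $N$ converges, for $t_{0}\notin N$, to $\tfrac12\,I(P(t_{0})\,\vert\,\mathrm{Q})=\mathds{E}_{\mathds{P}}[g(t_{0},X(t_{0}))]$; this is the one quantitative input, namely that $t_{0}$ is a Lebesgue point of $v\mapsto I(P(v)\,\vert\,\mathrm{Q})$. For the third term, continuity of $(t,x)\mapsto\nabla\log p(t,x)$ on $(0,\infty)\times\mathds{R}^{n}$ (from \hyperref[sosaojkoia]{Assumptions~\ref*{sosaojkoia}}) together with path continuity of $X$ gives $g(v,X(v))\to g(t_{0},X(t_{0}))$ a.s.\ as $v\downarrow t_{0}$, whence $g(v,X(v))\wedge g(t_{0},X(t_{0}))\to g(t_{0},X(t_{0}))$ a.s.; dominated convergence with $L^{1}$-dominating function $g(t_{0},X(t_{0}))$ then gives $\mathds{E}_{\mathds{P}}[g(v,X(v))\wedge g(t_{0},X(t_{0}))]\to\mathds{E}_{\mathds{P}}[g(t_{0},X(t_{0}))]$ pointwise in $v$, hence also in Ces\`aro mean. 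Adding the three contributions, the bound for $\mathds{E}_{\mathds{P}}\lvert A_{h}\rvert$ tends to $0$, so $A_{h}\to0$ in $L^{1}(\mathds{P})$; combined with the limit of $B_{h}$, this establishes \eqref{thetsixcorfe}.

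The delicate feature, and the main obstacle, is the coupled double limit: the conditioning $\sigma$-algebra $\mathcal{G}(t_{0}+h)$ shrinks toward $\mathcal{G}(t_{0})$ while the time-average runs over the \emph{same} vanishing window $[t_{0},t_{0}+h]$, so one cannot na\"ively interchange the operations. The $A_{h}/B_{h}$ decoupling above is precisely what separates a reverse-martingale convergence ($B_{h}$) from a Lebesgue-point plus dominated-convergence statement ($A_{h}$), and the only place where the hypothesis $t_{0}\notin N$ is genuinely used is to force $\tfrac1h\int_{t_{0}}^{t_{0}+h}I(P(v)\,\vert\,\mathrm{Q})\,\textnormal{d}v\to I(P(t_{0})\,\vert\,\mathrm{Q})$ through \eqref{stewrtpefitre}; everything else is soft. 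One caveat worth flagging: the a.s.\ continuity $g(v,X(v))\to g(t_{0},X(t_{0}))$ invoked for $A_{h}$ relies on $t_{0}>0$, where $\nabla\log p$ is continuous, so the value $t_{0}=0$ (should it lie in $\mathds{R}_{+}\setminus N$ at all) would require a short separate argument, e.g.\ by approximation from $t_{0}>0$.
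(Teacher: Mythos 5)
Your argument is correct, and its first step coincides exactly with the paper's: you use the backward martingale property of $M$ from Theorem \ref{thetsix} to collapse the numerator of \eqref{thetsixcorfe} to $\mathds{E}_{\mathds{P}}\big[F(t_{0})-F(T-s)\mid\mathcal{G}(T-s)\big]$. Where you diverge is in how this conditional Lebesgue-differentiation step is finished: the paper at this point simply invokes the measure-theoretic Proposition \ref{probamtr} of Appendix \ref{apsecamtr} (with $B$ as in \eqref{choofbfi} and $C\equiv 0$, the proof being delegated to the extended arXiv version), whereas you prove the needed statement directly. Your $A_{h}/B_{h}$ decomposition — Lévy's upward theorem for $B_{h}$ (using the continuity of the backwards filtration and the finiteness of $I(P(t_{0})\,\vert\,\mathrm{Q})$, which indeed follows from $t_{0}\notin N$), and for $A_{h}$ the Scheffé-type identity $\vert a-b\vert=a+b-2(a\wedge b)$ combined with the Lebesgue-point property of $v\mapsto I(P(v)\,\vert\,\mathrm{Q})$ at $t_{0}$ (which, via \eqref{stewrtpefitre}, is precisely the meaning of $t_{0}\notin N$) and dominated convergence for the minimum — is essentially a self-contained proof of the special case of Proposition \ref{probamtr} that the paper uses, and it makes explicit where the hypothesis $t_{0}\notin N$ enters. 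What the paper's route buys is reusability: the same abstract lemma is invoked again, with $C\not\equiv 0$, in the perturbed Proposition \ref{thetthretvcor}; what your route buys is transparency, since the reader sees the mechanism without consulting the appendix of the companion paper. Your caveat about $t_{0}=0$ (where continuity of $\nabla\log p$ is only guaranteed on $(0,\infty)\times\mathds{R}^{n}$) is fair but does not put you at a disadvantage, since the paper's application of Proposition \ref{probamtr} rests on the same continuity input.
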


\begin{remark} The limiting assertion \hyperref[thetsixcorfe]{(\ref*{thetsixcorfe})} of \hyperref[thetsixcor]{Proposition \ref*{thetsixcor}} is the conditional trajectorial version of the generalized de Bruijn identity \hyperref[flfffl]{(\ref*{flfffl})}.
\end{remark}

\begin{proof}[Proof of \texorpdfstring{\hyperref[thetsixcor]{Proposition \ref*{thetsixcor}}}{} from \texorpdfstring{\hyperref[thetsix]{Theorem \ref*{thetsix}}}{}:] Let $t_{0} \in \mathds{R}_{+} \setminus N$, i.e., so that the right-sided limiting assertion \hyperref[rgtsflffflnv]{(\ref*{rgtsflffflnv})} is valid, and select $T > t_{0}$. The martingale property of the process in \hyperref[dollafipim]{(\ref*{dollafipim})} allows us to write the numerator in \hyperref[thetsixcorfe]{(\ref*{thetsixcorfe})} as
\begin{equation}
\mathds{E}_{\mathds{P}}\Big[ F(t_{0})-F(T-s) \ \big\vert \ \mathcal{G}(T-s) \Big], \qquad 0 \leqslant s \leqslant T-t_{0}
\end{equation}
in the notation of \hyperref[fispdaftr]{(\ref*{fispdaftr})}, which expresses the process $(F(T-s))_{0 \leqslant s \leqslant T}$ as the primitive of 
\begin{equation} \label{choofbfi}
B(u) = \frac{1}{2} \frac{\big\vert \nabla \ell\big(T-u,X(T-u)\big) \big\vert^{2}}{\ell\big(T-u,X(T-u)\big)^{2}} \, , \qquad 0 \leqslant u \leqslant T.
\end{equation}
By analogy with the derivation of \hyperref[flfffl]{(\ref*{flfffl})} from \hyperref[stewrtpefitre]{(\ref*{stewrtpefitre})}, where we calculated real-valued expectations, we rely on the Lebesgue differentiation theorem to obtain the corresponding result \hyperref[thetsixcorfe]{(\ref*{thetsixcorfe})} for conditional expectations. Using the left-continuity of the backwards filtration $(\mathcal{G}(T-s))_{0 \leqslant s \leqslant T}$, we can invoke the measure-theoretic result in \hyperref[probamtr]{Proposition \ref*{probamtr}} of \hyperref[apsecamtr]{Appendix \ref*{apsecamtr}}, with the choice of the process $B$ as in \hyperref[choofbfi]{(\ref*{choofbfi})} and $C \equiv 0$. This establishes the claim \hyperref[thetsixcorfe]{(\ref*{thetsixcorfe})}.
\end{proof}

\begin{proposition}[\textsf{Trajectorial rate of relative entropy dissipation under perturbations}] \label{thetthretvcor} Under the \textnormal{\hyperref[sosaojkoia]{Assumptions \ref*{sosaojkoia}}}, we let $t_{0} \in \mathds{R}_{+} \setminus N$ and $T > t_{0}$. Then the perturbed relative entropy process \textnormal{\hyperref[prerepitufdllpitufd]{(\ref*{prerepitufdllpitufd})}} satisfies the trajectorial relations
\begingroup
\addtolength{\jot}{0.7em}
\begin{alignat}{1}
&\begin{aligned} \label{thetsixcorfes}
& \lim_{s \uparrow T-t_{0}} \, \frac{\mathds{E}_{\mathds{P}^{\beta}}\Big[ \log \ell^{\beta}\big(t_{0},X(t_{0})\big) \ \big\vert \ \mathcal{G}(T-s) \Big] - \log \ell^{\beta} \big( T-s,X(T-s)\big)}{T-t_{0}-s}  \\
& \qquad = \frac{1}{2}\frac{\big\vert \nabla \ell\big(t_{0},X(t_{0})\big) \big\vert^{2}}{\ell\big(t_{0},X(t_{0})\big)^{2}} - \operatorname{div} \beta\big(X(t_{0})\big)  
+ \Big\langle \beta\big(X(t_{0})\big) \, , \, 2 \, \nabla \Psi\big(X(t_{0})\big) \Big\rangle, 
\end{aligned} \\[11pt]
&\begin{aligned} \label{thetsixcorfessl}
& \lim_{s \uparrow T-t_{0}} \, \frac{\mathds{E}_{\mathds{P}}\Big[ \log \ell^{\beta}\big(t_{0},X(t_{0})\big) \ \big\vert \ \mathcal{G}(T-s) \Big] - \log \ell^{\beta} \big( T-s,X(T-s)\big)}{T-t_{0}-s} \\
& \qquad = \frac{1}{2}\frac{\big\vert \nabla \ell\big(t_{0},X(t_{0})\big) \big\vert^{2}}{\ell\big(t_{0},X(t_{0})\big)^{2}} -  \operatorname{div} \beta\big(X(t_{0})\big)  
- \Big\langle \beta\big(X(t_{0})\big) \, , \, \nabla  \log p \big(t_{0},X(t_{0})\big) \Big\rangle , 
\end{aligned} \\[11pt]
&\begin{aligned} \label{titmlwhtcfs}
& \lim_{s \uparrow T-t_{0}} \, \frac{\log \ell^{\beta}\big(T-s,X(T-s)\big) - \log \ell\big(T-s,X(T-s)\big)}{T-t_{0}-s}  \\
& \qquad = \operatorname{div} \beta\big(X(t_{0})\big) + \Big\langle \beta\big(X(t_{0})\big) \, , \, \nabla  \log p\big(t_{0},X(t_{0})\big) \Big\rangle, 
\end{aligned}
\end{alignat}
\endgroup
where the limits in \textnormal{\hyperref[thetsixcorfes]{(\ref*{thetsixcorfes})}} -- \textnormal{\hyperref[titmlwhtcfs]{(\ref*{titmlwhtcfs})}} exist in both $L^{1}(\mathds{P})$ and $L^{1}(\mathds{P}^{\beta})$.
\end{proposition}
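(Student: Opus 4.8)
The plan is to mirror the derivation of Proposition \ref{thetsixcor} from Theorem \ref{thetsix}, but now working with the perturbed objects of Theorem \ref{thetthretv} and then disentangling the three claimed limits by purely algebraic manipulations at the endpoint $s = T - t_0$. First I would treat \hyperref[thetsixcorfes]{(\ref*{thetsixcorfes})}: the martingale property of the process $M^{\beta}$ in \hyperref[dollafipimps]{(\ref*{dollafipimps})} under $\mathds{P}^{\beta}$ lets me rewrite the numerator in \hyperref[thetsixcorfes]{(\ref*{thetsixcorfes})} as the conditional expectation $\mathds{E}_{\mathds{P}^{\beta}}[\, F^{\beta}(t_0) - F^{\beta}(T-s) \mid \mathcal{G}(T-s)\,]$, which exhibits $(F^{\beta}(T-s))$ as the primitive of the integrand $B^{\beta}(u) = \tfrac12 |\nabla \ell^{\beta}(T-u,X(T-u))|^2 / \ell^{\beta}(T-u,X(T-u))^2 + (\langle \beta, 2\nabla\Psi\rangle - \operatorname{div}\beta)(X(T-u))$ appearing in \hyperref[fispdaftrps]{(\ref*{fispdaftrps})}. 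Since $\beta$ is smooth and compactly supported and the paths of $X$ are continuous, $u \mapsto B^{\beta}(u)$ is continuous at $u = T - t_0$ along almost every trajectory and is dominated suitably; so I may invoke the measure-theoretic differentiation result of Proposition \ref{probamtr} (the conditional Lebesgue differentiation theorem, using the left-continuity of $(\mathcal{G}(T-s))$) with this $B^{\beta}$ and $C \equiv 0$, exactly as in the proof of Proposition \ref{thetsixcor}. This yields \hyperref[thetsixcorfes]{(\ref*{thetsixcorfes})}, the value at $u = T - t_0$ being $\tfrac12 |\nabla \ell^{\beta}(t_0,X(t_0))|^2/\ell^{\beta}(t_0,X(t_0))^2 + (\langle \beta, 2\nabla\Psi\rangle - \operatorname{div}\beta)(X(t_0))$; but at the point $t_0$ we have $\ell^{\beta}(t_0,\cdot) = \ell(t_0,\cdot)$ by the initial condition \hyperref[pic]{(\ref*{pic})}, which replaces $\nabla \ell^{\beta}(t_0,\cdot)/\ell^{\beta}(t_0,\cdot)$ by $\nabla\ell(t_0,\cdot)/\ell(t_0,\cdot)$ and gives the stated right-hand side.

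Next I would establish \hyperref[titmlwhtcfs]{(\ref*{titmlwhtcfs})}, the trajectorial comparison of the perturbed and unperturbed relative entropy processes. Writing $\log \ell^{\beta}(T-s,X(T-s)) - \log \ell(T-s,X(T-s)) = \log\big(p^{\beta}(T-s,X(T-s))/p(T-s,X(T-s))\big)$, I would use the bound \hyperref[ilpdepvhfv]{(\ref*{ilpdepvhfv})} from Lemma \ref{hctclwittmeitpocasot} to control the size of this ratio (it is $O(t - t_0)$, vanishing at $s = T - t_0$), together with a first-order expansion whose derivative is governed by \hyperref[ilpdepvhfvsv]{(\ref*{ilpdepvhfvsv})}. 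The cleaner route, however, is to observe that subtracting the unperturbed identity from \hyperref[dollafipimps]{(\ref*{dollafipimps})} and its analogue \hyperref[dollafipim]{(\ref*{dollafipim})}, and then dividing by $T - t_0 - s$ and sending $s \uparrow T - t_0$, the two martingale terms' contributions vanish in $L^1$ (their quadratic variations are $O((T-t_0-s)^2)$ by \hyperref[ilpdepvhfvsv]{(\ref*{ilpdepvhfvsv})}, so $L^2$-differences tend to $0$ faster than linearly), while the difference $F^{\beta}(T-s) - F(T-s)$ divided by $T - t_0 - s$ tends, again by the conditional Lebesgue differentiation theorem, to $\big(\langle \beta, 2\nabla\Psi\rangle - \operatorname{div}\beta\big)(X(t_0))$ plus the limit of $\tfrac12\big(|\nabla\ell^{\beta}/\ell^{\beta}|^2 - |\nabla\ell/\ell|^2\big)(T-u,X(T-u))$ at $u = T-t_0$, and the latter is $0$ since $\ell^{\beta}(t_0,\cdot) = \ell(t_0,\cdot)$ forces the score functions to agree at $t_0$. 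This produces $\big(\langle \beta, 2\nabla\Psi\rangle - \operatorname{div}\beta\big)(X(t_0))$; rewriting $2\nabla\Psi = \nabla\log\ell(t_0,\cdot) - \nabla\log p(t_0,\cdot)$ and then checking that the $\langle\beta,\nabla\log\ell\rangle$ piece cancels appropriately gives the claimed form $\operatorname{div}\beta(X(t_0)) + \langle\beta(X(t_0)),\nabla\log p(t_0,X(t_0))\rangle$ — here I should double-check signs, since the displayed right-hand side of \hyperref[titmlwhtcfs]{(\ref*{titmlwhtcfs})} has $+\operatorname{div}\beta$, so the correct bookkeeping is that the perturbed minus unperturbed difference at $t_0$ equals $+(\operatorname{div}\beta + \langle\beta,\nabla\log p\rangle)(X(t_0)) = -(\langle\beta,2\nabla\Psi\rangle - \operatorname{div}\beta)(X(t_0))$ after the integration-by-parts identity already recorded in the proof of Theorem \ref{thetthre}; I will carry the signs carefully in the full proof.

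Finally, \hyperref[thetsixcorfessl]{(\ref*{thetsixcorfessl})} follows by combining the previous two: the numerator there differs from the numerator of \hyperref[thetsixcorfes]{(\ref*{thetsixcorfes})} only in the reference measure ($\mathds{P}$ versus $\mathds{P}^{\beta}$), and by \hyperref[ilpdepvhfv]{(\ref*{ilpdepvhfv})} — which bounds the Radon--Nikodým-type ratio $\ell^{\beta}/\ell$ uniformly on $[t_0,T]\times\mathds{R}^n$ and shows it tends to $1$ as $t \downarrow t_0$ — it is immaterial in the limit whether we condition and average under $\mathds{P}$ or $\mathds{P}^{\beta}$; alternatively, one adds \hyperref[thetsixcorfe]{(\ref*{thetsixcorfe})} (which involves only $\mathds{P}$) to \hyperref[titmlwhtcfs]{(\ref*{titmlwhtcfs})}, since $\log\ell^{\beta} = \log\ell + (\log\ell^{\beta} - \log\ell)$, and this immediately gives the right-hand side of \hyperref[thetsixcorfessl]{(\ref*{thetsixcorfessl})} with the $2\nabla\Psi$ in \hyperref[thetsixcorfes]{(\ref*{thetsixcorfes})} replaced by $-\nabla\log p$ via $2\nabla\Psi = \nabla\log\ell - \nabla\log p$ at $t_0$. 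The $L^1(\mathds{P})$ and $L^1(\mathds{P}^{\beta})$ convergence in all three displays is inherited from the $L^1$-convergence furnished by Proposition \ref{probamtr} together with the equivalence of $\mathds{P}$ and $\mathds{P}^{\beta}$ on each $\mathcal{F}(T)$ (with bounded density, by Girsanov and the compact support of $\beta$), which transfers $L^1$-convergence between the two measures.

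The main obstacle I anticipate is not any single computation but the rigorous justification that the martingale increments contribute nothing to the limits in \hyperref[titmlwhtcfs]{(\ref*{titmlwhtcfs})} and \hyperref[thetsixcorfessl]{(\ref*{thetsixcorfessl})} — i.e., that $M^{\beta}(T-s) - M^{\beta}(T-(T-t_0))$ and the analogous unperturbed increment, divided by $T-t_0-s$, vanish in $L^1$ as $s \uparrow T - t_0$. For the unperturbed piece this is exactly the content already used (via Proposition \ref{probamtr}) in Proposition \ref{thetsixcor}; for the difference of the two martingales one must control the quadratic variation of the difference, which by \hyperref[erdollafipim]{(\ref*{erdollafipim})}, \hyperref[erdollafipimps]{(\ref*{erdollafipimps})} is $\int_0^s |\nabla\log\ell^{\beta} - \nabla\log\ell|^2(T-u,X(T-u))\,\mathrm{d}u$ evaluated near the endpoint, and the quantitative estimate \hyperref[ilpdepvhfvsv]{(\ref*{ilpdepvhfvsv})} of Lemma \ref{hctclwittmeitpocasot} delivers precisely the $O((T-t_0-s)^2)$ bound needed to conclude via the Burkholder--Davis--Gundy inequality. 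So the delicate point is bookkeeping: making sure the endpoint of the time-reversal clock, the left-continuity of $(\mathcal{G}(T-s))$, and the hypothesis $t_0 \notin N$ (which guarantees the unperturbed derivative exists there) are all invoked consistently.
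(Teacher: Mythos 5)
Your argument for (\ref{thetsixcorfes}) follows the paper's: the $\mathds{P}^{\beta}$-martingale property of (\ref{dollafipimps}), the conditional Lebesgue differentiation result of \hyperref[probamtr]{Proposition \ref*{probamtr}}, and the bounded mutual densities of \hyperref[hctclwittmeitpocasotpv]{Lemma \ref*{hctclwittmeitpocasotpv}} to transfer $L^{1}(\mathds{P}^{\beta})$-convergence to $L^{1}(\mathds{P})$. Two repairs are needed, though: \hyperref[probamtr]{Proposition \ref*{probamtr}} requires the split into a \emph{non-negative} $B$ and a \emph{bounded} $C$, so you cannot take $C \equiv 0$ with the signed term $\langle \beta, 2\nabla\Psi\rangle - \operatorname{div}\beta$ folded into $B$; and the unconditional Lebesgue-point property (\ref{titlocotn}) for the \emph{perturbed} Fisher integrand at the given $t_{0}$ does not follow from path continuity alone --- it is exactly what the estimates (\ref{ilpdepvhfv})--(\ref{ilpdepvhfvsv}) and the identity of exceptional sets in (\ref{tciwsolafophvs}) extract from the hypothesis $t_{0} \notin N$.

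The genuine gap is in (\ref{titmlwhtcfs}), and it propagates to your treatment of (\ref{thetsixcorfessl}). You subtract the decompositions (\ref{dollafipim}) and (\ref{dollafipimps}) and claim the normalized increment of $M^{\beta}-M$ vanishes via its quadratic variation and BDG. But $M$ and $M^{\beta}$ are integrals against \emph{different} backward Brownian motions, $\overline{W}^{\mathds{P}}$ and $\overline{W}^{\mathds{P}^{\beta}}$, which by \hyperref[cotttrbmpapb]{Lemma \ref*{cotttrbmpapb}} differ by the finite-variation drift $\big(\beta + \nabla\log(\ell^{\beta}/\ell)\big)\big(X(T-s)\big)\,\textnormal{d}s$; hence $M^{\beta}-M$ is not a local martingale under either measure and BDG does not apply to it. Rewriting $M^{\beta}$ against $\overline{W}^{\mathds{P}}$ produces, besides the genuinely negligible martingale $\int \langle \nabla\log(\ell^{\beta}/\ell), \textnormal{d}\overline{W}^{\mathds{P}}\rangle$, a drift $\int \langle \nabla\log\ell^{\beta}, \beta + \nabla\log(\ell^{\beta}/\ell)\rangle\,\textnormal{d}u$ whose normalized increment converges to $\langle \nabla\log\ell(t_{0},X(t_{0})), \beta(X(t_{0}))\rangle$, not to zero. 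Without this term your computation yields $\big(\operatorname{div}\beta - \langle\beta, 2\nabla\Psi\rangle\big)(X(t_{0}))$, which differs from the asserted limit $\big(\operatorname{div}\beta + \langle\beta, \nabla\log p(t_{0},\cdot)\rangle\big)(X(t_{0}))$ by exactly $\langle\beta, \nabla\log\ell(t_{0},\cdot)\rangle(X(t_{0}))$ --- a quantity that is non-zero pointwise and even in expectation, so no sign bookkeeping and certainly not the unconditional integration-by-parts identity (\ref{telimitthetthretv}) can absorb it. For the same reason your first route to (\ref{thetsixcorfessl}) (``the conditioning measure is immaterial because $\textnormal{d}\mathds{P}^{\beta}/\textnormal{d}\mathds{P}$ is bounded'') cannot work: the limits in (\ref{thetsixcorfes}) and (\ref{thetsixcorfessl}) differ by this same non-zero random variable, so the change of measure matters at first order. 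The paper sidesteps all of this by working directly with the ratio $Y^{\beta} = \ell^{\beta}/\ell$ and its backward dynamics (\ref{logybetapro}) from \hyperref[wsublnsibvn]{Lemma \ref*{wsublnsibvn}}, where the $\beta$-drift is explicit, handling the four normalized terms with \hyperref[hctclwittmeitpocasot]{Lemma \ref*{hctclwittmeitpocasot}} (and this argument gives (\ref{titmlwhtcfs}) for \emph{every} $t_{0}>0$); your alternative derivation of (\ref{thetsixcorfessl}) by combining (\ref{thetsixcorfe}) with (\ref{titmlwhtcfs}) is algebraically sound once (\ref{titmlwhtcfs}) is repaired by inserting the missing drift correction.
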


\begin{remark} It is noteworthy that the three limiting expressions in \hyperref[thetsixcorfes]{(\ref*{thetsixcorfes})}, \hyperref[thetsixcorfessl]{(\ref*{thetsixcorfessl})} and \hyperref[titmlwhtcfs]{(\ref*{titmlwhtcfs})} are quite different from each other. The first limiting assertion \hyperref[thetsixcorfes]{(\ref*{thetsixcorfes})} of \hyperref[thetthretvcor]{Proposition \ref*{thetthretvcor}} is the conditional trajectorial version of the perturbed de Bruijn identity \hyperref[flffflpv]{(\ref*{flffflpv})}. We also note that in fact the third limiting assertion \hyperref[titmlwhtcfs]{(\ref*{titmlwhtcfs})} is valid for all $t_{0} > 0$.
\end{remark}

\begin{proof}[Proof of \texorpdfstring{\hyperref[thetsixcorfes]{\textnormal{(\ref*{thetsixcorfes})}}}{} from \texorpdfstring{\hyperref[thetthretv]{Theorem \ref*{thetthretv}}}{}:] Let $t_{0} \in \mathds{R}_{+} \setminus N$, i.e., so that the right-sided limiting assertion \hyperref[rgtsflffflnv]{(\ref*{rgtsflffflnv})} is valid, and select $T > t_{0}$. In \hyperref[tciwsolafophvs]{(\ref*{tciwsolafophvs})} from \hyperref[thetsixcoraopv]{Corollary \ref*{thetsixcoraopv}} of \hyperref[thetthretv]{Theorem \ref*{thetthretv}} we have seen that the limits in \hyperref[rgtsflffflnv]{(\ref*{rgtsflffflnv})} and \hyperref[flffflpv]{(\ref*{flffflpv})} have the same exceptional sets, hence the limiting identity \hyperref[flffflpv]{(\ref*{flffflpv})} also holds. Now, for such $t_{0} \in \mathds{R}_{+} \setminus N$, we show the limiting assertion \hyperref[thetsixcorfes]{(\ref*{thetsixcorfes})} in the same way as the assertion \hyperref[thetsixcorfe]{(\ref*{thetsixcorfe})} in the proof of \hyperref[thetsixcor]{Proposition \ref*{thetsixcor}} above. Indeed, this time we invoke the $\mathds{P}^{\beta}$-martingale property of the process in \hyperref[dollafipimps]{(\ref*{dollafipimps})}, and write the numerator on the first line of \hyperref[thetsixcorfes]{(\ref*{thetsixcorfes})} as $\mathds{E}_{\mathds{P}^{\beta}}\big[ F^{\beta}(t_{0})-F^{\beta}(T-s) \ \big\vert \ \mathcal{G}(T-s) \big]$, $0 \leqslant s \leqslant T-t_{0}$, in the notation of \hyperref[fispdaftrps]{(\ref*{fispdaftrps})}, which expresses the process $(F^{\beta}(T-s))_{0 \leqslant s \leqslant T-t_{0}}$ as the primitive of $(B(u)+C(u))_{0 \leqslant s \leqslant T-t_{0}}$, with
\begin{equation} \label{clno}
B(u) = \frac{1}{2}\frac{\big\vert \nabla \ell^{\beta}\big(T-u,X(T-u)\big) \big\vert^{2}}{\ell^{\beta}\big(T-u,X(T-u)\big)^{2}}, \qquad C(u) = \Big( \big\langle \beta \, , \, 2 \, \nabla \Psi \big\rangle - \operatorname{div} \beta \Big) \big(X(T-u)\big).
\end{equation}
\smallskip

Applying \hyperref[probamtr]{Proposition \ref*{probamtr}} of \hyperref[apsecamtr]{Appendix \ref*{apsecamtr}} in this situation proves the limiting identity \hyperref[thetsixcorfes]{(\ref*{thetsixcorfes})} in $L^{1}(\mathds{P}^{\beta})$. As we shall see in \hyperref[hctclwittmeitpocasotpv]{Lemma \ref*{hctclwittmeitpocasotpv}} of \hyperref[subsomusefullem]{Subsection \ref*{subsomusefullem}} below, the probability measures $\mathds{P}$ and $\mathds{P}^{\beta}$ are equivalent, and the mutual Radon--Nikod\'{y}m derivatives $\frac{\textnormal{d}\mathds{P}^{\beta}}{\textnormal{d}\mathds{P}}$ and $\frac{\textnormal{d}\mathds{P}}{\textnormal{d}\mathds{P}^{\beta}}$ are bounded on the $\sigma$-algebra $\mathcal{F}(T) = \mathcal{G}(0)$ (recall, in this vein, the claims of \hyperref[ilpdepvhfv]{(\ref*{ilpdepvhfv})}). Hence, convergence in $L^{1}(\mathds{P})$ is equivalent to convergence in $L^{1}(\mathds{P}^{\beta})$. This readily proves assertion \hyperref[thetsixcorfes]{(\ref*{thetsixcorfes})}.

\smallskip

The proofs of the limiting assertions \hyperref[thetsixcorfessl]{(\ref*{thetsixcorfessl})} and \hyperref[titmlwhtcfs]{(\ref*{titmlwhtcfs})} are postponed to \hyperref[nspofthetthretvcor]{Subsection \ref*{nspofthetthretvcor}}.
\end{proof}

%%%%%%%%%%%%%%%%%%%%%%%%%%%%%%%%%%%%%%%%%%%%%%%%%%%%%%%%%%%%%%%%%%%%%%%%%%%%%%%%%%%%%%%%%%%%%%%%%%%%%%%%%%%%%%%%%%%%%%%%%%%%%%%%%%%%%%%%%%%%%%%%%%%%%%%%%%%%%%%%%%%%%%%%%%%%%%%%%%%%%%%%%%%%%%%%%%%%%%%%%%%%%%%%%%%%%%%%%%%&&&&&&&&&&&&&&&&&&&&&&&&&&&&&&&&&

\subsection{A trajectorial proof of the HWI inequality} \label{ramifications}

%%%%%%%%%%%%%%%%%%%%%%%%%%%%%%%%%%%%%%%%%%%%%%%%%%%%%%%%%%%%%%%%%%%%%%%%%%%%%%%%%%%%%%%%%%%%%%%%%%%%%%%%%%%%%%%%%%%%%%%%%%%%%%%%%%%%%%%%%%%%%%%%%%%%%%%%%%%%%%%%%%%%%%%%%%%%%%%%%%%%%%%%%%%%%%%%%%%%%%%%%%%%%%%%%%%%%%%%%%%&&&&&&&&&&&&&&&&&&&&&&&&&&&&&&&&&

The aim of this section is to provide a proof of the celebrated \textit{HWI inequality} due to Otto and Villani \cite{OV00} by applying trajectorial arguments similar to those in \hyperref[thetsix]{Theorem \ref*{thetsix}}, in fact quite easier. We thus obtain an intuitive geometric picture and deduce the sharpened form of the HWI inequality; see also \cite{CE02}, \cite{OV00} and \cite[p.\ 650]{Vil09}).
 
\smallskip

The goal is to compare the relative entropies $H(P_{0} \, \vert \, \mathrm{Q})$ and $H(P_{1} \, \vert \, \mathrm{Q})$ for arbitrary probability measures $P_{0}, P_{1} \in \mathscr{P}_{2}(\mathds{R}^{n})$. Using \textit{Brenier's theorem} \cite{Bre91}, we first define the constant speed geodesic $(P_{t})_{0 \leqslant t \leqslant 1}$ between $P_{0}$ and $P_{1}$ with respect to the Wasserstein distance $W_{2}$ (details are given below). We remark, that we have chosen the subscript notation for $P_{t}$ in order to avoid confusion with the probability measure $P(t)$ from our \hyperref[snaas]{Section \ref*{snaas}} here. With $p_{t}(\, \cdot \,)$ the density function of the probability measure $P_{t}$, we define the likelihood ratio function 
\begin{equation} \label{nlrfitramfcc}
\ell_{t}(x) \vcentcolon = \frac{p_{t}(x)}{q(x)}, \qquad (t,x) \in [0,1] \times \mathds{R}^{n}.
\end{equation}

\smallskip

We shall investigate the behavior of the relative entropy function $t \mapsto f(t) \vcentcolon = H( P_{t} \, \vert \, \mathrm{Q})$ along the constant speed geodesic $(P_{t})_{0 \leqslant t \leqslant 1}$ by estimating two quantities: First, we want a lower bound on the first derivative $f'(0^{+})$. Secondly, we want a lower bound on the second derivative $(f''(t))_{0 \leqslant t \leqslant 1}$. It should be geometrically obvious (and will be spelled out in the proof of \hyperref[hwiai]{Theorem \ref*{hwiai}} below) that information on these two lower bounds leads to a lower bound on $f(1) - f(0)$. The latter is the content of the HWI inequality. As regards the second derivative $(f''(t))_{0 \leqslant t \leqslant 1}$, we shall rely on a fundamental result on displacement convexity due to McCann \cite{McC97} and have no novel contribution. As regards $f'(0^{+})$, however, we shall obtain a sharp estimate for this quantity by applying a trajectorial reasoning similar to that deployed in the proof of \hyperref[thetsix]{Theorem \ref*{thetsix}}. 

\smallskip

We will define an $\mathds{R}^{n}$-valued stochastic process $(X_{t})_{0 \leqslant t \leqslant 1}$, with marginal distributions $(P_{t})_{0 \leqslant t \leqslant 1}$ moving along straight lines in $\mathds{R}^{n}$, and calculate the relevant quantities of this finite variation process along every trajectory, by analogy with the proof of \hyperref[thetsix]{Theorem \ref*{thetsix}}. This gives the desired bound (and actually an equality) for the derivative $f'(0^{+})$. 

\medskip

We now cast these ideas into formal terms. The first step is to calculate the decay of the relative entropy function $t \mapsto H( P_{t} \, \vert \, \mathrm{Q})$ along the ``straight line'' $(P_{t})_{0 \leqslant t \leqslant 1}$ joining the elements $P_{0}$ and $P_{1}$ in $\mathscr{P}_{2}(\mathds{R}^{n})$. To this end, we impose temporarily the following strong regularity conditions. In the proof of \hyperref[hwiai]{Theorem \ref*{hwiai}} we shall see that these will not restrict the generality of the argument.

\begin{assumptions}[\textsf{Regularity assumptions of \texorpdfstring{\hyperref[hlotl]{Lemma \ref*{hlotl}}}{}}] \label{hwisosaojkoia} We impose that $P_{0}$ and $P_{1}$ are probability measures in $\mathscr{P}_{2}(\mathds{R}^{n})$ with smooth densities, which are compactly supported and strictly positive in the interior of their respective supports. Hence there exists a map $\gamma \colon \mathds{R}^{n} \rightarrow \mathds{R}^{n}$ of the form $\gamma(x) = \nabla(G(x) - \vert x \vert^{2}/2)$ for some convex function $G \colon \mathds{R}^{n} \rightarrow \mathds{R}$, uniquely defined on and supported by the support of $P_{0}$, and smooth in the interior of this set, such that $\gamma$ induces the optimal quadratic Wasserstein transport from $P_{0}$ to $P_{1}$ via
\begin{equation} \label{hlotlse}
T_{t}^{\gamma}(x) \vcentcolon = x + t \, \gamma(x) = (1-t) \, x + t \, \nabla G(x) \qquad \textnormal{and} \qquad P_{t} \vcentcolon = (T_{t}^{\gamma})_{\#}(P_{0}) = P_{0} \circ (T_{t}^{\gamma})^{-1}
\end{equation}
for $0 \leqslant t \leqslant 1$; to wit, the curve $(P_{t})_{0 \leqslant t \leqslant 1}$ is the displacement interpolation (constant speed geodesic) between $P_{0}$ and $P_{1}$, and we have along it the linear growth of the quadratic Wasserstein distance
\begin{equation} \label{otmitwsnp}
W_{2}(P_{0},P_{t}) = t \, \sqrt{\int_{\mathds{R}^{n}} \vert x - \nabla G(x) \vert^{2} \, \textnormal{d} P_{0}(x)} = t \, \| \gamma \|_{L^{2}(P_{0})}, \qquad 0 \leqslant t \leqslant 1.
\end{equation}
For existence and uniqueness of the optimal transport map $\gamma$ we refer to \cite[Theorem 2.12]{Vil03}, and for its smoothness to \cite[Theorem 4.14]{Vil03} as well as \cite[Remarks 4.15]{Vil03}. These results are known collectively under the rubric of Brenier's theorem \cite{Bre91}. 
\end{assumptions}

Next we compute the slope of the function $t \mapsto H( P_{t} \, \vert \, \mathrm{Q})$ along the straight line $(P_{t})_{0 \leqslant t \leqslant 1}$.

\begin{lemma} \label{hlotl} Under the \textnormal{\hyperref[hwisosaojkoia]{Assumptions \ref*{hwisosaojkoia}}}, let $X_{0} \colon S \rightarrow \mathds{R}^{n}$ be a random variable with probability distribution $P_{0} \in \mathscr{P}_{2}(\mathds{R}^{n})$, defined on some probability space $(S,\mathcal{S},\nu)$. Then we have
\begin{equation} \label{hlotlte}
\lim_{t \downarrow 0} \frac{H(P_{t} \, \vert \, \mathrm{Q}) - H(P_{0} \, \vert \, \mathrm{Q})}{t} = \big\langle \nabla \log \ell_{0}(X_{0}) \, , \, \gamma(X_{0}) \big\rangle_{L^{2}(\nu)}.
\end{equation}
\end{lemma}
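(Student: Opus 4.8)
The plan is to compute the derivative of $t \mapsto H(P_t \mid \mathrm{Q})$ at $t = 0$ directly, by pushing everything onto the fixed probability space $(S,\mathcal{S},\nu)$ via the transport maps $T_t^\gamma$. Since $P_t = (T_t^\gamma)_\#(P_0)$ and $X_0 \sim P_0$, the random variable $X_t \vcentcolon = T_t^\gamma(X_0) = X_0 + t\,\gamma(X_0)$ has distribution $P_t$, so it moves along a straight line with constant velocity $\gamma(X_0)$. The first step is to express $H(P_t \mid \mathrm{Q}) = \mathds{E}_\nu[\log \ell_t(X_t)]$ and to compute $\log \ell_t(X_t)$ explicitly. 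Writing $\log \ell_t(x) = \log p_t(x) + 2\Psi(x)$, the term $2\Psi(X_t)$ is smooth in $t$ with derivative at $0$ equal to $\langle 2\nabla\Psi(X_0), \gamma(X_0)\rangle$. The delicate piece is $\log p_t(X_t)$: here I would use the change-of-variables formula for the pushforward under the diffeomorphism $T_t^\gamma$ (valid on the interior of the support, where $G$ is smooth and $T_t^\gamma$ is injective for small $t$), namely $p_t(T_t^\gamma(x)) = p_0(x) / \det(\mathrm{D}T_t^\gamma(x))$, hence
\begin{equation} \label{hlotlproofcov}
\log p_t(X_t) = \log p_0(X_0) - \log \det\big(I_n + t\,\mathrm{D}\gamma(X_0)\big).
\end{equation}

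Next I would differentiate \eqref{hlotlproofcov} at $t = 0$. Using $\tfrac{\mathrm{d}}{\mathrm{d}t}\big\vert_{t=0} \log\det(I_n + t A) = \operatorname{tr}(A)$, we get $\tfrac{\mathrm{d}}{\mathrm{d}t}\big\vert_{t=0}\log p_t(X_t) = -\operatorname{div}\gamma(X_0)$. Combining with the $\Psi$-term, the pointwise (in $\omega \in S$) derivative of $\log\ell_t(X_t)$ at $t=0$ is $-\operatorname{div}\gamma(X_0) + \langle 2\nabla\Psi(X_0), \gamma(X_0)\rangle$. Then I would integrate this against $\nu$ and recognize, via integration by parts on $\mathds{R}^n$ (exactly as in the proof of Theorem \ref{thetthre}, where the identity $\int(\operatorname{div}\beta - \langle\beta, 2\nabla\Psi\rangle)\,p(t_0,\cdot) = -\langle a, b\rangle$ was established), that
\begin{equation}
\mathds{E}_\nu\big[-\operatorname{div}\gamma(X_0) + \langle 2\nabla\Psi(X_0), \gamma(X_0)\rangle\big] = \int_{\mathds{R}^n}\big(-\operatorname{div}\gamma(x) + \langle 2\nabla\Psi(x),\gamma(x)\rangle\big)p_0(x)\,\mathrm{d}x = \int_{\mathds{R}^n}\langle \nabla\log p_0(x) + 2\nabla\Psi(x), \gamma(x)\rangle\,p_0(x)\,\mathrm{d}x,
\end{equation}
which is precisely $\langle \nabla\log\ell_0(X_0), \gamma(X_0)\rangle_{L^2(\nu)}$, the claimed right-hand side of \eqref{hlotlte}. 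Here the compact support and smoothness of the densities from Assumptions \ref{hwisosaojkoia} make the boundary terms vanish and justify the integration by parts.

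The main obstacle is the interchange of limit (the difference quotient $\tfrac{1}{t}(H(P_t\mid\mathrm{Q}) - H(P_0\mid\mathrm{Q}))$) and expectation — i.e., upgrading the pointwise derivative computation to convergence of the difference quotients in the sense needed for \eqref{hlotlte}. To handle this I would obtain a $t$-uniform (for small $t > 0$) integrable dominating bound on $\tfrac{1}{t}\vert\log\ell_t(X_t) - \log\ell_0(X_0)\vert$. The terms are controlled as follows: $\mathrm{D}\gamma$ is bounded on the compact support, so $\log\det(I_n + t\,\mathrm{D}\gamma(X_0))$ is Lipschitz in $t$ near $0$ with a deterministic constant (and $I_n + t\,\mathrm{D}\gamma$ stays invertible for small $t$ since $G$ is convex, making $\mathrm{D}T_t^\gamma = (1-t)I_n + t\,\mathrm{D}^2 G \succeq 0$); likewise $\Psi$ is $\mathcal{C}^\infty$ and $\gamma(X_0)$ is bounded, so $\vert\Psi(X_t) - \Psi(X_0)\vert \leqslant C t$ on the support. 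Since the relevant region (the support of $P_0$, which contains all the $X_t$ for small $t$ up to a compact neighborhood) is bounded, these bounds are in fact uniform constants, so dominated convergence applies trivially. A secondary technical point is to make sure $T_t^\gamma$ is a diffeomorphism from the interior of $\mathrm{supp}(P_0)$ onto the interior of $\mathrm{supp}(P_t)$ for all $t \in [0,1]$ — this follows from Brenier/McCann theory (the map $x \mapsto (1-t)x + t\nabla G(x)$ is the gradient of the strictly convex function $x \mapsto (1-t)\vert x\vert^2/2 + tG(x)$, hence injective), as already invoked in Assumptions \ref{hwisosaojkoia}; I would cite \cite{McC97} and \cite[Theorem 2.12]{Vil03} for this and move on.
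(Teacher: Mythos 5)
Your route is, in substance, the paper's own: you follow the straight-line trajectories $X_t=X_0+t\,\gamma(X_0)$ of the displacement interpolation, track $\log\ell_t(X_t)$ along them, take expectations, and convert $-\mathds{E}_\nu[\operatorname{div}\gamma(X_0)]$ into $\langle\nabla\log p_0(X_0),\gamma(X_0)\rangle_{L^2(\nu)}$ by exactly the same integration by parts. The only formal difference is that you get $\log p_t(X_t)=\log p_0(X_0)-\log\det\big(I_n+t\,\mathrm{D}\gamma(X_0)\big)$ from the change-of-variables (Monge--Amp\`ere) identity, while the paper derives $\textnormal{d}\log p_t(X_t)=-\operatorname{div}\big(v_t(X_t)\big)\,\textnormal{d}t$ from the continuity equation; these are the integrated and differential forms of one and the same computation, since $\operatorname{div} v_t(X_t)=\operatorname{tr}\big(\mathrm{D}\gamma(X_0)(I_n+t\,\mathrm{D}\gamma(X_0))^{-1}\big)=\tfrac{\textnormal{d}}{\textnormal{d}t}\log\det\big(I_n+t\,\mathrm{D}\gamma(X_0)\big)$.

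There is one concrete unjustified step: the claim that $\mathrm{D}\gamma=\mathrm{D}^2G-I_n$ is bounded on all of $\operatorname{supp}P_0$, which you use to get a deterministic Lipschitz constant for $t\mapsto\log\det\big(I_n+t\,\mathrm{D}\gamma(X_0)\big)$ and hence ``trivial'' dominated convergence. Assumptions \ref{hwisosaojkoia} give smoothness of $G$ only in the \emph{interior} of $\operatorname{supp}P_0$ (Caffarelli's interior regularity, \cite[Theorem 4.14]{Vil03}); boundedness of $\mathrm{D}^2G$ up to the boundary is a strictly stronger statement (it needs global regularity, e.g.\ smooth uniformly convex supports) and does not follow from the stated hypotheses, so the dominating constant you invoke need not exist. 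The gap is repairable without boundary regularity: since $\mathrm{D}^2G\succeq 0$ one has $\det\big(I_n+t\,\mathrm{D}\gamma\big)\geqslant(1-t)^n$, so the difference quotients of $-\log\det$ are bounded above by a constant for $t$ bounded away from $1$; moreover $t\mapsto\log\det\big(I_n+t\,\mathrm{D}\gamma(x)\big)$ is concave and vanishes at $t=0$, so its difference quotients are monotone in $t$ and converge pointwise to $\operatorname{tr}\mathrm{D}\gamma(x)$, and monotone convergence together with the finiteness of $\int\Delta G\,p_0\,\textnormal{d}x$ (pair the nonnegative distributional Laplacian of the convex function $G$ with $p_0\in\mathcal{C}_c^\infty$) gives the interchange of limit and expectation. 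The same caveat applies to your integration by parts, where $\operatorname{div}\gamma$ is controlled only in the interior, so one should exhaust the interior of the support; note, for fairness, that the paper's proof treats these interchanges with a similarly light touch, so with this one repair your argument is on an equal footing with it. (Also, a cosmetic point: invertibility of $\mathrm{D}T_t^\gamma$ for $t<1$ comes from $\mathrm{D}T_t^\gamma\succeq(1-t)I_n\succ0$, not merely $\succeq 0$.)
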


We relegate to \hyperref[polhlotl]{Appendix \ref*{polhlotl}} the proof of \hyperref[hlotl]{Lemma \ref*{hlotl}}, which follows a similar (but considerably simpler) trajectorial line of reasoning as the proof of \hyperref[thetthre]{Theorem \ref*{thetthre}}. Combining \hyperref[hlotl]{Lemma \ref*{hlotl}} with well-known arguments, in particular, with a fundamental result on displacement convexity due to McCann \cite{McC97}, we derive now the HWI inequality of Otto and Villani \cite{OV00}. 

\begin{theorem}[\textsf{HWI inequality \cite{OV00}}] \label{hwiai} We fix $P_{0}, P_{1} \in \mathscr{P}_{2}(\mathds{R}^{n})$ and assume that the relative entropy $H( P_{1} \, \vert \, \mathrm{Q})$ is finite. We suppose in addition that the potential $\Psi \in \mathcal{C}^{\infty}(\mathds{R}^{n};[0,\infty))$ satisfies a curvature lower bound 
\begin{equation} \label{sndcbe}
\textnormal{Hess}(\Psi) \geqslant \kappa \, I_{n},
\end{equation}
for some $\kappa \in \mathds{R}$. Then we have
\begin{equation} \label{sivothwii}
H( P_{0} \, \vert \, \mathrm{Q} ) - H( P_{1} \, \vert \, \mathrm{Q} ) \leqslant - \big\langle \nabla \log \ell_{0}(X_{0}) \, , \, \gamma(X_{0}) \big\rangle_{L^{2}(\nu)} - \tfrac{\kappa}{2} \, W_{2}^{2}(P_{0},P_{1}),
\end{equation}
where the likelihood ratio function $\ell_{0}$, the random variable $X_{0}$, the optimal transport map $\gamma$, and the probability measure $\nu$, are as in \textnormal{\hyperref[hlotl]{Lemma \ref*{hlotl}}}.
\end{theorem}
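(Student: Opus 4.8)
The plan is to prove the HWI inequality by analyzing the relative entropy function $t \mapsto f(t) = H(P_t \,\vert\, \mathrm{Q})$ along the displacement interpolation $(P_t)_{0\leqslant t\leqslant 1}$ and combining a first-order (slope) estimate at $t=0$ with a second-order convexity estimate. First I would reduce to the strong regularity setting of \hyperref[hwisosaojkoia]{Assumptions \ref*{hwisosaojkoia}}: given arbitrary $P_0, P_1 \in \mathscr{P}_2(\mathds{R}^n)$ with $H(P_1\,\vert\,\mathrm{Q})<\infty$, one approximates by measures with smooth, compactly supported, strictly positive densities, checks that all terms in \hyperref[sivothwii]{(\ref*{sivothwii})} pass to the limit (using lower semicontinuity of relative entropy and continuity of $W_2$), and thereby sees that the temporary regularity conditions cost no generality. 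This is the standard density argument and I expect it to be routine, though it should be written carefully since the free energy $\mathcal{E}(p)=\int \Psi p$ can be infinite if $P_0$ is not compactly supported.

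Next, with the regularity in force, \hyperref[hlotl]{Lemma \ref*{hlotl}} gives us the exact slope
\begin{equation*}
f'(0^+) = \big\langle \nabla \log \ell_0(X_0)\,,\,\gamma(X_0)\big\rangle_{L^2(\nu)}.
\end{equation*}
For the second-order behavior I would invoke McCann's displacement convexity: writing $f(t) = \mathcal{E}(p_t) + \tfrac12 \mathcal{S}(p_t)$ via \hyperref[fef]{(\ref*{fef})} and \hyperref[reeef]{(\ref*{reeef})}, the internal energy term $\mathcal{S}(p_t)$ is displacement convex along $(P_t)$ (McCann \cite{McC97}), and the potential energy term $\mathcal{E}(p_t) = \int \Psi(T_t^\gamma(x))\,\textnormal{d}P_0(x)$ is $\kappa$-displacement convex precisely because of the curvature lower bound $\textnormal{Hess}(\Psi)\geqslant \kappa I_n$ in \hyperref[sndcbe]{(\ref*{sndcbe})}: along each straight-line trajectory $t\mapsto T_t^\gamma(x)$ the map $t\mapsto \Psi(T_t^\gamma(x))$ has second derivative $\langle \gamma(x), \textnormal{Hess}(\Psi)(T_t^\gamma(x))\,\gamma(x)\rangle \geqslant \kappa\,\vert\gamma(x)\vert^2$. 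Integrating against $P_0$ and recalling $W_2^2(P_0,P_1) = \|\gamma\|_{L^2(P_0)}^2$ from \hyperref[otmitwsnp]{(\ref*{otmitwsnp})}, we obtain
\begin{equation*}
f''(t) \geqslant \kappa\, W_2^2(P_0,P_1), \qquad 0\leqslant t\leqslant 1,
\end{equation*}
in the appropriate (distributional / a.e.) sense.

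The final step is the elementary calculus lemma: for a function $f$ on $[0,1]$ with $f''\geqslant \kappa W_2^2$ everywhere, Taylor's theorem with integral remainder gives $f(1) \geqslant f(0) + f'(0^+) + \tfrac{\kappa}{2}W_2^2(P_0,P_1)$, i.e.
\begin{equation*}
H(P_0\,\vert\,\mathrm{Q}) - H(P_1\,\vert\,\mathrm{Q}) = f(0) - f(1) \leqslant -f'(0^+) - \tfrac{\kappa}{2}W_2^2(P_0,P_1),
\end{equation*}
which is exactly \hyperref[sivothwii]{(\ref*{sivothwii})} once the value of $f'(0^+)$ from \hyperref[hlotl]{Lemma \ref*{hlotl}} is substituted. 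The main obstacle, I expect, is not any single estimate but the careful handling of regularity at the two seams: first, justifying the approximation argument when $P_1$ has finite relative entropy but $P_0, P_1$ are otherwise arbitrary (one must ensure $f'(0^+)$ and the $W_2^2$ term converge, and that no spurious $+\infty$ appears on the left); and second, making the convexity-plus-Taylor argument rigorous when $f$ is only known to be (semi)convex rather than $C^2$ — here one uses that a function whose distributional second derivative dominates a constant is itself a constant plus a convex function, so its right derivative at $0$ exists and the integral form of Taylor's inequality applies. Everything else — Lemma \ref*{hlotl}, displacement convexity of the entropy, and the chain-rule computation for $\mathcal{E}$ — is either cited or a one-line calculation.
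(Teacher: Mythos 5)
Your proposal follows essentially the same route as the paper's proof: reduce without loss of generality to the regular setting of \hyperref[hwisosaojkoia]{Assumptions \ref*{hwisosaojkoia}} (the paper simply cites \cite[Section 9.4]{Vil03} for this), take the slope $f'(0^{+})$ from \hyperref[hlotl]{Lemma \ref*{hlotl}}, obtain $f''(t) \geqslant \kappa \, W_{2}^{2}(P_{0},P_{1})$ from McCann's displacement convexity (the paper invokes \cite[Theorem 5.15]{Vil03} instead of redoing the Hessian computation along $t \mapsto T_{t}^{\gamma}(x)$), and finish with the Taylor formula with integral remainder. One bookkeeping remark: with $f(t) = H(P_{t}\,\vert\,\mathrm{Q})$ the decomposition coming from \eqref{reeef} is $f = \mathcal{S}(p_{t}) + 2\,\mathcal{E}(p_{t})$ rather than $\mathcal{E} + \tfrac{1}{2}\mathcal{S}$, but this affects only the constant-tracking in the same way as the paper's own computation and not the structure of the argument.
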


We stress that \hyperref[hwiai]{Theorem \ref*{hwiai}} does not require the measure $\mathrm{Q}$ with density $q(x) = \mathrm{e}^{-2 \Psi(x)}$ to be a finite measure in the formulation of the HWI inequality \hyperref[sivothwii]{(\ref*{sivothwii})}. 

\smallskip

On the strength of the Cauchy--Schwarz inequality, we have
\begin{equation}
- \big\langle \nabla \log \ell_{0}(X_{0}) \, , \, \gamma(X_{0}) \big\rangle_{L^{2}(\nu)} \leqslant \| \nabla \log \ell_{0}(X_{0}) \|_{L^{2}(\nu)} \ \| \gamma(X_{0}) \|_{L^{2}(\nu)},
\end{equation}
with equality if and only if the functions $\nabla \log \ell_{0}(\, \cdot \,)$ and $\gamma(\, \cdot \,)$ are negatively collinear. The relative Fisher information of $P_{0}$ with respect to $\mathrm{Q}$ equals 
\begin{equation} 
I( P_{0} \, \vert \, \mathrm{Q}) = \mathds{E}_{\nu}\Big[ \vert \nabla \log \ell_{0}(X_{0}) \vert^{2} \Big] = \| \nabla \log \ell_{0}(X_{0}) \|_{L^{2}(\nu)}^{2},
\end{equation}
and by Brenier's theorem \cite[Theorem 2.12]{Vil03} we deduce
\begin{equation}
\| \gamma(X_{0}) \|_{L^{2}(\nu)} = W_{2}(P_{0},P_{1})
\end{equation}
as in \hyperref[otmitwsnp]{(\ref*{otmitwsnp})}, along with the inequality
\begin{equation} \label{csfhwi} 
- \big\langle \nabla \log \ell_{0}(X_{0}) \, , \, \gamma(X_{0}) \big\rangle_{L^{2}(\nu)} \leqslant \sqrt{I( P_{0} \,  \vert \, \mathrm{Q} )} \ W_{2}(P_{0},P_{1}).
\end{equation}
Inserting \hyperref[csfhwi]{(\ref*{csfhwi})} into \hyperref[sivothwii]{(\ref*{sivothwii})} we obtain the usual form of the HWI inequality 
\begin{equation} \label{hwiast}
H( P_{0} \, \vert \, \mathrm{Q} ) - H( P_{1} \, \vert \, \mathrm{Q} ) \leqslant W_{2}(P_{0},P_{1}) \ \sqrt{I( P_{0} \,  \vert \, \mathrm{Q} )} - \tfrac{\kappa}{2} \, W_{2}^{2}(P_{0},P_{1}).
\end{equation}
When there is a non-trivial angle between $- \nabla \log \ell_{0}(X_{0})$ and $\gamma(X_{0})$ in $L^{2}(\nu)$, the inequality \hyperref[sivothwii]{(\ref*{sivothwii})} gives a sharper bound than \hyperref[hwiast]{(\ref*{hwiast})}. We refer to the original paper \cite{OV00}, as well as to \cite{CE02}, \cite[Chapter 5]{Vil03}, \cite[p.\ 650]{Vil09} and the recent papers \cite{GLRT20,KMS20} for detailed discussions of the HWI inequality in several contexts. For a good survey on transport inequalities, see \cite{GL10}. 

\begin{proof}[Proof of \texorpdfstring{\hyperref[hwiai]{Theorem \ref*{hwiai}}}] As elaborated in \cite[Section 9.4]{Vil03} we may assume without loss of generality that $P_{0}$ and $P_{1}$ satisfy the strong regularity \hyperref[hwisosaojkoia]{Assumptions \ref*{hwisosaojkoia}}, guaranteeing existence and smoothness of the optimal transport map $\gamma$.

We consider now the relative entropy with respect to $\mathrm{Q}$ along the constant-speed geodesic $(P_{t})_{0 \leqslant t \leqslant 1}$, namely, the function $f(t) \vcentcolon = H( P_{t} \, \vert \, \mathrm{Q})$, for $0 \leqslant t \leqslant 1$. The displacement convexity results of McCann \cite{McC97} imply 
\begin{equation} \label{dcromcc}
f''(t) \geqslant \kappa \, W_{2}^{2}(P_{0},P_{1}), \qquad 0 \leqslant t \leqslant 1.
\end{equation}

Indeed, under the condition \hyperref[sndcbe]{(\ref*{sndcbe})}, the potential $\Psi$ is $\kappa$-uniformly convex. Consequently, by items (i) and (ii) of \cite[Theorem 5.15]{Vil03}, the internal and potential energies
\begin{equation}
g(t) \vcentcolon = \int_{\mathds{R}^{n}} p_{t}(x) \log p_{t}(x) \, \textnormal{d}x, \qquad h(t) \vcentcolon = 2 \int_{\mathds{R}^{n}} \Psi(x) \, p_{t}(x) \, \textnormal{d}x, \qquad 0 \leqslant t \leqslant 1,
\end{equation}
are displacement convex and $\kappa$-uniformly displacement convex, respectively; i.e.,
\begin{equation} 
g''(t) \geqslant 0, \qquad h''(t) \geqslant \kappa \, W_{2}^{2}(P_{0},P_{1}), \qquad 0 \leqslant t \leqslant 1.
\end{equation}
As we have $f = g + h$, we conclude that the relative entropy function $f$ is $\kappa$-uniformly displacement convex, i.e., its second derivative satisfies \hyperref[dcromcc]{(\ref*{dcromcc})}. We appeal now to \hyperref[hlotl]{Lemma \ref*{hlotl}}, according to which 
\begin{equation} \label{irf}
f'(0^{+}) = \lim_{t \downarrow 0} \, \frac{f(t)-f(0)}{t} = \big\langle \nabla \log \ell_{0}(X_{0}) \, , \, \gamma(X_{0}) \big\rangle_{L^{2}(\nu)}.
\end{equation}
In conjunction with \hyperref[dcromcc]{(\ref*{dcromcc})} and \hyperref[irf]{(\ref*{irf})}, the Taylor formula $f(1) = f(0) + f'(0^{+}) + \int_{0}^{1} (1-t) f''(t) \, \textnormal{d}t$ now yields \hyperref[sivothwii]{(\ref*{sivothwii})}.
\end{proof}

%%%%%%%%%%%%%%%%%%%%%%%%%%%%%%%%%%%%%%%%%%%%%%%%%%%%%%%%%%%%%%%%%%%%%%%%%%%%%%%%%%%%%%%%%%%%%%%%%%%%%%%%%%%%%%%%%%%%%%%%%%%%%%%%%%%%%%%%%%%%%%%%%%%%%%%%%%%%%%%%%%%%%%%%%%%%%%%%%%%%%%%%%%%%%%%%%%%%%%%%%%%%%%%%%%%%%%%%%%&&&&&&&&&&&&&&&&&&&&&&&&&&&&&&&&&

\section{Details and proofs} \label{ssgrodwudm}

%%%%%%%%%%%%%%%%%%%%%%%%%%%%%%%%%%%%%%%%%%%%%%%%%%%%%%%%%%%%%%%%%%%%%%%%%%%%%%%%%%%%%%%%%%%%%%%%%%%%%%%%%%%%%%%%%%%%%%%%%%%%%%%%%%%%%%%%%%%%%%%%%%%%%%%%%%%%%%%%%%%%%%%%%%%%%%%%%%%%%%%%%%%%%%%%%%%%%%%%%%%%%%%%%%%%%%%%%%%&&&&&&&&&&&&&&&&&&&&&&&&&&&&&&&&&

In this section we complete the proofs of \hyperref[thetsixcoraopv]{Corollary \ref*{thetsixcoraopv}} and \hyperref[thetthretvcor]{Proposition \ref*{thetthretvcor}}, and provide the proofs of our main results, \hyperref[thetsix]{Theorems \ref*{thetsix}} and \hyperref[thetthretv]{\ref*{thetthretv}}. What we have to do in order to prove these latter theorems is to apply It\^{o}'s formula so as to calculate the dynamics, i.e., the stochastic differentials, of the ``pure'' and ``perturbed'' relative entropy processes of \hyperref[stlrpd]{(\ref*{stlrpd})} and \hyperref[prerepitufdllpitufd]{(\ref*{prerepitufdllpitufd})} under the measures $\mathds{P}$ and $\mathds{P}^{\beta}$, respectively. As already discussed, we shall do this in the backward direction of time.

%%%%%%%%%%%%%%%%%%%%%%%%%%%%%%%%%%%%%%%%%%%%%%%%%%%%%%%%%%%%%%%%%%%%%%%%%%%%%%%%%%%%%%%%%%%%%%%%%%%%%%%%%%%%%%%%%%%%%%%%%%%%%%%%%%%%%%%%%%%%%%%%%%%%%%%%%%%%%%%%%%%%%%%%%%%%%%%%%%%%%%%%%%%%%%%%%%%%%%%%%%%%%%%%%%%%%%%%%%&&&&&&&&&&&&&&&&&&&&&&&&&&&&&&&&&

\subsection{The proof of \texorpdfstring{\hyperref[thetsix]{Theorem \ref*{thetsix}}}{Theorem 4.1}} \label{cotpottpf}

%%%%%%%%%%%%%%%%%%%%%%%%%%%%%%%%%%%%%%%%%%%%%%%%%%%%%%%%%%%%%%%%%%%%%%%%%%%%%%%%%%%%%%%%%%%%%%%%%%%%%%%%%%%%%%%%%%%%%%%%%%%%%%%%%%%%%%%%%%%%%%%%%%%%%%%%%%%%%%%%%%%%%%%%%%%%%%%%%%%%%%%%%%%%%%%%%%%%%%%%%%%%%%%%%%%%%%%%%%&&&&&&&&&&&&&&&&&&&&&&&&&&&&&&&&&

We start by calculating the stochastic differential of the time-reversed canonical coordinate process $(X(T-s))_{0 \leqslant s \leqslant T}$ under $\mathds{P}$, a well-known and classical theme; see e.g.\ \cite{Foe85,Foe86}, \cite{HP86}, \cite{Mey94}, \cite{Nel01}, and \cite{Par86}. The reader may consult Appendix G of \cite{KST20} for an extensive presentation of the relevant facts regarding the theory of time reversal for diffusion processes. The idea of time reversal goes back to Boltzmann \cite{Bol96,Bol98a,Bol98b} and Schr\"odinger \cite{Sch31,Sch32}, as well as Kolmogorov \cite{Kol37}. In fact, the relation between time reversal of a Brownian motion and the quadratic Wasserstein distance may \textit{in nuce} be traced back to an insight of Bachelier in his thesis \cite{Bac00,Bac06} from 1900. This theme is discussed in Appendix A of \cite{KST20}.

\smallskip

Recall that the probability measure $\mathds{P}$ was defined on path space $\Omega = \mathcal{C}(\mathds{R}_{+};\mathds{R}^{n})$ so that the canonical coordinate process $(X(t,\omega))_{t \geqslant 0} = (\omega(t))_{t \geqslant 0}$ satisfies the stochastic differential equation \hyperref[sdeids]{(\ref*{sdeids})} with initial probability distribution $P(0)$ for $X(0)$ under $\mathds{P}$. In other words, the process
\begin{equation} \label{dobmwopsrss}
W(t) = X(t) - X(0) + \int_{0}^{t} \nabla \Psi\big(X(u)\big) \, \textnormal{d}u, \qquad t \geqslant 0
\end{equation}
is a Brownian motion of the forward filtration $(\mathcal{F}(t))_{t \geqslant 0}$ under the probability measure $\mathds{P}$. Passing to the reverse direction of time, the following classical result is well known to hold under the present assumptions. For proof and references we refer to Theorems G.2 and G.5 of Appendix G in \cite{KST20}.

\begin{proposition} \label{ptra} Under \textnormal{\hyperref[sosaojkoia]{Assumptions \ref*{sosaojkoia}}}, fix $T > 0$. The process %$\big(\overline{W}^\mathds{P}(T-s)\big)_{0 \leqslant s \leqslant T}$ defined by 
\begin{equation} \label{dotowptmtpbm}
\overline{W}^\mathds{P}(T-s) \vcentcolon = W(T-s) - W(T) - \int_{0}^{s} \nabla \log p\big(T-u,X(T-u)\big) \, \textnormal{d}u \, , \qquad 0 \leqslant s \leqslant T
\end{equation}
is a Brownian motion of the backwards filtration $(\mathcal{G}(T-s))_{0 \leqslant s \leqslant T}$ under the probability measure $\mathds{P}$. Moreover, the time-reversed canonical coordinate process $(X(T-s))_{0 \leqslant s \leqslant T}$ satisfies the stochastic differential equation
\begingroup
\addtolength{\jot}{0.7em}
\begin{align}
\textnormal{d} X(T-s) &= \Big( \nabla \log p\big(T-s,X(T-s)\big) + \nabla \Psi\big(X(T-s)\big) \Big) \, \textnormal{d}s + \textnormal{d}\overline{W}^{\mathds{P}}(T-s) \label{poortdfttrpp} \\    
 &= \Big( \nabla \log \ell\big(T-s,X(T-s)\big) - \nabla \Psi\big(X(T-s)\big) \Big) \, \textnormal{d}s + \textnormal{d}\overline{W}^{\mathds{P}}(T-s), \label{oortdfttrpp}
\end{align}
\endgroup
for $0 \leqslant s \leqslant T$, with respect to the backwards filtration $(\mathcal{G}(T-s))_{0 \leqslant s \leqslant T}$. 
\end{proposition}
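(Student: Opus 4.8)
The plan is to recognize \hyperref[ptra]{Proposition \ref*{ptra}} as the classical time‑reversal theorem for It\^o diffusions (in the line of Nelson, F\"ollmer, Haussmann--Pardoux, Pardoux, and Millet--Nualart--Sanz), applied to the equation \eqref{sdeids} with drift $-\nabla\Psi$ and identity diffusion matrix, and to verify that \hyperref[sosaojkoia]{Assumptions \ref*{sosaojkoia}} --- in particular the regularity condition \hyperref[tsaosaojko]{\ref*{tsaosaojko}} on $p$ and the propagation of finite second moments \eqref{1.12} --- are exactly the hypotheses this theorem requires. The statement has two parts, and the second is purely algebraic once the first is in hand: substituting the definition \eqref{dobmwopsrss} of $W$ into \eqref{dotowptmtpbm} gives
\begin{equation*}
\overline{W}^{\mathds{P}}(T-s) = X(T-s) - X(T) - \int_{0}^{s}\big(\nabla\Psi + \nabla\log p(T-u,\,\cdot\,)\big)\big(X(T-u)\big)\,\mathrm{d}u,
\end{equation*}
and reading this as an identity in the parameter $s$ yields at once $\mathrm{d}X(T-s) = \big(\nabla\log p(T-s,X(T-s)) + \nabla\Psi(X(T-s))\big)\,\mathrm{d}s + \mathrm{d}\overline{W}^{\mathds{P}}(T-s)$, which is \eqref{poortdfttrpp}; and \eqref{oortdfttrpp} then follows from $\nabla\log\ell = \nabla\log p + 2\,\nabla\Psi$ (see \eqref{rndlr}), since $\nabla\Psi + \nabla\log p = \nabla\log\ell - \nabla\Psi$. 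So the entire task reduces to showing that $\big(\overline{W}^{\mathds{P}}(T-s)\big)_{0 \leqslant s \leqslant T}$ is a Brownian motion of the backwards filtration $(\mathcal{G}(T-s))_{0 \leqslant s \leqslant T}$ under $\mathds{P}$.

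I would obtain this from L\'evy's characterization. Adaptedness of $\overline{W}^{\mathds{P}}(T-s)$ to $\mathcal{G}(T-s)$, continuity, and $\overline{W}^{\mathds{P}}(T)=0$ are immediate from the displayed formula, since it involves only $X(T-u)$ for $u\in[0,s]$. The quadratic variation costs nothing extra: $\overline{W}^{\mathds{P}}(T-\,\cdot\,)$ differs from the continuous process $s\mapsto X(T-s)$ only by the term $\int_{0}^{s}(\nabla\Psi + \nabla\log p(T-u,\,\cdot\,))(X(T-u))\,\mathrm{d}u$, which has finite variation along each path because $X$ is continuous on $[0,T]$ and $\nabla\Psi$, $\nabla\log p$ are continuous there by \hyperref[tsaosaojko]{\ref*{tsaosaojko}}; since pathwise quadratic variation is invariant under time reversal and under addition of a finite‑variation path, the bracket of $\overline{W}^{\mathds{P}}(T-\,\cdot\,)$ on $[0,s]$ equals that of the forward semimartingale $X$ on $[T-s,T]$, i.e.\ $s\,I_{n}$, because $\mathrm{d}X = -\nabla\Psi\,\mathrm{d}t + \mathrm{d}W$. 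Hence one thing remains: that $\overline{W}^{\mathds{P}}(T-\,\cdot\,)$ is a $(\mathcal{G}(T-s))$‑martingale under $\mathds{P}$.

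This martingale property is the heart of the proof, and the expected main obstacle. The standard route, which I would follow, is to verify that the time‑reversed coordinate process solves the martingale problem for the operator $\widehat{\mathcal{L}}_{t}\varphi = \tfrac{1}{2}\Delta\varphi + \langle\nabla\log p(t,\,\cdot\,) + \nabla\Psi,\,\nabla\varphi\rangle$: namely, that for every $\varphi\in\mathcal{C}_{c}^{\infty}(\mathds{R}^{n})$ the process $\varphi(X(T-s)) - \varphi(X(T)) - \int_{0}^{s}(\widehat{\mathcal{L}}_{T-u}\varphi)(X(T-u))\,\mathrm{d}u$ is a $(\mathcal{G}(T-s))$‑martingale. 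One proves this by a duality computation: for $0 \leqslant s_{1} \leqslant s_{2} \leqslant T$ and $\psi\in\mathcal{C}_{c}^{\infty}(\mathds{R}^{n})$ one expands $\mathds{E}_{\mathds{P}}\big[\psi(X(T-s_{1}))\big(\varphi(X(T-s_{2}))-\varphi(X(T-s_{1}))\big)\big]$ by running It\^o's formula for the \emph{forward} process on $[T-s_{2},T-s_{1}]$, and then rewrites the integrated terms with the help of the Fokker--Planck equation \eqref{fpeqnwfp} for $p$ and integration by parts in the space variable, which transfers the Laplacian and the drift onto the density and produces precisely the $\widehat{\mathcal{L}}$‑term; applying the resulting martingale problem to suitable compactly supported modifications of the coordinate functions and localizing identifies $\overline{W}^{\mathds{P}}(T-\,\cdot\,)$ itself as a continuous local $(\mathcal{G}(T-s))$‑martingale, so that L\'evy's theorem, with the bracket $I_{n}\,\mathrm{d}s$ found above, concludes. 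The delicate point --- and the place where \hyperref[sosaojkoia]{Assumptions \ref*{sosaojkoia}} genuinely enter --- is the rigorous justification of these manipulations: the smoothness and strict positivity of $p$ on $(0,\infty)\times\mathds{R}^{n}$ and the continuity of $\nabla\log p$ from \hyperref[tsaosaojko]{\ref*{tsaosaojko}} are what make $\nabla\log p(T-u,X(T-u))$ a well‑defined, path‑continuous integrand and make the spatial integration by parts meaningful, while the second‑moment bound \eqref{1.12} (together with local integrability control on $\nabla\log p$) is what lets one discard the boundary terms at infinity and upgrade the local martingale to a true martingale. A clean alternative to keep in reserve is Nelson's backward‑derivative argument, computing the backward drift $b_{*}(t,x) = -\nabla\Psi(x) - \nabla\log p(t,x)$ directly and invoking the symmetry between forward and backward It\^o integrals; either way, the full details are exactly those of Theorems G.2 and G.5 of Appendix G in \cite{KST20}, to which I would ultimately defer.
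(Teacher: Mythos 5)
Your proposal is correct and follows essentially the same route as the paper: the paper treats this as the classical time-reversal result for diffusions and simply refers to Theorems G.2 and G.5 of Appendix G in \cite{KST20} for proof and references, which is exactly where you defer as well, and your algebraic derivation of \hyperref[poortdfttrpp]{(\ref*{poortdfttrpp})}--\hyperref[oortdfttrpp]{(\ref*{oortdfttrpp})} from the definition \hyperref[dotowptmtpbm]{(\ref*{dotowptmtpbm})} together with $\nabla\log\ell=\nabla\log p+2\,\nabla\Psi$ is the same trivial reduction implicit there. Your additional sketch of the martingale-problem/duality argument and L\'evy characterization is a sound outline of what the cited classical proofs do, so there is nothing to object to.
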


The following result computes the forward dynamics of the likelihood ratio process $(\ell(t,X(t)))_{t \geqslant 0}$ of \hyperref[rndlr]{(\ref*{rndlr})} and compares it with the stochastic differential of the time-reversed likelihood ratio process
\begin{equation} \label{ttrreplik}
\ell\big(T-s,X(T-s)\big) = \frac{p\big(T-s,X(T-s)\big)}{q\big(X(T-s)\big)}  \, , \qquad 0 \leqslant s \leqslant T,
\end{equation} 
as well as its logarithmic differential.

\begin{proposition} \label{ppropp} Under the \textnormal{\hyperref[sosaojkoia]{Assumptions \ref*{sosaojkoia}}}, the likelihood ratio process \textnormal{\hyperref[rndlr]{(\ref*{rndlr})}} is a continuous semimartingale with respect to the forward filtration $(\mathcal{F}(t))_{t \geqslant 0}$ and satisfies, for $t \geqslant 0$, the stochastic differential equation
\begingroup
\addtolength{\jot}{0.7em}
\begin{equation} \label{updotlrpfu}
\textnormal{d} \ell\big(t,X(t)\big) 
= \Big\langle \nabla \ell\big(t,X(t)\big) \, , \, \textnormal{d}W(t) \Big\rangle
+ \Big( \Delta \ell\big(t,X(t)\big) 
- \Big\langle \nabla \ell\big(t,X(t)\big) \, , \,   2 \, \nabla \Psi\big(X(t)\big)  \Big\rangle \Big) \, \textnormal{d}t.  
\end{equation} 
\endgroup
Furthermore, the time-reversed likelihood ratio process \textnormal{\hyperref[ttrreplik]{(\ref*{ttrreplik})}} is a continuous semimartingale with respect to the backwards filtration $(\mathcal{G}(T-s))_{0 \leqslant s \leqslant T}$ and satisfies, for $0 \leqslant s \leqslant T$, the stochastic differential equations
\begingroup
\addtolength{\jot}{0.7em}
\begin{align}
\textnormal{d} \ell\big(T-s,X(T-s)\big) 
&= \Big\langle \nabla \ell\big(T-s,X(T-s)\big) \, , \, \textnormal{d}\overline{W}^\mathds{P}(T-s) \Big\rangle  + \frac{\big\vert \nabla \ell\big(T-s,X(T-s)\big) \big\vert^{2}}{\ell\big(T-s,X(T-s)\big)} \, \textnormal{d}s, \label{qubmcfrsdeftpmpaei} \\
\frac{\textnormal{d} \ell\big(T-s,X(T-s)\big)}{\ell\big(T-s,X(T-s)\big)} 
&= \Bigg\langle \frac{\nabla \ell\big(T-s,X(T-s)\big)}{\ell\big(T-s,X(T-s)\big)} \, , \, \textnormal{d}\overline{W}^\mathds{P}(T-s) \Bigg\rangle  + \frac{\big\vert \nabla \ell\big(T-s,X(T-s)\big) \big\vert^{2}}{\ell\big(T-s,X(T-s)\big)^{2}} \, \textnormal{d}s, \label{qubmcfrsdeftpmpaeie} \\
\textnormal{d}\log \ell\big(T-s,X(T-s)\big)
&= \Bigg\langle \frac{\nabla \ell\big(T-s,X(T-s)\big)}{\ell\big(T-s,X(T-s)\big)} \, , \, \textnormal{d}\overline{W}^\mathds{P}(T-s) \Bigg\rangle  + \frac{1}{2} \frac{\big\vert \nabla \ell\big(T-s,X(T-s)\big) \big\vert^{2}}{\ell\big(T-s,X(T-s)\big)^{2}} \, \textnormal{d}s. \label{sdotpttrrep}
\end{align}
\endgroup
\begin{proof} We start with the following observation. Writing the Fokker--Planck equation \hyperref[fpeqnwfp]{(\ref*{fpeqnwfp})} as
\begin{equation} \label{pdeftupdpp}
\partial_{t} p(t,x) = \tfrac{1}{2} \Delta p(t,x) + \big\langle \nabla p(t,x) \, , \nabla \Psi(x)\big\rangle + p(t,x) \, \Delta \Psi(x), \qquad t > 0
\end{equation}
and substituting the expression
\begin{equation} \label{tpdfptxcbritf}
p(t,x) = \ell(t,x) \, q(x) = \ell(t,x) \, \mathrm{e}^{ - 2 \Psi(x)}, \qquad t \geqslant 0
\end{equation}
into this equation, we find that the likelihood ratio function $(t,x) \mapsto \ell(t,x)$ solves the \textit{backwards} Kolmogorov equation
\begin{equation} \label{fpdefflrf}
\partial_{t} \ell(t,x) = \tfrac{1}{2} \Delta \ell(t,x) - \big\langle \nabla \ell(t,x) \, , \nabla \Psi(x) \big\rangle, \qquad t > 0.
\end{equation}

\smallskip

Now we turn to the proofs of \hyperref[updotlrpfu]{(\ref*{updotlrpfu})} -- \hyperref[sdotpttrrep]{(\ref*{sdotpttrrep})}. By \hyperref[sosaojkoia]{Assumptions \ref*{sosaojkoia}}, the likelihood ratio function $(t,x) \mapsto \ell(t,x)$ is sufficiently smooth to allow an application of It\^{o}'s formula. Together with the Langevin--Smoluchowski dynamics \hyperref[sdeids]{(\ref*{sdeids})} and the backwards Kolmogorov equation \hyperref[pdeftupdpp]{(\ref*{pdeftupdpp})}, we obtain \hyperref[updotlrpfu]{(\ref*{updotlrpfu})} by direct calculation. A similar calculation, this time relying on the backwards dynamics \hyperref[oortdfttrpp]{(\ref*{oortdfttrpp})}, shows \hyperref[qubmcfrsdeftpmpaei]{(\ref*{qubmcfrsdeftpmpaei})}. Finally, the equations \hyperref[qubmcfrsdeftpmpaeie]{(\ref*{qubmcfrsdeftpmpaeie})} and \hyperref[sdotpttrrep]{(\ref*{sdotpttrrep})} follow from \hyperref[qubmcfrsdeftpmpaei]{(\ref*{qubmcfrsdeftpmpaei})} and It\^{o}'s formula.
\end{proof}
\end{proposition}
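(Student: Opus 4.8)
The plan is to obtain all four stochastic differential equations by running It\^{o}'s formula with the appropriate parabolic equation for the likelihood ratio function, once in the forward and once in the backward direction of time.

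The first step is to derive the equation solved by $\ell$. Writing the Fokker--Planck equation \eqref{fpeqnwfp} in non-divergence form and substituting the factorization $p(t,x)=\ell(t,x)\,\mathrm{e}^{-2\Psi(x)}$ from \eqref{rndlr}, a short computation --- using $\nabla\mathrm{e}^{-2\Psi}=-2\,\mathrm{e}^{-2\Psi}\,\nabla\Psi$ and the analogous formula for $\Delta\mathrm{e}^{-2\Psi}$ --- makes the terms carrying $\Delta\Psi$ and $\vert\nabla\Psi\vert^{2}$ cancel, and leaves the backward Kolmogorov equation $\partial_{t}\ell=\tfrac12\Delta\ell-\langle\nabla\ell,\nabla\Psi\rangle$ on $(0,\infty)\times\mathds{R}^{n}$. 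Under \hyperref[sosaojkoia]{Assumptions \ref*{sosaojkoia}}, in particular condition \hyperref[tsaosaojko]{(\ref*{tsaosaojko})}, the function $\ell$ is $\mathcal{C}^{1}$ in $t$ and $\mathcal{C}^{2}$ in $x$ with $\nabla\ell$ continuous, so this equation holds classically and It\^{o}'s formula is available.

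Second, for the forward dynamics \eqref{updotlrpfu} I would apply It\^{o}'s formula to $\ell(t,X(t))$ along \eqref{sdeids}: the local-martingale part is $\langle\nabla\ell(t,X(t)),\mathrm{d}W(t)\rangle$ and the drift is $\big(\partial_{t}\ell-\langle\nabla\ell,\nabla\Psi\rangle+\tfrac12\Delta\ell\big)(t,X(t))\,\mathrm{d}t$; inserting the Kolmogorov equation just derived collapses the drift to $\big(\Delta\ell-2\langle\nabla\ell,\nabla\Psi\rangle\big)(t,X(t))\,\mathrm{d}t$, which is \eqref{updotlrpfu}. For the backward identity \eqref{qubmcfrsdeftpmpaei} I would invoke \hyperref[ptra]{Proposition \ref*{ptra}}, which already supplies the semimartingale decomposition \eqref{oortdfttrpp} of the time-reversed coordinate process relative to $(\mathcal{G}(T-s))_{0\leqslant s\leqslant T}$; applying It\^{o}'s formula to $s\mapsto\ell(T-s,X(T-s))$ --- being careful about the sign flip on the time-derivative term, so that the drift reads $\big(-\partial_{t}\ell+\langle\nabla\ell,\nabla\log\ell-\nabla\Psi\rangle+\tfrac12\Delta\ell\big)\,\mathrm{d}s$ --- and substituting the Kolmogorov equation once more, the $\tfrac12\Delta\ell$ and the $\langle\nabla\ell,\nabla\Psi\rangle$ contributions cancel, leaving drift $\langle\nabla\ell,\nabla\log\ell\rangle\,\mathrm{d}s=\big(\vert\nabla\ell\vert^{2}/\ell\big)\,\mathrm{d}s$ and martingale part $\langle\nabla\ell,\mathrm{d}\overline{W}^{\mathds{P}}(T-s)\rangle$, i.e.\ \eqref{qubmcfrsdeftpmpaei}. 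Then \eqref{qubmcfrsdeftpmpaeie} is simply \eqref{qubmcfrsdeftpmpaei} divided through by the strictly positive process $\ell(T-s,X(T-s))$, and \eqref{sdotpttrrep} follows from one more application of It\^{o}'s formula to the logarithm, the quadratic-variation correction $-\tfrac12\,\mathrm{d}\langle\ell(T-\cdot,X(T-\cdot))\rangle/\ell^{2}=-\tfrac12\,(\vert\nabla\ell\vert^{2}/\ell^{2})\,\mathrm{d}s$ halving the drift.

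I expect the main obstacle to lie not in the It\^{o} bookkeeping but in the legitimacy of time reversal --- that $(X(T-s))_{0\leqslant s\leqslant T}$ genuinely is a $(\mathcal{G}(T-s))$-semimartingale with the precise decomposition \eqref{poortdfttrpp}--\eqref{oortdfttrpp}; this is exactly the content of \hyperref[ptra]{Proposition \ref*{ptra}} (resting on the classical time-reversal results cited there), so once it is taken as given the remaining work is routine. A secondary, more pedestrian point is that smoothness and strict positivity of $p$ are assumed only on $(0,\infty)\times\mathds{R}^{n}$, so the forward identity \eqref{updotlrpfu} is really to be read on $(0,T]$ (with $t\downarrow 0$ handled by continuity), and the stochastic integrals must be kept well-posed on each compact time interval, which the assumed continuity of $(t,x)\mapsto\nabla\log p(t,x)$ secures.
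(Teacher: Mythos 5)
Your proposal is correct and follows essentially the same route as the paper: substitute $p=\ell\,\mathrm{e}^{-2\Psi}$ into the Fokker--Planck equation to obtain the backward Kolmogorov equation $\partial_{t}\ell=\tfrac12\Delta\ell-\langle\nabla\ell,\nabla\Psi\rangle$, apply It\^{o}'s formula forward along \eqref{sdeids} for \eqref{updotlrpfu}, and backward along the time-reversed dynamics \eqref{oortdfttrpp} supplied by \hyperref[ptra]{Proposition \ref*{ptra}} for \eqref{qubmcfrsdeftpmpaei}, with \eqref{qubmcfrsdeftpmpaeie} and \eqref{sdotpttrrep} then following by one more application of It\^{o}'s formula. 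The drift cancellations you exhibit are exactly those underlying the paper's computation, and your remarks on the legitimacy of time reversal and on regularity near $t=0$ are consistent with the paper's reliance on \hyperref[ptra]{Proposition \ref*{ptra}} and condition \hyperref[tsaosaojko]{(iv)} of \hyperref[sosaojkoia]{Assumptions \ref*{sosaojkoia}}.
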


The crucial feature of the stochastic differentials \hyperref[updotlrpfu]{(\ref*{updotlrpfu})} -- \hyperref[sdotpttrrep]{(\ref*{sdotpttrrep})} is that, after passing to time reversal, the finite-variation term $\Delta \ell - \langle \nabla \ell \, ,  \, 2 \, \nabla \Psi \rangle$ in \hyperref[updotlrpfu]{(\ref*{updotlrpfu})}, involving the Laplacian $\Delta \ell$, gets replaced by a term involving only the likelihood ratio function $\ell$ and its gradient $\nabla \ell$. We owe this crucial insight to the work of Fontbona and Jourdain \cite{FJ16}; see Theorem 4.2 and Appendix E in \cite{KST20} for an extensive discussion and a proof of the Fontbona--Jourdain theorem.

\smallskip

For another application of time reversal in a similar context, see \cite{Leo17}.

\begin{proof}[\bfseries \upshape Proof of \texorpdfstring{\hyperref[thetsix]{Theorem \ref*{thetsix}}}{}] On a formal level, the expressions \hyperref[fispdaftr]{(\ref*{fispdaftr})}, \hyperref[erdollafipim]{(\ref*{erdollafipim})} are just integral versions of the It\^{o} differential \hyperref[sdotpttrrep]{(\ref*{sdotpttrrep})}. What remains to check is that the integrals in \hyperref[fispdaftr]{(\ref*{fispdaftr})} and \hyperref[erdollafipim]{(\ref*{erdollafipim})} indeed make rigorous sense and satisfy the claimed integrability conditions.

\smallskip

By condition \hyperref[tsaosaojko]{\ref*{tsaosaojko}} of \hyperref[sosaojkoia]{Assumptions \ref*{sosaojkoia}} the function $(t,x) \mapsto \nabla \log \ell(t,x)$ is continuous. Together with the continuity of the paths of the canonical coordinate process $(X(t))_{t \geqslant 0}$, this implies 
\begin{equation} \label{twtcotpotccptitttfitisifoas}
\int_{0}^{T-\varepsilon} \frac{\big\vert \nabla \ell\big(T-u,X(T-u)\big) \big\vert^{2}}{\ell\big(T-u,X(T-u)\big)^{2}} \, \textnormal{d}u < \infty, \qquad \mathds{P}\textnormal{-a.s.}
\end{equation}
for every $0 < \varepsilon \leqslant T$. On account of \hyperref[twtcotpotccptitttfitisifoas]{(\ref*{twtcotpotccptitttfitisifoas})}, the sequence of stopping times (with respect to the backwards filtration)
\begin{equation} \label{twtcdostptitttfitisifoas}
\tau_{n} \vcentcolon = \inf \Bigg\{ t \geqslant 0 \colon \, \int_{0}^{t} \frac{\big\vert \nabla \ell\big(T-u,X(T-u)\big) \big\vert^{2}}{\ell\big(T-u,X(T-u)\big)^{2}} \, \textnormal{d}u \, \geqslant n \,  \Bigg\} \wedge T, \qquad n \in \mathds{N}_{0}
\end{equation}
is non-decreasing and converges $\mathds{P}$-a.s.\ to $T$. Defining $M$ via \hyperref[dollafipim]{(\ref*{dollafipim})}, each stopped process $M^{\tau_{n}}$ is bounded in $L^{2}(\mathds{P})$ and satisfies the stopped version of \hyperref[erdollafipim]{(\ref*{erdollafipim})}, i.e.,
\begin{equation} 
M^{\tau_{n}}(T-s) = M\big(T-(s \wedge \tau_{n})\big) = \int_{0}^{s \wedge \tau_{n}} \Bigg\langle \frac{\nabla \ell\big(T-u,X(T-u)\big)}{\ell\big(T-u,X(T-u)\big)} \, , \, \textnormal{d}\overline{W}^\mathds{P}(T-u) \Bigg\rangle \, , \qquad 0 \leqslant s \leqslant T.
\end{equation}
To show that, in fact, the process $M$ is a true $\mathds{P}$-martingale, we have to rely on condition \hyperref[ffecaoo]{(\ref*{ffecaoo})}, which asserts that the initial relative entropy $H( P(0) \, \vert \, \mathrm{Q} )$ is finite. 

\smallskip

We consider the process 
\begin{equation} \label{ttrreplikrev}
\ell^{-1}\big(T-s,X(T-s)\big) = \frac{q\big(X(T-s)\big)}{p\big(T-s,X(T-s)\big)}  \, , \qquad 0 \leqslant s \leqslant T,
\end{equation} 
where $\ell^{-1}(t, \, \cdot \,) = \frac{1}{\ell(t, \, \cdot \,)}$ is the likelihood ratio function of $\frac{\textnormal{d}\mathrm{Q}}{\textnormal{d}P(t)}( \, \cdot \,)$. Applying It\^{o}'s formula and using \hyperref[qubmcfrsdeftpmpaei]{(\ref*{qubmcfrsdeftpmpaei})}, we find the stochastic differential
\begin{equation} \label{qubmcfrsinvtpmpaei}
\textnormal{d} \ell^{-1}\big(T-s,X(T-s)\big)
= - \Bigg\langle \frac{\nabla \ell\big(T-s,X(T-s)\big)}{\ell\big(T-s,X(T-s)\big)^{2}} \, , \, \textnormal{d}\overline{W}^\mathds{P}(T-s) \Bigg\rangle,
\end{equation}
revealing that the locally bounded process \hyperref[ttrreplikrev]{(\ref*{ttrreplikrev})} is a local martingale under $\mathds{P}$. In fact, this result does not come as a surprise: it is a consequence of the eye-opening result of Fontbona and Jourdain \cite{FJ16} mentioned above, at least when $\mathrm{Q}$ is a finite measure. We refer to Subsection 4.2 of \cite{KST20} for more information on this theme, and for a more direct proof of \hyperref[thetsix]{Theorem \ref*{thetsix}} in the case when $\mathrm{Q}$ is a finite measure on $\mathds{R}^{n}$. 

\smallskip

From \hyperref[qubmcfrsinvtpmpaei]{(\ref*{qubmcfrsinvtpmpaei})}, we deduce the stochastic differential of the logarithm of the process \hyperref[ttrreplikrev]{(\ref*{ttrreplikrev})} and obtain in accordance with \hyperref[sdotpttrrep]{(\ref*{sdotpttrrep})} its form
\begin{equation} \label{sdotpttrrepinv}
\textnormal{d}\log \ell^{-1} \big(T-s,X(T-s)\big)
= - \Bigg\langle \frac{\nabla \ell\big(T-s,X(T-s)\big)}{\ell\big(T-s,X(T-s)\big)} \, , \, \textnormal{d}\overline{W}^\mathds{P}(T-s) \Bigg\rangle  -  \frac{1}{2} \frac{\big\vert \nabla \ell\big(T-s,X(T-s)\big) \big\vert^{2}}{\ell\big(T-s,X(T-s)\big)^{2}} \, \textnormal{d}s.
\end{equation}
We know that the terminal value $\log \ell^{-1}(0,X(0))$ is $\mathds{P}$-integrable, with 
\begin{equation}
\mathds{E}_{\mathds{P}}\big[ \log \ell^{-1}\big(0,X(0)\big) \big] = - H\big( P(0) \, \vert \, \mathrm{Q} \big) \in (-\infty,\infty).
\end{equation}
On the other hand, the initial value
\begin{equation}
\mathds{E}_{\mathds{P}}\big[ \log \ell^{-1}\big(T,X(T)\big) \big] = - H\big( P(T) \, \vert \, \mathrm{Q} \big) \in [-\infty,\infty)
\end{equation}
cannot take the value $\infty$, as mentioned after the definition \hyperref[doref]{(\ref*{doref})} of relative entropy. Hence we can apply \hyperref[sprobamtr]{Proposition \ref*{sprobamtr}} in \hyperref[apsecamtr]{Appendix \ref*{apsecamtr}} to the local martingale \hyperref[ttrreplikrev]{(\ref*{ttrreplikrev})} (in the reverse direction of time) and the deterministic stopping time $\tau = T$, to conclude that 
\begin{equation} \label{gfdmlmpapitupc}
\mathds{E}_{\mathds{P}}\big[ \log \ell^{-1}\big(0,X(0)\big) \big]  - \mathds{E}_{\mathds{P}}\big[ \log \ell^{-1}\big(T,X(T)\big) \big] = - \mathds{E}_{\mathds{P}}\Bigg[\int_{0}^{T} \frac{1}{2} \frac{\big\vert \nabla \ell\big(T-u,X(T-u)\big) \big\vert^{2}}{\ell\big(T-u,X(T-u)\big)^{2}} \,  \textnormal{d}u\Bigg],
\end{equation}
where all terms are well-defined and finite. This shows that the local martingale $M$ is bounded in $L^{2}(\mathds{P})$, with
\begin{equation} \label{gfdmlmequpapitupc}
\| M(0)\|_{L^{2}(\mathds{P})}^{2} = H\big( P(0) \, \vert \, \mathrm{Q} \big) - H\big( P(T) \, \vert \, \mathrm{Q} \big) = \tfrac{1}{2} \int_{0}^{T} I\big( P(t) \, \vert \, \mathrm{Q}\big) \, \textnormal{d}t < \infty,
\end{equation}
completing the proof of \hyperref[thetsix]{Theorem \ref*{thetsix}}.
\end{proof}

%%%%%%%%%%%%%%%%%%%%%%%%%%%%%%%%%%%%%%%%%%%%%%%%%%%%%%%%%%%%%%%%%%%%%%%%%%%%%%%%%%%%%%%%%%%%%%%%%%%%%%%%%%%%%%%%%%%%%%%%%%%%%%%%%%%%%%%%%%%%%%%%%%%%%%%%%%%%%%%%%%%%%%%%%%%%%%%%%%%%%%%%%%%%%%%%%%%%%%%%%%%%%%%%%%%%%%%%%%&&&&&&&&&&&&&&&&&&&&&&&&&&&&&&&&&

\subsection{The proof of \texorpdfstring{\hyperref[thetthretv]{Theorem \ref*{thetthretv}}}{Theorem 4.2}} 

%%%%%%%%%%%%%%%%%%%%%%%%%%%%%%%%%%%%%%%%%%%%%%%%%%%%%%%%%%%%%%%%%%%%%%%%%%%%%%%%%%%%%%%%%%%%%%%%%%%%%%%%%%%%%%%%%%%%%%%%%%%%%%%%%%%%%%%%%%%%%%%%%%%%%%%%%%%%%%%%%%%%%%%%%%%%%%%%%%%%%%%%%%%%%%%%%%%%%%%%%%%%%%%%%%%%%%%%%%&&&&&&&&&&&&&&&&&&&&&&&&&&&&&&&&&

The first step in the proof of \hyperref[thetthretv]{Theorem \ref*{thetthretv}} is to compute the stochastic differentials of the time-reversed perturbed likelihood ratio process
\begin{equation} \label{trplrp}
\ell^{\beta}\big(T-s,X(T-s)\big) = \frac{p^{\beta}\big(T-s,X(T-s)\big)}{q\big(X(T-s)\big)} \, , \qquad 0 \leqslant s \leqslant T - t_{0},
\end{equation}
and its logarithm. By analogy with \hyperref[ptra]{Proposition \ref*{ptra}}, the following result is well known (see, e.g., Theorems G.2 and G.5 in Appendix G of \cite{KST20}) to hold under suitable regularity conditions, such as \hyperref[sosaojkoia]{Assumptions \ref*{sosaojkoia}}. Recall that $(W^{\beta}(t))_{t \geqslant t_{0}}$ denotes the $\mathds{P}^{\beta}$-Brownian motion (in the forward direction of time) defined in \hyperref[wpsdeids]{(\ref*{wpsdeids})}.

\begin{proposition} Under the \textnormal{\hyperref[sosaojkoia]{Assumptions \ref*{sosaojkoia}}}, we let $t_{0} \geqslant 0$ and $T > t_{0}$. The process %$\big(\overline{W}^{\mathds{P}^{\beta}}(T-s)\big)_{0 \leqslant s \leqslant T-t_{0}}$ defined by 
\begin{equation} \label{dotowpbtmtpbm}
\overline{W}^{\mathds{P}^{\beta}}(T-s) \vcentcolon = W^{\beta}(T-s) - W^{\beta}(T) - \int_{0}^{s} \nabla \log p^{\beta}\big(T-u,X(T-u)\big) \, \textnormal{d}u
\end{equation}
for $0 \leqslant s \leqslant T-t_{0}$, is a Brownian motion of the backwards filtration $(\mathcal{G}(T-s))_{0 \leqslant s \leqslant T-t_{0}}$ under the probability measure $\mathds{P}^{\beta}$. Furthermore, the semimartingale decomposition of the time-reversed canonical coordinate process $(X(T-s))_{0 \leqslant s \leqslant T-t_{0}}$ is given by
\begingroup
\addtolength{\jot}{0.7em}
\begin{align}
\textnormal{d} X(T-s) 
&= \Big( \nabla \log p^{\beta}\big(T-s,X(T-s)\big) + \big(\nabla \Psi + \beta\big)\big(X(T-s)\big) \Big) \, \textnormal{d}s + \textnormal{d}\overline{W}^{\mathds{P}^{\beta}}(T-s) \label{rtdfttrppitop} \\
&= \Big( \nabla \log \ell^{\beta} \big(T-s,X(T-s)\big) - \big(\nabla \Psi - \beta\big)\big(X(T-s)\big) \Big) \, \textnormal{d}s + \textnormal{d}\overline{W}^{\mathds{P}^{\beta}}(T-s), \label{rtdfttrpp}
\end{align}
\endgroup
for $0 \leqslant s \leqslant T - t_{0}$, with respect to the backwards filtration $(\mathcal{G}(T-s))_{0 \leqslant s \leqslant T - t_{0}}$. 
\end{proposition}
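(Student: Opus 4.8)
The plan is to deduce this Proposition from its unperturbed counterpart, \hyperref[ptra]{Proposition \ref*{ptra}}, by viewing the perturbed dynamics \hyperref[wpsdeids]{(\ref*{wpsdeids})} as a Langevin--Smoluchowski equation driven by the potential $\Psi + B$ in place of $\Psi$. The first step is to record that, by condition \hyperref[naltsaosaojkos]{\ref*{naltsaosaojkos}} of \hyperref[sosaojkoia]{Assumptions \ref*{sosaojkoia}}, the perturbed potential $\Psi + B$ itself satisfies condition \hyperref[tsaosaojko]{\ref*{tsaosaojko}}: the equation \hyperref[wpsdeids]{(\ref*{wpsdeids})} started from $P(t_{0})$ has a pathwise unique strong solution on $[t_{0},\infty)$, the density $p^{\beta}$ is continuous, strictly positive, differentiable in time and smooth in space with $(t,x) \mapsto \nabla \log p^{\beta}(t,x)$ continuous on $(t_{0},\infty) \times \mathds{R}^{n}$, and $p^{\beta}$ solves the perturbed Fokker--Planck equation \hyperref[pfpeq]{(\ref*{pfpeq})} with initial condition \hyperref[pic]{(\ref*{pic})}. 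Consequently the classical time-reversal theorem underlying \hyperref[ptra]{Proposition \ref*{ptra}} (Theorems G.2 and G.5 in \cite{KST20}) applies verbatim on the finite horizon $[t_{0},T]$, with the forward drift $-\nabla \Psi$ replaced throughout by $-(\nabla \Psi + \beta)$.

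Invoking that theorem then yields directly that $\big(\overline{W}^{\mathds{P}^{\beta}}(T-s)\big)_{0 \leqslant s \leqslant T-t_{0}}$ defined in \hyperref[dotowpbtmtpbm]{(\ref*{dotowpbtmtpbm})} is a Brownian motion of the backwards filtration $(\mathcal{G}(T-s))_{0 \leqslant s \leqslant T-t_{0}}$ under $\mathds{P}^{\beta}$, and that the time-reversed coordinate process $(X(T-s))_{0 \leqslant s \leqslant T-t_{0}}$ obeys
\[
\textnormal{d}X(T-s) = \Big( \nabla \log p^{\beta}\big(T-s,X(T-s)\big) + (\nabla \Psi + \beta)\big(X(T-s)\big) \Big)\,\textnormal{d}s + \textnormal{d}\overline{W}^{\mathds{P}^{\beta}}(T-s),
\]
which is precisely \hyperref[rtdfttrppitop]{(\ref*{rtdfttrppitop})}. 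To pass to the equivalent form \hyperref[rtdfttrpp]{(\ref*{rtdfttrpp})} I would use the definition \hyperref[dotplrfb]{(\ref*{dotplrfb})} of the perturbed likelihood ratio, namely $\ell^{\beta}(t,x) = p^{\beta}(t,x)\,\mathrm{e}^{2\Psi(x)}$; taking logarithms and gradients gives $\nabla \log p^{\beta} = \nabla \log \ell^{\beta} - 2\,\nabla \Psi$, and substituting this into the drift above produces $\nabla \log \ell^{\beta} - 2\,\nabla \Psi + \nabla \Psi + \beta = \nabla \log \ell^{\beta} - (\nabla \Psi - \beta)$, exactly as asserted.

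Since the entire argument is a transcription of the unperturbed situation, there is no genuine obstacle here; the one point that does need to be verified is that the regularity hypotheses required by the time-reversal theorem are inherited when passing from $\Psi$ to $\Psi + B$, and this is precisely what condition \hyperref[naltsaosaojkos]{\ref*{naltsaosaojkos}} of \hyperref[sosaojkoia]{Assumptions \ref*{sosaojkoia}} was imposed to guarantee.
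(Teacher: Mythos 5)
Your proposal is correct and follows essentially the same route as the paper: the paper likewise obtains this result by invoking the classical time-reversal theorems (Theorems G.2 and G.5 in Appendix G of \cite{KST20}) for the diffusion with drift $-\nabla(\Psi+B)$, relying on condition \hyperref[naltsaosaojkos]{\ref*{naltsaosaojkos}} of \hyperref[sosaojkoia]{Assumptions \ref*{sosaojkoia}} to ensure the regularity hypotheses are inherited by the perturbed potential. Your algebraic passage from \hyperref[rtdfttrppitop]{(\ref*{rtdfttrppitop})} to \hyperref[rtdfttrpp]{(\ref*{rtdfttrpp})} via $\nabla\log p^{\beta}=\nabla\log\ell^{\beta}-2\,\nabla\Psi$ is exactly what is needed.
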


Comparing the equation \hyperref[poortdfttrpp]{(\ref*{poortdfttrpp})} with \hyperref[rtdfttrppitop]{(\ref*{rtdfttrppitop})}, we see that the reverse-time Brownian motions $\overline{W}^{\mathds{P}^{\beta}}$ and $\overline{W}^{\mathds{P}}$ are related as follows.

\begin{lemma} \label{cotttrbmpapb} Under the \textnormal{\hyperref[sosaojkoia]{Assumptions \ref*{sosaojkoia}}}, we let $t_{0} \geqslant 0$ and $T > t_{0}$. For $0 \leqslant s \leqslant T-t_{0}$, we have
\begingroup
\addtolength{\jot}{0.7em}
\begin{align}
\textnormal{d}\big( \overline{W}^{\mathds{P}} - \overline{W}^{\mathds{P}^{\beta}}\big)(T-s) &=  \Bigg( \beta\big(X(T-s)\big) 
 + \nabla \log \Bigg( \frac{p^{\beta}\big(T-s,X(T-s)\big)}{p\big(T-s,X(T-s)\big)} \Bigg) \Bigg) \, \textnormal{d}s \label{dbowptmtowpbtmtfp} \\
 &=  \Bigg( \beta\big(X(T-s)\big) 
 + \nabla \log \Bigg( \frac{\ell^{\beta}\big(T-s,X(T-s)\big)}{\ell\big(T-s,X(T-s)\big)} \Bigg) \Bigg) \, \textnormal{d}s. \label{dbowptmtowpbtmt}
\end{align}
\endgroup
\end{lemma}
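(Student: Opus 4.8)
The plan is to obtain the claimed dynamics by simply \emph{subtracting} the two semimartingale decompositions of the time-reversed coordinate process $(X(T-s))_{0\leqslant s\leqslant T-t_{0}}$ that are already recorded just above. First I would read \hyperref[poortdfttrpp]{(\ref*{poortdfttrpp})} as the $\mathds{P}$-a.s.\ pathwise identity
\[
X(T-s)=X(T)+\int_{0}^{s}\Big(\nabla\log p\big(T-u,X(T-u)\big)+\nabla\Psi\big(X(T-u)\big)\Big)\,\textnormal{d}u+\overline{W}^{\mathds{P}}(T-s),
\]
in which the integral is a genuine function of the path (a Lebesgue integral of a continuous integrand) and $\overline{W}^{\mathds{P}}$ is a continuous $(\mathcal{G}(T-s))$-local martingale under $\mathds{P}$; and \hyperref[rtdfttrppitop]{(\ref*{rtdfttrppitop})} as the $\mathds{P}^{\beta}$-a.s.\ analogue, with $p$ replaced by $p^{\beta}$, $\nabla\Psi$ by $\nabla\Psi+\beta$, and $\overline{W}^{\mathds{P}}$ by $\overline{W}^{\mathds{P}^{\beta}}$. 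Since $\beta=\nabla B$ is smooth and compactly supported, hence bounded, the measures $\mathds{P}$ and $\mathds{P}^{\beta}$ are mutually equivalent on $\mathcal{F}(T)=\mathcal{G}(0)$ (this is exactly what \hyperref[hctclwittmeitpocasotpv]{Lemma \ref*{hctclwittmeitpocasotpv}} will assert, with bounded densities; alternatively one invokes Girsanov's theorem directly for the bounded drift $\beta$), so both pathwise identities hold simultaneously on a set of full measure under either law.

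On that common full-measure set, subtracting the two identities term by term — the two path-functionals standing in for the finite-variation parts are literally the same integrals — and using uniqueness of the finite-variation part of a continuous semimartingale gives
\[
\big(\overline{W}^{\mathds{P}}-\overline{W}^{\mathds{P}^{\beta}}\big)(T-s)=\int_{0}^{s}\Big(\beta\big(X(T-u)\big)+\nabla\log p^{\beta}\big(T-u,X(T-u)\big)-\nabla\log p\big(T-u,X(T-u)\big)\Big)\,\textnormal{d}u,
\]
which is the differential form \hyperref[dbowptmtowpbtmtfp]{(\ref*{dbowptmtowpbtmtfp})} upon writing $\nabla\log p^{\beta}-\nabla\log p=\nabla\log\big(p^{\beta}/p\big)$. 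Finally, from the definitions $\ell(t,\cdot)=p(t,\cdot)\,\mathrm{e}^{2\Psi}$ in \hyperref[rndlr]{(\ref*{rndlr})} and $\ell^{\beta}(t,\cdot)=p^{\beta}(t,\cdot)\,\mathrm{e}^{2\Psi}$ in \hyperref[dotplrfb]{(\ref*{dotplrfb})}, the common factor $\mathrm{e}^{2\Psi}$ cancels, so $p^{\beta}/p=\ell^{\beta}/\ell$ and \hyperref[dbowptmtowpbtmt]{(\ref*{dbowptmtowpbtmt})} follows at once.

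I do not anticipate a substantive obstacle: the computation is a one-line cancellation. The only point that needs care is the juxtaposition of \hyperref[poortdfttrpp]{(\ref*{poortdfttrpp})} and \hyperref[rtdfttrppitop]{(\ref*{rtdfttrppitop})}, which a priori are statements about two different probability measures $\mathds{P}$ and $\mathds{P}^{\beta}$; this is resolved by their mutual equivalence, after which the $\mathds{P}$-a.s.\ identity \hyperref[poortdfttrpp]{(\ref*{poortdfttrpp})} (and the fact that $\overline{W}^{\mathds{P}}$ remains a continuous $\mathds{P}^{\beta}$-semimartingale under the equivalent law) transfers to a $\mathds{P}^{\beta}$-a.s.\ identity, legitimizing the termwise subtraction. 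I would state the lemma's conclusion as an equality of $\mathds{P}^{\beta}$-semimartingales, which is the form needed later for the It\^{o}-calculus arguments in the proof of \hyperref[thetthretv]{Theorem \ref*{thetthretv}}.
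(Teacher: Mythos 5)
Your proposal is correct and is essentially the paper's own (implicit) argument: the lemma is obtained precisely by comparing the two time-reversed semimartingale decompositions \hyperref[poortdfttrpp]{(\ref*{poortdfttrpp})} and \hyperref[rtdfttrppitop]{(\ref*{rtdfttrppitop})} and subtracting, then using $p^{\beta}/p=\ell^{\beta}/\ell$. Your extra care about juxtaposing statements under $\mathds{P}$ and $\mathds{P}^{\beta}$ is sound (and harmless), though strictly speaking all the processes involved are pathwise-defined functionals of the canonical process, so the cancellation already holds path by path.
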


\begin{remark} We shall apply \hyperref[cotttrbmpapb]{Lemma \ref*{cotttrbmpapb}} down the road, when $s$ is close to $T-t_{0}$. In this case the logarithmic gradients in \hyperref[dbowptmtowpbtmtfp]{(\ref*{dbowptmtowpbtmtfp})} and \hyperref[dbowptmtowpbtmt]{(\ref*{dbowptmtowpbtmt})} will become small in view of $p^{\beta}(t_{0},\, \cdot \,) = p(t_{0},\, \cdot \,)$, so that these logarithmic gradients will disappear in the limit $s \uparrow T-t_{0}$; see also \hyperref[hctclwittmeitpocasot]{Lemma \ref*{hctclwittmeitpocasot}} below. By contrast, the term $\beta(X(T-s))$ will not go away in the limit $s \uparrow T-t_{0}$. Rather, it will tend to the random variable $\beta(X(t_{0}))$, which plays an important role in distinguishing between \hyperref[thetsixcorfes]{(\ref*{thetsixcorfes})} and \hyperref[thetsixcorfessl]{(\ref*{thetsixcorfessl})} in \hyperref[thetthretvcor]{Proposition \ref*{thetthretvcor}}.
\end{remark} 

\smallskip

By analogy with the proof of \hyperref[ppropp]{Proposition \ref*{ppropp}}, for $t > t_{0}$, we write now the perturbed Fokker--Planck equation \hyperref[pfpeq]{(\ref*{pfpeq})} as 
\begin{equation} \label{pfpeef}
\partial_{t} p^{\beta}(t,x) = \tfrac{1}{2} \Delta p^{\beta}(t,x) + \big\langle \nabla p^{\beta}(t,x) \, , \nabla \Psi(x) + \beta(x) \big\rangle + p^{\beta}(t,x) \, \big( \Delta \Psi(x) + \operatorname{div} \beta(x) \big).
\end{equation}
Using the relation
\begin{equation} \label{rppbaeb}
p^{\beta}(t,x) = \ell^{\beta}(t,x) \, q(x) = \ell^{\beta}(t,x) \, \mathrm{e}^{ - 2 \Psi(x)}, \qquad t \geqslant t_{0},
\end{equation}
determined computation shows that the perturbed likelihood ratio function $\ell^{\beta}(t,x)$ satisfies 
\begingroup
\addtolength{\jot}{0.7em}
\begin{equation} \label{pfpeeffe}
\begin{aligned}
\partial_{t} \ell^{\beta}(t,x) = \tfrac{1}{2} \Delta \ell^{\beta}(t,x) &+ \big\langle \nabla \ell^{\beta}(t,x) \, , \, \beta(x) - \nabla \Psi(x) \big\rangle \\
&+ \ell^{\beta}(t,x) \, \Big( \operatorname{div} \beta(x) -   \big\langle \beta(x) \, , \, 2 \, \nabla \Psi(x) \big\rangle \Big), \qquad t > t_{0};
\end{aligned}
\end{equation}
\endgroup
this is the analogue of the backwards Kolmogorov equation \hyperref[fpdefflrf]{(\ref*{fpdefflrf})} in this ``perturbed'' context, and reduces to \hyperref[fpdefflrf]{(\ref*{fpdefflrf})} when $\beta \equiv 0$.

\smallskip

With these preparations, we obtain the following stochastic differentials for our objects of interest.

\begin{lemma} \label{trplrpdald} Under the \textnormal{\hyperref[sosaojkoia]{Assumptions \ref*{sosaojkoia}}}, we let $t_{0} \geqslant 0$ and $T > t_{0}$. The time-reversed perturbed likelihood ratio process \textnormal{\hyperref[trplrp]{(\ref*{trplrp})}} and its logarithm satisfy the stochastic differential equations
\begingroup
\addtolength{\jot}{0.7em}
\begin{equation} \label{sdefttrplrp}
\begin{aligned}
&\frac{\textnormal{d} \ell^{\beta}\big(T-s,X(T-s)\big)}{\ell^{\beta}\big(T-s,X(T-s)\big)} 
= \Big( \big\langle \beta \, , \, 2 \, \nabla \Psi \big\rangle - \operatorname{div} \beta \Big)\big(X(T-s)\big) \, \textnormal{d}s \\
& \qquad \quad + \frac{\big\vert \nabla \ell^{\beta}\big(T-s,X(T-s)\big) \big\vert^{2}}{\ell^{\beta}\big(T-s,X(T-s)\big)^{2}} \, \textnormal{d}s \, + \, \Bigg\langle \frac{\nabla \ell^{\beta}\big(T-s,X(T-s)\big)}{\ell^{\beta}\big(T-s,X(T-s)\big)} \, , \, \textnormal{d}\overline{W}^{\mathds{P}^{\beta}}(T-s)\Bigg\rangle
\end{aligned}
\end{equation}
\endgroup
and
\begingroup
\addtolength{\jot}{0.7em}
\begin{equation} \label{sdefttrplrpail}
\begin{aligned}
&\textnormal{d} \log \ell^{\beta}\big(T-s,X(T-s)\big)
= \Big( \big\langle \beta \, , \, 2 \, \nabla \Psi \big\rangle  - \operatorname{div} \beta \Big)\big(X(T-s)\big) \, \textnormal{d}s  \\
&  \qquad \quad + \frac{1}{2} \frac{\big\vert \nabla \ell^{\beta}\big(T-s,X(T-s)\big) \big\vert^{2}}{\ell^{\beta}\big(T-s,X(T-s)\big)^{2}} \, \textnormal{d}s \, + \, \Bigg\langle \frac{\nabla \ell^{\beta}\big(T-s,X(T-s)\big)}{\ell^{\beta}\big(T-s,X(T-s)\big)} \, , \, \textnormal{d}\overline{W}^{\mathds{P}^{\beta}}(T-s) \Bigg\rangle,
\end{aligned}
\end{equation}
\endgroup
respectively, for $0 \leqslant s \leqslant T - t_{0}$, with respect to the backwards filtration $(\mathcal{G}(T-s))_{0 \leqslant s \leqslant T - t_{0}}$.
\begin{proof} The equations \hyperref[sdefttrplrp]{(\ref*{sdefttrplrp})}, \hyperref[sdefttrplrpail]{(\ref*{sdefttrplrpail})} follow from It\^{o}'s formula together with \hyperref[rtdfttrpp]{(\ref*{rtdfttrpp})}, \hyperref[pfpeeffe]{(\ref*{pfpeeffe})}.
\end{proof}
\end{lemma}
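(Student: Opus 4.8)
The plan is to obtain both stochastic differentials by a single application of It\^{o}'s formula to the function $(s,y)\mapsto\ell^{\beta}(T-s,y)$ evaluated along the time-reversed canonical process, using the backwards semimartingale decomposition \hyperref[rtdfttrpp]{(\ref*{rtdfttrpp})} of $(X(T-s))_{0\leqslant s\leqslant T-t_{0}}$ under $\mathds{P}^{\beta}$ together with the perturbed backwards Kolmogorov equation \hyperref[pfpeeffe]{(\ref*{pfpeeffe})}, and then passing to the logarithm by a second application of It\^{o}'s formula. First I would record that, by \hyperref[sosaojkoia]{Assumptions \ref*{sosaojkoia}} (in particular condition \hyperref[tsaosaojko]{\ref*{tsaosaojko}}, applied to the perturbed potential $\Psi+B$ as permitted by condition \hyperref[naltsaosaojkos]{\ref*{naltsaosaojkos}}), the map $(t,x)\mapsto\ell^{\beta}(t,x)=p^{\beta}(t,x)\,\mathrm{e}^{2\Psi(x)}$ is $t$-differentiable, and strictly positive and smooth in $x$, on $(t_{0},\infty)\times\mathds{R}^{n}$, so that It\^{o}'s formula is legitimately applicable to the continuous semimartingale $(X(T-s))_{0\leqslant s\leqslant T-t_{0}}$, whose quadratic covariation is $\mathrm{d}s\cdot I_{n}$.

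Writing $\varphi(s,y)\vcentcolon=\ell^{\beta}(T-s,y)$ and applying It\^{o}'s formula along the backwards dynamics \hyperref[rtdfttrpp]{(\ref*{rtdfttrpp})}, one gets
\[
\textnormal{d}\ell^{\beta}\big(T-s,X(T-s)\big)=\Big(-\partial_{t}\ell^{\beta}+\tfrac{1}{2}\Delta\ell^{\beta}\Big)\,\textnormal{d}s+\big\langle\nabla\ell^{\beta}\,,\,\textnormal{d}X(T-s)\big\rangle,
\]
with all functions evaluated at $(T-s,X(T-s))$; the sign on the $\partial_{t}\ell^{\beta}$ term is the one point where one must be attentive, since it comes from $\partial_{s}\varphi=-\partial_{t}\ell^{\beta}(T-s,\cdot\,)$. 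Substituting the drift $\nabla\log\ell^{\beta}-\nabla\Psi+\beta=\nabla\ell^{\beta}/\ell^{\beta}-\nabla\Psi+\beta$ from \hyperref[rtdfttrpp]{(\ref*{rtdfttrpp})}, the martingale part becomes $\langle\nabla\ell^{\beta},\textnormal{d}\overline{W}^{\mathds{P}^{\beta}}(T-s)\rangle$ and the finite-variation part collects to $-\partial_{t}\ell^{\beta}+\tfrac{1}{2}\Delta\ell^{\beta}+|\nabla\ell^{\beta}|^{2}/\ell^{\beta}+\langle\nabla\ell^{\beta},\beta-\nabla\Psi\rangle$. At this stage the perturbed backwards Kolmogorov equation \hyperref[pfpeeffe]{(\ref*{pfpeeffe})} is exactly the identity $-\partial_{t}\ell^{\beta}+\tfrac{1}{2}\Delta\ell^{\beta}+\langle\nabla\ell^{\beta},\beta-\nabla\Psi\rangle=-\ell^{\beta}\big(\operatorname{div}\beta-\langle\beta,2\nabla\Psi\rangle\big)$, so the finite-variation part reduces to $\ell^{\beta}\big(\langle\beta,2\nabla\Psi\rangle-\operatorname{div}\beta\big)+|\nabla\ell^{\beta}|^{2}/\ell^{\beta}$; dividing through by $\ell^{\beta}>0$ yields \hyperref[sdefttrplrp]{(\ref*{sdefttrplrp})}.

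For \hyperref[sdefttrplrpail]{(\ref*{sdefttrplrpail})} I would apply It\^{o}'s formula once more, now to $\log(\cdot\,)$, using that the quadratic variation of $s\mapsto\ell^{\beta}(T-s,X(T-s))$ has density $|\nabla\ell^{\beta}|^{2}\,\textnormal{d}s$ with respect to $\textnormal{d}s$; the It\^{o} correction $-\tfrac{1}{2}|\nabla\ell^{\beta}|^{2}/(\ell^{\beta})^{2}\,\textnormal{d}s$ halves the Fisher-information term and produces the stated logarithmic differential. There is no substantive obstacle here beyond the bookkeeping: the two genuinely delicate points are the orientation convention in reverse-time It\^{o} calculus (handled by treating $\ell^{\beta}(T-s,y)$ as an honest time-space function and invoking the backwards decomposition from the preceding proposition) and the regularity needed to differentiate $\ell^{\beta}$, which is supplied by condition \hyperref[tsaosaojko]{\ref*{tsaosaojko}} of \hyperref[sosaojkoia]{Assumptions \ref*{sosaojkoia}} for the perturbed potential; the algebraic simplification is driven entirely by \hyperref[pfpeeffe]{(\ref*{pfpeeffe})}.
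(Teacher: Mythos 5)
Your argument is exactly the paper's proof, carried out in detail: It\^{o}'s formula applied to $(s,y)\mapsto\ell^{\beta}(T-s,y)$ along the backwards dynamics \hyperref[rtdfttrpp]{(\ref*{rtdfttrpp})}, simplified via the perturbed backwards Kolmogorov equation \hyperref[pfpeeffe]{(\ref*{pfpeeffe})}, and then a second application of It\^{o}'s formula for the logarithm, with the sign of the $\partial_{t}\ell^{\beta}$ term and the quadratic-variation correction handled correctly. The computation checks out and there is no gap.
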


We have assembled now all the ingredients needed for the proof of \hyperref[thetthretv]{Theorem \ref*{thetthretv}}. 

\begin{proof}[\bfseries \upshape Proof of \texorpdfstring{\hyperref[thetthretv]{Theorem \ref*{thetthretv}}}] Formally, the stochastic differential in \hyperref[sdefttrplrpail]{(\ref*{sdefttrplrpail})} amounts to the conclusions \hyperref[fispdaftrps]{(\ref*{fispdaftrps})} -- \hyperref[erdollafipimps]{(\ref*{erdollafipimps})} of \hyperref[thetthretv]{Theorem \ref*{thetthretv}}. But as in the proof of \hyperref[thetsix]{Theorem \ref*{thetsix}}, we still have to substantiate the claim, that the stochastic process $M^{\beta}$ defined in \hyperref[dollafipimps]{(\ref*{dollafipimps})} with representation \hyperref[erdollafipimps]{(\ref*{erdollafipimps})} is indeed a $\mathds{P}^{\beta}$-martingale of the backwards filtration $(\mathcal{G}(T-s))_{0 \leqslant s \leqslant T - t_{0}}$, and is bounded in $L^{2}(\mathds{P}^{\beta})$. 

\smallskip

By \hyperref[sdefttrplrpail]{(\ref*{sdefttrplrpail})} and the same stopping argument as in the proof of \hyperref[thetsix]{Theorem \ref*{thetsix}}, the process $M^{\beta}$ is a local $\mathds{P}^{\beta}$-martingale. We have to show that $\mathds{E}_{\mathds{P}^{\beta}}\big[F^{\beta}(t_{0})\big]< \infty$.

\smallskip

We recall that $\beta = \nabla B$ and define the density
\begin{equation}
q^{\beta}(x) \vcentcolon = \mathrm{e}^{-2(\Psi + B)(x)}, \qquad x \in \mathds{R}^{n}.
\end{equation}
This density function solves the stationary version of the perturbed Fokker--Planck equation \hyperref[pfpeq]{(\ref*{pfpeq})}. Equivalently, it induces an invariant measure for the stochastic differential equation \hyperref[wpsdeids]{(\ref*{wpsdeids})}. We now consider the ``doubly perturbed'' likelihood ratio function
\begin{equation} \label{ellbb}
\ell_{\beta}^{\beta}(t,x) \vcentcolon = \frac{p^{\beta}(t,x)}{q^{\beta}(x)} \, , \qquad (t,x) \in [t_{0},\infty) \times \mathds{R}^{n}.
\end{equation}

\smallskip

The \hyperref[sosaojkoia]{Assumptions \ref*{sosaojkoia}} are invariant under the passage from the potential $\Psi$ to $\Psi + B$, so we can apply \hyperref[thetsix]{Theorem \ref*{thetsix}} to the potential $\Psi + B$ and obtain that the process (cf.\ \hyperref[fispdaftr]{(\ref*{fispdaftr})})
\begin{equation} \label{ttwow}
F_{\beta}^{\beta}(T-s) \vcentcolon = \int_{0}^{s} \frac{1}{2} \frac{\big\vert \nabla \ell_{\beta}^{\beta}\big(T-u,X(T-u)\big) \big\vert^{2}}{\ell_{\beta}^{\beta}\big(T-u,X(T-u)\big)^{2}} \, \textnormal{d}u \, , \qquad 0 \leqslant s \leqslant T-t_{0}
\end{equation}
satisfies $\mathds{E}_{\mathds{P}^{\beta}}\big[F_{\beta}^{\beta}(t_{0})\big]< \infty$. This latter condition implies also $\mathds{E}_{\mathds{P}^{\beta}}\big[F^{\beta}(t_{0})\big]< \infty$, where the process $F^{\beta}$ is defined in \hyperref[fispdaftrps]{(\ref*{fispdaftrps})}. Indeed, the function $\big\langle \beta \, , \, 2 \, \nabla \Psi \big\rangle - \operatorname{div} \beta$ in \hyperref[fispdaftrps]{(\ref*{fispdaftrps})} is bounded, so that
\begin{equation} \label{tfeabb}
\mathds{E}_{\mathds{P}^{\beta}}\Bigg[\int_{0}^{T-t_{0}} \Big\vert \big\langle \beta \, , \, 2 \, \nabla \Psi \big\rangle - \operatorname{div} \beta \Big\vert \big(X(T-u)\big) \, \textnormal{d}u \Bigg] < \infty.
\end{equation}
As regards the remaining difference between \hyperref[ttwow]{(\ref*{ttwow})} and \hyperref[fispdaftrps]{(\ref*{fispdaftrps})}, note that $\ell^{\beta}(t,x) / \ell_{\beta}^{\beta}(t,x) = \mathrm{e}^{2B(x)}$ and consequently $\nabla \log \ell^{\beta}(t,x) - \nabla \log \ell_{\beta}^{\beta}(t,x) = 2 \, \nabla B(x)$, which again is a bounded function.

\smallskip

In conclusion, we obtain that $\mathds{E}_{\mathds{P}^{\beta}}\big[F^{\beta}(t_{0})\big]< \infty$, finishing the proof of \hyperref[thetthretv]{Theorem \ref*{thetthretv}}.
\end{proof}

%%%%%%%%%%%%%%%%%%%%%%%%%%%%%%%%%%%%%%%%%%%%%%%%%%%%%%%%%%%%%%%%%%%%%%%%%%%%%%%%%%%%%%%%%%%%%%%%%%%%%%%%%%%%%%%%%%%%%%%%%%%%%%%%%%%%%%%%%%%%%%%%%%%%%%%%%%%%%%%%%%%%%%%%%%%%%%%%%%%%%%%%%%%%%%%%%%%%%%%%%%%%%%%%%%%%%%%%%%&&&&&&&&&&&&&&&&&&&&&&&&&&&&&&&&&

\subsection{Some useful lemmas} \label{subsomusefullem}

%%%%%%%%%%%%%%%%%%%%%%%%%%%%%%%%%%%%%%%%%%%%%%%%%%%%%%%%%%%%%%%%%%%%%%%%%%%%%%%%%%%%%%%%%%%%%%%%%%%%%%%%%%%%%%%%%%%%%%%%%%%%%%%%%%%%%%%%%%%%%%%%%%%%%%%%%%%%%%%%%%%%%%%%%%%%%%%%%%%%%%%%%%%%%%%%%%%%%%%%%%%%%%%%%%%%%%%%%%&&&&&&&&&&&&&&&&&&&&&&&&&&&&&&&&&

In this subsection we collect some useful results needed in order to justify the claims \hyperref[ilpdepvhfv]{(\ref*{ilpdepvhfv})}, \hyperref[ilpdepvhfvsv]{(\ref*{ilpdepvhfvsv})} made in the course of the proof of \hyperref[thetsixcoraopv]{Corollary \ref*{thetsixcoraopv}}, and to complete the proof of \hyperref[thetthretvcor]{Proposition \ref*{thetthretvcor}} in \hyperref[nspofthetthretvcor]{Subsection \ref*{nspofthetthretvcor}}.

\medskip

First, let us introduce the ``perturbed-to-unperturbed'' ratio
\begin{equation} \label{witpetounper}
Y^{\beta}(t,x) \vcentcolon = \frac{\ell^{\beta}(t,x)}{\ell(t,x)}  = \frac{p^{\beta}(t,x)}{p(t,x)}, \qquad (t,x) \in [t_{0},\infty) \times \mathds{R}^{n}
\end{equation}
and recall the backwards Kolmogorov-type equations \hyperref[fpdefflrf]{(\ref*{fpdefflrf})}, \hyperref[pfpeeffe]{(\ref*{pfpeeffe})}. These lead to the equation 
\begingroup
\addtolength{\jot}{0.7em}
\begin{equation} \label{pdefoybptupr}
\begin{aligned}
\partial_{t} Y^{\beta}(t,x) = \tfrac{1}{2} \Delta Y^{\beta}(t,x) 
& + \big\langle \nabla Y^{\beta}(t,x) \, , \, \beta(x) + \nabla \log p(t,x) + \nabla \Psi(x) \big\rangle \\
& + Y^{\beta}(t,x) \, \Big( \operatorname{div} \beta(x) + \big\langle \beta(x) \, , \, \nabla \log p(t,x) \big\rangle \Big), \qquad t > t_{0},
\end{aligned}
\end{equation}
\endgroup
with $Y^{\beta}(t_{0}, \, \cdot \,) = 1$, for the ratio in \hyperref[witpetounper]{(\ref*{witpetounper})}. In conjunction with \hyperref[poortdfttrpp]{(\ref*{poortdfttrpp})}, this equation leads by direct calculation to the following backward dynamics.

\begin{lemma} \label{wsublnsibvn} Under the \textnormal{\hyperref[sosaojkoia]{Assumptions \ref*{sosaojkoia}}}, we let $t_{0} \geqslant 0$ and $T > t_{0}$. The time-reversed ratio process $\big(Y^{\beta}(T-s,X(T-s))\big)_{0 \leqslant s \leqslant T-t_{0}}$ and its logarithm satisfy the stochastic differential equations
\begingroup
\addtolength{\jot}{0.7em}
\begin{equation} \label{logybdbyb}
\begin{aligned}
&\frac{\textnormal{d}Y^{\beta}\big(T-s,X(T-s)\big)}{Y^{\beta}\big(T-s,X(T-s)\big)} = \Bigg\langle \frac{\nabla Y^{\beta}\big(T-s,X(T-s)\big)}{Y^{\beta}\big(T-s,X(T-s)\big)} \, , \, \textnormal{d}\overline{W}^{\mathds{P}}(T-s) - \beta\big(X(T-s)\big) \, \textnormal{d}s \Bigg\rangle \\
& \qquad \qquad \qquad - \bigg( \operatorname{div} \beta \big(X(T-s)\big) + \Big\langle \beta\big(X(T-s)\big) \, , \nabla \log p\big(T-s,X(T-s)\big) \Big\rangle  \bigg) \, \textnormal{d}s
\end{aligned}
\end{equation}
\endgroup
and
\begingroup
\addtolength{\jot}{0.7em}
\begin{equation} \label{logybetapro}
\begin{aligned}
&\textnormal{d} \log Y^{\beta}\big(T-s,X(T-s)\big) = \Bigg\langle \frac{\nabla Y^{\beta}\big(T-s,X(T-s)\big)}{Y^{\beta}\big(T-s,X(T-s)\big)} \, , \, \textnormal{d}\overline{W}^{\mathds{P}}(T-s) - \beta\big(X(T-s)\big) \, \textnormal{d}s \Bigg\rangle \\
& \qquad \qquad \qquad - \bigg( \operatorname{div} \beta \big(X(T-s)\big) + \Big\langle \beta\big(X(T-s)\big) \, , \nabla \log p\big(T-s,X(T-s)\big) \Big\rangle  \bigg) \, \textnormal{d}s \\
& \qquad \qquad \qquad \qquad \qquad \qquad  \qquad  \qquad \qquad \qquad - \frac{1}{2} \frac{\big\vert \nabla Y^{\beta}\big(T-s,X(T-s)\big) \big\vert^{2}}{Y^{\beta}\big(T-s,X(T-s)\big)^{2}} \, \textnormal{d}s,
\end{aligned}
\end{equation}
\endgroup
respectively, for $0 \leqslant s \leqslant T - t_{0}$, relative to the backwards filtration $(\mathcal{G}(T-s))_{0 \leqslant s \leqslant T - t_{0}}$.
\end{lemma}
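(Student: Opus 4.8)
The plan is to obtain both displays by a single application of It\^{o}'s formula to the time-reversed ratio process $s \mapsto Y^{\beta}(T-s,X(T-s))$, in exactly the manner of \hyperref[ppropp]{Proposition \ref*{ppropp}} and \hyperref[trplrpdald]{Lemma \ref*{trplrpdald}}. First I would record the regularity that legitimizes this: under \hyperref[sosaojkoia]{Assumptions \ref*{sosaojkoia}}, condition \hyperref[tsaosaojko]{\ref*{tsaosaojko}} applied both to $\Psi$ and to $\Psi + B$ guarantees that $p$ and $p^{\beta}$ are continuous, strictly positive, and spatially smooth on $(t_{0},\infty) \times \mathds{R}^{n}$ with continuous logarithmic gradients, so the quotient $Y^{\beta} = p^{\beta}/p$ from \textnormal{\hyperref[witpetounper]{(\ref*{witpetounper})}} is smooth in $x$, differentiable in $t$, and strictly positive there. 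The parabolic equation \textnormal{\hyperref[pdefoybptupr]{(\ref*{pdefoybptupr})}} with $Y^{\beta}(t_{0},\cdot) = 1$ is pure algebra: substitute $\ell^{\beta} = Y^{\beta}\ell$ into the backwards Kolmogorov-type equation \textnormal{\hyperref[pfpeeffe]{(\ref*{pfpeeffe})}}, use \textnormal{\hyperref[fpdefflrf]{(\ref*{fpdefflrf})}} for $\ell$, and rewrite $\nabla \log \ell = \nabla \log p + 2\,\nabla \Psi$; collecting terms yields the stated drift coefficients.

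Next I would apply It\^{o}'s formula to $Y^{\beta}(T-s,X(T-s))$ along the $\mathds{P}$-backward dynamics \textnormal{\hyperref[poortdfttrpp]{(\ref*{poortdfttrpp})}}, i.e.\ $\textnormal{d}X(T-s) = \big(\nabla \log p + \nabla \Psi\big)\big(T-s,X(T-s)\big)\,\textnormal{d}s + \textnormal{d}\overline{W}^{\mathds{P}}(T-s)$. Writing the $s$-derivative of the time-reversed function as $-\partial_{t}Y^{\beta}\big|_{t=T-s}$, It\^{o}'s formula produces three contributions: the $-\partial_{t}Y^{\beta}$ term (read off from \textnormal{\hyperref[pdefoybptupr]{(\ref*{pdefoybptupr})}}), the transport term $\langle \nabla Y^{\beta}, \nabla \log p + \nabla \Psi\rangle\,\textnormal{d}s + \langle \nabla Y^{\beta}, \textnormal{d}\overline{W}^{\mathds{P}}(T-s)\rangle$, and the second-order term $\tfrac{1}{2}\Delta Y^{\beta}\,\textnormal{d}s$ (the quadratic variation of $X(T-\cdot)$ being $\delta_{ij}\,\textnormal{d}s$ since $\overline{W}^{\mathds{P}}$ is $n$-dimensional Brownian motion). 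The decisive point is the cancellation: the two $\tfrac{1}{2}\Delta Y^{\beta}$ terms cancel, and the two $\langle \nabla Y^{\beta}, \nabla \log p + \nabla \Psi\rangle$ terms cancel, leaving precisely $\langle \nabla Y^{\beta}, \textnormal{d}\overline{W}^{\mathds{P}}(T-s) - \beta(X(T-s))\,\textnormal{d}s\rangle - Y^{\beta}\big(\operatorname{div}\beta + \langle \beta, \nabla \log p\rangle\big)\,\textnormal{d}s$. Dividing by the strictly positive $Y^{\beta}$ gives \textnormal{\hyperref[logybdbyb]{(\ref*{logybdbyb})}}.

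Finally, \textnormal{\hyperref[logybetapro]{(\ref*{logybetapro})}} follows from one further application of It\^{o}'s formula, to the logarithm of the positive continuous semimartingale $Y^{\beta}(T-s,X(T-s))$: the correction $-\tfrac{1}{2}\,\textnormal{d}\langle Y^{\beta}(T-\cdot,X(T-\cdot))\rangle_{s}\big/Y^{\beta}(T-s,X(T-s))^{2}$ contributes exactly $-\tfrac{1}{2}\,|\nabla Y^{\beta}|^{2}/(Y^{\beta})^{2}\,\textnormal{d}s$, since only the stochastic-integral part of \textnormal{\hyperref[logybdbyb]{(\ref*{logybdbyb})}} carries quadratic variation. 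I do not anticipate any genuine obstacle: the only points requiring minor care are the sign bookkeeping in $\partial_{s}\big[Y^{\beta}(T-s,\cdot)\big] = -\partial_{t}Y^{\beta}\big|_{t=T-s}$, and confirming — via condition \hyperref[tsaosaojko]{\ref*{tsaosaojko}} of \hyperref[sosaojkoia]{Assumptions \ref*{sosaojkoia}} for both $\Psi$ and $\Psi + B$ (cf.\ condition \hyperref[naltsaosaojkos]{\ref*{naltsaosaojkos}}) — that $Y^{\beta}$ is regular enough to justify It\^{o}'s formula on all of $[0,T-t_{0}]$.
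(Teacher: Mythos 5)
Your proposal is correct and follows essentially the paper's own route: the paper likewise derives the equation \hyperref[pdefoybptupr]{(\ref*{pdefoybptupr})} for $Y^{\beta}$ from the backwards Kolmogorov-type equations \hyperref[fpdefflrf]{(\ref*{fpdefflrf})}, \hyperref[pfpeeffe]{(\ref*{pfpeeffe})} and then obtains \hyperref[logybdbyb]{(\ref*{logybdbyb})} ``by direct calculation'' via It\^{o}'s formula along the backward dynamics \hyperref[poortdfttrpp]{(\ref*{poortdfttrpp})}, with \hyperref[logybetapro]{(\ref*{logybetapro})} following from a second application of It\^{o}'s formula to the logarithm. Your sign bookkeeping and the cancellation of the $\tfrac{1}{2}\Delta Y^{\beta}$ and $\langle \nabla Y^{\beta}, \nabla \log p + \nabla \Psi\rangle$ terms are exactly the computation the paper leaves implicit.
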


We first establish a preliminary control on $Y^{\beta}(\, \cdot \, , \, \cdot \,)$, which will be refined in \hyperref[hctclwittmeitpocasot]{Lemma \ref*{hctclwittmeitpocasot}} below.

\begin{lemma} \label{hctclwittmeitpocasotpv} Under the \textnormal{\hyperref[sosaojkoia]{Assumptions \ref*{sosaojkoia}}}, we let $t_{0} \geqslant 0$ and $T > t_{0}$. There is a real constant $C > 1$ such that 
\begin{equation} \label{flaarftpottpsfe}
\frac{1}{C} \leqslant Y^{\beta}(t,x) \leqslant C \, , \qquad (t,x) \in [t_{0},T] \times \mathds{R}^{n}.
\end{equation}
\begin{proof} In the forward direction of time, the canonical coordinate process $(X(t))_{t_{0} \leqslant t \leqslant T}$ on path space $\Omega = \mathcal{C}([t_{0},T];\mathds{R}^{n})$ satisfies the equations \hyperref[sdeids]{(\ref*{sdeids})} and \hyperref[wpsdeids]{(\ref*{wpsdeids})} with initial distribution $P(t_{0})$ under the probability measures $\mathds{P}$ and $\mathds{P}^{\beta}$, respectively. Hence, the $\mathds{P}$-Brownian motion $(W(t))_{t_{0} \leqslant t \leqslant T}$ from \hyperref[sdeids]{(\ref*{sdeids})} can be represented as
\begin{equation}
W(t) - W(t_{0}) = W^{\beta}(t) - W^{\beta}(t_{0}) - \int_{t_{0}}^{t} \beta\big(X(u)\big) \, \textnormal{d}u, \qquad t_{0} \leqslant t \leqslant T,
\end{equation}
where $(W^{\beta}(t))_{t_{0} \leqslant t \leqslant T}$ is the $\mathds{P}^{\beta}$-Brownian motion appearing in \hyperref[wpsdeids]{(\ref*{wpsdeids})}. By the Girsanov theorem, this amounts, for $t_{0} \leqslant t \leqslant T$, to the likelihood ratio computation
\begin{equation} \label{tgtae}
Z(t) \vcentcolon = \frac{\textnormal{d}\mathds{P}^{\beta}}{\textnormal{d}\mathds{P}} \bigg \vert_{\mathcal{F}(t)} 
= \exp \Bigg( - \int_{t_{0}}^{t} \Big\langle \beta\big(X(u)\big) \, , \, \textnormal{d}W(u) \Big\rangle - \tfrac{1}{2} \int_{t_{0}}^{t} \big\vert \beta\big(X(u)\big) \big\vert^{2} \, \textnormal{d}u \Bigg).
\end{equation} 

\smallskip

Now, for each $(t,x) \in [t_{0},T] \times \mathds{R}^{n}$, the ratio $Y^{\beta}(t,x) = p^{\beta}(t,x) / p(t,x)$ equals the conditional expectation of the random variable \hyperref[tgtae]{(\ref*{tgtae})} with respect to the probability measure $\mathds{P}$, where we condition on $X(t) = x$; to wit,
\begin{equation}
Y^{\beta}(t,x) = \mathds{E}_{\mathds{P}}\big[ Z(t) \, \vert \, X(t) = x\big] \, , \qquad (t,x) \in [t_{0},T] \times \mathds{R}^{n}.
\end{equation}
Therefore, in order to obtain the estimate \hyperref[flaarftpottpsfe]{(\ref*{flaarftpottpsfe})}, it suffices to show that the log-density process $( \log Z(t))_{t_{0} \leqslant t \leqslant T}$ is uniformly bounded. Since the perturbation $\beta$ is smooth and has compact support, the Lebesgue integral inside the exponential of \hyperref[tgtae]{(\ref*{tgtae})} is uniformly bounded, as required. 

\smallskip

In order to handle the stochastic integral with respect to the $\mathds{P}$-Brownian motion $(W(u))_{t_{0} \leqslant u \leqslant t}$ inside the exponential \hyperref[tgtae]{(\ref*{tgtae})}, we invoke the assumption that the vector field $\beta$ equals the gradient of a potential $B \colon \mathds{R}^{n} \rightarrow \mathds{R}$, which is of class $\mathcal{C}^{\infty}(\mathds{R}^{n};\mathds{R})$ and has compact support. According to It\^{o}'s formula and \hyperref[sdeids]{(\ref*{sdeids})}, we can express the stochastic integral appearing in \hyperref[tgtae]{(\ref*{tgtae})} as
\begin{equation} \label{urotsiarl}
\int_{t_{0}}^{t} \Big\langle \beta\big(X(u)\big) \, , \, \textnormal{d}W(u) \Big\rangle = B\big(X(t)) - B\big(X(t_{0})\big) + \int_{t_{0}}^{t} \Big(\big\langle \beta \, , \, \nabla \Psi \big\rangle - \tfrac{1}{2} \operatorname{div} \beta \Big)\big(X(u)\big) \, \textnormal{d}u
\end{equation}
for $t_{0} \leqslant t \leqslant T$. At this stage it becomes obvious that the expression of \hyperref[urotsiarl]{(\ref*{urotsiarl})} is uniformly bounded. This completes the proof of \hyperref[hctclwittmeitpocasotpv]{Lemma \ref*{hctclwittmeitpocasotpv}}.
\end{proof}
\end{lemma}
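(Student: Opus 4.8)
The plan is to realise the ratio $Y^{\beta}(t,x)=p^{\beta}(t,x)/p(t,x)$ as a conditional expectation of the Girsanov density relating $\mathds{P}$ and $\mathds{P}^{\beta}$ on the finite horizon $[t_{0},T]$, and then to bound that density uniformly, exploiting that $\beta=\nabla B$ with $B$ smooth and compactly supported. Concretely, I would view both measures as probability measures on $\mathcal{C}([t_{0},T];\mathds{R}^{n})$ under which $X(t_{0})$ has the common law $P(t_{0})$ and $(X(t))_{t_{0}\leqslant t\leqslant T}$ solves \hyperref[sdeids]{(\ref*{sdeids})}, respectively \hyperref[wpsdeids]{(\ref*{wpsdeids})}; since the two drifts differ only by the bounded field $\beta$, Girsanov's theorem gives $\mathds{P}^{\beta}\sim\mathds{P}$ on $\mathcal{F}(T)$ with
\[ Z(t)\vcentcolon=\frac{\textnormal{d}\mathds{P}^{\beta}}{\textnormal{d}\mathds{P}}\Big\vert_{\mathcal{F}(t)}=\exp\Big(-\int_{t_{0}}^{t}\big\langle\beta(X(u)),\textnormal{d}W(u)\big\rangle-\tfrac{1}{2}\int_{t_{0}}^{t}\big\vert\beta(X(u))\big\vert^{2}\,\textnormal{d}u\Big),\qquad t_{0}\leqslant t\leqslant T. \]
Because $p^{\beta}(t,\cdot)$ and $p(t,\cdot)$ are the densities of $X(t)$ under $\mathds{P}^{\beta}$ and $\mathds{P}$, and the two measures coincide on $\sigma(X(t_{0}))$, conditioning this identity on $\{X(t)=x\}$ yields $Y^{\beta}(t,x)=\mathds{E}_{\mathds{P}}\big[Z(t)\,\vert\,X(t)=x\big]$. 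It then suffices to produce a deterministic constant $C>1$, depending only on $B$, $\Psi$ and $T$, with $1/C\leqslant Z(t)\leqslant C$ $\mathds{P}$-almost surely for all $t\in[t_{0},T]$.

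For such a bound, equivalently a uniform bound on $\log Z(t)$, the quadratic-variation term is immediate, $\tfrac{1}{2}\int_{t_{0}}^{t}\vert\beta(X(u))\vert^{2}\,\textnormal{d}u\leqslant\tfrac{T}{2}\sup_{\mathds{R}^{n}}\vert\beta\vert^{2}$, while the only genuinely stochastic term $\int_{t_{0}}^{t}\langle\beta(X(u)),\textnormal{d}W(u)\rangle$ I would rewrite using $\beta=\nabla B$: It\^{o}'s formula applied to $u\mapsto B(X(u))$ along \hyperref[sdeids]{(\ref*{sdeids})} gives
\[ \int_{t_{0}}^{t}\big\langle\beta(X(u)),\textnormal{d}W(u)\big\rangle=B\big(X(t)\big)-B\big(X(t_{0})\big)+\int_{t_{0}}^{t}\Big(\big\langle\beta,\nabla\Psi\big\rangle-\tfrac{1}{2}\operatorname{div}\beta\Big)\big(X(u)\big)\,\textnormal{d}u. \]
Each term on the right is bounded by a constant depending only on $B$, $\Psi$ and $T$, since $B$, $\beta=\nabla B$ and $\operatorname{div}\beta$ are continuous with compact support and $\nabla\Psi$ is continuous, hence bounded on that support; thus the stochastic integral on the left --- a priori only a local martingale --- is in fact a uniformly bounded process, and the two-sided bound on $Z(t)$ follows. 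Passing from $1/C\leqslant Z(t)\leqslant C$ to $1/C\leqslant Y^{\beta}(t,x)\leqslant C$ for $P(t)$-a.e.\ $x$ is then just monotonicity of conditional expectation.

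The step that I expect to require genuine care, and which I would single out as the main obstacle, is upgrading the almost-everywhere bound to the pointwise bound \hyperref[flaarftpottpsfe]{(\ref*{flaarftpottpsfe})} valid for \emph{all} $(t,x)\in[t_{0},T]\times\mathds{R}^{n}$: this rests on the continuity and strict positivity of $p$ and $p^{\beta}$ on $(0,\infty)\times\mathds{R}^{n}$ furnished by condition \hyperref[tsaosaojko]{\ref*{tsaosaojko}} of \hyperref[sosaojkoia]{Assumptions \ref*{sosaojkoia}} (which make $Y^{\beta}$ continuous there), together with some care about the instant $t=t_{0}$. It is worth noting that the tempting alternative --- a parabolic maximum principle applied directly to the equation \hyperref[pdefoybptupr]{(\ref*{pdefoybptupr})} satisfied by $Y^{\beta}$ --- founders on its zeroth-order coefficient $\operatorname{div}\beta+\langle\beta,\nabla\log p\rangle$, which involves $\nabla\log p$ and need not be bounded near $t=t_{0}$ when $t_{0}=0$; the Girsanov route circumvents this entirely, because the density $Z(t)$ only sees the bounded perturbation $\beta$. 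Everything else is routine, relying solely on the compact support of $\beta$.
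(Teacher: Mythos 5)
Your proposal is correct and follows essentially the same route as the paper's proof: the Girsanov density $Z(t)$, the representation $Y^{\beta}(t,x)=\mathds{E}_{\mathds{P}}[Z(t)\,\vert\,X(t)=x]$, and the rewriting of the stochastic integral via It\^{o}'s formula for $B(X(\cdot))$ into the uniformly bounded expression \hyperref[urotsiarl]{(\ref*{urotsiarl})} are exactly the steps in the paper. Your additional remark about upgrading the $P(t)$-a.e.\ bound to all $(t,x)$ via continuity of $p$ and $p^{\beta}$ is a sensible refinement that the paper passes over silently, but it does not change the argument.
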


The following \hyperref[hctclwittmeitpocasot]{Lemma \ref*{hctclwittmeitpocasot}} provides the crucial estimates \hyperref[ilpdepvhfv]{(\ref*{ilpdepvhfv})} and \hyperref[ilpdepvhfvsv]{(\ref*{ilpdepvhfvsv})}, needed in the proofs of \hyperref[thetsixcoraopv]{Corollary \ref*{thetsixcoraopv}} and \hyperref[thetthretvcor]{Proposition \ref*{thetthretvcor}}.

\begin{lemma} \label{hctclwittmeitpocasot} Under the \textnormal{\hyperref[sosaojkoia]{Assumptions \ref*{sosaojkoia}}}, we let $t_{0} \geqslant 0$ and $T > t_{0}$. There is a constant $C > 0$ such that 
\begin{equation} \label{ilpdepvhfvwp}
\big\vert Y^{\beta}(T-s,x) - 1 \big\vert \leqslant C \, (T-t_{0}-s),
\end{equation}
as well as
\begin{equation} \label{ilpdepvhfvsvwp}
\mathds{E}_{\mathds{P}}\Bigg[\int_{s}^{T-t_{0}} \Big\vert \nabla \log Y^{\beta}\big(T-u,X(T-u)\big) \Big\vert^{2} \, \textnormal{d}u  \ \bigg\vert \ X(T-s) = x \Bigg] 
\leqslant C \, (T - t_{0} - s)^{2}, 
\end{equation}
hold for all $0 \leqslant s \leqslant T-t_{0}$ and $x \in \mathds{R}^{n}$. Furthermore, for every $t_{0} > 0$ and $x \in \mathds{R}^{n}$ we have the pointwise limiting assertion
\begin{equation} \label{taflawhplftw}
\lim_{s \uparrow T-t_{0}} \frac{\log Y^{\beta}(T-s,x)}{T-t_{0}-s}
= \operatorname{div} \beta(x) + \Big\langle \beta(x) \, , \, \nabla  \log p(t_{0},x) \Big\rangle,
\end{equation}
where the fraction on the left-hand side of \textnormal{\hyperref[taflawhplftw]{(\ref*{taflawhplftw})}} is uniformly bounded on $[0,T-t_{0}] \times \mathds{R}^{n}$.
\begin{remark} The pointwise limiting assertion \hyperref[taflawhplftw]{(\ref*{taflawhplftw})} is the deterministic analogue of the trajectorial relation \hyperref[titmlwhtcfs]{(\ref*{titmlwhtcfs})} from \hyperref[thetthretvcor]{Proposition \ref*{thetthretvcor}}. In \hyperref[nspofthetthretvcor]{Subsection \ref*{nspofthetthretvcor}} below we will prove that the limiting assertion \hyperref[titmlwhtcfs]{(\ref*{titmlwhtcfs})} holds in $L^{1}$ under both $\mathds{P}$ and $\mathds{P}^{\beta}$, and is valid for all $t_{0} > 0$.
\end{remark}
\begin{proof} As $\log Y^{\beta} = \log \ell^{\beta} - \log \ell$, we obtain from \hyperref[thetsix]{Theorems \ref*{thetsix}}, \hyperref[thetthretv]{\ref*{thetthretv}} and \hyperref[flaarftpottpsfe]{(\ref*{flaarftpottpsfe})} that the martingale part of the process in \hyperref[logybetapro]{(\ref*{logybetapro})} is bounded in $L^{2}(\mathds{P})$, i.e.,
\begin{equation} \label{martpoflogybibilt}
\mathds{E}_{\mathds{P}}\Bigg[\int_{0}^{T - t_{0}} \frac{\big\vert \nabla Y^{\beta}\big(T-u,X(T-u)\big) \big\vert^{2}}{Y^{\beta}\big(T-u,X(T-u)\big)^{2}} \, \textnormal{d}u\Bigg] < \infty.
\end{equation}
Once again using \hyperref[flaarftpottpsfe]{(\ref*{flaarftpottpsfe})}, we compare $\nabla Y^{\beta} / Y^{\beta}$ with $\nabla Y^{\beta}$ to see that \hyperref[martpoflogybibilt]{(\ref*{martpoflogybibilt})} also implies
\begin{equation} \label{martpoflogybibiltnyb}
\mathds{E}_{\mathds{P}}\Bigg[\int_{0}^{T - t_{0}} \Big\vert \nabla Y^{\beta}\big(T-u,X(T-u)\big) \Big\vert^{2} \, \textnormal{d}u\Bigg] < \infty.
\end{equation}
According to \hyperref[logybdbyb]{(\ref*{logybdbyb})}, the time-reversed ratio process $\big(Y^{\beta}(T-s,X(T-s))\big)_{0 \leqslant s \leqslant T-t_{0}}$ satisfies with respect to the backwards filtration $(\mathcal{G}(T-s))_{0 \leqslant s \leqslant T - t_{0}}$ the stochastic differential equation
\begingroup
\addtolength{\jot}{0.7em}
\begin{equation} \label{tpdbdoybe}
\begin{aligned}
&\textnormal{d}Y^{\beta}\big(T-s,X(T-s)\big) = \Big\langle \nabla Y^{\beta}\big(T-s,X(T-s)\big) \, , \, \textnormal{d}\overline{W}^{\mathds{P}}(T-s) - \beta\big(X(T-s)\big) \, \textnormal{d}s \Big\rangle \\
& \ - Y^{\beta}\big(T-s,X(T-s)\big)  \bigg( \operatorname{div} \beta \big(X(t-s)\big) + \Big\langle \beta\big(X(T-s)\big) \, , \nabla \log p\big(T-s,X(T-s)\big) \Big\rangle  \bigg) \, \textnormal{d}s.
\end{aligned}
\end{equation}
\endgroup

\smallskip

In view of \hyperref[martpoflogybibiltnyb]{(\ref*{martpoflogybibiltnyb})}, the martingale part in \hyperref[tpdbdoybe]{(\ref*{tpdbdoybe})} is bounded in $L^{2}(\mathds{P})$. As regards the drift term of this equation, we observe that it vanishes when $X(T-s)$ takes values outside the compact support of the smooth vector field $\beta$. Consequently, the drift term is bounded, i.e., the constant
\begin{equation} 
C_{1} \vcentcolon = \sup_{\substack{t_{0} \leqslant t \leqslant T \\ y \in \mathds{R}^{n}}} \Bigg\vert - Y^{\beta}(t,y)  \Bigg( \operatorname{div} \beta(y) + \Bigg\langle \beta(y) \, , \nabla \log p(t,y) + \frac{\nabla Y^{\beta}(t,y)}{Y^{\beta}(t,y)} \Bigg\rangle \, \Bigg)  \Bigg\vert
\end{equation}
is finite, and the processes
\begin{equation}
Y^{\beta}\big(T-s,X(T-s)\big) + C_{1} \, s \qquad \textnormal{ and } \qquad Y^{\beta}\big(T-s,X(T-s)\big) - C_{1} \, s 
\end{equation}
for $0 \leqslant s \leqslant T-t_{0}$, are a sub- and a supermartingale, respectively. We conclude that
\begin{equation} \label{tjpefftvotp}
\Big\vert \, Y^{\beta}(T-s,x) - \mathds{E}_{\mathds{P}}\Big[ Y^{\beta}\big(t_{0},X(t_{0})\big) \ \big\vert \ X(T-s) = x \Big] \, \Big\vert \leqslant  C_{1} \, (T-t_{0}-s)
\end{equation}
holds for all $0 \leqslant s \leqslant T-t_{0}$ and $x \in \mathds{R}^{n}$. Since $Y^{\beta}(t_{0}, \, \cdot \,) = 1$, this establishes the first estimate 
\begin{equation} \label{ilpdepvhfvwphiwtrt}
\big\vert Y^{\beta}(T-s,x) - 1 \big\vert \leqslant C_{1} \, (T-t_{0}-s).
\end{equation}

\smallskip

Now we turn our attention to the second estimate \hyperref[ilpdepvhfvsvwp]{(\ref*{ilpdepvhfvsvwp})}. We fix $0 \leqslant s \leqslant T-t_{0}$ and $x \in \mathds{R}^{n}$. By means of the stochastic differentials in \hyperref[logybetapro]{(\ref*{logybetapro})} and \hyperref[tpdbdoybe]{(\ref*{tpdbdoybe})}, we find that the expression
\begin{equation} \label{sodacebtbbsewiet}
\tfrac{1}{2} \, \mathds{E}_{\mathds{P}}\Bigg[\int_{s}^{T-t_{0}} \Big\vert \nabla \log Y^{\beta}\big(T-u,X(T-u)\big) \Big\vert^{2} \, \textnormal{d}u  \ \bigg\vert \ X(T-s) = x \Bigg] 
\end{equation}
is equal to
\begin{equation} \label{sodacebtbbs}
\log Y^{\beta}(T-s,x) - Y^{\beta}(T-s,x) + 1 + \mathds{E}_{\mathds{P}}\Bigg[\int_{s}^{T-t_{0}} G\big(T-u,X(T-u)\big) \, \textnormal{d}u  \ \bigg\vert \ X(T-s) = x \Bigg],
\end{equation}
where we have set
\begin{equation}
G(t,y) \vcentcolon = \big( Y^{\beta}(t,y)-1\big) \Bigg( \operatorname{div} \beta(y) + \Bigg\langle \beta(y) \, , \nabla \log p(t,y) + \frac{\nabla Y^{\beta}(t,y)}{Y^{\beta}(t,y)} \Bigg\rangle \, \Bigg)
\end{equation}
for $t_{0} \leqslant t \leqslant T$ and $y \in \mathds{R}^{n}$. Introducing the finite constant
\begin{equation} 
C_{2} \vcentcolon = \sup_{\substack{t_{0} \leqslant t \leqslant T \\ y \in \mathds{R}^{n}}} \Bigg\vert \operatorname{div} \beta(y) + \Bigg\langle \beta(y) \, , \nabla \log p(t,y) + \frac{\nabla Y^{\beta}(t,y)}{Y^{\beta}(t,y)} \Bigg\rangle \, \Bigg\vert
\end{equation}
and using the just proved estimate \hyperref[ilpdepvhfvwphiwtrt]{(\ref*{ilpdepvhfvwphiwtrt})}, we see that the absolute value of the conditional expectation appearing in \hyperref[sodacebtbbs]{(\ref*{sodacebtbbs})} can be bounded by $C_{1} \, C_{2} \, (T-t_{0}-s)^{2}$. In order to handle the remaining terms of \hyperref[sodacebtbbs]{(\ref*{sodacebtbbs})}, we apply the elementary inequality $\log p \leqslant p-1$, which is valid for all $p > 0$, and obtain
\begin{equation} \label{teiuftfa}
\log Y^{\beta}(T-s,x) - Y^{\beta}(T-s,x) + 1 \leqslant 0.
\end{equation}
This implies that the expression of \hyperref[sodacebtbbsewiet]{(\ref*{sodacebtbbsewiet})} is bounded by $C_{1} \, C_{2} \, (T-t_{0}-s)^{2}$, which establishes the second estimate \hyperref[ilpdepvhfvsvwp]{(\ref*{ilpdepvhfvsvwp})}. We also note that the elementary inequality \hyperref[teiuftfa]{(\ref*{teiuftfa})} in conjunction with the estimate \hyperref[ilpdepvhfvwphiwtrt]{(\ref*{ilpdepvhfvwphiwtrt})} shows that
\begin{equation}
\log Y^{\beta}(T-s,x) \leqslant C_{1} \, (T-t_{0}-s)   
\end{equation}
for all $0 \leqslant s \leqslant T-t_{0}$ and $x \in \mathds{R}^{n}$; this implies that the fraction on the left-hand side of \hyperref[taflawhplftw]{(\ref*{taflawhplftw})} is uniformly bounded on $[0,T-t_{0}] \times \mathds{R}^{n}$. 

\smallskip

Regarding the limiting assertion \hyperref[taflawhplftw]{(\ref*{taflawhplftw})}, we fix $t_{0} > 0$, $x \in \mathds{R}^{n}$ and $0 \leqslant s \leqslant T-t_{0}$, and take conditional expectations with respect to $X(T-s) = x$ in the integral version of the stochastic differential \hyperref[logybetapro]{(\ref*{logybetapro})}. On account of \hyperref[martpoflogybibilt]{(\ref*{martpoflogybibilt})}, the stochastic integral with respect to the $\mathds{P}$-Brownian motion $(\overline{W}^{\mathds{P}}(T-s))_{0 \leqslant s \leqslant T}$ in \hyperref[logybetapro]{(\ref*{logybetapro})} vanishes. Dividing by $T-t_{0}-s$ and passing to the limit as $s \uparrow T-t_{0}$, we can use the estimate \hyperref[ilpdepvhfvsvwp]{(\ref*{ilpdepvhfvsvwp})} to deduce that the expression in the third line of \hyperref[logybetapro]{(\ref*{logybetapro})} vanishes in the limit. After applying the Cauchy--Schwarz inequality, we see that the normalized integral involving the perturbation $\beta$ appearing in the first line of \hyperref[logybetapro]{(\ref*{logybetapro})} can be bounded by
\begin{equation} \label{trtotflitfube}
\frac{1}{T-t_{0}-s} \int_{s}^{T-t_{0}} \Big\vert \nabla \log Y^{\beta}\big(T-u,X(T-u)\big) \Big\vert \cdot \big\vert \beta\big(X(T-u)\big) \big\vert \, \textnormal{d}u.
\end{equation}
By conditions \hyperref[tsaosaojko]{\ref*{tsaosaojko}}, \hyperref[naltsaosaojkos]{\ref*{naltsaosaojkos}} of \hyperref[sosaojkoia]{Assumptions \ref*{sosaojkoia}}, the function $(t,x) \mapsto \nabla \log Y^{\beta}(t,x)$ is continuous on $(0,\infty) \times \mathds{R}^{n}$, thus the expression in \hyperref[trtotflitfube]{(\ref*{trtotflitfube})} is uniformly bounded on the rectangle $[0,T-t_{0}] \times \operatorname{supp} \beta$. As $\log Y^{\beta}(t_{0}, \, \cdot \,) = 0$, it converges $\mathds{P}$-a.s.\ to zero, hence also
\begin{equation} 
\lim_{s \uparrow T-t_{0}} \, \mathds{E}_{\mathds{P}}\Bigg[\frac{1}{T-t_{0}-s} \int_{s}^{T-t_{0}} \Big\vert \nabla \log Y^{\beta}\big(T-u,X(T-u)\big) \Big\vert \cdot \big\vert \beta\big(X(T-u)\big) \big\vert \, \textnormal{d}u  \ \bigg\vert \ X(T-s) = x \Bigg] 
= 0.
\end{equation}
Finally, continuity and uniform boundedness imply that the conditional expectations of the normalized integrals over the second line of \hyperref[logybetapro]{(\ref*{logybetapro})} converge to the right-hand side of \hyperref[taflawhplftw]{(\ref*{taflawhplftw})}, as claimed.
\end{proof}
\end{lemma}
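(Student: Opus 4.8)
The plan is to read everything off the two backward stochastic differential equations of Lemma~\ref{wsublnsibvn}, namely \eqref{logybdbyb} for $\mathrm{d}Y^\beta/Y^\beta$ and \eqref{logybetapro} for $\mathrm{d}\log Y^\beta$, using repeatedly that $\beta=\nabla B$ is smooth with compact support and that $Y^\beta$ is bounded away from $0$ and $\infty$ by Lemma~\ref{hctclwittmeitpocasotpv}. First I would record that the martingale parts in \eqref{logybdbyb} and \eqref{logybetapro} are bounded in $L^2(\mathds{P})$: writing $\log Y^\beta=\log\ell^\beta-\log\ell$ and using $|\nabla\log Y^\beta|^2\le 2|\nabla\log\ell^\beta|^2+2|\nabla\log\ell|^2$, Theorem~\ref{thetsix} (applied to $\Psi$) and Theorem~\ref{thetthretv} (which gives $\mathds{E}_{\mathds{P}^\beta}[F^\beta(t_0)]<\infty$, transferred to $\mathds{P}$ because $\mathds{P}$ and $\mathds{P}^\beta$ are equivalent with bounded mutual densities) yield $\mathds{E}_{\mathds{P}}\big[\int_0^{T-t_0}|\nabla\log Y^\beta(T-u,X(T-u))|^2\,\mathrm{d}u\big]<\infty$; the two-sided bound on $Y^\beta$ then upgrades this to the same bound with $\nabla Y^\beta$ in place of $\nabla\log Y^\beta$. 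In particular every stochastic integral appearing below is a genuine martingale, so its conditional expectations vanish.

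For the first estimate \eqref{ilpdepvhfvwp}, I would multiply \eqref{logybdbyb} by $Y^\beta$ to get the linear backward SDE for $Y^\beta(T-s,X(T-s))$ whose drift is $-Y^\beta\big(\operatorname{div}\beta+\langle\beta,\nabla\log p+\nabla Y^\beta/Y^\beta\rangle\big)$ along the path. This drift is supported in $\operatorname{supp}\beta$, and on the compact rectangle $[t_0,T]\times\operatorname{supp}\beta$ all of its factors are continuous — for $\nabla\log p$ and $\nabla\log Y^\beta=\nabla\log p^\beta-\nabla\log p$ this is condition~\ref{tsaosaojko}, which holds for both $\Psi$ and $\Psi+B$ by condition~\ref{naltsaosaojkos} — hence it is bounded by some constant $C_1$. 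Therefore $Y^\beta(T-s,X(T-s))\pm C_1 s$ is a super/sub-martingale of the backwards filtration under $\mathds{P}$; conditioning on $X(T-s)=x$ (Markov property of the reverse process; pointwise in $x$ since both sides are continuous and $p(T-s,\cdot)$ is strictly positive) gives $|Y^\beta(T-s,x)-\mathds{E}_{\mathds{P}}[Y^\beta(t_0,X(t_0))\mid X(T-s)=x]|\le C_1(T-t_0-s)$, and since $Y^\beta(t_0,\cdot)\equiv1$ this is \eqref{ilpdepvhfvwp}.

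For the second estimate \eqref{ilpdepvhfvsvwp}, I would integrate both \eqref{logybetapro} and the $Y^\beta$-version of \eqref{logybdbyb} from $s$ to $T-t_0$ and condition on $X(T-s)=x$; the difference of the resulting identities expresses $\tfrac12\mathds{E}_{\mathds{P}}\big[\int_s^{T-t_0}|\nabla\log Y^\beta|^2\,\mathrm{d}u\mid X(T-s)=x\big]$ as $\log Y^\beta(T-s,x)-Y^\beta(T-s,x)+1+\mathds{E}_{\mathds{P}}\big[\int_s^{T-t_0}G(T-u,X(T-u))\,\mathrm{d}u\mid X(T-s)=x\big]$ with $G=(Y^\beta-1)\big(\operatorname{div}\beta+\langle\beta,\nabla\log p+\nabla Y^\beta/Y^\beta\rangle\big)$. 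The first three terms are $\le0$ by the elementary inequality $\log r\le r-1$, and the last term is $O((T-t_0-s)^2)$ because $|Y^\beta-1|\le C_1(T-t_0-s)$ while the second factor of $G$ is bounded on $[t_0,T]\times\operatorname{supp}\beta$; this gives \eqref{ilpdepvhfvsvwp}. The same chain also yields $\log Y^\beta(T-s,x)\le Y^\beta(T-s,x)-1\le C_1(T-t_0-s)$, which together with $\log Y^\beta\ge-\log C$ from Lemma~\ref{hctclwittmeitpocasotpv} shows the fraction in \eqref{taflawhplftw} is uniformly bounded on $[0,T-t_0]\times\mathds{R}^n$.

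Finally, for the limiting identity \eqref{taflawhplftw} with $t_0>0$, I would condition the integral form of \eqref{logybetapro} on $X(T-s)=x$, divide by $T-t_0-s$, and let $s\uparrow T-t_0$: the stochastic integral drops out; the $\tfrac12|\nabla\log Y^\beta|^2$ contribution is $O(T-t_0-s)$ by \eqref{ilpdepvhfvsvwp}; the $\langle\nabla\log Y^\beta,\beta\rangle$ contribution is dominated, via Cauchy--Schwarz, by the normalized time-average over $[s,T-t_0]$ of $|\nabla\log Y^\beta|\,|\beta|$, which is uniformly bounded and continuous on $[0,T-t_0]\times\operatorname{supp}\beta$ and whose integrand tends to $0$ as the backward time decreases to $t_0$ (because $\nabla\log Y^\beta(t_0,\cdot)\equiv0$); and what remains is the normalized conditional expectation of $\int_s^{T-t_0}\big(\operatorname{div}\beta+\langle\beta,\nabla\log p\rangle\big)(T-u,X(T-u))\,\mathrm{d}u$, which should converge to $\big(\operatorname{div}\beta+\langle\beta,\nabla\log p\rangle\big)(t_0,x)$. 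I expect the main obstacle to be making these last two passages to the limit rigorous at the level of conditional expectations given $X(T-s)=x$: since $\mathcal{G}(T-s)\uparrow\mathcal{G}(t_0)$ as $s\uparrow T-t_0$, one needs a Hunt-type lemma for the vanishing $\langle\nabla\log Y^\beta,\beta\rangle$ term and a short-time (Feller-type) continuity of the time-reversed diffusion — essentially $\mathds{E}_{\mathds{P}}[\psi(X(t_0))\mid X(t_0+h)=x]\to\psi(x)$ as $h\downarrow0$ for bounded continuous $\psi$, uniformly enough in the time-argument to survive the time-average — to identify the remaining limit. Both hold under Assumptions~\ref{sosaojkoia} thanks to the smoothness of the coefficients of the reverse SDE \eqref{poortdfttrpp} on $(0,\infty)\times\mathds{R}^n$, and with these in hand the displayed term converges to the right-hand side of \eqref{taflawhplftw}, completing the proof.
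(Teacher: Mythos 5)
Your proposal follows the paper's own proof almost step for step: the same $L^{2}(\mathds{P})$-bound on $\int_{0}^{T-t_{0}}\vert\nabla\log Y^{\beta}\vert^{2}\,\textnormal{d}u$ obtained from Theorems \ref{thetsix}, \ref{thetthretv} together with the two-sided bound \eqref{flaarftpottpsfe}; the same sub/supermartingale comparison $Y^{\beta}(T-s,X(T-s))\pm C_{1}s$ with the drift bounded because it is supported on $\operatorname{supp}\beta$, yielding \eqref{ilpdepvhfvwp}; the same identity expressing $\tfrac12\,\mathds{E}_{\mathds{P}}\big[\int_{s}^{T-t_{0}}\vert\nabla\log Y^{\beta}\vert^{2}\,\textnormal{d}u\,\vert\,X(T-s)=x\big]$ as $\log Y^{\beta}-Y^{\beta}+1$ plus the conditional expectation of $\int G$, estimated via $\log r\leqslant r-1$ and \eqref{ilpdepvhfvwp}; and the same passage to the limit in the conditioned, normalized integral version of \eqref{logybetapro} for \eqref{taflawhplftw}. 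The final technical point you flag (a conditional dominated-convergence/Hunt argument plus short-time continuity of the reversed diffusion to identify the limit of the remaining drift term) is exactly the step the paper dispatches with the words ``continuity and uniform boundedness imply'', so your treatment there is, if anything, more explicit.

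The one step that does not work as written is your justification of the uniform boundedness of the fraction in \eqref{taflawhplftw}: the constant lower bound $\log Y^{\beta}\geqslant-\log C$ from Lemma \ref{hctclwittmeitpocasotpv} controls $\log Y^{\beta}(T-s,x)/(T-t_{0}-s)$ from below only by $-\log C/(T-t_{0}-s)$, which blows up as $s\uparrow T-t_{0}$; a bound of order $T-t_{0}-s$ is needed, not a constant. The fix is immediate with the tools you already have: besides the upper bound $\log Y^{\beta}\leqslant Y^{\beta}-1\leqslant C_{1}(T-t_{0}-s)$, use $\log r\geqslant 1-1/r=(r-1)/r$ together with $Y^{\beta}\geqslant 1/C$ to get $\log Y^{\beta}(T-s,x)\geqslant -C\,\vert Y^{\beta}(T-s,x)-1\vert\geqslant -C\,C_{1}\,(T-t_{0}-s)$, i.e.\ the quantitative estimate \eqref{ilpdepvhfvwp} combined with Lemma \ref{hctclwittmeitpocasotpv}, rather than Lemma \ref{hctclwittmeitpocasotpv} alone. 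With this correction your argument coincides with the paper's proof.
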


%%%%%%%%%%%%%%%%%%%%%%%%%%%%%%%%%%%%%%%%%%%%%%%%%%%%%%%%%%%%%%%%%%%%%%%%%%%%%%%%%%%%%%%%%%%%%%%%%%%%%%%%%%%%%%%%%%%%%%%%%%%%%%%%%%%%%%%%%%%%%%%%%%%%%%%%%%%%%%%%%%%%%%%%%%%%%%%%%%%%%%%%%%%%%%%%%%%%%%%%%%%%%%%%%%%%%%%%%%&&&&&&&&&&&&&&&&&&&&&&&&&&&&&&&&&

\subsection{Completing the proof of \texorpdfstring{\hyperref[thetthretvcor]{Proposition \ref*{thetthretvcor}}}{Proposition 4.7}} \label{nspofthetthretvcor}

%%%%%%%%%%%%%%%%%%%%%%%%%%%%%%%%%%%%%%%%%%%%%%%%%%%%%%%%%%%%%%%%%%%%%%%%%%%%%%%%%%%%%%%%%%%%%%%%%%%%%%%%%%%%%%%%%%%%%%%%%%%%%%%%%%%%%%%%%%%%%%%%%%%%%%%%%%%%%%%%%%%%%%%%%%%%%%%%%%%%%%%%%%%%%%%%%%%%%%%%%%%%%%%%%%%%%%%%%%&&&&&&&&&&&&&&&&&&&&&&&&&&&&&&&&&

With the preparations of \hyperref[subsomusefullem]{Subsection \ref*{subsomusefullem}}, we are now able to complete the proof of \hyperref[thetthretvcor]{Proposition \ref*{thetthretvcor}} by establishing the remaining limiting assertions \hyperref[titmlwhtcfs]{(\ref*{titmlwhtcfs})} and \hyperref[thetsixcorfessl]{(\ref*{thetsixcorfessl})} therein. 

\begin{proof}[Proof of \texorpdfstring{\textnormal{\hyperref[titmlwhtcfs]{(\ref*{titmlwhtcfs})}}}{} in \texorpdfstring{\hyperref[thetthretvcor]{Proposition \ref*{thetthretvcor}}}{}:] Let $t_{0} > 0$ and select $T > t_{0}$. Using the notation of \hyperref[witpetounper]{(\ref*{witpetounper})} above, we have to calculate the limit 
\begin{equation} \label{tfwhtccftbsav}
\lim_{s \uparrow T-t_{0}} \, \frac{\log Y^{\beta}\big(T-s,X(T-s)\big)}{T-t_{0}-s}.
\end{equation}
Fix $0 \leqslant s \leqslant T-t_{0}$. According to the integral version of the stochastic differential \hyperref[logybetapro]{(\ref*{logybetapro})}, the fraction in \hyperref[tfwhtccftbsav]{(\ref*{tfwhtccftbsav})} is equal to the sum of the following four normalized integral terms \hyperref[tfwhtccftbsavfi]{(\ref*{tfwhtccftbsavfi})} -- \hyperref[tfwhtccftbsavfith]{(\ref*{tfwhtccftbsavfith})} and \hyperref[tfwhtccftbsavfifo]{(\ref*{tfwhtccftbsavfifo})}, whose behavior as $s \uparrow T-t_{0}$ we will study separately below. By conditions \hyperref[tsaosaojko]{\ref*{tsaosaojko}}, \hyperref[naltsaosaojkos]{\ref*{naltsaosaojkos}} of \hyperref[sosaojkoia]{Assumptions \ref*{sosaojkoia}}, the function $(t,x) \mapsto \nabla \log Y^{\beta}(t,x)$ is continuous on $(0,\infty) \times \mathds{R}^{n}$, thus the first expression 
\begin{equation} \label{tfwhtccftbsavfi}
\frac{1}{T-t_{0}-s} \int_{s}^{T-t_{0}} \bigg( \operatorname{div} \beta \big(X(T-u)\big) + \Big\langle \beta\big(X(T-u)\big) \, , \nabla \log p\big(T-u,X(T-u)\big) \Big\rangle  \bigg) \, \textnormal{d}u
\end{equation}
is uniformly bounded on $[0,T-t_{0}] \times \operatorname{supp} \beta$. Using continuity and uniform boundedness, we conclude that \hyperref[tfwhtccftbsavfi]{(\ref*{tfwhtccftbsavfi})} converges $\mathds{P}$-a.s.\ as well as in $L^{1}(\mathds{P})$ to the right-hand side of \hyperref[titmlwhtcfs]{(\ref*{titmlwhtcfs})}, as required. Thus it remains to show that the three remaining terms converge to zero. Using continuity and uniform boundedness once again, we deduce from $\log Y^{\beta}(t_{0}, \, \cdot \,) = 0$ that the second integral term 
\begin{equation} \label{tfwhtccftbsavfis}
\frac{1}{T-t_{0}-s} \int_{s}^{T-t_{0}} \Bigg\langle \frac{\nabla Y^{\beta}\big(T-u,X(T-u)\big)}{Y^{\beta}\big(T-u,X(T-u)\big)} \, , \, \beta\big(X(T-u)\big)  \Bigg\rangle \, \textnormal{d}u
\end{equation}
converges to zero $\mathds{P}$-a.s.\ and in $L^{1}(\mathds{P})$. Since $\log Y^{\beta}(t_{0}, \, \cdot \,) = 0$ and because the integrand is continuous, we see that the third expression
\begin{equation} \label{tfwhtccftbsavfith}
\frac{1}{T-t_{0}-s} \int_{s}^{T-t_{0}} \frac{1}{2} \frac{\big\vert \nabla Y^{\beta}\big(T-u,X(T-u)\big) \big\vert^{2}}{Y^{\beta}\big(T-u,X(T-u)\big)^{2}} \, \textnormal{d}u
\end{equation}
converges $\mathds{P}$-a.s.\ to zero. Furthermore, owing to \hyperref[hctclwittmeitpocasot]{Lemma \ref*{hctclwittmeitpocasot}}, there is a constant $C > 0$ such that 
\begin{equation} \label{ubfiswwwu}
\mathds{E}_{\mathds{P}}\Bigg[\frac{1}{T - t_{0} - s}\int_{s}^{T-t_{0}} \frac{\big\vert \nabla Y^{\beta}\big(T-u,X(T-u)\big) \big\vert^{2}}{Y^{\beta}\big(T-u,X(T-u)\big)^{2}} \, \textnormal{d}u \Bigg] 
\leqslant C \, (T - t_{0} - s)
\end{equation}
holds for all $0 \leqslant s \leqslant T-t_{0}$, which implies that \hyperref[tfwhtccftbsavfith]{(\ref*{tfwhtccftbsavfith})} converges to zero also in $L^{1}(\mathds{P})$. 

\smallskip

The fourth and last term is the stochastic integral
\begin{equation} \label{tfwhtccftbsavfifo}
- \frac{1}{T-t_{0}-s} \int_{s}^{T-t_{0}} \Bigg\langle \frac{\nabla Y^{\beta}\big(T-u,X(T-u)\big)}{Y^{\beta}\big(T-u,X(T-u)\big)} \, , \, \textnormal{d}\overline{W}^{\mathds{P}}(T-u) \Bigg\rangle.
\end{equation}
The expression \hyperref[tfwhtccftbsavfith]{(\ref*{tfwhtccftbsavfith})} converges to zero $\mathds{P}$-a.s.\ and according to \hyperref[ubfiswwwu]{(\ref*{ubfiswwwu})} we have
\begin{equation} 
\mathds{E}_{\mathds{P}}\Bigg[\frac{1}{(T - t_{0} - s)^{2}}\int_{s}^{T-t_{0}} \frac{\big\vert \nabla Y^{\beta}\big(T-u,X(T-u)\big) \big\vert^{2}}{Y^{\beta}\big(T-u,X(T-u)\big)^{2}} \, \textnormal{d}u \Bigg] \leqslant C.
\end{equation}
By means of the It\^{o} isometry, we deduce that
\begin{equation} 
\lim_{s \uparrow T-t_{0}} \, \mathds{E}_{\mathds{P}}\Bigg[ \, \Bigg( \frac{1}{T-t_{0}-s} \int_{s}^{T-t_{0}} \Bigg\langle \frac{\nabla Y^{\beta}\big(T-u,X(T-u)\big)}{Y^{\beta}\big(T-u,X(T-u)\big)} \, , \, \textnormal{d}\overline{W}^{\mathds{P}}(T-u) \Bigg\rangle  \, \Bigg)^{2} \, \Bigg] = 0.
\end{equation}
In other words, the normalized stochastic integral of \hyperref[tfwhtccftbsavfifo]{(\ref*{tfwhtccftbsavfifo})} converges to zero in $L^{2}(\mathds{P})$.

\smallskip

Summing up, we have shown that the limiting assertion \hyperref[titmlwhtcfs]{(\ref*{titmlwhtcfs})} holds in $L^{1}(\mathds{P})$ for every $t_{0} > 0$. As we have seen in \hyperref[hctclwittmeitpocasotpv]{Lemma \ref*{hctclwittmeitpocasotpv}}, the probability measures $\mathds{P}$ and $\mathds{P}^{\beta}$ are equivalent, the Radon--Nikod\'{y}m derivatives $\frac{\textnormal{d}\mathds{P}^{\beta}}{\textnormal{d}\mathds{P}}$ and $\frac{\textnormal{d}\mathds{P}}{\textnormal{d}\mathds{P}^{\beta}}$ are bounded on the $\sigma$-algebra $\mathcal{F}(T) = \mathcal{G}(0)$, and therefore convergence in $L^{1}(\mathds{P})$ is equivalent to convergence in $L^{1}(\mathds{P}^{\beta})$. This completes the proof of \hyperref[titmlwhtcfs]{(\ref*{titmlwhtcfs})}.
\end{proof}

\begin{proof}[Proof of \texorpdfstring{\textnormal{\hyperref[thetsixcorfessl]{(\ref*{thetsixcorfessl})}}}{} in \texorpdfstring{\hyperref[thetthretvcor]{Proposition \ref*{thetthretvcor}}}{}:] This is proved in very much the same way, as \hyperref[thetsixcorfes]{(\ref*{thetsixcorfes})}, \hyperref[titmlwhtcfs]{(\ref*{titmlwhtcfs})}. The only novelty here is the use of \hyperref[dbowptmtowpbtmtfp]{(\ref*{dbowptmtowpbtmtfp})} to pass to the $\mathds{P}$-Brownian motion $\overline{W}^{\mathds{P}}$ from the $\mathds{P}^{\beta}$-Brownian motion $\overline{W}^{\mathds{P}^{\beta}}$, and the reliance on $\mathds{E}_{\mathds{P}^{\beta}}\big[F^{\beta}(t_{0})\big]< \infty$ to ensure that the resulting stochastic integral is a (square-integrable) $\mathds{P}$-martingale. We leave the details to the diligent reader.
\end{proof}

%%%%%%%%%%%%%%%%%%%%%%%%%%%%%%%%%%%%%%%%%%%%%%%%%%%%%%%%%%%%%%%%%%%%%%%%%%%%%%%%%%%%%%%%%%%%%%%%%%%%%%%%%%%%%%%%%%%%%%%%%%%%%%%%%%%%%%%%%%%%%%%%%%%%%%%%%%%%%%%%%%%%%%%%%%%%%%%%%%%%%%%%%%%%%%%%%%%%%%%%%%%%%%%%%%%%%%%%%%%&&&&&&&&&&&&&&&&&&&&&&&&&&&&&&&&&

\section{The rate of growth for the Wasserstein distance} \label{stwt} 

%%%%%%%%%%%%%%%%%%%%%%%%%%%%%%%%%%%%%%%%%%%%%%%%%%%%%%%%%%%%%%%%%%%%%%%%%%%%%%%%%%%%%%%%%%%%%%%%%%%%%%%%%%%%%%%%%%%%%%%%%%%%%%%%%%%%%%%%%%%%%%%%%%%%%%%%%%%%%%%%%%%%%%%%%%%%%%%%%%%%%%%%%%%%%%%%%%%%%%%%%%%%%%%%%%%%%%%%%%%&&&&&&&&&&&&&&&&&&&&&&&&&&&&&&&&&

Let us recapitulate the message of \hyperref[thetsixcorao]{Corollaries \ref*{thetsixcorao}} and \hyperref[thetsixcoraopv]{\ref*{thetsixcoraopv}}: in these results we compare the rate of decay for the relative entropy with the rate of growth for the quadratic Wasserstein distance $W_{2}$ along the curves $(P(t))_{t \geqslant 0}$ and $(P^{\beta}(t))_{t \geqslant t_{0}}$ in $\mathscr{P}_{2}(\mathds{R}^{n})$. This is the essence of the gradient flow property formalized in \hyperref[thetthre]{Theorem \ref*{thetthre}}.

\smallskip

In order to complete the proofs of \hyperref[thetsixcorao]{Corollaries \ref*{thetsixcorao}} and \hyperref[thetsixcoraopv]{\ref*{thetsixcoraopv}}, we have to establish the limits \hyperref[agswtuffasih]{(\ref*{agswtuffasih})} and \hyperref[svpvompvv]{(\ref*{svpvompvv})}. The limit \hyperref[agswtuffasih]{(\ref*{agswtuffasih})} is well known (see \cite{AGS08}) to exist, under suitable regularity assumptions, for \textit{Lebesgue-a.e.\ $t_{0} \geqslant 0$}. A similar remark pertains to the ``perturbed'' limit \hyperref[svpvompvv]{(\ref*{svpvompvv})}: if we replace $t_{0}$ by $s_{0}$ in \hyperref[svpvompvv]{(\ref*{svpvompvv})}, it is well known that this limit exists for \textit{Lebesgue-a.e.\ $s_{0} \geqslant t_{0}$}. But this is not what we need. We have to prove the validity of \hyperref[svpvompvv]{(\ref*{svpvompvv})} for \textit{the point $t_{0}$ itself}, in order to calculate the slope of the function $(H(P^{\beta}(t) \, \vert \, \mathrm{Q}))_{t \geqslant t_{0}}$ with respect to the Wasserstein distance \textit{at time $t_{0}$}. After all, the deviation of $P^{\beta}(t)$ from $P(t)$ takes place at time $t_{0}$.

\smallskip

This technical aspect turns out to be quite delicate. We already needed a careful analysis (recall the estimates \hyperref[ilpdepvhfv]{(\ref*{ilpdepvhfv})}, \hyperref[ilpdepvhfvsv]{(\ref*{ilpdepvhfvsv})}) to show that the exceptional set $N$ of \hyperref[rgtsflffflnv]{(\ref*{rgtsflffflnv})}, defined in terms of the decay of entropy of the unperturbed curve $(P(t))_{t \geqslant 0}$, does not change when passing to the perturbed curve $(P^{\beta}(t))_{t \geqslant t_{0}}$. In addition, we have to show that this set $N$ also cannot increase when passing from the unperturbed Wasserstein limit \hyperref[agswtuffasih]{(\ref*{agswtuffasih})} to its perturbed counterpart \hyperref[svpvompvv]{(\ref*{svpvompvv})}. In order to do this, we have to rely here (and only here) on condition \hyperref[nalwstasas]{\ref*{nalwstasas}} of \hyperref[sosaojkoianoew]{Assumptions \ref*{sosaojkoianoew}}.

\smallskip

For a detailed discussion of metric measure spaces and in particular Wasserstein spaces, we refer to \cite{AG13,AGS08} and \cite{Stu06a,Stu06b}. We also refer to Section 5 in \cite{KST20}, where some results on quadratic Wasserstein transport are reviewed for the convenience of the reader.

\medskip

For fixed $T \in (0,\infty)$, we define now the time-dependent velocity field 
\begin{equation} \label{tdvfvtx}
[0,T] \times \mathds{R}^{n} \ni (t,x) \longmapsto v(t,x) \vcentcolon = - \bigg( \frac{1}{2} \frac{\nabla p(t,x)}{p(t,x)} + \nabla \Psi(x) \bigg) = - \frac{1}{2} \frac{\nabla \ell(t,x)}{\ell(t,x)} \in \mathds{R}^{n}.
\end{equation}
According to condition \hyperref[nalwstasas]{\ref*{nalwstasas}} in \hyperref[sosaojkoianoew]{Assumptions \ref*{sosaojkoianoew}}, this gradient vector field $v(t, \, \cdot \, )$ is an element of the tangent space (see Definition 8.4.1 in \cite{AGS08}) of $\mathscr{P}_{2}(\mathds{R}^{n})$ at the point $P(t) \in \mathscr{P}_{2}(\mathds{R}^{n})$, i.e.,
\begin{equation} \label{tanpcvecup}
v(t, \, \cdot \, ) \in \textnormal{Tan}_{P(t)} \mathscr{P}_{2}(\mathds{R}^{n}) \vcentcolon = \overline{\big\{ \nabla \varphi \colon \ \varphi \in \mathcal{C}_{c}^{\infty}(\mathds{R}^{n};\mathds{R})\big\}}^{L^{2}(P(t))}.
\end{equation}
We can now formulate the ``unperturbed'' version of our desired result.

\begin{theorem}[\textsf{Limiting behavior of the quadratic Wasserstein distance}] \label{agswt} Under the \textnormal{\hyperref[sosaojkoianoew]{Assumptions \ref*{sosaojkoianoew}}}, let $t_{0} \geqslant 0$ be such that the generalized de Bruijn identity \textnormal{\hyperref[flffflnv]{(\ref*{flffflnv})}}, \textnormal{\hyperref[flfffl]{(\ref*{flfffl})}} is valid. Then we have the two-sided limit
\begin{equation} \label{agswtuff}
\lim_{t \rightarrow t_{0}} \, \frac{W_{2}\big(P(t),P(t_{0})\big)}{\vert t - t_{0} \vert} 
= \bigg( \mathds{E}_{\mathds{P}}\Big[ \, \big\vert v\big(t_{0},X(t_{0})\big) \big\vert^{2} \, \Big] \bigg)^{1/2} 
= \tfrac{1}{2} \, \sqrt{I\big( P(t_{0}) \, \vert \, \mathrm{Q}\big)}.
\end{equation}
\end{theorem}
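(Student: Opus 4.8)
The plan is to realize $W_2(P(t),P(t_0))$ as the minimal cost of transporting the mass of $P(t_0)$ to $P(t)$, and to compare this minimal cost with the cost of a specific, explicit transport plan along the Langevin--Smoluchowski flow. The natural candidate is the displacement generated by the continuity equation satisfied by $(P(t))_{t\ge 0}$: differentiating the Fokker--Planck equation \hyperref[fpeqnwfp]{(\ref*{fpeqnwfp})} in conservative form shows that $\partial_t p(t,\cdot) + \operatorname{div}(p(t,\cdot)\, v(t,\cdot)) = 0$, with $v(t,x) = -\tfrac12 \nabla p/p - \nabla\Psi = -\tfrac12 \nabla\ell/\ell$ as in \hyperref[tdvfvtx]{(\ref*{tdvfvtx})}. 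By the Benamou--Brenier formula (or the metric-derivative characterization in \cite[Chapter 8]{AGS08}), for any absolutely continuous curve in $\mathscr{P}_2(\mathds{R}^n)$ the metric speed is bounded above by $\|v(t,\cdot)\|_{L^2(P(t))}$, and this upper bound is attained precisely when $v(t,\cdot)$ lies in the tangent space $\textnormal{Tan}_{P(t)}\mathscr{P}_2(\mathds{R}^n)$ --- which is exactly what condition \hyperref[nalwstasas]{\ref*{nalwstasas}} of \hyperref[sosaojkoianoew]{Assumptions \ref*{sosaojkoianoew}} guarantees via \hyperref[tanpcvecup]{(\ref*{tanpcvecup})}. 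Hence the metric derivative of $t\mapsto P(t)$ exists for Lebesgue-a.e.\ $t_0$ and equals $\|v(t_0,\cdot)\|_{L^2(P(t_0))}$; the content of the theorem is to upgrade this from ``a.e.\ $t_0$'' to ``every $t_0$ at which the de Bruijn identity holds.''

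\textbf{Key steps, in order.}
First I would record the continuity equation and verify that $(P(t))_{t\ge 0}$ is a locally absolutely continuous curve in $(\mathscr{P}_2(\mathds{R}^n),W_2)$, using \hyperref[1.12]{(\ref*{1.12})} and the regularity from condition \hyperref[tsaosaojko]{\ref*{tsaosaojko}}; this gives, for a.e.\ $t_0$,
\begin{equation*}
\lim_{t\to t_0}\frac{W_2(P(t),P(t_0))}{|t-t_0|} = \|v(t_0,\cdot)\|_{L^2(P(t_0))} = \tfrac12\bigl(\mathds{E}_{\mathds{P}}[|\nabla\ell(t_0,X(t_0))/\ell(t_0,X(t_0))|^2]\bigr)^{1/2} = \tfrac12\sqrt{I(P(t_0)\,\vert\,\mathrm{Q})}.
\end{equation*}
Second, for the upper bound valid at \emph{every} $t_0$: the estimate $W_2(P(t),P(t_0)) \le \int_{t_0\wedge t}^{t_0\vee t}\|v(u,\cdot)\|_{L^2(P(u))}\,\mathrm{d}u$ holds for all $t$ (not just a.e.), and dividing by $|t-t_0|$ and letting $t\to t_0$ yields $\limsup \le \|v(t_0,\cdot)\|_{L^2(P(t_0))}$, provided $u\mapsto\|v(u,\cdot)\|_{L^2(P(u))} = \tfrac12\sqrt{I(P(u)\,\vert\,\mathrm{Q})}$ is right-continuous (or at least has right-limit $\|v(t_0,\cdot)\|_{L^2(P(t_0))}$) at $t_0$; this is where I would invoke the hypothesis that the de Bruijn identity holds at $t_0$, since by \hyperref[thetsixcorao]{Corollary \ref*{thetsixcorao}} that identity says exactly that $\tfrac12\int_{t_0}^t I(P(u)\,\vert\,\mathrm{Q})\,\mathrm{d}u$ is differentiable at $t_0$ with derivative $\tfrac12 I(P(t_0)\,\vert\,\mathrm{Q})$, forcing the required continuity of the integrand's average at $t_0$.

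\textbf{Lower bound and the main obstacle.}
Third, the matching lower bound $\liminf_{t\to t_0} W_2(P(t),P(t_0))/|t-t_0| \ge \|v(t_0,\cdot)\|_{L^2(P(t_0))}$ is the delicate half. Here I would use the de Bruijn identity in its ``gradient-flow'' guise: the relative entropy functional $H(\cdot\,\vert\,\mathrm{Q})$ has, at $P(t_0)$, the Wasserstein subdifferential generated by $\nabla\log\ell(t_0,\cdot)$, so for any $t$ near $t_0$ one has the first-order inequality $H(P(t)\,\vert\,\mathrm{Q}) - H(P(t_0)\,\vert\,\mathrm{Q}) \ge -\langle\,\nabla\log\ell(t_0,\cdot),\,\text{(optimal displacement to }P(t))\,\rangle + o(W_2(P(t),P(t_0)))$ (the McCann-type displacement convexity / subdifferential estimate, as used in \hyperref[hwiai]{Theorem \ref*{hwiai}}), while the left side equals $-\tfrac12 I(P(t_0)\,\vert\,\mathrm{Q})(t-t_0) + o(t-t_0)$ by the de Bruijn identity at $t_0$ for $t\downarrow t_0$. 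Combining these with Cauchy--Schwarz and $\|\nabla\log\ell(t_0,\cdot)\|_{L^2(P(t_0))} = \sqrt{I(P(t_0)\,\vert\,\mathrm{Q})}$ pins down $\liminf W_2(P(t),P(t_0))/(t-t_0) \ge \tfrac12\sqrt{I(P(t_0)\,\vert\,\mathrm{Q})}$, and the two-sided statement follows by also running $t\uparrow t_0$. The main obstacle I anticipate is precisely the control of the $o(\cdot)$ remainder terms in the first-order expansion uniformly as $t\to t_0$ — ensuring that the displacement-convexity lower bound degrades only at lower order than $W_2(P(t),P(t_0))$ itself — and it is exactly for this step that condition \hyperref[nalwstasas]{\ref*{nalwstasas}} (placing $v(t_0,\cdot)$ in the tangent cone, so that the subdifferential inequality is tight rather than merely one-sided) is indispensable; without it one would only get an inequality between the two slopes rather than the claimed equality.
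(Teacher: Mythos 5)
Your first two steps (the a.e.\ statement via the continuity equation and the metric-derivative theory of \cite{AGS08}, and the upper bound $\limsup_{t\to t_0} W_{2}(P(t),P(t_{0}))/\vert t-t_{0}\vert \leqslant \tfrac12\sqrt{I(P(t_{0})\,\vert\,\mathrm{Q})}$ at every Lebesgue point of the Fisher information, using $W_{2}(P(t),P(t_{0}))\leqslant \int \Vert v(u,\cdot)\Vert_{L^{2}(P(u))}\,\textnormal{d}u$, Jensen, and the de Bruijn hypothesis) are sound and in fact a cleaner route to the upper bound than the paper's. The gap is in the lower bound. You invoke a McCann/Otto--Villani subdifferential (displacement-convexity) inequality for $H(\cdot\,\vert\,\mathrm{Q})$ at $P(t_{0})$, with remainder $o(W_{2})$; but that inequality is only available when the entropy is $\kappa$-geodesically convex, i.e.\ under the curvature bound $\textnormal{Hess}(\Psi)\geqslant\kappa\,I_{n}$ of \hyperref[sndcbe]{(\ref*{sndcbe})}. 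That hypothesis belongs to \hyperref[hwiai]{Theorem \ref*{hwiai}} (HWI) but is deliberately \emph{not} part of \hyperref[sosaojkoianoew]{Assumptions \ref*{sosaojkoianoew}}, under which \hyperref[agswt]{Theorem \ref*{agswt}} is stated; and condition \hyperref[nalwstasas]{\ref*{nalwstasas}} does not substitute for it --- its role is not to make a subdifferential inequality ``tight'', but to allow $v(t_{0},\cdot)$ to be approximated in $L^{2}(P(t_{0}))$ by gradients of compactly supported smooth functions. A second problem: even granting convexity, your symmetrization ``by also running $t\uparrow t_{0}$'' does not go through. For $t<t_{0}$ the entropy \emph{increases} at rate $\tfrac12 I(P(t_{0}))$, so the subdifferential inequality at $P(t_{0})$ yields a vacuous bound; one would instead need the subdifferential at $P(t)$, hence control of $I(P(t))$ as $t\uparrow t_{0}$, which is not available at an arbitrary point $t_{0}\in\mathds{R}_{+}\setminus N$.

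The paper's proof of the lower bound avoids convexity altogether: using condition \hyperref[nalwstasas]{\ref*{nalwstasas}} it approximates $v(t_{0},\cdot)$ by $\nabla\varphi_{m}$ with $\varphi_{m}\in\mathcal{C}^{\infty}_{c}$, observes that the linearized maps $x\mapsto x+(t-t_{0})\nabla\varphi_{m}(x)$ are gradients of convex functions for $t-t_{0}$ small (bounded Hessian of $\varphi_m$), hence \emph{optimal} by Brenier's theorem, so their image measures are at exact Wasserstein distance $(t-t_{0})\Vert\nabla\varphi_{m}\Vert_{L^{2}(P(t_{0}))}$ from $P(t_{0})$; the lower bound then follows from the triangle inequality once one shows that this linearized line is tangential to the curve $(P(t))$, i.e.\ $W_{2}(P(t),P_{\mathcal X}^{m}(t))=o(t-t_{0})$ as $m\to\infty$. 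That tangency is an $L^{2}$ estimate on the flow of $v$ versus its linearization, whose uniform integrability is supplied by Scheff\'e's lemma together with precisely the de Bruijn identity at $t_{0}$. If you want to keep your subdifferential route, you must either add the curvature assumption \hyperref[sndcbe]{(\ref*{sndcbe})} (weakening the theorem) or replace that step by an argument of the above type.
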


\bigskip

Before dealing with \hyperref[agswt]{Theorem \ref*{agswt}}, we will prove the more general \hyperref[bvagswt]{Theorem \ref*{bvagswt}} below which amounts to the perturbed version of \hyperref[agswt]{Theorem \ref*{agswt}}. For right-derivatives, the latter then simply follows by setting $\beta \equiv 0$ in the statement of \hyperref[bvagswt]{Theorem \ref*{bvagswt}}.

\smallskip

We consider the ``perturbed'' curve $(P^{\beta}(t))_{t \geqslant t_{0}}$ in $\mathscr{P}_{2}(\mathds{R}^{n})$, as defined in \hyperref[pfpeq]{(\ref*{pfpeq})} -- \hyperref[wpsdeids]{(\ref*{wpsdeids})}, and define the time-dependent perturbed velocity field 
\begin{equation} \label{pbtdvfvb}
[t_{0},T] \times \mathds{R}^{n} \ni (t,x) \longmapsto v^{\beta}(t,x) \vcentcolon = - \bigg( \frac{1}{2} \frac{\nabla p^{\beta}(t,x)}{p^{\beta}(t,x)} + \nabla \Psi(x) + \beta(x) \bigg) \in \mathds{R}^{n}.
\end{equation} 
At this point, we recall that the perturbation $\beta \colon \mathds{R}^{n} \rightarrow \mathds{R}^{n}$ is a \textit{gradient vector field}, i.e., of the form $\beta = \nabla B$ for some smooth potential $B \colon \mathds{R}^{n} \rightarrow \mathds{R}$ with compact support. Since $p(t_{0}, \, \cdot \,) = p^{\beta}(t_{0}, \, \cdot \,)$, at time $t_{0}$ the vector fields of \hyperref[tdvfvtx]{(\ref*{tdvfvtx})} and \hyperref[pbtdvfvb]{(\ref*{pbtdvfvb})} are related via
\begin{equation} \label{rbvbavwitob}
v^{\beta}(t_{0},x) = v(t_{0},x) - \nabla B(x) = - \nabla \Big( \tfrac{1}{2} \log \ell(t_{0},x) + B(x) \Big), \qquad x \in \mathds{R}^{n}.
\end{equation}
Using the regularity assumption that the potential $B$ is of class $\mathcal{C}_{c}^{\infty}(\mathds{R}^{n};\mathds{R})$, we conclude from \hyperref[tanpcvecup]{(\ref*{tanpcvecup})} and \hyperref[rbvbavwitob]{(\ref*{rbvbavwitob})} that the perturbed vector field $v^{\beta}(t_{0}, \, \cdot \,)$ is also an element of the tangent space of $\mathscr{P}_{2}(\mathds{R}^{n})$ at the point $P^{\beta}(t_{0}) = P(t_{0}) \in \mathscr{P}_{2}(\mathds{R}^{n})$, i.e.,
\begin{equation} \label{perttaspaver}
v^{\beta}(t_{0}, \, \cdot \,) \in \textnormal{Tan}_{P^{\beta}(t_{0})} \mathscr{P}_{2}(\mathds{R}^{n}) 
= \overline{\big\{ \nabla \varphi^{\beta} \colon \ \varphi^{\beta} \in \mathcal{C}_{c}^{\infty}(\mathds{R}^{n};\mathds{R})\big\}}^{L^{2}(P^{\beta}(t_{0}))}.
\end{equation}

\begin{theorem}[\textsf{Limiting behavior of the quadratic Wasserstein distance under perturbations}] \label{bvagswt} Under the \textnormal{\hyperref[sosaojkoianoew]{Assumptions \ref*{sosaojkoianoew}}}, for every point $t_{0} \in \mathds{R}_{+} \setminus N$ \textnormal{(}at which the right-sided limiting identity \textnormal{\hyperref[rgtsflffflnv]{(\ref*{rgtsflffflnv})}} is valid\textnormal{)}, we have the one-sided limit
\begin{equation} \label{pvosagswtuff}
\lim_{t \downarrow t_{0}} \, \frac{W_{2}\big( P^{\beta}(t),P^{\beta}(t_{0})\big)}{t-t_{0}} 
= \bigg( \mathds{E}_{\mathds{P}}\Big[ \, \big\vert v^{\beta}\big(t_{0},X(t_{0})\big) \big\vert^{2} \, \Big] \bigg)^{1/2} 
= \tfrac{1}{2}  \, \| a + 2 b\|_{L^{2}(\mathds{P})}.
\end{equation}
Here $a = \nabla \log \ell(t_{0},X(t_{0}))$ and $b = \beta(X(t_{0}))$ as in \textnormal{\hyperref[ttrvzo]{(\ref*{ttrvzo})}}.
\begin{proof}[Proof of \texorpdfstring{\hyperref[bvagswt]{Theorem \ref*{bvagswt}}}{}] The second equality in \hyperref[pvosagswtuff]{(\ref*{pvosagswtuff})} is apparent from the definition of the time-dependent perturbed velocity field $(v^{\beta}(t, \, \cdot \,))_{t \geqslant t_{0}}$ from \hyperref[pbtdvfvb]{(\ref*{pbtdvfvb})} above. The delicate point is to show that the limiting assertion \hyperref[pvosagswtuff]{(\ref*{pvosagswtuff})} is valid for every $t_{0} \in \mathds{R}_{+} \setminus N$. 
\smallskip

In order to see this, let us fix some $t_{0} \in \mathds{R}_{+} \setminus N$ so that the limiting identity \hyperref[rgtsflffflnv]{(\ref*{rgtsflffflnv})} is valid. In the following steps we prove that then the limiting assertion \hyperref[pvosagswtuff]{(\ref*{pvosagswtuff})} also holds.

\medskip

\noindent \fbox{\textsf{Step 1.}} The gradient vector field $v^{\beta}(t_{0}, \, \cdot \,)$ induces a family of \textit{linearized transport maps} 
\begin{equation} \label{dofaijkophi}
\mathcal{X}_{t}^{\beta}(x) \vcentcolon = x + (t-t_{0}) \cdot v^{\beta}(t_{0},x), \qquad x \in \mathds{R}^{n}
\end{equation}
for $t \geqslant t_{0}$ in the manner of \hyperref[hlotlse]{(\ref*{hlotlse})}, and we denote by $P_{\mathcal{X}}^{\beta}(t)$ the image measure of $P^{\beta}(t_{0}) = P(t_{0})$ under the transport map $\mathcal{X}_{t}^{\beta} \colon \mathds{R}^{n} \rightarrow \mathds{R}^{n}$; i.e.,
\begin{equation} 
P_{\mathcal{X}}^{\beta}(t) \vcentcolon = (\mathcal{X}_{t}^{\beta})_{\#} P^{\beta}(t_{0}), \qquad t \geqslant t_{0}.
\end{equation}
To motivate the arguments that follow, let us first pretend that, for all $t > t_{0}$ sufficiently close to $t_{0}$, the map $\mathcal{X}_{t}^{\beta}$ is the \textit{optimal quadratic Wasserstein transport} from $P^{\beta}(t_{0})$ to $P_{\mathcal{X}}^{\beta}(t)$; i.e.,
\begin{equation}
W_{2}^{2}\big(P_{\mathcal{X}}^{\beta}(t),P^{\beta}(t_{0})\big) = \mathds{E}_{\mathds{P}^{\beta}}\Big[ \, \big\vert \mathcal{X}_{t}^{\beta}\big(X(t_{0})\big) - X(t_{0}) \big\vert^{2} \, \Big] = \mathds{E}_{\mathds{P}}\Big[ \, \big\vert \mathcal{X}_{t}^{\beta}\big(X(t_{0})\big) - X(t_{0}) \big\vert^{2} \, \Big],
\end{equation}
where we have used in the last equality the fact that $X(t_{0})$ has the same distribution under $\mathds{P}^{\beta}$ as it does under $\mathds{P}$. Then, on account of \hyperref[dofaijkophi]{(\ref*{dofaijkophi})}, we could conclude that
\begin{equation} \label{svrlinfifat}
\lim_{t \downarrow t_{0}} \,\frac{W_{2}\big(P_{\mathcal{X}}^{\beta}(t),P^{\beta}(t_{0})\big)}{t-t_{0}} = \bigg( \mathds{E}_{\mathds{P}}\Big[ \, \big\vert v^{\beta}\big(t_{0},X(t_{0})\big) \big\vert^{2} \, \Big] \bigg)^{1/2}
= \tfrac{1}{2}  \, \| a + 2 b\|_{L^{2}(\mathds{P})}.
\end{equation} 
Furthermore, let us suppose that we can show the limiting identity
\begin{equation} \label{svrlinfifatsvwct}
\lim_{t \downarrow t_{0}} \, \frac{W_{2}\big(P^{\beta}(t),P_{\mathcal{X}}^{\beta}(t)\big)}{t-t_{0}} = 0,
\end{equation} 
which has the interpretation that ``the straight line $(P_{\mathcal{X}}^{\beta}(t))_{t \geqslant t_{0}}$ is \textit{tangential} to the curve $(P^{\beta}(t))_{t \geqslant t_{0}}$''. Using \hyperref[svrlinfifat]{(\ref*{svrlinfifat})} and \hyperref[svrlinfifatsvwct]{(\ref*{svrlinfifatsvwct})}, we could now derive the desired equality \hyperref[pvosagswtuff]{(\ref*{pvosagswtuff})}. Indeed, invoking the triangle inequality for the quadratic Wasserstein distance we obtain 
\begin{equation} \label{tiotwwsdfv}
\lim_{t \downarrow t_{0}} \, \frac{W_{2}\big(P_{\mathcal{X}}^{\beta}(t),P^{\beta}(t_{0})\big)}{t-t_{0}} \, \leqslant \, \lim_{t \downarrow t_{0}} \, \frac{W_{2}\big(P_{\mathcal{X}}^{\beta}(t),P^{\beta}(t)\big)}{t-t_{0}} \, + \, \liminf_{t \downarrow t_{0}} \, \frac{W_{2}\big(P^{\beta}(t),P^{\beta}(t_{0})\big)}{t-t_{0}},
\end{equation}
and one more application of the triangle inequality yields
\begin{equation} \label{tiotwwsdfvsv}
\limsup_{t \downarrow t_{0}} \,\frac{W_{2}\big(P^{\beta}(t),P^{\beta}(t_{0})\big)}{t-t_{0}} \, \leqslant \,  \lim_{t \downarrow t_{0}} \, \frac{W_{2}\big(P^{\beta}(t),P_{\mathcal{X}}^{\beta}(t)\big)}{t-t_{0}} \, + \, \lim_{t \downarrow t_{0}} \, \frac{W_{2}\big(P_{\mathcal{X}}^{\beta}(t),P^{\beta}(t_{0})\big)}{t-t_{0}}.
\end{equation}

\medskip

\noindent \fbox{\textsf{Step 2.}} The bad news at this point is that there is little reason why, for $t > t_{0}$ sufficiently close to $t_{0}$, the map $\mathcal{X}_{t}^{\beta}$ defined in \hyperref[dofaijkophi]{(\ref*{dofaijkophi})} of \textsf{Step 1} should be optimal with respect to quadratic Wasserstein transportation costs; i.e., by Brenier's theorem \cite{Bre91}, equal to the gradient of a \textit{convex} function. The good news is that we can reduce the general case to the situation of optimal transports $\mathcal{X}_{t}^{\beta}$ as in \textsf{Step 1} by localizing the vector field $v^{\beta}(t_{0}, \, \cdot \,)$ as well as the transport maps $(\mathcal{X}_{t}^{\beta})_{t \geqslant t_{0}}$ to compact subsets of $\mathds{R}^{n}$ (\textsf{Steps 2} -- \textsf{4}); and that, after these localizations have been carried out, an analogue of the equality \hyperref[svrlinfifatsvwct]{(\ref*{svrlinfifatsvwct})} also holds, allowing us to complete the argument (\textsf{Steps 5} -- \textsf{6}).

\smallskip

To this end, we recall that $v^{\beta}(t_{0}, \, \cdot \,)$ from \hyperref[rbvbavwitob]{(\ref*{rbvbavwitob})} is an element of the tangent space $\textnormal{Tan}_{P^{\beta}(t_{0})} \mathscr{P}_{2}(\mathds{R}^{n})$ of the quadratic Wasserstein space $\mathscr{P}_{2}(\mathds{R}^{n})$ at the point $P^{\beta}(t_{0}) \in \mathscr{P}_{2}(\mathds{R}^{n})$. Thus, we can choose a sequence of functions $(\varphi_{m}^{\beta}(t_{0}, \cdot \,))_{m \geqslant 1} \subseteq \mathcal{C}_{c}^{\infty}(\mathds{R}^{n};\mathds{R})$ such that
\begin{equation} \label{aotvfgosfvgosfwcs}
\lim_{m \rightarrow \infty} \, \mathds{E}_{\mathds{P}}\bigg[ \ \Big\vert v^{\beta}\big((t_{0},X(t_{0})\big) - \nabla \varphi_{m}^{\beta}\big(t_{0},X(t_{0})\big) \Big\vert^{2} \ \bigg] = 0.
\end{equation}

\smallskip

Next, for each $m \in \mathds{N}$, we define the \textit{localized gradient vector fields}
\begin{equation} \label{lvfvbrtox}
v_{m}^{\beta}(t_{0},x) \vcentcolon = \nabla \varphi_{m}^{\beta}(t_{0},x), \qquad x \in \mathds{R}^{n}.
\end{equation} 
These have compact support, approximate the gradient vector field $v^{\beta}(t_{0}, \, \cdot \,)$ in $L^{2}(P(t_{0}))$ as in \hyperref[aotvfgosfvgosfwcs]{(\ref*{aotvfgosfvgosfwcs})}, and induce a family of \textit{localized linear transports} $(\mathcal{X}_{t}^{\beta,m})_{t \geqslant t_{0}}$ defined by analogy with \hyperref[dofaijkophi]{(\ref*{dofaijkophi})} via
\begin{equation} \label{defphbrtaopws}
\mathcal{X}_{t}^{\beta,m}(x) \vcentcolon = x + (t-t_{0}) \cdot v_{m}^{\beta}(t_{0},x), \qquad x \in \mathds{R}^{n}.
\end{equation}
We denote by $P_{\mathcal{X}}^{\beta,m}(t)$ the image measure of $P^{\beta}(t_{0}) = P(t_{0})$ under this localized linear transport map $\mathcal{X}_{t}^{\beta,m} \colon \mathds{R}^{n} \rightarrow \mathds{R}^{n}$; i.e.,
\begin{equation} \label{delltmpphbrtf}
P_{\mathcal{X}}^{\beta,m}(t) \vcentcolon = (\mathcal{X}_{t}^{\beta,m})_{\#} P^{\beta}(t_{0}), \qquad t \geqslant t_{0}.
\end{equation} 

\medskip

\noindent \fbox{\textsf{Step 3.}} We claim that, for every $m \in \mathds{N}$, there exists some $\varepsilon_{m} > 0$ such that for all $t \in (t_{0},t_{0} + \varepsilon_{m})$, the localized linear transport map $\mathcal{X}_{t}^{\beta,m} \colon \mathds{R}^{n} \rightarrow \mathds{R}^{n}$ constructed in \textsf{Step 2} defines an optimal Wasserstein transport from $P^{\beta}(t_{0})$ to $P_{\mathcal{X}}^{\beta,m}(t)$. Hence, by Brenier's theorem (\cite{Bre91}, \cite[Theorem 2.12]{Vil03}), we have to show that $\mathcal{X}_{t}^{\beta,m}$ is the \textit{gradient of a convex function}, for all $t > t_{0}$ sufficiently close to $t_{0}$. 

\smallskip

Indeed, from the definitions in \hyperref[lvfvbrtox]{(\ref*{lvfvbrtox})}, \hyperref[defphbrtaopws]{(\ref*{defphbrtaopws})} we see that the functions $\mathcal{X}_{t}^{\beta,m}$ are gradients, for all $m \in \mathds{N}$ and $t \geqslant t_{0}$. More precisely, we have
\begin{equation} \label{bracket}
\mathcal{X}_{t}^{\beta,m}(x) = \nabla \Big( \tfrac{1}{2} \vert x \vert^{2} + (t-t_{0}) \cdot \varphi_{m}^{\beta}(t_{0},x) \Big), \qquad x \in \mathds{R}^{n}.
\end{equation}
As the Hessian matrix of $\varphi_{m}^{\beta}(t_{0}, \, \cdot \,)$ is uniformly bounded, the function in the bracket of \hyperref[bracket]{(\ref*{bracket})} is a convex function of $x$ for every $m \in \mathds{N}$ and $t \in (t_{0},t_{0} + \varepsilon_{m})$, for $\varepsilon_{m} > 0$ small enough. We also note for later use that $\mathcal{X}_{t}^{\beta,m}$ defines a Lipschitz bijection on $\mathds{R}^{n}$, again for every $m \in \mathds{N}$ and $t \in (t_{0},t_{0} + \varepsilon_{m})$.

\medskip

\noindent \fbox{\textsf{Step 4.}} From \textsf{Step 3} we know that, for every $m \in \mathds{N}$, there exists some $\varepsilon_{m} > 0$ such that for all $t \in (t_{0},t_{0} + \varepsilon_{m})$ the localized map $\mathcal{X}_{t}^{\beta,m}$ is the optimal transport from $P^{\beta}(t_{0})$ to $P_{\mathcal{X}}^{\beta,m}(t)$ with respect to quadratic Wasserstein costs. Therefore, we can apply the considerations of \textsf{Step 1} to the optimal map $\mathcal{X}_{t}^{\beta,m}$ in \hyperref[defphbrtaopws]{(\ref*{defphbrtaopws})}, and deduce that
\begin{equation} \label{svrlinfifatnv}
\lim_{t \downarrow t_{0}} \,\frac{W_{2}\big(P_{\mathcal{X}}^{\beta,m}(t),P^{\beta}(t_{0})\big)}{t-t_{0}} = \bigg( \mathds{E}_{\mathds{P}}\Big[ \, \big\vert v_{m}^{\beta}\big(t_{0},X(t_{0})\big) \big\vert^{2} \, \Big] \bigg)^{1/2}
\end{equation} 
holds for every $m \in \mathds{N}$. Invoking \hyperref[aotvfgosfvgosfwcs]{(\ref*{aotvfgosfvgosfwcs})} and \hyperref[lvfvbrtox]{(\ref*{lvfvbrtox})}, we obtain from this
\begin{equation} 
\lim_{m \rightarrow \infty} \, \lim_{t \downarrow t_{0}} \,\frac{W_{2}\big(P_{\mathcal{X}}^{\beta,m}(t),P^{\beta}(t_{0})\big)}{t-t_{0}} = \bigg( \mathds{E}_{\mathds{P}}\Big[ \, \big\vert v^{\beta}\big(t_{0},X(t_{0})\big) \big\vert^{2} \, \Big] \bigg)^{1/2} = \tfrac{1}{2}  \, \| a + 2 b\|_{L^{2}(\mathds{P})}.
\end{equation} 
From the inequalities \hyperref[tiotwwsdfv]{(\ref*{tiotwwsdfv})} and \hyperref[tiotwwsdfvsv]{(\ref*{tiotwwsdfvsv})} of \textsf{Step 1} (with $P_{\mathcal{X}}^{\beta,m}(t)$ instead of $P_{\mathcal{X}}^{\beta}(t)$) it follows that, in order to conclude \hyperref[pvosagswtuff]{(\ref*{pvosagswtuff})}, it remains to establish the analogue of the identity \hyperref[svrlinfifatsvwct]{(\ref*{svrlinfifatsvwct})}:
\begin{equation} \label{nvotlisvrlinfifatsvwct}
\lim_{m \rightarrow \infty} \, \lim_{t \downarrow t_{0}} \, \frac{W_{2}\big(P^{\beta}(t),P_{\mathcal{X}}^{\beta,m}(t)\big)}{t-t_{0}} = 0.
\end{equation} 

\medskip

\noindent \fbox{\textsf{Step 5.}} The time-dependent velocity field $(v^{\beta}(t, \, \cdot \,))_{t \geqslant t_{0}}$ induces a \textit{curved flow} $(\mathcal{Y}_{t}^{\beta})_{t \geqslant t_{0}}$, which is characterized by  
\begin{equation} \label{cotcfl}
\tfrac{\textnormal{d}}{\textnormal{d}t} \, \mathcal{Y}_{t}^{\beta} = v^{\beta}(t,\mathcal{Y}_{t}^{\beta}) \quad \textnormal{ for all } t \geqslant t_{0} \, , \qquad \mathcal{Y}_{t_{0}}^{\beta} = \textnormal{Id}_{\mathds{R}^{n}}.
\end{equation}
Then, for every $t \geqslant t_{0}$, the map $\mathcal{Y}_{t}^{\beta} \colon \mathds{R}^{n} \rightarrow \mathds{R}^{n}$ transports the measure $P^{\beta}(t_{0}) = P(t_{0})$ to $P^{\beta}(t)$, i.e., $(\mathcal{Y}_{t}^{\beta})_{\#} P^{\beta}(t_{0}) = P^{\beta}(t)$.

\smallskip

The localized linear mappings $\mathcal{X}_{t}^{\beta,m} \colon \mathds{R}^{n} \rightarrow \mathds{R}^{n}$ of \hyperref[defphbrtaopws]{(\ref*{defphbrtaopws})} transport $P^{\beta}(t_{0})$ to $P_{\mathcal{X}}^{\beta,m}(t)$, as in \hyperref[delltmpphbrtf]{(\ref*{delltmpphbrtf})}. As mentioned at the end of \textsf{Step 3}, the inverse mappings $(\mathcal{X}_{t}^{\beta,m})^{-1} \colon \mathds{R}^{n} \rightarrow \mathds{R}^{n}$ are well-defined for all $m \in \mathds{N}$ and $t \in (t_{0},t_{0} + \varepsilon_{m})$; they satisfy
\begin{equation} 
\big((\mathcal{X}_{t}^{\beta,m})^{-1}\big)_{\#} P_{\mathcal{X}}^{\beta,m}(t) = P^{\beta}(t_{0}), \qquad t \in (t_{0},t_{0} + \varepsilon_{m}).
\end{equation} 

\smallskip

From \textsf{Step 4}, our remaining task is to prove \hyperref[nvotlisvrlinfifatsvwct]{(\ref*{nvotlisvrlinfifatsvwct})}. To this end, we have to construct maps $\mathcal{Z}_{t}^{\beta,m} \colon \mathds{R}^{n} \rightarrow \mathds{R}^{n}$ that transport $P_{\mathcal{X}}^{\beta,m}(t)$ to $P^{\beta}(t)$, i.e., $(\mathcal{Z}_{t}^{\beta,m})_{\#} P_{\mathcal{X}}^{\beta,m}(t) = P^{\beta}(t)$, and satisfy
\begin{equation} \label{twhteaoncmne}
\lim_{m \rightarrow \infty} \, \lim_{t \downarrow t_{0}} \, \frac{1}{t-t_{0}} \, \Bigg( \, \mathds{E}_{\mathds{P}_{\mathcal{X}}^{\beta,m}}\bigg[ \ \Big\vert \mathcal{Z}_{t}^{\beta,m}\big(X(t)\big) - X(t) \Big\vert^{2} \ \bigg]  \, \Bigg)^{1/2} = 0  \, ,
\end{equation}
where $\mathds{P}_{\mathcal{X}}^{\beta,m}$ denotes a probability measure on path space under which the random variable $X(t)$ has distribution $P_{\mathcal{X}}^{\beta,m}(t)$ as in \hyperref[delltmpphbrtf]{(\ref*{delltmpphbrtf})}. We define for this job the candidate maps
\begin{equation} \label{defphitcamap}
\mathcal{Z}_{t}^{\beta,m} \vcentcolon = \mathcal{Y}_{t}^{\beta} \circ \big( \mathcal{X}_{t}^{\beta,m} \big)^{-1}, \qquad t \in (t_{0},t_{0} + \varepsilon_{m});
\end{equation}
recall that $(\mathcal{X}_{t}^{\beta,m})^{-1}$ transports $P_{\mathcal{X}}^{\beta,m}(t)$ to $P^{\beta}(t_{0})$ while $\mathcal{Y}_{t}^{\beta}$ transports $P^{\beta}(t_{0})$ to $P^{\beta}(t)$; and conclude that $\mathcal{Z}_{t}^{\beta,m}$ of \hyperref[defphitcamap]{(\ref*{defphitcamap})} transports $P_{\mathcal{X}}^{\beta,m}(t)$ to $P^{\beta}(t)$. Thus, we obtain 
\begin{equation} \label{olwtwhteaoncmne}
\mathds{E}_{\mathds{P}_{\mathcal{X}}^{\beta,m}}\bigg[ \ \Big\vert \mathcal{Z}_{t}^{\beta,m}\big(X(t)\big) - X(t) \Big\vert^{2} \ \bigg] 
= \mathds{E}_{\mathds{P}}\bigg[ \ \Big\vert \mathcal{Y}_{t}^{\beta}\big(X(t_{0})\big) - \mathcal{X}_{t}^{\beta,m}\big(X(t_{0})\big) \Big\vert^{2} \ \bigg].
\end{equation}
Combining \hyperref[twhteaoncmne]{(\ref*{twhteaoncmne})} with \hyperref[olwtwhteaoncmne]{(\ref*{olwtwhteaoncmne})}, we see that we have to establish
\begin{equation}  \label{olebaewtwhteaoncmne}
\lim_{m \rightarrow \infty} \, \lim_{t \downarrow t_{0}} \, \frac{1}{(t-t_{0})^{2}} \, \mathds{E}_{\mathds{P}}\bigg[ \ \Big\vert \mathcal{Y}_{t}^{\beta}\big(X(t_{0})\big) - \mathcal{X}_{t}^{\beta,m}\big(X(t_{0})\big) \Big\vert^{2} \ \bigg] = 0.
\end{equation}
Using \hyperref[defphbrtaopws]{(\ref*{defphbrtaopws})} and the elementary inequality $\vert x+y \vert^{2} \leqslant 2(\vert x \vert^{2}+\vert y \vert^{2})$, for $x,y \in \mathds{R}^{n}$, we derive the estimate
\begingroup
\addtolength{\jot}{0.7em}
\begin{align}
\tfrac{1}{2} \, \big\vert \mathcal{Y}_{t}^{\beta}(x) - \mathcal{X}_{t}^{\beta,m}(x) \big\vert^{2} \, \leqslant \, &(t-t_{0})^2 \cdot \vert v^{\beta}(t_{0},x) - v_{m}^{\beta}(t_{0},x)\vert^{2} \label{olwtwhteaoncmneb} \\
\, + \, &\Big\vert \big(\mathcal{Y}_{t}^{\beta}(x) - x \big) - (t-t_{0}) \cdot v^{\beta}(t_{0},x) \Big\vert^{2}. \label{olwtwhteaoncmnea} 
\end{align}
\endgroup

\smallskip

Therefore, in order to establish \hyperref[olebaewtwhteaoncmne]{(\ref*{olebaewtwhteaoncmne})}, it suffices to show the limiting assertions \hyperref[liidofstsetwahfo]{(\ref*{liidofstsetwahfo})} and \hyperref[bolwtwhteaoncmne]{(\ref*{bolwtwhteaoncmne})} below; these correspond to \hyperref[olwtwhteaoncmneb]{(\ref*{olwtwhteaoncmneb})} and \hyperref[olwtwhteaoncmnea]{(\ref*{olwtwhteaoncmnea})}, respectively. The first limiting identity we already have from \hyperref[aotvfgosfvgosfwcs]{(\ref*{aotvfgosfvgosfwcs})}, \hyperref[lvfvbrtox]{(\ref*{lvfvbrtox})} of \textsf{Step 2}, namely,
\begin{equation} \label{liidofstsetwahfo}
\lim_{m \rightarrow \infty} \, \mathds{E}_{\mathds{P}}\bigg[ \ \Big\vert v^{\beta}\big((t_{0},X(t_{0})\big) - v_{m}^{\beta}\big(t_{0},X(t_{0})\big) \Big\vert^{2} \ \bigg] = 0.
\end{equation}

\medskip

\noindent \fbox{\textsf{Step 6.}} Our final task is to justify that
\begin{equation} \label{bolwtwhteaoncmne}
\lim_{t \downarrow t_{0}} \, \mathds{E}_{\mathds{P}}\bigg[ \ \Big\vert \tfrac{1}{t-t_{0}} \Big(\mathcal{Y}_{t}^{\beta}\big(X(t_{0})\big) - X(t_{0}) \Big) - v^{\beta}\big(t_{0},X(t_{0})\big) \Big\vert^{2} \ \bigg] = 0.
\end{equation}
To this end, we first note that by \hyperref[cotcfl]{(\ref*{cotcfl})} we have for all $t \geqslant t_{0}$ the identity
\begin{equation} \label{iotcfitdfe}
\mathcal{Y}_{t}^{\beta}(x) = x + \int_{t_{0}}^{t} v^{\beta}\big(u,\mathcal{Y}_{u}^{\beta}(x)\big) \, \textnormal{d}u, \qquad x \in \mathds{R}^{n},
\end{equation}
on whose account the expectation in \hyperref[bolwtwhteaoncmne]{(\ref*{bolwtwhteaoncmne})} is equal to
\begin{equation} 
\mathds{E}_{\mathds{P}}\Bigg[ \ \bigg\vert \frac{1}{t-t_{0}} \int_{t_{0}}^{t} v^{\beta}\Big(u,\mathcal{Y}_{u}^{\beta}\big(X(t_{0})\big)\Big) \, \textnormal{d}u - v^{\beta}\big(t_{0},X(t_{0})\big) \bigg\vert^{2} \ \Bigg].
\end{equation}
As $\mathcal{Y}_{t}^{\beta}$ transports $P^{\beta}(t_{0})=P(t_{0})$ to $P^{\beta}(t)$, and because the random variable $X(t_{0})$ has the same distribution under $\mathds{P}^{\beta}$ as it does under $\mathds{P}$, this expectation can also be expressed with respect to the probability measure $\mathds{P}^{\beta}$, and it thus suffices to show the limiting assertion
\begin{equation} \label{tsctznuiv}
\lim_{t \downarrow t_{0}} \, \mathds{E}_{\mathds{P}^{\beta}}\Bigg[ \ \bigg\vert \frac{1}{t-t_{0}} \int_{t_{0}}^{t} v^{\beta}\big(u,X(u)\big) \, \textnormal{d}u - v^{\beta}\big(t_{0},X(t_{0})\big) \bigg\vert^{2} \ \Bigg] = 0.
\end{equation}
For this purpose, we first observe that by the continuity of the paths of the canonical coordinate process $(X(t))_{t \geqslant 0}$, the family of random variables 
\begin{equation} \label{uivrvone}
\Bigg( \ \bigg\vert \frac{1}{t-t_{0}} \int_{t_{0}}^{t} v^{\beta}\big(u,X(u)\big) \, \textnormal{d}u - v^{\beta}\big(t_{0},X(t_{0})\big) \bigg\vert^{2} \ \Bigg)_{t \geqslant t_{0}}
\end{equation}
converges $\mathds{P}^{\beta}$-a.s.\ to zero, as $t \downarrow t_{0}$. In order to show that their expectations also converge to zero, i.e., that \hyperref[tsctznuiv]{(\ref*{tsctznuiv})} does hold, we have to verify that the family of \hyperref[uivrvone]{(\ref*{uivrvone})} is uniformly integrable with respect to $\mathds{P}^{\beta}$. As the random variable $\vert v^{\beta}(t_{0},X(t_{0})) \vert^{2}$ belongs to $L^{1}(\mathds{P}^{\beta})$, and we have
\begin{equation}
\bigg\vert \frac{1}{t-t_{0}} \int_{t_{0}}^{t} v^{\beta}\big(u,X(u)\big) \, \textnormal{d}u \bigg\vert^{2} \, \leqslant \,
\frac{1}{t-t_{0}} \int_{t_{0}}^{t} \big\vert v^{\beta}\big(u,X(u)\big) \big\vert^{2} \, \textnormal{d}u, \qquad t \geqslant t_{0}
\end{equation}
by Jensen's inequality, it is sufficient to prove the uniform integrability of the family 
\begin{equation} \label{eqouofuindsf}
\Bigg( \, \frac{1}{t-t_{0}} \int_{t_{0}}^{t} \big\vert v^{\beta}\big(u,X(u)\big) \big\vert^{2} \, \textnormal{d}u  \, \Bigg)_{t \geqslant t_{0}}.
\end{equation}
Invoking the definition of the time-dependent velocity field $(v^{\beta}(t, \, \cdot \,))_{t \geqslant t_{0}}$ in \hyperref[pbtdvfvb]{(\ref*{pbtdvfvb})} and the fact that the perturbation $\beta$ is smooth and compactly supported, the uniform integrability of the family in \hyperref[eqouofuindsf]{(\ref*{eqouofuindsf})} above, is equivalent to the uniform integrability of the family
\begin{equation} \label{tfseqsbuitg}
\Bigg( \, \frac{1}{t-t_{0}} \int_{t_{0}}^{t} \frac{\big\vert \nabla \ell^{\beta}\big(u,X(u)\big) \big\vert^{2}}{\ell^{\beta}\big(u,X(u)\big)^{2}} \, \textnormal{d}u  \, \Bigg)_{t \geqslant t_{0}}.
\end{equation}
Now by continuity, the family of \hyperref[tfseqsbuitg]{(\ref*{tfseqsbuitg})} converges $\mathds{P}^{\beta}$-a.s.\ to $\vert \nabla \log \ell(t_{0},X(t_{0}))\vert^{2}$. Thus, to establish this uniform integrability, it suffices to show that the family of random variables in \hyperref[tfseqsbuitg]{(\ref*{tfseqsbuitg})} converges in $L^{1}(\mathds{P}^{\beta})$. Hence, in view of \textit{Scheff\'{e}'s lemma} (\hyperref[whaelsl]{Lemma \ref*{whaelsl}}), it remains to check that the corresponding expectations also converge. But at this point we use for the first time our choice of $t_{0} \in \mathds{R}_{+} \setminus N$ and recall \hyperref[tciwsolafophvs]{(\ref*{tciwsolafophvs})}, \hyperref[tcigwwsolafophvs]{(\ref*{tcigwwsolafophvs})} from the proof of \hyperref[thetsixcoraopv]{Corollary \ref*{thetsixcoraopv}}, which gives us 
\begin{equation}
\lim_{t \downarrow t_{0}}  \, \mathds{E}_{\mathds{P}^{\beta}}\Bigg[ \frac{1}{t-t_{0}} \int_{t_{0}}^{t} \frac{\big\vert \nabla \ell^{\beta}\big(u,X(u)\big) \big\vert^{2}}{\ell^{\beta}\big(u,X(u)\big)^{2}} \, \textnormal{d}u \Bigg] = 
\mathds{E}_{\mathds{P}}\Bigg[ \ \frac{\big\vert \nabla \ell\big(t_{0},X(t_{0})\big) \big\vert^{2}}{\ell\big(t_{0},X(t_{0})\big)^{2}} \ \Bigg],
\end{equation}
as required. This completes the proof of the claim made in the beginning of \textsf{Step 6}.

\smallskip

Summing up, in light of \hyperref[olwtwhteaoncmneb]{(\ref*{olwtwhteaoncmneb})}, \hyperref[olwtwhteaoncmnea]{(\ref*{olwtwhteaoncmnea})} from \textsf{Step 5}, the limiting assertions \hyperref[liidofstsetwahfo]{(\ref*{liidofstsetwahfo})} and \hyperref[bolwtwhteaoncmne]{(\ref*{bolwtwhteaoncmne})} imply the limiting behavior \hyperref[olebaewtwhteaoncmne]{(\ref*{olebaewtwhteaoncmne})}. According to the results of \textsf{Steps 4} and \textsf{5}, the latter also entails the validity of the limiting identity \hyperref[nvotlisvrlinfifatsvwct]{(\ref*{nvotlisvrlinfifatsvwct})}, which completes the proof of \hyperref[bvagswt]{Theorem \ref*{bvagswt}}.
\end{proof}
\end{theorem}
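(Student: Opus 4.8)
The plan is as follows. The second equality in the asserted limit is immediate: since $p^{\beta}(t_{0},\cdot)=p(t_{0},\cdot)$, one has $v^{\beta}(t_{0},X(t_{0}))=-\tfrac{1}{2}\nabla\log\ell(t_{0},X(t_{0}))-\beta(X(t_{0}))=-\tfrac{1}{2}(a+2b)$, so $\mathds{E}_{\mathds{P}}\big[\,|v^{\beta}(t_{0},X(t_{0}))|^{2}\,\big]=\tfrac{1}{4}\|a+2b\|_{L^{2}(\mathds{P})}^{2}$. For the first equality I would compare the curve $(P^{\beta}(t))_{t\geqslant t_{0}}$ near $t_{0}$ with the ``straight line'' obtained by pushing $P^{\beta}(t_{0})$ forward under the linearized maps $\mathcal{X}_{t}^{\beta}(x):=x+(t-t_{0})\,v^{\beta}(t_{0},x)$, writing $P_{\mathcal{X}}^{\beta}(t):=(\mathcal{X}_{t}^{\beta})_{\#}P^{\beta}(t_{0})$. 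If $\mathcal{X}_{t}^{\beta}$ were the optimal quadratic transport from $P^{\beta}(t_{0})$ to $P_{\mathcal{X}}^{\beta}(t)$, then $W_{2}(P_{\mathcal{X}}^{\beta}(t),P^{\beta}(t_{0}))=(t-t_{0})\,\|v^{\beta}(t_{0},X(t_{0}))\|_{L^{2}(\mathds{P})}$ exactly, and two applications of the triangle inequality for $W_{2}$ would pin down $\lim_{t\downarrow t_{0}}W_{2}(P^{\beta}(t),P^{\beta}(t_{0}))/(t-t_{0})$, provided we also had the ``tangency'' estimate $W_{2}(P^{\beta}(t),P_{\mathcal{X}}^{\beta}(t))=o(t-t_{0})$.

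The first difficulty is that there is no reason for $\mathcal{X}_{t}^{\beta}$ to be optimal---equivalently, by Brenier's theorem \cite{Bre91}, to be the gradient of a convex function---since $v^{\beta}(t_{0},\cdot)$ need not be globally Lipschitz. I would remove this obstruction by localization. Condition \ref{nalwstasas} of Assumptions \ref{sosaojkoianoew}, applied through the identity $v^{\beta}(t_{0},\cdot)=-\nabla\big(\tfrac{1}{2}\log\ell(t_{0},\cdot)+B\big)$, places $v^{\beta}(t_{0},\cdot)$ in $\textnormal{Tan}_{P^{\beta}(t_{0})}\mathscr{P}_{2}(\mathds{R}^{n})$; hence there are $\varphi_{m}^{\beta}\in\mathcal{C}_{c}^{\infty}(\mathds{R}^{n};\mathds{R})$ with $\nabla\varphi_{m}^{\beta}\to v^{\beta}(t_{0},\cdot)$ in $L^{2}(P(t_{0}))$. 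Setting $\mathcal{X}_{t}^{\beta,m}(x):=x+(t-t_{0})\nabla\varphi_{m}^{\beta}(x)=\nabla\big(\tfrac{1}{2}|x|^{2}+(t-t_{0})\varphi_{m}^{\beta}(x)\big)$ and using that $\varphi_{m}^{\beta}$ has bounded Hessian, the bracketed function is convex and $\mathcal{X}_{t}^{\beta,m}$ is a Lipschitz bijection for $t$ in an $m$-dependent right neighbourhood of $t_{0}$, hence an optimal transport. This yields $\lim_{t\downarrow t_{0}}W_{2}(P_{\mathcal{X}}^{\beta,m}(t),P^{\beta}(t_{0}))/(t-t_{0})=\|\nabla\varphi_{m}^{\beta}(X(t_{0}))\|_{L^{2}(\mathds{P})}$ for each $m$, which converges to $\tfrac{1}{2}\|a+2b\|_{L^{2}(\mathds{P})}$ as $m\to\infty$; so, via the triangle inequality, the whole statement reduces to the localized tangency identity $\lim_{m\to\infty}\lim_{t\downarrow t_{0}}W_{2}(P^{\beta}(t),P_{\mathcal{X}}^{\beta,m}(t))/(t-t_{0})=0$.

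For tangency I would exhibit an explicit transport from $P_{\mathcal{X}}^{\beta,m}(t)$ to $P^{\beta}(t)$ by composing $(\mathcal{X}_{t}^{\beta,m})^{-1}$, which carries $P_{\mathcal{X}}^{\beta,m}(t)$ back to $P^{\beta}(t_{0})$, with the curved flow $\mathcal{Y}_{t}^{\beta}$ solving $\tfrac{\mathrm{d}}{\mathrm{d}t}\mathcal{Y}_{t}^{\beta}=v^{\beta}(t,\mathcal{Y}_{t}^{\beta})$, $\mathcal{Y}_{t_{0}}^{\beta}=\textnormal{Id}$, which carries $P^{\beta}(t_{0})$ to $P^{\beta}(t)$. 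The associated squared cost is $\mathds{E}_{\mathds{P}}\big[\,|\mathcal{Y}_{t}^{\beta}(X(t_{0}))-\mathcal{X}_{t}^{\beta,m}(X(t_{0}))|^{2}\,\big]$, which by $|x+y|^{2}\leqslant 2|x|^{2}+2|y|^{2}$ is at most $2(t-t_{0})^{2}\|v^{\beta}(t_{0},\cdot)-\nabla\varphi_{m}^{\beta}\|_{L^{2}(P(t_{0}))}^{2}+2\|(\mathcal{Y}_{t}^{\beta}-\textnormal{Id})-(t-t_{0})\,v^{\beta}(t_{0},\cdot)\|_{L^{2}(P(t_{0}))}^{2}$. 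Divided by $(t-t_{0})^{2}$, the first summand vanishes as $m\to\infty$; for the second, inserting $\mathcal{Y}_{t}^{\beta}(x)-x=\int_{t_{0}}^{t}v^{\beta}(u,\mathcal{Y}_{u}^{\beta}(x))\,\mathrm{d}u$, using that $\mathcal{Y}_{u}^{\beta}$ pushes $P(t_{0})$ to $P^{\beta}(u)$ and that $X(t_{0})$ has the same law under $\mathds{P}$ and $\mathds{P}^{\beta}$, it equals $\mathds{E}_{\mathds{P}^{\beta}}\big[\,|\tfrac{1}{t-t_{0}}\int_{t_{0}}^{t}v^{\beta}(u,X(u))\,\mathrm{d}u-v^{\beta}(t_{0},X(t_{0}))|^{2}\,\big]$. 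This tends to $0$ as $t\downarrow t_{0}$ by continuity of paths (almost-sure convergence) together with uniform integrability; discarding the smooth, compactly supported contribution of $\beta$ in $v^{\beta}$, the uniform integrability reduces to that of the averaged integrals $\tfrac{1}{t-t_{0}}\int_{t_{0}}^{t}|\nabla\log\ell^{\beta}(u,X(u))|^{2}\,\mathrm{d}u$, which converge $\mathds{P}^{\beta}$-a.s.\ to $|\nabla\log\ell(t_{0},X(t_{0}))|^{2}$; by Scheff\'{e}'s lemma it then suffices that the expectations converge, and this is precisely where the hypothesis $t_{0}\in\mathds{R}_{+}\setminus N$ is used, since the limit identities established in the proof of Corollary \ref{thetsixcoraopv} give, at that very $t_{0}$, $\lim_{t\downarrow t_{0}}\tfrac{1}{t-t_{0}}\mathds{E}_{\mathds{P}^{\beta}}[\int_{t_{0}}^{t}|\nabla\log\ell^{\beta}(u,X(u))|^{2}\mathrm{d}u]=I(P(t_{0})\,\vert\,\mathrm{Q})$.

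The main obstacle is this last tangency/uniform-integrability step. The almost-sure convergences are soft consequences of path continuity; the delicate point is the convergence of expectations, since the classical Wasserstein derivative formula of \cite{AGS08} is available only for Lebesgue-a.e.\ initial time, whereas the gradient-flow interpretation forces us to work at the prescribed time $t_{0}$ at which the perturbation is switched on---so we must import, through Corollary \ref{thetsixcoraopv}, the fact that $t_{0}\notin N$ already controls the unperturbed averaged Fisher information. The localization producing Brenier optimality of $\mathcal{X}_{t}^{\beta,m}$, and the bookkeeping of whether each expectation is taken under $\mathds{P}$ or $\mathds{P}^{\beta}$ (legitimate because $X(t_{0})$ has the same law under both), are the remaining points requiring care but no new idea. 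Finally, the unperturbed Theorem \ref{agswt} follows by taking $\beta\equiv 0$, which reduces $v^{\beta}(t_{0},\cdot)$ to $v(t_{0},\cdot)=-\tfrac{1}{2}\nabla\log\ell(t_{0},\cdot)$ and $\tfrac{1}{2}\|a+2b\|_{L^{2}(\mathds{P})}$ to $\tfrac{1}{2}\sqrt{I(P(t_{0})\,\vert\,\mathrm{Q})}$.
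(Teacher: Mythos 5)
Your proposal is correct and follows essentially the same route as the paper's own proof: linearized transport maps, localization via condition \hyperref[nalwstasas]{(vi)} and Brenier optimality of the localized maps, the tangency estimate through the composition of the curved flow with the inverse localized map, and the uniform-integrability/Scheff\'{e} argument that invokes $t_{0} \in \mathds{R}_{+} \setminus N$ through \hyperref[tciwsolafophvs]{(\ref*{tciwsolafophvs})}--\hyperref[tcigwwsolafophvs]{(\ref*{tcigwwsolafophvs})}. No substantive differences to report.
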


Equipped with \hyperref[bvagswt]{Theorem \ref*{bvagswt}}, we can now easily deduce \hyperref[agswt]{Theorem \ref*{agswt}}.

\begin{proof}[Proof of \texorpdfstring{\hyperref[agswt]{Theorem \ref*{agswt}}}{}] The second equality in \hyperref[agswtuff]{(\ref*{agswtuff})} follows from the representation of the relative Fisher information in \hyperref[merfi]{(\ref*{merfi})} and the definition of the time-dependent velocity field $(v(t, \, \cdot \,))_{t \geqslant t_{0}}$ in \hyperref[tdvfvtx]{(\ref*{tdvfvtx})}. The first equality in \hyperref[agswtuff]{(\ref*{agswtuff})} follows from \hyperref[bvagswt]{Theorem \ref*{bvagswt}} if we set $\beta \equiv 0$. However, the limit in \hyperref[pvosagswtuff]{(\ref*{pvosagswtuff})} is only from the right, while the limit in \hyperref[agswtuff]{(\ref*{agswtuff})} is two-sided. But the only reason for considering right-sided limits in \hyperref[bvagswt]{Theorem \ref*{bvagswt}}, was the presence of the perturbation $\beta$ at time $t \geqslant t_{0}$. If there is no such perturbation, one can replace all limits from the right by two-sided ones. This completes the proof of \hyperref[agswt]{Theorem \ref*{agswt}}.
\end{proof}

%%%%%%%%%%%%%%%%%%%%%%%%%%%%%%%%%%%%%%%%%%%%%%%%%%%%%%%%%%%%%%%%%%%%%%%%%%%%%%%%%%%%%%%%%%%%%%%%%%%%%%%%%%%%%%%%%%%%%%%%%%%%%%%%%%%%%%%%%%%%%%%%%%%%%%%%%%%%%%%%%%%%%%%%%%%%%%%%%%%%%%%%%%%%%%%%%%%%%%%%%%%%%%%%%%%%%%%%%%%&&&&&&&&&&&&&&&&&&&&&&&&&&&&&&&&&

%\makeatletter
%\xpatchcmd{\@sec@pppage}{
%\bfseries}{
%\normalfont\scshape\Large}{}{}
%\makeatother

\setkomafont{section}{\large}
\setkomafont{subsection}{\normalsize}

\begin{appendices}

%%%%%%%%%%%%%%%%%%%%%%%%%%%%%%%%%%%%%%%%%%%%%%%%%%%%%%%%%%%%%%%%%%%%%%%%%%%%%%%%%%%%%%%%%%%%%%%%%%%%%%%%%%%%%%%%%%%%%%%%%%%%%%%%%%%%%%%%%%%%%%%%%%%%%%%%%%%%%%%%%%%%%%%%%%%%%%%%%%%%%%%%%%%%%%%%%%%%%%%%%%%%%%%%%%%%%%%%%%%&&&&&&&&&&&&&&&&&&&&&&&&&&&&&&&&&

%%%%%%%%%%%%%%%%%%%%%%%%%%%%%%%%%%%%%%%%%%%%%%%%%%%%%%%%%%%%%%%%%%%%%%%%%%%%%%%%%%%%%%%%%%%%%%%%%%%%%%%%%%%%%%%%%%%%%%%%%%%%%%%%%%%%%%%%%%%%%%%%%%%%%%%%%%%%%%%%%%%%%%%%%%%%%%%%%%%%%%%%%%%%%%%%%%%%%%%%%%%%%%%%%%%%%%%%%%%&&&&&&&&&&&&&&&&&&&&&&&&&&&&&&&&&

\section{Some measure-theoretic results} \label{apsecamtr} 

%%%%%%%%%%%%%%%%%%%%%%%%%%%%%%%%%%%%%%%%%%%%%%%%%%%%%%%%%%%%%%%%%%%%%%%%%%%%%%%%%%%%%%%%%%%%%%%%%%%%%%%%%%%%%%%%%%%%%%%%%%%%%%%%%%%%%%%%%%%%%%%%%%%%%%%%%%%%%%%%%%%%%%%%%%%%%%%%%%%%%%%%%%%%%%%%%%%%%%%%%%%%%%%%%%%%%%%%%%%&&&&&&&&&&&&&&&&&&&&&&&&&&&&&&&&&

In the proofs of \hyperref[thetsixcor]{Propositions \ref*{thetsixcor}} and \hyperref[thetthretvcor]{\ref*{thetthretvcor}} we have used a result about conditional expectations, which we formulate below as \hyperref[probamtr]{Proposition \ref*{probamtr}}; we refer to Proposition D.2 in Appendix D of \cite{KST20} for its proof. We place ourselves on a probability space $(\Omega,\mathcal{F},\mathds{P})$ endowed with a left-continuous filtration $(\mathcal{F}(t))_{t \geqslant 0}$. We first recall the following result, which is well known under the name of \textit{Scheff\'{e}'s lemma} \cite[5.10]{Wil91}.

\begin{lemma}[\textsf{Scheff\'{e}'s lemma}] \label{whaelsl} For a sequence of integrable random variables $(X_{n})_{n \in \mathds{N}}$ which converges $\mathds{P}$-a.s.\ to another integrable random variable $X$, the convergence of the $L^{1}(\mathds{P})$-norms \textnormal{(}i.e., $\lim_{n \rightarrow \infty} \mathds{E}[\vert X_{n} \vert] = \mathds{E}[\vert X \vert ]$\textnormal{)} is equivalent to the convergence in $L^{1}(\mathds{P})$ \textnormal{(}i.e., $\lim_{n \rightarrow \infty} \mathds{E}[\vert X_{n} - X\vert] = 0$\textnormal{)}. 
\end{lemma}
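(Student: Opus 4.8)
The plan is to treat the two implications separately, the ``$\Leftarrow$'' direction being immediate and the ``$\Rightarrow$'' direction carrying the content of the lemma. For ``$\Leftarrow$'', suppose $\lim_{n \to \infty} \mathds{E}[|X_n - X|] = 0$. Then the reverse triangle inequality gives $\big| \mathds{E}[|X_n|] - \mathds{E}[|X|] \big| \leqslant \mathds{E}\big[ \big\vert |X_n| - |X| \big\vert \big] \leqslant \mathds{E}[|X_n - X|]$, and letting $n \to \infty$ yields $\mathds{E}[|X_n|] \to \mathds{E}[|X|]$.

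For the converse, assume $\mathds{E}[|X_n|] \to \mathds{E}[|X|]$. The idea is to apply Fatou's lemma to the nonnegative sequence $Z_n \vcentcolon = |X_n| + |X| - |X_n - X|$; nonnegativity is the triangle inequality, and since $X_n \to X$ $\mathds{P}$-a.s.\ we have $Z_n \to 2|X|$ $\mathds{P}$-a.s. All the random variables involved are integrable, so $\mathds{E}[Z_n] = \mathds{E}[|X_n|] + \mathds{E}[|X|] - \mathds{E}[|X_n - X|]$, and Fatou's lemma gives
\[
2 \, \mathds{E}[|X|] = \mathds{E}\Big[ \liminf_{n \to \infty} Z_n \Big] \leqslant \liminf_{n \to \infty} \mathds{E}[Z_n] = \mathds{E}[|X|] + \lim_{n \to \infty} \mathds{E}[|X_n|] - \limsup_{n \to \infty} \mathds{E}[|X_n - X|] = 2 \, \mathds{E}[|X|] - \limsup_{n \to \infty} \mathds{E}[|X_n - X|],
\]
where the last equality uses the hypothesis $\mathds{E}[|X_n|] \to \mathds{E}[|X|]$. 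Rearranging forces $\limsup_{n \to \infty} \mathds{E}[|X_n - X|] \leqslant 0$, hence $\mathds{E}[|X_n - X|] \to 0$, which is the claimed $L^{1}(\mathds{P})$-convergence.

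There is no real analytic obstacle here: the only point requiring a little care is the bookkeeping of $\liminf$ versus $\limsup$ when splitting $\mathds{E}[Z_n]$ into three terms, which is legitimate precisely because $\mathds{E}[|X_n|]$ converges (and because $\mathds{E}[|X|]$ and all $\mathds{E}[|X_n|]$, $\mathds{E}[|X_n - X|]$ are finite). The filtration and the conditional-expectation apparatus play no role in this lemma; it is a purely measure-theoretic statement used downstream, and the proof above is self-contained given Fatou's lemma.
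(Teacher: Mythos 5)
Your proof is correct: the ``$\Leftarrow$'' direction via the reverse triangle inequality and the ``$\Rightarrow$'' direction via Fatou's lemma applied to $Z_n = |X_n| + |X| - |X_n - X| \geqslant 0$ are both sound, and the $\liminf$/$\limsup$ bookkeeping is legitimate precisely because $\mathds{E}[|X_n|]$ converges and all the quantities involved are finite. Note that the paper itself offers no proof of this lemma --- it simply cites \cite[5.10]{Wil91} --- so there is no in-paper argument to compare against; your Fatou-based argument is the standard self-contained one (the textbook route in Williams instead handles the nonnegative case first, bounding $(X_n - X)^-$ by $X$ and using dominated convergence, which is an equally short but different mechanism). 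Either way, the lemma is purely measure-theoretic and your write-up would serve as a complete substitute for the citation.
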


\begin{proposition} \label{probamtr} Let $(B(t))_{0 \leqslant t \leqslant T}$ and $(C(t))_{0 \leqslant t \leqslant T}$ be adapted continuous processes, which are non-negative and uniformly bounded, respectively. Define the process 
\begin{equation}
A(t) \vcentcolon = \int_{0}^{t} \big( B(u) + C(u) \big) \, \textnormal{d}u, \qquad 0 \leqslant t \leqslant T
\end{equation}
and assume that $\mathds{E}\big[\int_{0}^{T}  B(u)  \, \textnormal{d}u\big]$ is finite. By the Lebesgue differentiation theorem, for Lebesgue-a.e.\ $t_{0} \in [0,T]$ we have
\begin{equation} \label{titlocotn}
\lim_{t \uparrow t_{0}} \,  \frac{\mathds{E}\big[A(t)-A(t_{0})\big]}{t-t_{0}} = \lim_{t \uparrow t_{0}} \, \frac{1}{t-t_{0}} \, \mathds{E}\Bigg[ \int_{t_{0}}^{t} \big( B(u) + C(u) \big) \, \textnormal{d}u \Bigg]  = \mathds{E}\big[ B(t_{0}) + C(t_{0})\big].
\end{equation}
Now fix a ``Lebesgue point'' $t_{0} \in [0,T]$ for which \textnormal{\hyperref[titlocotn]{(\ref*{titlocotn})}} does hold. Then we have the analogous limiting assertion for the conditional expectations, i.e.,
\begin{equation} \label{probamtre}
\lim_{t \uparrow t_{0}} \, \frac{\mathds{E}\big[A(t_{0})-A(t) \ \vert \ \mathcal{F}(t)\big]}{t_{0}-t} = 
B(t_{0}) + C(t_{0}),
\end{equation}
where the limit exists in $L^{1}(\mathds{P})$.
\end{proposition}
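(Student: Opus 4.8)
The plan is to rewrite the conditional expectation in \textnormal{\hyperref[probamtre]{(\ref*{probamtre})}} as the conditional expectation of an explicit difference quotient, to establish an unconditional $L^{1}(\mathds{P})$-convergence for that quotient, and then to pass to the conditional statement through the (Hunt-type) martingale mechanism afforded by the left-continuity of the filtration. Concretely, since $A(t_{0})-A(t)=\int_{t}^{t_{0}}(B(u)+C(u))\,\textnormal{d}u$, the tower property and the $\mathcal{F}(t)$-measurability of $1/(t_{0}-t)$ give, for $t<t_{0}$,
\begin{equation*}
\frac{\mathds{E}\big[A(t_{0})-A(t)\mid\mathcal{F}(t)\big]}{t_{0}-t}=\mathds{E}\big[Y_{t}\mid\mathcal{F}(t)\big],\qquad Y_{t}\vcentcolon=\frac{1}{t_{0}-t}\int_{t}^{t_{0}}\big(B(u)+C(u)\big)\,\textnormal{d}u .
\end{equation*}
Hence it suffices to prove that $Y_{t}\to R\vcentcolon= B(t_{0})+C(t_{0})$ in $L^{1}(\mathds{P})$ as $t\uparrow t_{0}$, and that this convergence survives conditioning on $\mathcal{F}(t)$.

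For the unconditional convergence I would argue as follows. Let $M$ be a uniform bound for $\vert C\vert$. Since $B\geqslant 0$ and $C+M\geqslant 0$, the shifted variables $Y_{t}+M=\frac{1}{t_{0}-t}\int_{t}^{t_{0}}(B(u)+C(u)+M)\,\textnormal{d}u$ are non-negative; by continuity of the paths of $B$ and $C$, for every $\omega$ the average converges, so $Y_{t}+M\to R+M$ pointwise as $t\uparrow t_{0}$. The assumed Lebesgue-point property \textnormal{\hyperref[titlocotn]{(\ref*{titlocotn})}} says precisely that $\mathds{E}[Y_{t}]\to\mathds{E}[R]$, hence $\mathds{E}[Y_{t}+M]\to\mathds{E}[R+M]$, i.e.\ the $L^{1}$-norms converge. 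Applying \emph{Scheff\'{e}'s lemma} (\hyperref[whaelsl]{Lemma \ref*{whaelsl}}) along an arbitrary sequence $t_{n}\uparrow t_{0}$ — which is enough, since a limit over the continuous parameter follows once it holds along every sequence — we obtain $Y_{t}+M\to R+M$, and hence $Y_{t}\to R$, in $L^{1}(\mathds{P})$; in particular $R\in L^{1}(\mathds{P})$.

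For the passage to conditional expectations, note that $R=B(t_{0})+C(t_{0})$ is $\mathcal{F}(t_{0})$-measurable because $B$ and $C$ are adapted, and that left-continuity of the filtration gives $\mathcal{F}(t)\uparrow\sigma\big(\bigcup_{u<t_{0}}\mathcal{F}(u)\big)=\mathcal{F}(t_{0})$ as $t\uparrow t_{0}$. Along any sequence $t_{n}\uparrow t_{0}$, the triangle inequality together with the $L^{1}$-contraction property of conditional expectation yields
\begin{equation*}
\big\Vert\,\mathds{E}[Y_{t_{n}}\mid\mathcal{F}(t_{n})]-R\,\big\Vert_{L^{1}}\leqslant\big\Vert Y_{t_{n}}-R\big\Vert_{L^{1}}+\big\Vert\,\mathds{E}[R\mid\mathcal{F}(t_{n})]-R\,\big\Vert_{L^{1}} .
\end{equation*}
The first term tends to $0$ by the previous paragraph; the second tends to $0$ because $\big(\mathds{E}[R\mid\mathcal{F}(t_{n})]\big)_{n}$ is a uniformly integrable martingale converging $\mathds{P}$-a.s.\ and in $L^{1}$ to $\mathds{E}[R\mid\mathcal{F}(t_{0})]=R$ by Doob's martingale convergence theorem. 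Since this holds along every sequence, $\mathds{E}[Y_{t}\mid\mathcal{F}(t)]\to R$ in $L^{1}(\mathds{P})$, which is \textnormal{\hyperref[probamtre]{(\ref*{probamtre})}}.

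The step I expect to be the main obstacle is the unconditional $L^{1}$-convergence $Y_{t}\to R$, and within it the unboundedness of $B$: dominated convergence is not available, so one must combine the pathwise convergence (from continuity of $B$) with the convergence of $L^{1}$-norms (from the Lebesgue-point hypothesis) via Scheff\'{e}'s lemma, the shift by $M$ being the device that neutralises the sign-indefinite contribution of $C$. The conditional step is then the routine Hunt-lemma argument, whose only subtlety is the identification $\mathcal{F}(t_{0}^{-})=\mathcal{F}(t_{0})$ coming from left-continuity of the filtration.
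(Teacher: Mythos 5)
Your proof is correct and, in essence, follows the route the paper relies on: the paper only cites \cite{KST20} (Proposition D.2 there) for the proof, but the ingredients it sets up --- Scheff\'{e}'s lemma (\hyperref[whaelsl]{Lemma \ref*{whaelsl}}) stated immediately before the proposition, and the left-continuity of the filtration stressed precisely when the proposition is invoked --- are exactly those you deploy, namely unconditional $L^{1}(\mathds{P})$-convergence of the shifted averages $Y_{t}+M$ (pathwise continuity plus the Lebesgue-point identity \textnormal{(\ref{titlocotn})} and Scheff\'{e}), followed by the $L^{1}$-contractivity of conditional expectation and L\'{e}vy's upward martingale convergence with $\mathcal{F}(t_{0}^{-})=\mathcal{F}(t_{0})$. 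The one point to phrase more carefully is that the integrability of $B(t_{0})+C(t_{0})$ is an input to Scheff\'{e}'s lemma rather than a consequence of it; it is, however, available at the fixed Lebesgue point, since $t\mapsto\mathds{E}[B(t)+C(t)]$ is integrable on $[0,T]$ and \textnormal{(\ref{titlocotn})} identifies the (finite) limit with this value.
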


In the proof of \hyperref[thetsix]{Theorem \ref*{thetsix}} we invoked the following result. We refer for its proof to Lemma 2.48 in \cite{KK21}.

\begin{proposition} \label{sprobamtr} Suppose $(N(t))_{t \geqslant 0}$ is a strictly positive local martingale with continuous paths. Let $\tau$ be a $[0,\infty)$-valued stopping time such that $\mathds{E}[\log N(\tau)]$ is finite and $\mathds{E}[(\log N(0))^{+}] < \infty$. Then $\mathds{E}[\log N(0)]$ is finite and
\begin{equation}
\mathds{E}\big[\log N(\tau)\big] - \mathds{E}\big[\log N(0)\big]  = - \tfrac{1}{2} \, \mathds{E}\Big[ \big[\log N, \log N\big](\tau)\Big].
\end{equation}

\end{proposition}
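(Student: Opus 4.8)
The statement to prove is Proposition \ref{sprobamtr}: for a strictly positive local martingale $(N(t))_{t\geqslant 0}$ with continuous paths and a stopping time $\tau$ with $\mathds{E}[\log N(\tau)]$ finite and $\mathds{E}[(\log N(0))^{+}]<\infty$, one has $\mathds{E}[\log N(0)]$ finite and $\mathds{E}[\log N(\tau)]-\mathds{E}[\log N(0)]=-\tfrac12\,\mathds{E}[[\log N,\log N](\tau)]$.

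\begin{proof}[Proof of \texorpdfstring{\hyperref[sprobamtr]{Proposition \ref*{sprobamtr}}}{Proposition}]
The plan is to apply It\^{o}'s formula to $\log N$, then control the local-martingale term via a localization-plus-Fatou argument that upgrades it to a genuine supermartingale bound, and finally close the gap with an integrability estimate coming from the hypotheses on $N(0)$ and $N(\tau)$. First I would introduce the continuous local martingale $L(t):=\log N(t)$. Since $N$ is a strictly positive continuous local martingale, It\^{o}'s formula applied to the $\mathcal{C}^{2}$ function $x\mapsto\log x$ gives
\begin{equation*}
\log N(t) = \log N(0) + \int_{0}^{t}\frac{\textnormal{d}N(u)}{N(u)} - \tfrac12\int_{0}^{t}\frac{\textnormal{d}[N,N](u)}{N(u)^{2}},
\end{equation*}
and the process $\widetilde{M}(t):=\int_{0}^{t}N(u)^{-1}\,\textnormal{d}N(u)$ is a continuous local martingale with $[\widetilde{M},\widetilde{M}](t)=\int_{0}^{t}N(u)^{-2}\,\textnormal{d}[N,N](u)=[\log N,\log N](t)$. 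Thus $\log N(t)=\log N(0)+\widetilde{M}(t)-\tfrac12[\log N,\log N](t)$.

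Next I would localize. Choose a nondecreasing sequence of stopping times $\sigma_{k}\uparrow\infty$ reducing $\widetilde{M}$ and such that $\widetilde{M}^{\sigma_{k}}$ is a true (bounded, or at least uniformly integrable) martingale; by optional stopping at $\tau\wedge\sigma_{k}$ we get
\begin{equation*}
\mathds{E}\big[\log N(\tau\wedge\sigma_{k})\big] = \mathds{E}\big[\log N(0)\big] - \tfrac12\,\mathds{E}\big[[\log N,\log N](\tau\wedge\sigma_{k})\big].
\end{equation*}
Here one must first note $\mathds{E}[\log N(0)]$ is finite: writing $\log N(0)=\log N(\tau\wedge\sigma_{k})+\tfrac12[\log N,\log N](\tau\wedge\sigma_{k})-\widetilde{M}(\tau\wedge\sigma_{k})$ and taking expectations shows $\mathds{E}[\log N(0)]=\mathds{E}[\log N(\tau\wedge\sigma_{k})]+\tfrac12\mathds{E}[[\log N,\log N](\tau\wedge\sigma_{k})]$, and since the second term is nonnegative and the first is bounded above (the positive part of $\log N(0)$ is integrable, so $\mathds{E}[\log N(0)]$ exists in $[-\infty,\infty)$ and the displayed identity with $k$ fixed pins it to a finite value once we know $\mathds{E}[\log N(\tau\wedge\sigma_{k})]$ is finite — which holds because $\log N(\tau\wedge\sigma_{k})=\log N(0)-\tfrac12[\log N,\log N](\tau\wedge\sigma_{k})+\widetilde{M}(\tau\wedge\sigma_{k})$ with each summand integrable under the localization). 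A cleaner route: the process $\log N(t)+\tfrac12[\log N,\log N](t)=\log N(0)+\widetilde{M}(t)$ is a continuous local martingale bounded below (up to $\log N(0)$) — but $\log N(0)$ need not be bounded below, so instead I use that $\exp(-\widetilde{M}(t)-\tfrac12[\widetilde M,\widetilde M](t))=N(0)/N(t)$ is a positive local martingale hence a supermartingale, giving $\mathds{E}[N(0)/N(t)]\leqslant 1$; this is not directly what we need but illustrates the monotonicity structure. I will in fact proceed directly from the optional-stopping identity above.

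Finally I would pass to the limit $k\to\infty$. On the one hand, $[\log N,\log N](\tau\wedge\sigma_{k})\uparrow[\log N,\log N](\tau)$ monotonically, so by the monotone convergence theorem $\mathds{E}[[\log N,\log N](\tau\wedge\sigma_{k})]\uparrow\mathds{E}[[\log N,\log N](\tau)]\in[0,\infty]$. On the other hand, $\log N(\tau\wedge\sigma_{k})\to\log N(\tau)$ a.s.\ (using continuity of paths and $\sigma_{k}\uparrow\infty$; if $\tau=\infty$ on a null-probability-irrelevant set one interprets $\log N(\tau)$ on $\{\tau<\infty\}$, and the hypothesis that $\mathds{E}[\log N(\tau)]$ is finite forces the relevant integrability). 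The main obstacle — and the step requiring genuine care — is justifying the interchange of limit and expectation for $\mathds{E}[\log N(\tau\wedge\sigma_{k})]$, since $\log N(\tau\wedge\sigma_{k})$ is not monotone in $k$ and has no obvious uniform domination. The resolution is to split $\log N=(\log N)^{+}-(\log N)^{-}$ and handle the two parts via different convergence theorems: for the negative part one uses the supermartingale property of $N(0)/N(\cdot)$ combined with Jensen to get $\mathds{E}[(\log N(\tau\wedge\sigma_{k}))^{-}]$ controlled, then Fatou; for the positive part the already-established identity rearranged as $\mathds{E}[(\log N(\tau\wedge\sigma_{k}))^{+}]=\mathds{E}[\log N(0)]-\mathds{E}[(\log N(\tau\wedge\sigma_{k}))^{-}]+\tfrac12\mathds{E}[[\log N,\log N](\tau\wedge\sigma_{k})]$ together with the finiteness of $\mathds{E}[\log N(\tau)]$ and of $\mathds{E}[\log N(0)]$ provides a uniform bound, after which dominated or monotone convergence applies componentwise. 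Combining the two limits yields $\mathds{E}[\log N(\tau)]=\mathds{E}[\log N(0)]-\tfrac12\mathds{E}[[\log N,\log N](\tau)]$, which is the claim; in particular the right-hand quadratic variation term is finite. Since this is a known result, I would alternatively simply cite Lemma 2.48 in \cite{KK21} as the excerpt already does, but the sketch above records the self-contained argument.
\end{proof}
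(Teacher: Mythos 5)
The paper itself offers no proof of this proposition (it is quoted from \cite{KK21}, Lemma 2.48), so your sketch has to stand on its own, and as written it has two genuine gaps. First, the identity you lean on in the final step is false: since $N(t)=N(0)\exp\big(\widetilde{M}(t)-\tfrac12[\widetilde{M},\widetilde{M}](t)\big)$, one has $N(0)/N(t)=\exp\big(-\widetilde{M}(t)+\tfrac12[\widetilde{M},\widetilde{M}](t)\big)$, \emph{not} $\exp\big(-\widetilde{M}(t)-\tfrac12[\widetilde{M},\widetilde{M}](t)\big)$; the process $1/N$ is a local \emph{sub}martingale, and e.g.\ for $N(t)=\mathrm{e}^{B(t)-t/2}$ one gets $\mathds{E}[N(0)/N(t)]=\mathrm{e}^{t}$, so $N(0)/N(\cdot)$ is no supermartingale. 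Since your control of $\mathds{E}[(\log N(\tau\wedge\sigma_{k}))^{-}]$, and hence the whole interchange of limit and expectation, rests on that ``supermartingale property'', the concluding step collapses. Second, your argument for the finiteness of $\mathds{E}[\log N(0)]$ is circular: you justify integrability of $\log N(\tau\wedge\sigma_{k})$ by decomposing it as $\log N(0)+\widetilde{M}(\tau\wedge\sigma_{k})-\tfrac12[\log N,\log N](\tau\wedge\sigma_{k})$ ``with each summand integrable under the localization'', but the summand $\log N(0)$ is precisely the quantity in question and is untouched by any localization in time. More structurally, nothing before your last step uses the hypothesis $\log N(\tau)\in L^{1}$, and without that hypothesis the conclusion is simply false, so no such derivation can succeed; likewise, a uniform bound on $\mathds{E}[(\log N(\tau\wedge\sigma_{k}))^{+}]$ would not by itself yield convergence of the expectations (the positive parts are neither monotone nor dominated, so neither convergence theorem applies ``componentwise'').

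The missing idea is a comparison between time $\tau$ and time $\tau\wedge\sigma_{k}$ that injects the integrability of $\log N(\tau)$: the nonnegative local martingale $N$ is a supermartingale, so optional sampling at the finite stopping times $\tau\wedge\sigma_{k}\leqslant\tau$ together with conditional Jensen for the concave, increasing function $\log$ (legitimate because $\log N(\tau)\in L^{1}$) gives $\mathds{E}[\log N(\tau)\mid\mathcal{F}(\tau\wedge\sigma_{k})]\leqslant\log\mathds{E}[N(\tau)\mid\mathcal{F}(\tau\wedge\sigma_{k})]\leqslant\log N(\tau\wedge\sigma_{k})$, hence $\mathds{E}[\log N(\tau)]\leqslant\mathds{E}[\log N(\tau\wedge\sigma_{k})]=\mathds{E}[\log N(0)]-\tfrac12\,\mathds{E}\big[[\log N,\log N](\tau\wedge\sigma_{k})\big]$. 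This one inequality delivers both missing facts at once: $\mathds{E}[\log N(0)]>-\infty$, and $\sup_{k}\mathds{E}\big[[\log N,\log N](\tau\wedge\sigma_{k})\big]<\infty$, so $\mathds{E}\big[[\log N,\log N](\tau)\big]<\infty$ by monotone convergence. Then $\widetilde{M}(\cdot\wedge\tau)$ is an $L^{2}$-bounded true martingale, so $\mathds{E}[\widetilde{M}(\tau)]=0$, and taking expectations directly in It\^{o}'s identity $\log N(\tau)=\log N(0)+\widetilde{M}(\tau)-\tfrac12[\log N,\log N](\tau)$ --- now a sum of integrable terms --- yields the asserted equality; the delicate limit interchange you were struggling with is then not needed at all.
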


%%%%%%%%%%%%%%%%%%%%%%%%%%%%%%%%%%%%%%%%%%%%%%%%%%%%%%%%%%%%%%%%%%%%%%%%%%%%%%%%%%%%%%%%%%%%%%%%%%%%%%%%%%%%%%%%%%%%%%%%%%%%%%%%%%%%%%%%%%%%%%%%%%%%%%%%%%%%%%%%%%%%%%%%%%%%%%%%%%%%%%%%%%%%%%%%%%%%%%%%%%%%%%%%%%%%%%%%%%%&&&&&&&&&&&&&&&&&&&&&&&&&&&&&&&&&

\section{The proof of \texorpdfstring{\hyperref[hlotl]{Lemma \ref*{hlotl}}}{Lemma 4.10}} \label{polhlotl}

%%%%%%%%%%%%%%%%%%%%%%%%%%%%%%%%%%%%%%%%%%%%%%%%%%%%%%%%%%%%%%%%%%%%%%%%%%%%%%%%%%%%%%%%%%%%%%%%%%%%%%%%%%%%%%%%%%%%%%%%%%%%%%%%%%%%%%%%%%%%%%%%%%%%%%%%%%%%%%%%%%%%%%%%%%%%%%%%%%%%%%%%%%%%%%%%%%%%%%%%%%%%%%%%%%%%%%%%%%%&&&&&&&&&&&&&&&&&&&&&&&&&&&&&&&&&

\begin{proof}[Proof of \texorpdfstring{\hyperref[hlotl]{Lemma \ref*{hlotl}}}{}] In order to show \hyperref[hlotlte]{(\ref*{hlotlte})}, we recall the notation of \hyperref[hlotlse]{(\ref*{hlotlse})} and consider the time-dependent velocity field 
\begin{equation}
[0,1] \times \mathds{R}^{n} \ni (t,\xi) \longmapsto v_{t}(\xi) \vcentcolon = \gamma\Big( \big(T_{t}^{\gamma}\big)^{-1}(\xi)\Big) \in \mathds{R}^{n},
\end{equation}
which is well-defined $P_{t}$-a.s.\ for every $t \in [0,1]$. Then $(v_{t})_{0 \leqslant t \leqslant 1}$ is the velocity field associated with $(T_{t}^{\gamma})_{0 \leqslant t \leqslant 1}$, i.e.,
\begin{equation}
T_{t}^{\gamma}(x) = x + \int_{0}^{t} v_{\theta}\big(T_{\theta}^{\gamma}(x)\big) \, \textnormal{d}\theta,
\end{equation}
on account of \hyperref[hlotlse]{(\ref*{hlotlse})}. Let $p_{t}(\, \cdot \,)$ be the probability density function of the probability measure $P_{t}$ in \hyperref[hlotlse]{(\ref*{hlotlse})}. Then, according to \cite[Theorem 5.34]{Vil03}, the function $p_{t}(\, \cdot \,)$ satisfies the continuity equation
\begin{equation} 
\partial_{t} p_{t}(x) + \operatorname{div} \big( v_{t}(x) \, p_{t}(x) \big) = 0, \qquad (t,x) \in (0,1) \times \mathds{R}^{n},
\end{equation}
which can be written equivalently as 
\begin{equation} \label{ceiikyjl}
- \partial_{t} p_{t}(x) = \operatorname{div} \big( v_{t}(x) \big) \, p_{t}(x) + \big\langle v_{t}(x) \, , \nabla p_{t}(x) \big\rangle, \qquad (t,x) \in (0,1) \times \mathds{R}^{n}.
\end{equation}
Recall that $X_{0}$ is a random variable with probability distribution $P_{0}$ on the probability space $(S,\mathcal{S},\nu)$. Then the integral equation
\begin{equation} \label{ieahwv}
X_{t} = X_{0} + \int_{0}^{t} v_{\theta}(X_{\theta}) \, \textnormal{d}\theta, \qquad 0 \leqslant t \leqslant 1
\end{equation}
defines random variables $X_{t}$ with probability distributions $P_{t} = (T_{t}^{\gamma})_{\#}(P_{0})$ for $t \in [0,1]$, as in \hyperref[hlotlse]{(\ref*{hlotlse})}. We have 
\begin{equation} 
\textnormal{d} p_{t}(X_{t}) 
= \partial_{t} p_{t}(X_{t}) \, \textnormal{d}t + \big\langle \nabla p_{t}(X_{t}) \, , \, \textnormal{d}X_{t} \big\rangle
= - p_{t}(X_{t}) \operatorname{div} \big( v_{t}(X_{t}) \big) \, \textnormal{d}t
\end{equation}
on account of \hyperref[ceiikyjl]{(\ref*{ceiikyjl})}, \hyperref[ieahwv]{(\ref*{ieahwv})}, thus also
\begin{equation} \label{dlogpd}
\textnormal{d} \log p_{t}(X_{t}) = - \operatorname{div} \big( v_{t}(X_{t}) \big) \, \textnormal{d}t, \qquad 0 \leqslant t \leqslant 1.
\end{equation}
Recall the function $q(x) = \mathrm{e}^{ - 2 \Psi(x)}$, for which
\begin{equation} \label{dlogqd}
\textnormal{d} \log q( X_{t} ) 
= -  \big\langle 2 \, \nabla \Psi(X_{t}) \, , \, \textnormal{d}X_{t} \big\rangle  
= -  \big\langle 2 \, \nabla \Psi(X_{t}) \, , \, v_{t}(X_{t}) \big\rangle \, \textnormal{d}t.
\end{equation}
For the likelihood ratio function $\ell_{t}(\, \cdot \,)$ of \hyperref[nlrfitramfcc]{(\ref*{nlrfitramfcc})} we get from \hyperref[dlogpd]{(\ref*{dlogpd})} and \hyperref[dlogqd]{(\ref*{dlogqd})} that
\begin{equation} \label{hwwtte}
\textnormal{d} \log \ell_{t}(X_{t}) =  \big\langle 2 \, \nabla \Psi(X_{t}) \, , \, v_{t}(X_{t}) \big\rangle \, \textnormal{d}t \, - \, \operatorname{div} \big( v_{t}(X_{t}) \big) \, \textnormal{d}t, \qquad 0 \leqslant t \leqslant 1.
\end{equation}
Taking expectations in the integral version of \hyperref[hwwtte]{(\ref*{hwwtte})}, we obtain that the difference 
\begin{equation}
H(P_{t} \, \vert \, \mathrm{Q})  - H(P_{0} \, \vert \, \mathrm{Q}) 
= \mathds{E}_{\nu}\big[ \log \ell_{t}(X_{t})\big] - \mathds{E}_{\nu}\big[ \log \ell_{0}(X_{0}) \big]
\end{equation}
is equal to
\begin{equation}
\mathds{E}_{\nu}\bigg[\int_{0}^{t} \Big(  \big\langle 2 \, \nabla \Psi(X_{\theta}) \, , \, v_{\theta}(X_{\theta}) \big\rangle - \operatorname{div} \big( v_{\theta}(X_{\theta})\big) \Big) \, \textnormal{d}\theta \bigg]
\end{equation}
for $t \in [0,1]$. Consequently,
\begin{equation} \label{liadn}
\lim_{t \downarrow 0} \frac{H(P_{t} \, \vert \, \mathrm{Q}) - H(P_{0} \, \vert \, \mathrm{Q})}{t} 
= \mathds{E}_{\nu}\Big[    \big\langle 2 \, \nabla \Psi(X_{0}) \, , \, v_{0}(X_{0}) \big\rangle - \operatorname{div} \big(v_{0}(X_{0})\big) \Big].
\end{equation}
Integrating by parts, we see that 
\begingroup
\addtolength{\jot}{0.7em}
\begin{align}
\mathds{E}_{\nu}\big[\operatorname{div} \big(v_{0}(X_{0})\big)\big] 
&= \int_{\mathds{R}^{n}} \operatorname{div} \big(v_{0}(x)\big) \, p_{0}(x) \, \textnormal{d}x = - \int_{\mathds{R}^{n}} \big\langle v_{0}(x) \, , \nabla p_{0}(x) \big\rangle \, \textnormal{d}x  \label{ibppo} \\
&= - \big\langle \nabla \log p_{0}(X_{0}) \, , \, v_{0}(X_{0}) \big\rangle_{L^{2}(\nu)}. \label{ibppopt}
\end{align}
\endgroup
Recalling \hyperref[liadn]{(\ref*{liadn})}, and combining it with the relation $\nabla \log \ell_{t}(x) = \nabla \log p_{t}(x) + 2 \, \nabla \Psi(x)$, as well as with \hyperref[ibppo]{(\ref*{ibppo})} and \hyperref[ibppopt]{(\ref*{ibppopt})}, we get
\begin{equation} 
\lim_{t \downarrow 0} \frac{H(P_{t} \, \vert \, \mathrm{Q}) - H(P_{0} \, \vert \, \mathrm{Q})}{t} 
= \big\langle \nabla \log \ell_{0}(X_{0}) \, , \, v_{0}(X_{0}) \big\rangle_{L^{2}(\nu)}.
\end{equation}
Since $v_{0} = \gamma$, this leads to \hyperref[hlotlte]{(\ref*{hlotlte})}. 
\end{proof}

\end{appendices}

%%%%%%%%%%%%%%%%%%%%%%%%%%%%%%%%%%%%%%%%%%%%%%%%%%%%%%%%%%%%%%%%%%%%%%%%%%%%%%%%%%%%%%%%%%%%%%%%%%%%%%%%%%%%%%%%%%%%%%%%%%%%%%%%%%%%%%%%%%%%%%%%%%%%%%%%%%%%%%%%%%%%%%%%%%%%%%%%%%%%%%%%%%%%%%%%%%%%%%%%%%%%%%%%%%%%%%%%%%%&&&&&&&&&&&&&&&&&&&&&&&&&&&&&&&&&

%\newcommand{\bibpreamble}

%\renewcommand\refname{\Large References}

%\def\bibfont{\footnotesize}

%\bibliographystyle{alpha}
%\bibliography{references}

\bibliographystyle{alpha}
{\footnotesize
\bibliography{references}}

%%%%%%%%%%%%%%%%%%%%%%%%%%%%%%%%%%%%%%%%%%%%%%%%%%%%%%%%%%%%%%%%%%%%%%%%%%%%%%%%%%%%%%%%%%%%%%%%%%%%%%%%%%%%%%%%%%%%%%%%%%%%%%%%%%%%%%%%%%%%%%%%%%%%%%%%%%%%%%%%%%%%%%%%%%%%%%%%%%%%%%%%%%%%%%%%%%%%%%%%%%%%%%%%%%%%%%%%%%%&&&&&&&&&&&&&&&&&&&&&&&&&&&&&&&&&

\end{document}